\newcolumntype{P}[1]{>{\centering\arraybackslash}p{#1}}
\newcolumntype{M}[1]{>{\centering\arraybackslash}m{#1}}
\newtheorem{thm}{Theorem}[section]
\newtheorem*{fixed point criterion}{Fixed point criterion}
\newtheorem{cor}[thm]{Corollary}
\newtheorem{lem}[thm]{Lemma}
\newtheorem{prop}[thm]{Proposition}
\newtheorem{claim}{Claim}
\theoremstyle{definition}
\newtheorem{defn}[thm]{Definition}
\newtheorem{exam}[thm]{Example}
\newtheorem{ques}[thm]{Question}
\theoremstyle{remark}
\newtheorem{rem}[thm]{Remark}
\numberwithin{equation}{section}
\newcommand{\Z}{\mathbb Z}
\newcommand{\RR}{\mathcal R}
\newcommand{\W}{\mathcal{W}}
\newcommand{\nf}{\mathfrak{nf}}
\newcommand{\rsp}{\mathfrak{rp}}
\newcommand{\ciw}{\mathfrak{CI}}
\newcommand{\llfr}{\mathrm{LLFR}}
\newcommand{\swl}{\tau}
\newcommand{\gs}{\mathcal{S}}
\newcommand{\groupelement}{\mathcal{E}}
\newcommand{\generalconj}{\mathcal{C}}
\newcommand{\primitiveconj}{\mathcal{P}}
\begin{document}
	%\begin{CJK}{GBK}{song}
	
	\title{Word length formulae and normal forms of conjugacy classes in surface groups}

	\author{Ke Wang}
	\address{School of Mathematics and Statistics, Xi'an Jiaotong University, Xi'an 710049, PR CHINA}
	\email{keqiyehuopo@stu.xjtu.edu.cn}
	
	\author{Qiang Zhang}
	\address{School of Mathematics and Statistics, Xi'an Jiaotong University, Xi'an 710049, PR CHINA}
	\email{zhangq.math@mail.xjtu.edu.cn}
	
	\author{Xuezhi Zhao}
	\address{School of Mathematical Sciences, Capital Normal University, Beijing 100048, PR CHINA}
	\email{zhaoxve@mail.cnu.edu.cn}
	
	\thanks{The authors are partially supported by NSFC (No. 12471066 and 12331003) and the Shaanxi Fundamental Science Research Project for Mathematics and Physics (No. 23JSY027).}
	
	\subjclass[2010]{20E45, 20F06, 20F10, 13P10.}
	
	\keywords{Stable word length, surface group, power element, normal form of conjugacy class, Gr\"{o}bner-Shirshov basis, conjugacy growth rate.}
	
	\date{\today}

% ----------------------------------------------------------------
\begin{abstract}
In this paper, we primarily investigate the following symmetric presentation of the surface group $$\pi_1(\Sigma_g)=\left\langle c_1,\dots, c_{2g}\mid c_1\cdots c_{2g}c_1^{-1}\cdots c_{2g}^{-1}\right\rangle.$$  For every nontrivial element $x\in \pi_1(\Sigma_g)$, we obtain a uniform representative of the normal forms of $x^k$ under the length-lexicographical order. Based on this, we find a new relation among these normal forms, and then derive the following three formulae related to the word length: \begin{enumerate}\item $|x^2|>|x|$;\item $|x^k|=(k-1)(|x^2|-|x|)+|x|$;\item $\lim_{k\to\infty}\frac{|x^k|}{k}=|x^2|-|x|$.\end{enumerate} Moreover, we extend these results to obtain analogous but less precise formulae for every minimal geometric presentation. Then, we define the normal forms of conjugacy classes in $\pi_1(\Sigma_g)$ and give a criterion for determining the conjugacy of elements. As a consequence, we give efficient algorithms for solving the root-finding and conjugacy problems. Finally, we present applications concerning the computation of some growth rates.
\end{abstract}
\maketitle
\tableofcontents
% ----------------------------------------------------------------

\section{Introduction}\label{sect introduction}
In the realm of geometric group theory, surface groups and word length stand as two fundamental concepts that have spurred an abundance of related research. For example, Grigorchuk and Nagnibeda \cite{GN97} investigated the word length of surface groups and obtained the complete growth functions of surface groups; Erlandsson \cite{Er19} gave the asymptotic growth of the number of immersed curves of bounded word length on a hyperbolic surface; Calegari \cite{Ca08} gave a lower bound for the word length of power elements with respect to a characteristic generating set; Malestein and Putman \cite{MP24} provided lower bounds for the word length of nontrivial elements in the lower central series groups of surface groups. In this paper, we mainly focus on the word length of power elements in surface groups.

\subsection{Motivation and main results} In this paper, a \emph{surface group} means the fundamental group $\pi_1(\Sigma_g)$ of a closed orientable surface $\Sigma_g$ of genus $g\geq 2$.

Given a finitely generated free group $F$ with a  freely generating set, it is easy to get the word length of $a^n$ for any element $1\neq a\in F$: cyclically reduce the element $a$ and obtain a cyclically reduced form $b$, then the word length formula is
$$ |a^n|=|b|\cdot(n-1)+|a|.$$
Moreover, we have $|a^2|>|a|$ and $|b|=|a^2|-|a|$. These results are well-known and can be found in some textbooks \cite{LS01,MKS04}.

Now, when considering the surface groups, the situation seems more complicated because there is an additional relator. It is not easy to tell whether the above three formulae still hold for surface groups. If we approach from the perspective of Gromov hyperbolic groups \cite{Gr87}, the word length of power elements of surface groups should follow the following formula:
$$|a^n|=k\cdot n+c(n),$$
 where $c(n)$ is uniformly bounded and $k$ is a constant. However, since hyperbolicity is a coarse property, it is difficult to obtain precise results through this method. Nevertheless, in the following symmetric presentation (\ref{symmetric presentation}), we will show that the conclusion remains as precise as in the case of free groups.

Let $G$ be a surface group with the generating set $\gs=\{c_1,\dots,c_{2g}\}$ and the following \emph{symmetric presentation}
\begin{equation}\label{symmetric presentation}
G=\pi_1(\Sigma_g)=  \left\langle c_1,\dots , c_{2g}\mid c_1\cdots c_{2g}c_1^{-1}\cdots c_{2g}^{-1} \right\rangle .
\end{equation}
If we give the following order of the generators
\begin{equation}\label{order of generators}
    c_{2g}^{-1}\succ \cdots \succ c_2^{-1}\succ c_1^{-1}\succ c_1\succ c_2\cdots\succ  c_{2g},
\end{equation}
there will be a length-lexicographical order on the word set $\W(\gs)$ of $G$. Then, for an element $x\in G$, there is a unique \emph{normal form} $\nf(x)$ of $x$, which is the word with the minimal order that represents $x$, see Definition \ref{normal form}.  After a series of thorough and complicated discussion, we find a new  relation among the normal forms of all powers of the given word, see Theorem \ref{thm normal forms of power elements (original main thm1)}, meaning that the word length behavior of power elements exhibits a high degree of regularity. To be more exact, the word length grows in a precisely linear fashion with the increase in the power.

Given a word metric $|\cdot|_\gs$ for the group $G$ with respect to a generating set $\gs$, we usually define the \emph{stable word length} or \emph{translation number} $\swl_\gs(x)$ of $x\in G$ as follows,
$$\swl_\gs(x):=\lim_{n\to \infty} \frac{|x^n|_\gs}{n}.$$
Incidentally, if the generating set $\gs$ is clear, we always omit the subscript $\cdot_\gs$. There has been a lot of research on the translation number, such as \cite{Gr87,GS91ann,Ka97,Ye25}. It is noted in \cite[Section 6]{GS91ann} that this limit always exists. Kapovich \cite{Ka97} proved that for some small cancellation groups, their translation numbers are strongly discrete. We, on the other hand, will prove that in the symmetric presentation, the translation number of surface groups is always an integer. 

Our first main result is as follows.

\begin{thm}\label{main thm1 (original main thm2)}
	Let $G$ be a surface group with the following symmetric presentation
	$$G=\left\langle c_1,\dots,c_{2g}\mid c_1\cdots c_{2g}c_{1}^{-1}\cdots c_{2g}^{-1}\right\rangle.$$
	Then, for any $1\neq x\in G$, the following three formulae hold:
	\begin{enumerate}
		\item $|x^2|> |x|$;
		\item $|x^k|=(k-1)(|x^2|-|x|) +|x|, ~\forall k\geq 1$;
        \item $\swl(x)=|x^2|-|x|$.
	\end{enumerate}
\end{thm}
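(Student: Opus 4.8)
The plan is to derive all three formulae as corollaries of the structural relation among normal forms of powers established in Theorem~\ref{thm normal forms of power elements (original main thm1)}. The paper has already done the heavy lifting: it produces a uniform description of $\nf(x^k)$ for all $k$, so the task here is to read off the length arithmetic from that description. First I would reduce to the ``cyclically reduced'' representative in this presentation. Unlike the free group case, reduction is governed by the length-lexicographical order coming from (\ref{order of generators}) together with the single relator, so I would use the normal form $\nf(x)$ directly rather than an ad hoc cyclic reduction. The key input is that $\nf(x^k)$ decomposes, for $k\ge 1$, into a ``core'' block that repeats once per increment of $k$ plus a fixed ``boundary'' correction independent of $k$; this is exactly what a uniform representation of the normal forms should provide.

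Granting that decomposition, the second formula is the load-bearing one and I would prove it first. Writing $\ell(k):=|x^k|=|\nf(x^k)|$, the uniform representation should show that passing from $\nf(x^k)$ to $\nf(x^{k+1})$ inserts one additional copy of the repeating core, whose length is a constant $d$ independent of $k$; hence $\ell(k+1)-\ell(k)=d$ for all $k\ge 1$, so $\ell$ is an arithmetic progression. Evaluating the common difference at the first step gives $d=\ell(2)-\ell(1)=|x^2|-|x|$, and then by induction (or by summing the telescoping differences) I obtain
\begin{equation}
|x^k|=(k-1)\bigl(|x^2|-|x|\bigr)+|x|,\qquad \forall k\ge 1,
\end{equation}
which is formula~(2). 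The main obstacle is entirely contained in justifying that the per-step increment is genuinely constant and equals $|x^2|-|x|$ already from $k=1$: I must rule out any exceptional ``transient'' behaviour between $x^1$ and $x^2$ by checking that Theorem~\ref{thm normal forms of power elements (original main thm1)} applies uniformly starting at $k=1$, and in particular that no cancellation with the relator shortens $\nf(x^{k+1})$ relative to the naive concatenation for small $k$.

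With formula~(2) in hand, formulae~(1) and~(3) are immediate. For~(3), dividing by $k$ and letting $k\to\infty$ gives
\begin{equation}
\swl(x)=\lim_{k\to\infty}\frac{|x^k|}{k}=\lim_{k\to\infty}\frac{(k-1)(|x^2|-|x|)+|x|}{k}=|x^2|-|x|,
\end{equation}
so the translation number equals the common difference $d$; since $d$ is a nonnegative integer, this simultaneously recovers the integrality of $\swl(x)$ claimed in the introduction. For~(1), I would argue that the common difference $d=|x^2|-|x|$ is strictly positive for every nontrivial $x$. The inequality $d\ge 0$ is automatic since $\ell$ is nondecreasing (lengths of powers cannot shrink indefinitely, given~(2)), so the real content is ruling out $d=0$: if $d=0$ then $|x^k|=|x|$ for all $k$, forcing infinitely many distinct group elements $x^k$ to share a bounded length, which is impossible in a group where only finitely many elements have a given word length, or alternatively contradicts the fact that a nontrivial element of the torsion-free group $\pi_1(\Sigma_g)$ has infinite order with unboundedly long powers. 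Thus $d>0$ and~(1) follows. The genuinely hard step is therefore the appeal to Theorem~\ref{thm normal forms of power elements (original main thm1)}; everything after it is elementary arithmetic of arithmetic progressions and a short nonvanishing argument.
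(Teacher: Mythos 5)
Your proposal is correct and follows essentially the same route as the paper: both derive the theorem as a direct corollary of Theorem \ref{thm normal forms of power elements (original main thm1)}, reading the length arithmetic off Eq. (\ref{eq. nf(xk)}). The ``transient'' issue you flag as the main obstacle --- that the increment from $k=1$ to $k=2$ might differ from the later increments --- is resolved exactly by the identity $|\overline{W}|=n'$ built into that theorem: writing $\nf(x)=\overline{YXY^{-1}}$ with $|\overline{X}|=n$, Eq. (\ref{eq. nf(xk)}) gives $|x^k|=2|\overline{Y}|+n+n'+(k-2)|\overline{W}|=2|\overline{Y}|+n+(k-1)n'$ for $k\geq 2$, which agrees with $|x|=2|\overline{Y}|+n$ at $k=1$, so the sequence of lengths is an arithmetic progression from the very first step; this is precisely the computation in the paper's proof. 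The only genuine divergence is item (1): the paper reads $|x^2|-|x|=n'\geq 1$ directly off the theorem (the cyclically irreducible core $\overline{W}$ is nonempty), whereas you rule out a zero common difference by combining torsion-freeness of $\pi_1(\Sigma_g)$ with the finiteness of balls in the word metric. Your argument is valid, but it invokes external facts where the paper's formula already contains the strict inequality; on the other hand, it makes transparent why positivity must hold in any group with these properties, independent of the combinatorics.
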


Our next theorem is about conjugacy classes and shows when two elements are conjugate. For any conjugacy class $[x]$, we define the \emph{normal form $\nf([x])$ of the conjugacy class $[x]$} as the word with the minimal order in it, see Definition \ref{normal form}. After seeing Theorem \ref{thm normal forms of power elements (original main thm1)}, what readily comes to mind is whether the word $\overline{W}$ in Eq. (\ref{eq. nf(xk)}) is the normal form $\nf([x])$. The answer is clearly negative, because such a $\overline{W}$ is not unique. In fact, different $\overline{W}$'s differ by a cyclic permutation. Then, is it feasible to obtain $\nf([x])$ by cyclically permuting $\overline{W}$? The answer remains negative. A simple counter-example (see \cite[Page 13]{GZ22}) is that $c_3c_4c_1^{-1}$ and $c_4c_3c_1^{-1}$ are conjugate
in the surface group $\pi_1(\Sigma_2)=\langle c_1,c_2,c_3,c_4\mid c_1c_2c_3c_4c_1^{-1} c_2^{-1}c_3^{-1}c_4^{-1}\rangle$,  
$$c_2^{-1}c_1^{-1}(c_4c_3c_1^{-1})c_1c_2=c_3c_4c_1^{-1}c_2^{-1}c_2=c_3c_4c_1^{-1},$$ but the words $\overline{c_3c_4c_1^{-1}}$ and $\overline{c_4c_3c_1^{-1}}$ (we always overline a word to distinguish it from a group element) remain distinct after cyclic permutations.
Therefore, the situation seems more complicated than we imagined. 

In fact, this is the only type of exception, as detailed in item (1b) of the following theorem. 
\begin{thm}\label{main thm3 conj. class}
 Let $G$ be a surface group with the symmetric presentation (\ref{symmetric presentation}) and the length-lexicographical order (\ref{order of generators}). Then
    \begin{enumerate}
        \item Two nontrivial elements $x, y\in G$ are conjugate if and only if one of the following holds:
    \begin{enumerate}
        \item [(a)] $\ciw(x)=\overline{x_1\cdots x_m}$ and $\ciw(y)= \overline{x_{i+1}\cdots x_mx_1\cdots x_i}$ for some $1\leq i\leq m$;
        \item [(b)] $\ciw(x)=\overline{(b_{i+1}\cdots b_{2g-1}b_1\cdots b_i)^t}$ and $\ciw(y)=\overline{(b_{j}\cdots b_1b_{2g-1}\cdots b_{j+1})^t}$ for some $t\geq 1$, $ 1\leq i,j\leq 2g-1$ and a cyclic permutation $b_1\cdots b_{4g}$ of the relator $c_1\cdots c_{2g}c_1^{-1}\cdots c_{2g}^{-1}$ and its inverse.
    \end{enumerate}
    \item If $\ciw(x)$ has a form as in the above case (b), then 
    $$\nf([x])=\min\{\overline{(b_{j+1}\cdots b_{2g-1}b_1\cdots b_j)^t}, ~\overline{(b_{j}\cdots b_1b_{2g-1}\cdots b_{j+1})^t}\mid 1\leq j\leq 2g-1\}.$$ Otherwise 
$$\nf([x])=\min\{\overline{W}\mid \overline{W}\mbox{ is a cyclic permutation of }\ciw(x)\}.$$
    \end{enumerate}
\end{thm}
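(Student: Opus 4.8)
The plan is to reduce everything to properties of the cyclic normal form $\ciw(\cdot)$ together with the linear structure of $\nf(x^k)$ supplied by Theorem~\ref{thm normal forms of power elements (original main thm1)}, treating the conjugacy criterion~(1) and the computation of $\nf([x])$~(2) separately and deriving (2) as a formal consequence of (1). Throughout I would exploit the terminating confluent rewriting system (the Gr\"obner--Shirshov basis) underlying the normal forms: $\nf(x)$ is the irreducible word representing $x$, and $\ciw(x)$ is its cyclically reduced counterpart. The two halves of (1) have very different characters: the ``if'' direction is an explicit computation, while the ``only if'' direction is where the real work lies.

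For the \emph{sufficiency} in~(1), I would dispatch case~(a) immediately: if $\ciw(x)=\overline{x_1\cdots x_m}$ then $x_{i+1}\cdots x_m x_1\cdots x_i=(x_1\cdots x_i)^{-1}\,x\,(x_1\cdots x_i)$, so every cyclic permutation represents a conjugate element, and since $\ciw(y)$ is such a permutation we get $y\sim x$. Case~(b) rests on a single relator-induced identity. Because $b_1\cdots b_{4g}=1$ is a cyclic permutation of the defining relation, the symmetric relator yields $c_1\cdots c_{2g}=c_{2g}\cdots c_1$, i.e.\ a half-boundary path of the relator polygon equals (the inverse of) its complementary half; reading this across the seam of the cyclic word shows that the ``forward'' length-$(2g-1)$ diagonal word $b_{i+1}\cdots b_{2g-1}b_1\cdots b_i$ is conjugate to the ``reversed'' complementary word $b_{j}\cdots b_1 b_{2g-1}\cdots b_{j+1}$. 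Raising to the $t$-th power and conjugating preserves this, giving $x\sim y$; I expect this to be a short calculation generalizing the displayed example $c_3c_4c_1^{-1}\sim c_4c_3c_1^{-1}$.

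The \emph{necessity} in~(1) is the crux. Here I would argue that conjugate elements must have related cyclic normal forms. First, the length of $\ciw(x)$ is a conjugacy invariant, equal to $\swl(x)=|x^2|-|x|$ by Theorem~\ref{main thm1 (original main thm2)}, so $\ciw(x)$ and $\ciw(y)$ have equal length. Then I would run a \emph{cyclic} version of the confluence of the rewriting system: two cyclically reduced words representing conjugate elements are connected by a sequence of elementary cyclic moves, each of which either rotates the word (producing a cyclic permutation, case~(a)) or applies the relator rule across the seam, replacing one half-boundary diagonal by its complement. The delicate point — and the main obstacle — is proving that case~(b) is the \emph{only} exception to ``conjugate $\Leftrightarrow$ cyclic permutation'': one must show that a seam-crossing move is possible exactly when the word traces (a power of) a half-relator diagonal, that each such move keeps the word within the two families listed, and that iterating the moves never yields a shorter word or one outside these families. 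Controlling the interaction of a \emph{periodic} cyclic word with the relator polygon — so that no spurious shorter or genuinely different normal form appears — is where the precise half-length $2g-1$ and the ranges $1\le i,j\le 2g-1$ enter, and this case analysis is the heart of the argument.

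Finally, part~(2) follows formally from part~(1) by minimizing under the length-lexicographical order. By definition $\nf([x])$ is the minimal word in the conjugacy class of $x$, and part~(1) describes that class completely: in the generic situation it is the set of cyclic permutations of $\ciw(x)$, so $\nf([x])=\min\{\overline{W}\mid \overline{W}\text{ a cyclic permutation of }\ciw(x)\}$; in the exceptional situation~(b) the class is the union of the two diagonal families, so the minimum is taken over both. The only point to verify is that no conjugate of strictly smaller order is omitted, which is guaranteed because~(1) enumerates the entire conjugacy class; taking minima over these finite candidate sets then reproduces the two displayed formulae exactly.
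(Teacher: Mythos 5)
Your sufficiency argument for part (1) and your derivation of part (2) from part (1) are both sound and agree with the paper (the paper's ``$\Leftarrow$'' direction uses exactly the identity $b_{4g}(b_1\cdots b_{2g-1})^t b_{2g}=(b_{2g-1}\cdots b_1)^t$ with $b_{4g}=b_{2g}^{-1}$ that you describe). The problem is the necessity direction, which you yourself call ``the crux'': your proposal does not prove it. You invoke a ``cyclic version of the confluence of the rewriting system'' --- that any two cyclically irreducible words representing conjugate elements are connected by rotations and seam-crossing relator moves, and that such moves occur only on powers of half-relator diagonals. But no such cyclic confluence statement is available: it does not follow formally from confluence of the Gr\"{o}bner--Shirshov system (cyclic rewriting is notoriously not a consequence of linear rewriting, which is precisely why Dehn's conjugacy algorithm is a separate theorem from his word algorithm), and in fact the assertion you want to use \emph{is} essentially the theorem being proved, so assuming it is circular. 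You then explicitly defer the entire case analysis (``the delicate point,'' ``the heart of the argument'') without carrying it out, so the only-if direction is left as a named obstacle rather than an argument.

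For comparison, the paper closes this gap by a completely different mechanism. Writing $ZXZ^{-1}=Y$ with $\overline{X}=\ciw(x)$, $\overline{Y}=\ciw(y)$, it studies the equations $ZX^{\ell}=Y^{\ell}Z$ as $\ell\to\infty$ through the reducing-subword pairs $\rsp(\overline{Z},\overline{X^{\ell}})=(\overline{Z_\ell},\overline{X_\ell})$ and $\rsp(\overline{Y^{\ell}},\overline{Z})=(\overline{Y_\ell},\overline{Z'_\ell})$, splitting into cases according to whether the cancellation lengths $|\overline{X_\ell}|$, $|\overline{Y_\ell}|$ stay bounded. Bounded cancellation on both sides stabilizes the normal forms $\nf(ZX^{\ell_0+k})=\overline{\nf(ZX^{\ell_0})X^k}$ and forces $\overline{Y}$ to be a cyclic permutation of $\overline{X}$ (your case (a)); unbounded cancellation triggers Lemma \ref{lem reducing formulae} --- a long cancellation region must contain a subword $\overline{(b_1\cdots b_{2g-1})^{t}}$ --- which forces $\overline{X}$ and $\overline{Y}$ to be cyclic permutations of powers of $(2g-1)$-periodic words, and then uniqueness of roots in surface groups together with the abelianization argument (Lemma \ref{prop observations of b_1 cdots b_{2g-1} }) yields exactly case (b). That asymptotic argument rests on the complete classification of reductions in Section \ref{sect classification of reductions} and Appendix \ref{sect appendix tables for reductions}; some substitute for this quantitative control of cancellations (or for your unproven cyclic-moves claim, e.g.\ via annular diagrams) is indispensable, and supplying it is where the real work of the theorem lies.
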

Here, $\ciw(x)$ is conjugate to $x$ and called the \emph{cyclically irreducible core} of $x$, which is a specific selection of $\overline{W}$ from Eq. (\ref{eq. nf(xk)}) in Theorem \ref{thm normal forms of power elements (original main thm1)}:
  $$ \nf(x^k)=\overline{Y\nf(X^k)Y^{-1}}=\overline{Yx_1\cdots x_{i}(x'_{1}\cdots x'_{j} W^{k-2}x'_{j+1}\cdots x'_{n'})x_{i+1}\cdots x_nY^{-1}}.$$
See Eq. (\ref{def ciw}) and Corollary \ref{cor cyclically irreducible word notation} for further details.

\begin{rem}\label{rem for main thm3 conj. class}
    In a free group with a freely generating set, clearly, there is a bijection between the conjugacy class set and the freely reduced cyclic word set. However, in a surface group even with a minimal generating set and a length-lexicographical order, it is hard to say there is such a bijection as shown by the counterexample by Gu and Zhao before Theorem \ref{main thm3 conj. class}. Note that, for any total order on the word set, it is easy to define the normal form of a conjugacy class as the minimal word representing an element in it. Consequently, the most significant contribution of Theorem \ref{main thm3 conj. class} lies not in providing the definition of the normal form of a conjugacy class, but rather in specifically elaborating on the concrete differences between cyclically irreducible words and their normal forms.
\end{rem}

Note that the conjugacy problem is a fundamental issue in combinatorial group theory and has therefore been extensively studied. For example, as early as the beginning of the last century, Dehn \cite{De11, De12, De87} solved the conjugacy problem for surface groups. In \cite{De12}, he first presented an algorithm for the word problem, and then transformed the conjugacy problem into a finite number of word problems to solve the conjugacy problem. Furthermore, there were many studies on the conjugacy problems for small cancellation groups, hyperbolic groups, and biautomatic groups \cite{Ep92}. Our results also provide Algorithm \ref{algorithm: find normal form of conjugacy class},  which is used to solve the conjugacy problem and compute a conjugator when the answer is affirmative. In addition, the root-finding Algorithm \ref{algorithm: root-finding} is derived from Theorem \ref{thm normal forms of power elements (original main thm1)}, along with a conjugacy-power algorithm as stated in the following corollary. The complexity of these algorithms is about $O(n^2)$, where $n$ is the word length of the input word. For the differences between our algorithms and Dehn's algorithms, see Section \ref{sect normal form of conjugacy classes}.

\begin{cor}\label{cor conj-power algo.}
   Given a pair of elements $x,y$ in a surface group $G$, there is an algorithm to decide whether there is an element $z\in G$ and a pair of non-zero integers $(m,n)$ such that $x^m=z^{-1}y^nz$ and, in the affirmative case, compute $z$ and $(m,n)$. 
\end{cor}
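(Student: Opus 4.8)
The plan is to reduce the conjugacy--power problem to two problems that the paper has already settled algorithmically: the root-finding problem (Algorithm \ref{algorithm: root-finding}) and the conjugacy problem (Algorithm \ref{algorithm: find normal form of conjugacy class}). The structural fact I would rely on is that $\pi_1(\Sigma_g)$ is torsion-free with infinite cyclic centralizers, so every nontrivial $x\in G$ has a \emph{unique primitive root}: there is a unique non-proper-power $w_x$ and a unique integer $a\geq 1$ with $x=w_x^a$. Running Algorithm \ref{algorithm: root-finding} on $x$ and on $y$ produces these data $(w_x,a)$ and $(w_y,b)$; the uniform normal-form description of powers in Theorem \ref{thm normal forms of power elements (original main thm1)} guarantees that all the relevant quantities are effectively computable.

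The heart of the argument is the claim that, for nontrivial $x,y$, there exist nonzero integers $m,n$ and an element $z$ with $x^m=z^{-1}y^nz$ \emph{if and only if} $w_x$ is conjugate to $w_y$ or to $w_y^{-1}$. For the forward direction, suppose $x^m=z^{-1}y^nz$ and set $u=z^{-1}w_yz$, which is again primitive. Then the relation reads $w_x^{am}=u^{bn}$, an equation between powers of two primitive elements with $am,bn\neq 0$. The common value $g=w_x^{am}\neq 1$ has infinite cyclic centralizer containing both $w_x$ and $u$, and primitivity forces $w_x=u^{\pm 1}$; equivalently, the primitive root of $g$ is determined up to inversion, so $w_x\in\{u,u^{-1}\}$. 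Hence $w_x$ is conjugate to $w_y^{\pm 1}$. For the converse, if $w_x=z_0^{-1}w_y^{\epsilon}z_0$ with $\epsilon\in\{+1,-1\}$, put $d=\gcd(a,b)$ and take $m=b/d$ and $n=\epsilon\,a/d$ (both nonzero); then $bn=\epsilon\,am$, so $w_x^{am}=z_0^{-1}w_y^{bn}z_0$, that is $x^m=z_0^{-1}y^nz_0$, and $(m,n,z_0)$ is an explicit witness.

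This yields the algorithm. After disposing of the trivial cases (if $x=1$, the answer is affirmative exactly when $y=1$, since $G$ is torsion-free, and symmetrically), compute $(w_x,a)$ and $(w_y,b)$ by Algorithm \ref{algorithm: root-finding}. Then apply the conjugacy test of Algorithm \ref{algorithm: find normal form of conjugacy class} twice, to decide whether $w_x$ is conjugate to $w_y$ or to $w_y^{-1}$, returning a conjugator $z_0$ in the affirmative case. If neither conjugacy holds, report that no such $(m,n,z)$ exists; otherwise output the witness $(m,n,z_0)$ built above with the corresponding sign $\epsilon$. Since Algorithm \ref{algorithm: root-finding} and Algorithm \ref{algorithm: find normal form of conjugacy class} each run in time about $O(n^2)$ in the input word length, so does the whole procedure.

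The main obstacle is the forward direction of the claim: showing that a power relation $x^m=z^{-1}y^nz$ cannot hold unless the primitive roots already agree up to conjugacy and inversion. This is precisely the step where the absence of torsion and the cyclic structure of centralizers in $\pi_1(\Sigma_g)$ are used; alternatively it can be extracted from the uniqueness of primitive roots underlying the correctness of Algorithm \ref{algorithm: root-finding}, together with the conjugacy classification of Theorem \ref{main thm3 conj. class}. Once this rigidity is established, the construction of an explicit $(m,n,z)$ is a short computation, and the word-length formulae of Theorem \ref{main thm1 (original main thm2)} (in particular the integrality of $\swl$) can additionally be used to certify, via the necessary condition $|m|\,\swl(x)=|n|\,\swl(y)$, that the chosen exponents are compatible.
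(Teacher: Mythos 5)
Your proposal is correct and follows essentially the same route as the paper: compute the primitive roots $w_x, w_y$ via Algorithm \ref{algorithm: root-finding}, reduce the power-conjugacy question to whether $w_x$ is conjugate to $w_y^{\pm 1}$ (tested with Algorithm \ref{algorithm: find normal form of conjugacy class}), and build the exponents $(m,n)$ from $\gcd(a,b)$. The only difference is presentational — you justify the key rigidity step (equal nonzero powers of primitives force agreement up to inversion) explicitly via torsion-freeness and cyclic centralizers, where the paper simply asserts it from uniqueness of roots.
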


\subsection{Some applications}

In Section \ref{sect from sym. pre. to any pre.}, we first study isomorphisms between Cayley graphs of any \emph{minimal geometric presentation} (see Definition \ref{def minimal geometric presentation}) and the symmetric presentation (\ref{symmetric presentation}), and then obtain the following similar but more rough formulae.

\begin{thm}\label{main thm4 word length formulae more rough}
Let $\pi_1(\Sigma_g)$ be a surface group with a minimal geometric presentation $P$. Then there exists a positive integer $t$ dividing $4g$ such that for any $1\neq x\in \pi_1(\Sigma_g)$, the following formulae (with respect to $P$) hold:
	\begin{enumerate}
		\item $|x^{2t}|> |x^{t}|$;
		\item $|x^{kt}|=(k-1)(|x^{2t}|-|x^{t}|) +|x^{t}|, ~\forall k\geq 1$;
        \item $\tau(x)=\frac{|x^{2t}|-|x^{t}|}{t}$.
	\end{enumerate}
In particular, the canonical presentation  $\left\langle a_1, \ldots, a_{2g} \mid\prod_{i=1}^g[a_{2i-1},a_{2i}]\right\rangle$ is minimal geometric and $t$ can be taken as $2g$.

\end{thm}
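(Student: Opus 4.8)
The plan is to reduce everything to the symmetric case already settled in Theorem~\ref{main thm1 (original main thm2)}, by realizing both word metrics on $G=\pi_1(\Sigma_g)$ as vertex-distance functions on one and the same graph. The input from Definition~\ref{def minimal geometric presentation} is combinatorial: a minimal geometric presentation $P=\langle x_1,\dots,x_{2g}\mid R\rangle$ has $2g$ generators and a single defining relator $R$ of length $4g$ coming from a one-vertex, one-face cell decomposition of $\Sigma_g$. Its Cayley $2$-complex is therefore the universal cover of $\Sigma_g$ tiled by $4g$-gons, $4g$ of them around each vertex; combinatorially this is the regular tessellation $\{4g,4g\}$, \emph{independently of} $P$. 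Let $\mathcal{T}$ denote its $1$-skeleton, let $\A$ be the automorphism group of the tessellation, and $\A^{+}$ its orientation-preserving subgroup (the $(2,4g,4g)$ rotation group). Both $\mathrm{Cay}(G,P)$ and $\mathrm{Cay}(G,\gs)$ are isomorphic to $\mathcal{T}$, and left translation exhibits $G$ as two vertex-simply-transitive subgroups $\beta_P(G),\,\beta_\gs(G)\le\A^{+}$. Choosing the two identifications so that the identity maps to a common base vertex $o$, one has $|g|_P=d_{\mathcal T}(o,\beta_P(g)\,o)$ and $|g|_\gs=d_{\mathcal T}(o,\beta_\gs(g)\,o)$ for every $g$, where $d_{\mathcal T}$ is graph distance.

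The crux is a single structural fact about the symmetric presentation: \emph{$\beta_\gs(G)$ is normal in $\A^{+}$}, with quotient $Q:=\A^{+}/\beta_\gs(G)$ finite of order $[\A^{+}:\beta_\gs(G)]=4g$. This is the precise sense in which the presentation~(\ref{symmetric presentation}) is ``symmetric'': the symmetric cell structure makes $\Sigma_g$ a \emph{regular} covering of the $(2,4g,4g)$-orbifold, whose deck group $Q$ is its orientation-preserving map-automorphism group, of order $4g$. Granting this, for a fixed $P$ let $H_P\le Q$ be the image of $\beta_P(G)$ and set $t:=\exp(H_P)$; then $t\mid |H_P|\mid |Q|=4g$. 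Since the image of $\beta_P(x)$ in $Q$ has order dividing $t$, we get $\beta_P(x)^{t}\in\ker(\A^{+}\to Q)=\beta_\gs(G)$ for every $x$, so there is a unique $y=y(x)\in G$ with $\beta_P(x)^{t}=\beta_\gs(y)$; moreover $y\neq 1$ whenever $x\neq 1$, because $\beta_P(x)$ has infinite order.

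The three formulae now follow formally. As $\beta_P,\beta_\gs$ are homomorphisms, $\beta_P(x^{tk})=\beta_P(x)^{tk}=\beta_\gs(y)^{k}=\beta_\gs(y^{k})$ as elements of $\A$, whence $|x^{tk}|_P=d_{\mathcal T}(o,\beta_\gs(y^{k})\,o)=|y^{k}|_\gs$ for all $k\ge 1$. Applying Theorem~\ref{main thm1 (original main thm2)} to $y$ gives $|y^{2}|_\gs>|y|_\gs$, i.e.\ item~(1), and $|y^{k}|_\gs=(k-1)(|y^{2}|_\gs-|y|_\gs)+|y|_\gs$, which upon substituting $|x^{tk}|_P=|y^{k}|_\gs$ is exactly item~(2). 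For item~(3) one uses homogeneity of the translation number: $\tau_P(x)=\tfrac1t\tau_P(x^{t})=\tfrac1t\lim_{k}\tfrac{|x^{tk}|_P}{k}=\tfrac1t\lim_{k}\tfrac{|y^{k}|_\gs}{k}=\tfrac1t\tau_\gs(y)=\tfrac1t\bigl(|y^{2}|_\gs-|y|_\gs\bigr)=\tfrac1t\bigl(|x^{2t}|_P-|x^{t}|_P\bigr)$.

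It remains to treat the canonical presentation and to locate the genuine difficulty. For the canonical presentation I would write down the classical change of generators carrying $\prod_{i=1}^{g}[a_{2i-1},a_{2i}]$ to the symmetric relator, read off the induced embedding $\beta_P$, and compute directly that $H_P\le Q$ has exponent $2g$, giving $t=2g\mid 4g$; the symmetric case itself is $H_\gs=1$, $t=1$, recovering Theorem~\ref{main thm1 (original main thm2)}. The main obstacle is the structural input invoked above: constructing the graph isomorphism $\mathrm{Cay}(G,P)\cong\mathcal{T}$ canonically and, above all, proving that $\beta_\gs(G)$ is \emph{normal} in $\A^{+}$ with quotient of order $4g$. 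This is precisely the content that makes the symmetric formulae exact and that forces the period $t$ to divide $4g$; once it is in place, everything else is formal bookkeeping with the displacement function $d_{\mathcal T}$. This is where the bulk of Section~\ref{sect from sym. pre. to any pre.} must do its work.
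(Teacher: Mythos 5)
Your route is genuinely different from the paper's, and the formal part of it is sound: once you know that $t$-th powers of $\beta_P$-elements land in $\beta_\gs(G)$, the displacement identity $|x^{tk}|_P=|y^k|_\gs$ and an application of Theorem \ref{main thm1 (original main thm2)} to $y\neq 1$ give items (1)--(3) exactly as you write, and the divisibility $t\mid 4g$ follows from $\exp(H_P)\mid |Q|$. It is worth recording how your picture matches the paper's, because the paper never introduces the ambient group $\A^{+}$ at all: it builds the graph isomorphism from the Floyd--Plotnick cyclic orders and tracks, letter by letter, a rotation discrepancy $r_{k}=\sum_{j\le k}\mathcal{O}(\overline{X})_j+2gk\in\Z_{4g}$ (Definition \ref{def graph-iso to word-iso}, Eqs.\ (\ref{eq for r_k-1}), (\ref{eq. specific h})). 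That discrepancy is precisely the image of $\beta_P(x)$ in your quotient $Q\cong\Z_{4g}$, and the hypothesis $\sum_k\mathcal{O}(\overline{X})_k=2gn$ of Proposition \ref{prop property of graph isomorphism} is precisely ``trivial image in $Q$''. So the paper proves only the weaker statement it needs -- that $x^{t}$ transfers into the symmetric copy, with $t=4g/\gcd(2g,p)$ -- by elementary bookkeeping, and never proves or uses normality. Your approach is more conceptual and identifies the smallest $t$ for which the transfer works, but it requires a strictly stronger structural input.

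That input is where the gap is: you assert, and explicitly defer (``Granting this\dots''), that $\beta_\gs(G)$ is normal in $\A^{+}$ with quotient of order $4g$; without it your $Q$ is not a group and no uniform $t$ dividing $4g$ comes out (pigeonhole only gives an exponent $t_x\le 4g$ depending on $x$). The claim is true and fillable: the index is $4g$ by comparing the area $4\pi(g-1)$ of the $4g$-gon with the area $\pi(g-1)/g$ of a fundamental domain of the $(2,4g,4g)$ rotation group, and normality holds because the shift $c_1\mapsto c_2\mapsto\cdots\mapsto c_{2g}\mapsto c_1^{-1}\mapsto\cdots\mapsto c_{2g}^{-1}\mapsto c_1$ carries the symmetric relator to a cyclic permutation of itself, so the order-$4g$ rotation of the tessellation conjugates the side-pairing generators of $\beta_\gs(G)$ to one another; then $\A^{+}=\langle\rho\rangle\ltimes\beta_\gs(G)$ by the index count and torsion-freeness. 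Until that is written out, the proof is incomplete at its central point. Separately, your parenthetical computation for the canonical presentation is wrong for odd $g$: the images of the canonical generators in $Q\cong\Z_{4g}$ are $2g\pm 2$, which generate a subgroup of order $2g$ when $g$ is even but of order $g$ when $g$ is odd (for $g=3$: $\langle 4,8\rangle=\langle 4\rangle\cong\Z_3$ inside $\Z_{12}$), so $\exp(H_{P_c})=g$ there. This is harmless -- if $t_0$ works then so does any multiple of $t_0$ dividing $4g$, since $\beta_P(x)^{mt_0}=\beta_\gs(y^m)$ -- but with your definition $t:=\exp(H_P)$ you must add that remark before concluding that $t$ ``can be taken as $2g$'' in the canonical case.
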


Finally, we present applications concerning the computation of some growth rates of surface groups. By using Lemma \ref{lem for Xw types}, we provide a rough estimate of the standard growth rate, see Proposition \ref{exam standard growth}. By Theorem \ref{main thm3 conj. class}, we demonstrate the equality of the standard growth rate, the conjugacy growth rate and the primitive conjugacy growth rate of a surface group, see Proposition \ref{prop for primitive conjugacy}, which has already been studied \cite{AC17,GY22}.

This paper is organized as follows. In Section \ref{sect D-reduced and S-reduced}, we first introduce some basic definitions and notations, especially the Gr\"{o}bner-Shirshov basis $D$ and the rewriting system $S$, and then prove that they are equivalent. In Section \ref{sect cyclically irreducible words}, to prepare for the subsequent sections, we demonstrate some certain cyclically irreducible words. In Section \ref{sect proof of main thms}, we prove the key Theorem \ref{thm normal forms of power elements (original main thm1)} and Theorem \ref{main thm1 (original main thm2)}. Considering the extensive discussions in the proof of Proposition \ref{prop classification of LLFR of X^2}, we first introduce the ideas and procedures of our proof in Section \ref{sect. proof of key Proposition}, and then place the detailed proof in Appendix \ref{sect appendix for some detailed proofs of props}, which is through a lengthy process of case-by-case analysis. In Section \ref{sect classification of reductions}, we give a complete classification of reductions. For the sake of completeness and readability, we list them in Appendix \ref{sect appendix tables for reductions}. In Section \ref{sect normal form of conjugacy classes}, we mainly discuss the normal forms of conjugacy classes, present several algorithms and prove Corollary \ref{cor conj-power algo.}, and analyze the differences between our conjugation algorithm and Dehn's conjugation algorithm. Then in Section \ref{sect from sym. pre. to any pre.}, we generalize our formulae from the symmetric presentation to any minimal geometric presentation and prove Theorem \ref{main thm4 word length formulae more rough}. Finally in Section \ref{sect remark}, we show an application on calculating some growths in a surface group.

\section{Preliminary }\label{sect D-reduced and S-reduced}
Throughout this paper, unless it is specifically stated otherwise, $G:=\pi_1(\Sigma_g) (g\geq 2)$ is a surface group with the following symmetric presentation 
\begin{equation*}
		G=\left\langle c_1,\dots, c_{2g}\mid c_1\cdots c_{2g}c_1^{-1}\cdots c_{2g}^{-1}\right\rangle.
	\end{equation*}	 
We first introduce some notations:

\begin{longtable}{ll}
%	\centering
%
%\renewcommand\arraystretch{2}
	%\begin{tabular}{ll}
\hline
%\setstretch{28}
		Notation & Explanation \\
		\hline
		$\overline{abc\cdots xyz},\overline{X}$ & words \\
		${abc\cdots xyz}, X$& group elements \\
		$\W(\gs)$ & set of all words in the alphabet $\gs^{\pm}$ \\
		$\overline{W'}\subset (\not\subset)\overline{W}$ &$\overline{W'}$ is (not) a subword of the word $\overline{W}$ \\
		$b_i\in(\notin) \overline{W}$ & $\overline{W}$ (does not) contains the letter $b_i$\\
		$|\overline{X}|_\gs,|X|_\gs$ & word lengths of $\overline{X}$ and $X$\\
        $\swl_\gs(x), \swl(x)$ &  stable word length / translation number of $x$ with respect to $\gs$\\
		$\nf(x), \nf([x]), \nf(W)$ & normal forms of the element $x$, the conjugacy class $[x]$ and the word $\overline{W}$\\
        $\ciw(x), \ciw(X)$ &  cyclically irreducible cores of $x$ and $X$\\
        $\rsp(\overline{W_1},\overline{W_2})$ & reducing-subword pair of $\overline{W_1}$ and $\overline{W_2}$\\
		$\RR$ & set of all cyclic permutations of the relator $c_1\cdots c_{2g}c_1^{-1}\cdots c_{2g}^{-1}$ and its inverse \\
		$\overline{W_1}\xrightarrow{D_{(i)}, ~S_{(i)}}\overline{W_2}$ &
        $\overline{W_1}$ is reduced to $\overline{W_2}$ by $D_{(i)}, ~S_{(i)}$\\
        %$\overline{W_1}\xrightarrow{S_{(i)}}\overline{W_2}$ & the word $\overline{W_1}$ is reduced to $\overline{W_2}$ by $S_{(i)}$\\
		$\llfr$ & locally longest fractional relator \\
		\hline
	%\end{tabular}
\end{longtable}

Now we give basic definitions and then present our main tool: the Gr\"{o}bner-Shirshov basis. 

\subsection{Basic definitions}
Let $\gs=\{c_1,\dots,c_{2g} \}$ be a generating set, and $\gs^\pm=\{c_1^\pm,\dots,c_{2g}^\pm \}$. Then each element of $G$ can be represented by a finite word $x_1\cdots x_n$, where $x_i\in \gs^{\pm}$. To avoid ambiguity, we overline every word $\overline{x_1\cdots x_n}$. For convenience, if $\overline{W},\overline{y_1\cdots y_n}$ are words, we always use the general ones $W,y_1\cdots y_n$ to denote the corresponding elements in $G$. We denote the set of all the words in the alphabet $\gs^{\pm}$ by
	$$\W(\gs):=\{\overline{x_1\cdots x_{n}}\mid x_i\in \gs^{\pm},n\geq 0\}.$$
Especially, if $n=0$, we call it an empty word and denote it by $1$. If $\overline{W'}$ is a subword of $\overline{W}$, we denote this by $\overline{W'}\subset\overline{W}$. Similarly, if a letter $b_i\in \gs^{\pm}$ appears in $\overline{W}$, we denote this by $b_i\in \overline{W}$.

  For a better explanation, in a word, we always use lowercase letters to represent letters in the alphabet $\gs^\pm$ and use uppercase letters to represent subwords. For instance, in the word $\overline{x_1x_2W_1y_1Vzs}$, we have $x_1,x_2,y_1,z,s\in\gs^\pm$ and $\overline{W_1},\overline{V}\in\W(\gs)$. 

For a word $\overline{W}=\overline{x_1\cdots x_n}\in \W(\gs)$, we denote the length of $\overline{W}$ by $|\overline{W}|_\gs=n$. The \emph{(word) length} of an element $x\in G$ with respect to the generating set $\gs$ is defined as
	$$|x|_\gs:=\min\{n\mid x=x_1\cdots x_n\in G, ~x_i\in \gs^\pm\}.$$
When the generating set is clear, we always omit the subscript $\cdot_\gs$.

Note that an element may be represented  by several different words with the same minimal length. For example, in the surface group $G$ with the symmetric presentation (\ref{symmetric presentation}), two distinct words $\overline{c_1\cdots c_{2g}}\neq \overline{c_{2g} \cdots c_1}\in\W(\gs)$ have the same minimal length but represent the same element $$x=c_1\cdots c_{2g}=c_{2g} \cdots c_1\in G.$$ Now, we introduce a total order $\succ$ on $ \W(\gs) $, which help us get the unique minimal presentation for every element in $G$. 

\begin{defn}[Length-lexicographical Order]\label{def order in word set}
Given an order $\succ$ on the alphabet $\gs^{\pm}$ as follows:
\begin{equation*}
		c_{2g}^{-1}\succ \cdots \succ c_2^{-1}\succ c_1^{-1}\succ c_1\succ c_2\cdots\succ  c_{2g}.
	\end{equation*}
Then a \emph{length-lexicographical order} on $\W(\gs)$ and $\W(\gs)\times \W(\gs)$ is defined as follows:
\begin{equation*}
\begin{array}{ccccl}
 \overline{X} &\succ& \overline{Y} &\Longleftrightarrow&\left\{\begin{array}{ll}|\overline{X}|>|\overline{Y}|,~\mathrm{or}~  \\ |\overline{X}|=|\overline{Y}| ~\mathrm{with}~ 
 \overline{X}=\overline{AxB}, ~\overline{Y}=\overline{AyC}
~\mathrm{for}~  x \succ y\in \gs^{\pm}\end{array}\right.,\\[10pt]
(\overline{X_1},\overline{X_2})&\succ& (\overline{Y_1},\overline{Y_2})&\Longleftrightarrow& \overline{X_1}\succ \overline{Y_1}, \mbox{or} ~\overline{X_1}=\overline{Y_1}~\mbox{with}~\overline{X_2}\succ\overline{Y_2}.\end{array}
\end{equation*}
Moreover, the related notations ``$\prec, ~\preceq, ~\succeq$'' are defined conventionally. 
\end{defn}

Using the length-lexicographical order, we have:

\begin{defn}[Normal Form and Irreducible Word]\label{normal form}
Let $x\in G$. The \emph{normal form} $\nf(x)$ of $x$ is defined to be the minimal word representing $x$:
$$\nf(x):=\min\{\overline{X}\in \W(\gs)\mid X=x\in G\}.$$
A word $\overline{X}\in \W(\gs)$ is called \emph{irreducible} if $\overline{X}= \nf(X)$. Otherwise, $\overline{X}$ is called \emph{reducible}. Furthermore, $\overline{X}=\overline{x_1\cdots x_n}~(x_i\in\gs^\pm)$ is called \emph{cyclically irreducible} if all of its cyclic permutations $\overline{x_i\cdots x_nx_1\cdots x_{i-1}}$ ($1\leq i\leq n$) are irreducible. The \emph{normal form $\nf([x])$ of the conjugacy class $[x]$} is defined to be the minimal word in the conjugacy class $[x]$ of $x$:
    $$\nf([x]):=\min\{\overline{X}\in\W(\gs)\mid X ~\mbox{is conjugate to} ~x\in G\}.$$

\end{defn}

\subsection{Gr\"{o}bner-Shirshov basis}

Since we have defined the normal form of each element in $G$, we need a criterion to decide whether a word is a normal form. Fortunately, we have: 

\begin{thm}[Gu-Zhao, \cite{GZ22}]\label{thm GZ}
	For the surface group  $\pi_1(\Sigma_g)$ with the symmetric presentation (\ref{symmetric presentation}) and the length-lexicographical order (\ref{order of generators}), there is a Gr\"{o}bner-Shirshov basis $D$, consisting of the following:
	\begin{enumerate}
		\item $D_{(1,j,t)}=\overline{c_j(c_{j-1}\cdots c_1c_{2g}^{-1}\cdots c_{j+1}^{-1})^tc_{j}^{-1}} - \overline{(c_{j+1}^{-1}\cdots c_{2g}^{-1}c_1\cdots c_{j-1})^t}\mbox{ for }j=2,\dots,2g\mbox{ and }t=1,2,\dots$;
		\item $ D_{(2,j,t)}=\overline{c_j(c_{j+1}\cdots c_{2g}c_{1}^{-1}\cdots c_{j-1}^{-1})^tc_{j}^{-1}} - \overline{(c_{j-1}^{-1}\cdots c_{1}^{-1}c_{2g}\cdots c_{j+1})^t}\mbox{ for }j=2,\dots,2g\mbox{ and }t=1,2,\dots$;
		\item $D_{(3)}=\overline{c_{2g}^{-1}\cdots c_{1}^{-1}}-\overline{c_1^{-1}\cdots c_{2g}^{-1}}$; 
		\item $D_{(4)}=\overline{c_{1}\cdots c_{2g}}-\overline{c_{2g}\cdots c_{1}}$; 
		\item $D_{(5,i)}=\overline{c_i^{-1}\cdots c_{2g}^{-1}c_1\cdots c_{i-1}}-\overline{c_{i-1}\cdots c_{1}c_{2g}^{-1}\cdots c_i^{-1}}\mbox{ for }i=2,\dots,2g$;
		\item $D_{(6,i)}=\overline{c_{i-1}^{-1}\cdots c_{1}^{-1}c_{2g}\cdots c_{i}}-\overline{c_{i}\cdots c_{2g}c_{1}^{-1}\cdots c_{i-1}^{-1}}\mbox{ for }i=2,\dots,2g$;
		\item $D_{(7,i)}=\overline{c_{i}^{-1}c_i}-1\mbox{ for }i=1,\dots,2g$;
		\item $D_{(8,i)}=\overline{c_ic_{i}^{-1}}-1\mbox{ for }i=1,\dots,2g$.
	\end{enumerate}
\end{thm}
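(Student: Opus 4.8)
The plan is to apply the Composition--Diamond Lemma (the Shirshov--Buchberger criterion) in the free associative algebra $k\langle \gs^{\pm}\rangle$ equipped with the length-lexicographical order (\ref{order of generators}). Recall that a set of monic differences $\overline{W_1}-\overline{W_2}$, each written so that $\overline{W_1}\succ\overline{W_2}$, is a Gr\"obner--Shirshov basis exactly when two conditions hold: the set generates the same two-sided ideal as the defining relations, and every composition of two of its elements is trivial modulo the set. Accordingly I would split the argument into an \emph{ideal-equivalence} step and a \emph{composition} step.

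For the ideal-equivalence step I would check that $D$ and the presentation (\ref{symmetric presentation}) cut out the same quotient. The rules $D_{(7,i)}$ and $D_{(8,i)}$ are precisely the free-reduction relations $c_ic_i^{-1}=1=c_i^{-1}c_i$. Each of the remaining relations is a consequence of the single relator $r=c_1\cdots c_{2g}c_1^{-1}\cdots c_{2g}^{-1}$: the rule $D_{(4)}$ is the identity $c_1\cdots c_{2g}=c_{2g}\cdots c_1$ read off directly from $r=1$, the rule $D_{(3)}$ is its inverse, and the conjugation-type families $D_{(1,j,t)},D_{(2,j,t)}$ together with the cyclic families $D_{(5,i)},D_{(6,i)}$ arise by conjugating and cyclically rotating $r$, the exponent $t$ coming from iterating the basic $t=1$ instance. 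Conversely $r=1$ is recovered from $D$, so the two ideals agree.

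The heart of the proof is the composition step. Here I would first record that the leading word of each relation is its longer (or lexicographically larger) side $\overline{W_1}$, and then enumerate all overlaps: the \emph{compositions of intersection}, where a suffix of one leading word coincides with a prefix of another, and the \emph{compositions of inclusion}, where one leading word is a factor of another. For each overlap word $w$ I would form the composition and reduce it along both available paths, checking that the two results coincide, so that the composition vanishes modulo $D$. The genuine difficulty is that $D_{(1,j,t)}$ and $D_{(2,j,t)}$ are \emph{infinite} families indexed by $t=1,2,\dots$, so a priori there are infinitely many overlaps to examine. The organizing observation is that every leading word is a block-power built from cyclic segments of $r$, so an overlap amounts to reading the cyclic word $r$ from two different starting positions; I expect this to force the high-$t$ ambiguities either to reduce in a pattern independent of $t$ or to telescope down to the $t=1$ case, collapsing the infinitely many checks into finitely many.

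I anticipate that the main obstacle is precisely the combinatorial bulk and bookkeeping of this composition analysis: the leading words of $D_{(1)}$ through $D_{(6)}$ are long and highly structured, their overlaps interact both with one another and with the free-reduction rules $D_{(7)},D_{(8)}$, and verifying that each composition reduces to $0$ demands careful tracking of indices modulo $2g$ and of the parameter $t$. I would therefore lean heavily on the cyclic-permutation symmetry of the relator to identify equivalent overlap types, and thereby keep the case analysis finite and manageable.
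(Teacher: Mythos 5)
This statement is not proved in the paper at all: Theorem \ref{thm GZ} is quoted verbatim from Gu--Zhao \cite{GZ22}, and the paper simply points the reader to \cite[Sec. 3]{GZ22} and \cite[Sec. 2]{BV06}. So there is no internal proof to compare against; the only question is whether your blind sketch would stand on its own as a proof of the cited result.

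As it stands, it would not. Your overall strategy --- the Composition--Diamond Lemma, split into an ideal-equivalence step and a composition step --- is the correct and essentially unique framework (and is surely what \cite{GZ22} does), and your ideal-equivalence argument is fine: the $D_{(7)},D_{(8)}$ rules are free reduction, $D_{(3)}$--$D_{(6)}$ and the $t=1$ instances of $D_{(1)},D_{(2)}$ are rearrangements/conjugates of the relator, and conversely $D_{(4)}$ recovers $r=1$. But the composition step, which is the entire mathematical content of the theorem, is only described, never executed: you enumerate no overlaps, reduce no ambiguities, and verify nothing. Worse, the heuristic you lean on --- that the high-$t$ ambiguities should ``telescope down to the $t=1$ case'' --- points in the wrong direction. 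The basis is infinite precisely because compositions among the low-$t$ elements (and between them and $D_{(3)}$--$D_{(6)}$) do \emph{not} reduce to zero; the $t\geq 2$ relations are exactly what the Shirshov completion procedure must adjoin to kill those nontrivial compositions. So the infinitude cannot be collapsed away; what is actually needed is a uniform-in-$t$ computation showing that once \emph{all} $t$ are included, every intersection and inclusion composition (tracked with indices mod $2g$) is trivial modulo $D$. Without that computation, or at least a precise induction on $t$ replacing the ``I expect'' step, the proposal is a plan rather than a proof.
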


 To make clear what this theorem does, please refer to \cite[Sec. 3]{GZ22} and \cite[Sec. 2]{BV06}. For more details on Gr\"{o}bner-Shirshov basis, see \cite{Sh99,Uf98}.

Here we can interpret the $D$-basis as a rewriting system. Every leading word of a polynomial in $D$-basis is called a \emph{forbidden word}. A word that does not contain any forbidden subword is called \emph{$D$-reduced}. Given a word $\overline{X}\in \W(\gs)$, we can always obtain its normal form $\nf(X)$ by rewriting it: replacing its subword of the form as the leading word of $D_{(i)}$ with the second word of $D_{(i)}$ until it becomes irreducible. More precisely, if $\overline{ABC}$ is a reducible word, where $\overline{B}$ is the leading term of $D_{(i)}=\overline{B}-\overline{E}$, then we have $$\overline{ABC}\xrightarrow{\mbox{replace }\overline{B}\mbox{ with }\overline{E}}\overline{AEC},$$ that is, $D_{(i)}$ reduces $\overline{ABC}$ to $\overline{AEC}$. Denote this process as 
$$\overline{ABC}\xrightarrow{D_{(i)}}\overline{AEC}.$$
Note that the procedure of reducing by $D$ is a descending order process, and the normal form $\nf(x)$ is the unique minimal word representing $x\in G$. So, if a word $\overline{W}\in \W(\gs)$ is not $D$-reduced, then $\overline{W}\neq \nf(W)$. Conversely, if $\overline{W}\neq \nf(W)$, then $\overline{W}$ contains some leading terms of polynomials in the Gr\"{o}bner-Shirshov basis $D$ as a subword. In conclusion, we have:

\begin{lem}\label{lem D-reduced = normal form}
A word $\overline{W}\in \W(\gs)$ is $D$-reduced if and only if  $\overline{W}$ is a normal form, i.e., $\overline{W}=\nf(W)$.
\end{lem}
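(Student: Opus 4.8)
The plan is to split the biconditional into its two implications and reduce both to two underlying facts: that every single reduction step strictly lowers a word in the length-lexicographical order, and that $D$ being a Gr\"obner-Shirshov basis (Theorem \ref{thm GZ}) forces each element of $G$ to have a unique $D$-reduced representative. The first fact handles the forward implication almost for free, and the uniqueness supplied by the Gr\"obner-Shirshov property handles the reverse.

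First I would record the monotonicity of reduction. Writing each polynomial as $D_{(i)} = \overline{B_i} - \overline{E_i}$, where $\overline{B_i}$ is the leading (forbidden) word, a direct inspection of the eight families shows $\overline{B_i} \succ \overline{E_i}$ in every case: in the families $D_{(1)}, D_{(2)}, D_{(7)}, D_{(8)}$ the leading word is strictly longer, while in $D_{(3)}, D_{(4)}, D_{(5)}, D_{(6)}$ the two words have equal length but the leading word wins the lexicographic comparison under the order (\ref{order of generators}) (for instance, in $D_{(5,i)}$ and $D_{(6,i)}$ the leading word begins with an inverse generator, which dominates every positive generator). The order of Definition \ref{def order in word set} is compatible with concatenation on both sides, i.e. $\overline{B} \succ \overline{E}$ implies $\overline{ABC} \succ \overline{AEC}$ for all $\overline{A},\overline{C} \in \W(\gs)$: if the comparison $\overline{B} \succ \overline{E}$ is by length it persists after prepending $\overline{A}$ and appending $\overline{C}$, and if it is lexicographic then $\overline{A}$ is a common prefix and the first disagreeing letter still lies inside the $\overline{B}$ versus $\overline{E}$ block. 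Consequently, a single reduction $\overline{W} \xrightarrow{D_{(i)}} \overline{W'}$ always yields $\overline{W} \succ \overline{W'}$ while preserving the represented element $W = W' \in G$.

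With monotonicity in hand, the implication ``$\overline{W} = \nf(W) \Rightarrow \overline{W}$ is $D$-reduced'' follows by contraposition: if $\overline{W}$ is reducible it contains a forbidden subword, and the corresponding reduction produces a word $\overline{W'} \prec \overline{W}$ with $W' = W$, so $\overline{W}$ cannot be the minimal representative of $W$. In particular $\nf(W)$ is always $D$-reduced. For the reverse implication, let $\overline{W}$ be $D$-reduced. Then $\overline{W}$ and $\nf(W)$ are two $D$-reduced words representing the same element; since $D$ is a Gr\"obner-Shirshov basis, the associated rewriting system is confluent and terminating (Composition-Diamond Lemma), so each element of $G$ has exactly one $D$-reduced representative. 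Hence $\overline{W} = \nf(W)$, completing the equivalence.

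The only genuinely substantial input is the uniqueness of $D$-reduced representatives, but this is exactly the content of Theorem \ref{thm GZ} and need not be reproved here; the remaining work---verifying $\overline{B_i} \succ \overline{E_i}$ for each family and the two-sided compatibility of the order---is routine bookkeeping. The one point demanding care is that compatibility, since without it a reduction step could in principle fail to decrease the global order even though it decreases the replaced block; the length-first design of Definition \ref{def order in word set} is precisely what rules this out.
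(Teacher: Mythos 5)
Your proposal is correct and takes essentially the same route as the paper: the forward implication comes from the fact that each $D$-reduction strictly decreases a word in the length-lexicographical order (so a reducible word cannot be the minimal representative), and the reverse implication comes from the uniqueness of $D$-reduced representatives guaranteed by the Gr\"{o}bner-Shirshov property of Theorem \ref{thm GZ}. The paper presents this argument only as a brief discussion preceding the lemma, whereas you spell out the monotonicity check $\overline{B_i}\succ\overline{E_i}$ and the two-sided compatibility of the order explicitly, but the two ingredients and their roles are identical.
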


\subsection{\texorpdfstring{$S$-reduction}{S-reduction}} 
In this subsection, we shall define several kinds of reducible subwords and the corresponding operations based on Theorem \ref{thm GZ}, which facilitates the discussions in the subsequent sections. First, we need a definition of \emph{fractional relator}.

\begin{defn}[Fractional Relator and $\llfr$]\label{fractional relator}
For the symmetric presentation (\ref{symmetric presentation}), we denote the set of all cyclic permutations of the relator $c_1\cdots c_{2g}c_{1}^{-1}\cdots c_{2g}^{-1}$ and its inverse by
$$\RR:=\left\{\overline{d_{i}\cdots d_{4g}d_1\cdots d_{i-1}}, ~\overline{d_{i}\cdots d_{1}d_{4g}\cdots d_{i+1}}\mid i=1,\dots,4g\right\},$$
where $\overline{d_1\cdots d_{4g}}=\overline{c_1\cdots c_{2g}c_{1}^{-1}\cdots c_{2g}^{-1}}$.
For any $2\leq k\leq 4g$,	a word $\overline{b_1\cdots b_{k}}$  is called a $k$-\emph{fractional relator} if it is a subword of some $\overline R\in\RR$. Moreover,  if  a $k$-fractional relator $\overline{b_1\cdots b_{k}}$ is a subword of $$\overline{W}=\overline{w_1\cdots w_{m}b_1\cdots b_{k}w'_1\cdots w'_n}\in\W(\gs),$$  such that  neither $\overline{ w_{m}b_1\cdots b_{k}}$ nor $\overline{b_1\cdots b_{k}w'_1}$ is a $(k+1)$-fractional relator, then we call  $\overline{b_1\cdots b_k}$ a \emph{locally longest fractional relator} (\emph{LLFR}) of $\overline{W}$. Note that a $4g$-fractional relator is always an $\llfr$.
\end{defn}
We now recombine the Gr\"{o}bner-Shirshov basis $D$ and obtain four sets of polynomials. It is worth noting that the union of these four sets is not a basis but a generating set. 

\begin{defn}[$S$-reduced]\label{def S-reduced}
	Let $G,\RR$ be defined as before, and let %We have the following four kinds of polynomial sets:
	\begin{enumerate}
		\item $S_{(1)}=\{\overline{b_1b_1^{-1}}-1\mid \overline{b_1\cdots b_{4g}}\in \RR\}$;
		\item $S_{(2,k)}=\{\overline{b_1\cdots b_{k}}-\overline{b^{-1}_{4g}\cdots b^{-1}_{k+1}}\mid \overline{b_1\cdots b_{4g}}\in \RR\}, ~2g+1\leq k\leq 4g$; 
		\item $S_{(3,t)}=\{\overline{b_1(b_2\cdots b_{2g})^tb_{2g+1}}-\overline{(b_{2g}\cdots b_{2})^t}\mid \overline{b_1\cdots b_{4g}}\in \RR\}, t\geq 2$;
		\item $S_{(4,t)}=\{\overline{b_1(b_2\cdots b_{2g})^t}-\overline{(b_{2g}\cdots b_{2})^tb_1}; ~\overline{(b_1\cdots b_{2g-1})^tb_{2g}}-\overline{b_{2g}(b_{2g-1}\cdots b_{1})^t}\mid \overline{b_1\cdots b_{4g}}\in \RR, b_1\succ b_{2g}\},t\geq 1$.
	\end{enumerate}
We will denote $S_{(i,k)}$ by $S_{(i)}$ ($i=2,3,4$) if there is no need to emphasize the second subscript $k$.
We call a word $\overline{V}$\emph{ of type} $S_{(i)}$ if it is the leading term of a polynomial in the set $S_{(i)}$. Similarly, we can also interpret this $S$-set (i.e. $\cup S_{(i)}$) as a rewriting system to define \emph{$S$-reduced} and $S_{(i)}$ reducing $\overline{W_1}$ to $\overline{W_2}$:
$$\overline{W_1}\xrightarrow{S_{(i)}}\overline{W_2}.$$
\end{defn}

Recall that a word $\overline{X}=\overline{x_1x_2\cdots x_n}\in\W(\gs)$ is called \emph{freely reduced} if $x_i\neq x^{-1}_{i+1}$ for all $i=1,\ldots, n-1$, and is called \emph{cyclically freely reduced} if in addition $x_1\neq x_n^{-1}$. Therefore, we have:

\begin{lem}
A word is freely reduced if and only if it contains no subword of type $S_{(1)}$.
\end{lem}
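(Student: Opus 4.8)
The plan is to unwind the two definitions and verify that the length-two subwords forbidden by $S_{(1)}$ are exactly the cancelling pairs $\overline{bb^{-1}}$ that appear in the definition of free reducedness. First I would record what a subword of type $S_{(1)}$ is: by Definition~\ref{def S-reduced} it is the leading term of a polynomial in $S_{(1)}=\{\overline{b_1b_1^{-1}}-1\mid \overline{b_1\cdots b_{4g}}\in\RR\}$, hence a word $\overline{b_1b_1^{-1}}$ of length $2$ in which $b_1$ ranges over the \emph{leading letters} of the elements of $\RR$.

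The only substantive step is to show that these leading letters exhaust $\gs^{\pm}$. By Definition~\ref{fractional relator}, $\RR$ consists of all cyclic permutations of the relator $\overline{d_1\cdots d_{4g}}=\overline{c_1\cdots c_{2g}c_1^{-1}\cdots c_{2g}^{-1}}$ together with those of its inverse. Read as a finite sequence of letters, the relator already displays each of $c_1,\dots,c_{2g},c_1^{-1},\dots,c_{2g}^{-1}$, so every letter of $\gs^{\pm}$ occurs somewhere in $\overline{d_1\cdots d_{4g}}$. Since any letter occurring in a word occurs as the first letter of a suitable cyclic permutation of it (if $d_i=b$, then $\overline{d_i\cdots d_{4g}d_1\cdots d_{i-1}}\in\RR$ has leading letter $b$), every $b\in\gs^{\pm}$ is the leading letter of some element of $\RR$. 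Consequently the collection of subwords of type $S_{(1)}$ is precisely $\{\overline{bb^{-1}}\mid b\in\gs^{\pm}\}$.

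Finally I would compare this with the definition of freely reduced recalled just before the statement. A word $\overline{x_1\cdots x_n}$ fails to be freely reduced exactly when $x_i=x_{i+1}^{-1}$ for some $i$, that is, when some consecutive pair $\overline{x_ix_{i+1}}$ equals $\overline{bb^{-1}}$ for $b=x_i\in\gs^{\pm}$; by the previous paragraph this is exactly the assertion that $\overline{x_1\cdots x_n}$ contains a subword of type $S_{(1)}$. Both implications of the equivalence then follow immediately. I expect no genuine obstacle here: the single point that actually requires checking is that $\RR$ is rich enough for its leading letters to cover all of $\gs^{\pm}$, which holds because the lone relator already uses every generator together with its inverse.
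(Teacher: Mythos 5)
Your proof is correct and follows the same (purely definitional) route the paper intends: the paper states this lemma without proof as an immediate consequence of the definitions, and you have simply made explicit the one point worth checking, namely that every letter of $\gs^{\pm}$ occurs as the leading letter of some element of $\RR$, so the subwords of type $S_{(1)}$ are exactly the cancelling pairs $\overline{bb^{-1}}$ with $b\in\gs^{\pm}$.
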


Furthermore,  we shall prove that $S$-set can also be used to determine whether a word is $D$-reduced or not, i.e., $S$-reduced is equivalent to $D$-reduced. However, we shall see that $S$-set is not minimal and $D$-basis is actually a proper subset of $S$-set.

\begin{lem}\label{lem four operations}
Let $\overline{W}\in \W(\gs)$ be a word in the presentation (\ref{symmetric presentation}) of  a surface group $G$. Then the following are equivalent:
\begin{enumerate}
    \item $\overline{W}$ is a normal form, i.e., $\overline{W}$ is irreducible;
    \item $\overline{W}$ is $D$-reduced;
    \item $\overline{W}$ does not contain a subword of types $S_{(1)}, S_{(2,2g+1)}, S_{(3,t)}(t\geq 2)\mbox{ or }S_{(4,1)}$;
    %\item $\overline{W}$ doesn't contain a subword of types $S_{(1)}, S_{(2, k)}  (2g+1\leq k\leq 4g), S_{(3,t)}(t\geq 2)\mbox{ or }S_{(4,t)} (t\geq 1)$, that is, $\overline{W}$ doesn't contain a subword of types $S_{(i)}(i=1,2,3,4)$;
    \item $\overline{W}$ is $S$-reduced.
\end{enumerate}
\end{lem}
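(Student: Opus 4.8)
The plan is to establish the chain of equivalences $(1)\Leftrightarrow(2)\Rightarrow(3)\Rightarrow(4)\Rightarrow(2)$, leaning on results already in place. The equivalence $(1)\Leftrightarrow(2)$ is exactly Lemma \ref{lem D-reduced = normal form}, so nothing new is needed there. The substantive content lies in showing that the four distinguished subword types in item (3)—namely $S_{(1)},S_{(2,2g+1)},S_{(3,t)}$ for $t\geq 2$, and $S_{(4,1)}$—suffice to detect $D$-reducibility, even though they form a strictly smaller collection than the full $S$-set and a differently-organized collection than the $D$-basis. The cleanest route is to prove the two implications $(2)\Rightarrow(3)$ and $(3)\Rightarrow(2)$ directly in terms of forbidden subwords, and then close the loop to $(4)$.

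First I would prove the contrapositive of $(2)\Rightarrow(3)$: assume $\overline{W}$ contains a subword of one of the four listed types and show it contains a leading term of some $D_{(i)}$, hence is not $D$-reduced. This amounts to inspecting each family against the eight $D$-polynomials in Theorem \ref{thm GZ}. A subword of type $S_{(1)}$, being some $\overline{b_1b_1^{-1}}$ with $b_1\in\gs^\pm$, is visibly a leading term of $D_{(7,i)}$ or $D_{(8,i)}$. For $S_{(2,2g+1)}$, $S_{(3,t)}$, and $S_{(4,1)}$, I would translate the abstract ``fractional relator'' notation $\overline{b_1\cdots b_{4g}}\in\RR$ back into the cyclic-permutation form $\overline{d_i\cdots d_{4g}d_1\cdots d_{i-1}}$ of the relator and its inverse, and match the resulting explicit words against the leading terms $D_{(1,j,t)}$ through $D_{(6,i)}$; the indexing is bookkeeping, but the match should be exact by the way the $S$-families were distilled from $D$.

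The harder direction is $(3)\Rightarrow(2)$, equivalently: if $\overline{W}$ contains some $D$-leading term as a subword, then it already contains one of the four types in (3). Here the point is that the $D$-basis contains infinitely many leading terms (the families with a parameter $t$), whereas (3) lists only the smallest representatives ($k=2g+1$ in $S_{(2)}$, $t\geq 2$ in $S_{(3)}$, $t=1$ in $S_{(4)}$). I expect this to be the main obstacle, and the key observation to exploit is the notion of \emph{locally longest fractional relator} from Definition \ref{fractional relator}: any $D$-leading term is a fractional-relator-type word, and any occurrence of a longer forbidden word forces the presence of a shorter one of a listed type nested inside it. Concretely, I would argue that a $k$-fractional relator with $k>2g$ contains a $(2g{+}1)$-fractional relator (type $S_{(2,2g+1)}$), that the ``power'' leading terms $D_{(1,j,t)},D_{(2,j,t)}$ with $t\geq 2$ contain a subword of type $S_{(3,t)}$, and that those with $t=1$ together with $D_{(5,i)},D_{(6,i)}$ reduce to an $S_{(4,1)}$ subword after accounting for the generator-order hypothesis $b_1\succ b_{2g}$. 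Finally, $(3)\Leftrightarrow(4)$ follows because $S$-reducedness means containing no leading term of any polynomial in $\cup S_{(i)}$, and the same nesting argument shows the full $S$-set detects exactly what its minimal sub-collection in (3) detects; this also yields the remark that $D\subseteq S$ as a proper subset. I would present the case analysis compactly, relegating the routine index-matching to a short table-style enumeration rather than writing every case in prose.
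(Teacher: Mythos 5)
Your proposal is correct in substance, and its central case analysis---matching the leading terms of the Gr\"obner--Shirshov basis $D$ of Theorem \ref{thm GZ} against the four special types $S_{(1)}$, $S_{(2,2g+1)}$, $S_{(3,t)}$ $(t\geq 2)$, $S_{(4,1)}$---is exactly the content of the paper's proof (its implication (4) $\Rightarrow$ (2), summarized in Table \ref{tab: the correspondence}). Where you genuinely differ is in the logical organization, and the difference has a cost. The paper runs the one-way cycle (1) $\Leftrightarrow$ (2), (2) $\Rightarrow$ (3) $\Rightarrow$ (4) $\Rightarrow$ (2), and gets the middle two implications essentially for free: for (2) $\Rightarrow$ (3), each of the four listed patterns is the leading term of a relation of $G$ that strictly decreases the length-lexicographical order, so a word containing one is not a normal form and hence, by Lemma \ref{lem D-reduced = normal form}, not $D$-reduced; and (3) $\Rightarrow$ (4) is the nesting observation that every $S$-leading term contains one of the four patterns (e.g.\ $S_{(2,k)}$ contains an $S_{(2,2g+1)}$ prefix, $S_{(4,t)}$ contains an $S_{(4,1)}$ prefix). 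Thus only \emph{one} syntactic case analysis is ever needed. You instead propose to prove (2) $\Leftrightarrow$ (3) by two syntactic case analyses, the extra one being ``each of the four $S$-patterns contains a $D$-leading term.'' That direction does go through, but it is fiddlier than you suggest, and your claim that ``the match should be exact'' is false: for instance, the $(2g+1)$-fractional relator $\overline{c_1^{-1}c_2^{-1}\cdots c_{2g}^{-1}c_1}$ is not the leading term of any $D_{(i)}$; it only \emph{properly} contains the $D_{(5,2)}$-leading term $\overline{c_2^{-1}\cdots c_{2g}^{-1}c_1}$, and similar phenomena occur for $S_{(3,t)}$ words built from cyclic permutations starting in the ``inverse half'' of the relator. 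So you must settle for containment rather than equality and check all cyclic-permutation cases, where the paper's semantic argument via Lemma \ref{lem D-reduced = normal form} is one line. (Also, your assignment of the $t=1$ families $D_{(1,j,1)}, D_{(2,j,1)}$ to type $S_{(4,1)}$ is off; they are of type $S_{(2,2g+1)}$, while $D_{(3)}$--$D_{(6)}$ are the ones of type $S_{(4,1)}$.) Both routes are valid: the paper's buys economy by exploiting the normal-form interpretation, yours buys a purely combinatorial proof of (2) $\Leftrightarrow$ (3) at the price of a second, more delicate enumeration.
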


\begin{proof}
(1) $\Leftrightarrow$ (2), (2) $\Rightarrow$ (3) and (3) $\Rightarrow$ (4) hold according to Lemma \ref{lem D-reduced = normal form} and Definition \ref{def S-reduced}. We only need to prove  (4) $\Rightarrow$ (2), that is, if $\overline{W}$ does not contain a subword of types $S_{(i)}$~($1\leq i\leq 4$), then it does not contain a subword of type $D_{(k)}$ ( $1\leq k\leq 8$).
The proof is as follows.

Suppose $\overline{W}$ contains a subword $\overline{W'}$ of type $D_{(k)}$ for some $1\leq k\leq 8$. We will prove $\overline{W'}$ is of type $S_{(i)}$ for some $1\leq i\leq 4$, see Table \ref{tab: the correspondence}.
\begin{enumerate}
	\item[$1^\circ$] If $\overline{W'}$ is of type $D_{(1,j,t)}$, then $$\overline{W'}=\overline{c_j(c_{j-1}\cdots c_1c_{2g}^{-1}\cdots c_{j+1}^{-1})^tc_{j}^{-1}}~(j=2,3,\dots,2g;~t=1,2,\dots).$$
	Note that $\overline{c_j\cdots c_1c_{2g}^{-1}\cdots c_1^{-1}c_{2g}\cdots c_{j+1}}\in \RR$ ,  $c_j \prec c_{j-1}^{-1}$ and $c_{j-1}\prec c_{j}^{-1}$.  So $\overline{W'}$ is of type $S_{(2,2g+1)}$ (and hence of type $S_{(2)}$) if $t=1$, and $\overline{W'}$ is of type $S_{(3,t)}$ (and hence of type $S_{(3)}$)  if $t\geq 2$. If $\overline{W'}$ is of type $D_{(2,j,t)}$, then the argument is similar to the case $D_{(1,j,t)}$'s.
	 \item[$2^\circ$] If $\overline{W'}$ is of type $D_{(3)}$, then $$\overline{W'}=\overline{c_{2g}^{-1}\cdots c_{1}^{-1}}.$$ Since $\overline{c_{2g}^{-1}\cdots c_{1}^{-1}c_{2g}\cdots c_1}\in \RR$ and $c_{2g}^{-1}\succ c_{1}^{-1}$, we have that $\overline{W'}$ is of type $S_{(4,1)}$ (and thus of type $S_{(4)}$). If $\overline{W'}$ is of type $D_{(4)},D_{(5,i)}\mbox{ or }D_{(6,i)}$, then the argument is similar to the case $D_{(3)}$'s.
	 \item[$3^\circ$] If $\overline{W'}$ is of type $D_{(7,i)}$ or  $D_{(8,i)}$, then $\overline{W'}$ is of type $S_{(1)}$ clearly.
\end{enumerate}

\begin{table}[ht]
    \begin{center}
    \begin{tabular}{|c|c|c|c|c|c|c|c|c|}
    \hline
         & $D_{(1)}$  & $D_{(2)}$ & $D_{(3)}$ & $D_{(4)}$ & $D_{(5)}$ & $D_{(6)}$ & $D_{(7)}$ & $D_{(8)}$\\
         \hline
        $S_{(1)}$ &  &  &  &  &  &  & $D_{(7)}$ & $D_{(8)}$\\
         \hline
        $S_{(2)}$ & $D_{(1,j,1)}$ & $D_{(2,j,1)}$ &  &  &  &  &  & \\
         \hline
        $S_{(3)}$ & $D_{(1,j,t)},t\geq 2$ & $D_{(2,j,t)},t\geq 2$ &  &  &  &  &  & \\
         \hline
        $S_{(4)}$ &  &  &    $D_{(3)}$ & $D_{(4)}$ & $D_{(5)}$ & $D_{(6)}$ & & \\
         \hline
    \end{tabular}
\end{center}
\caption{The correspondence between $S$-type and $D$-type.}\label{tab: the correspondence}
\end{table}
In conclusion, we have finished the proof of ``(4) $\Rightarrow$ (2)'' and hence Lemma \ref{lem four operations} holds. 
\end{proof}

Since ``$D$-reduced'', ``$S$-reduced'' and ``irreducible'' are equivalent, ``cyclically $D$-reduced'', ``cyclically $S$-reduced'' and ``cyclically irreducible'' are also equivalent. We will always say ``irreducible'' and ``cyclically irreducible'' in the subsequent sections. Moreover, Gu and Zhao  \cite[Corollary 4.2]{GZ22} gave an equivalent condition of cyclically irreducible words as follows.

\begin{prop}[Gu-Zhao, \cite{GZ22}]\label{prop D cyclically irreducible= DD irreducible}
    A word $\overline{D}\in\W(\gs)$ is cyclically irreducible if and only if $\overline{D^2}$ is irreducible.
\end{prop}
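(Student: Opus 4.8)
The plan is to prove the equivalence in Proposition~\ref{prop D cyclically irreducible= DD irreducible} by relating cyclic irreducibility of $\overline{D}$ to the ordinary irreducibility of $\overline{D^2}$, using the characterization of irreducible words via forbidden subwords (Lemma~\ref{lem four operations}). Recall that $\overline{D}$ being cyclically irreducible should mean that every cyclic permutation of $\overline{D}$ is irreducible; equivalently, $\overline{D}$ contains no forbidden subword even when read around the cyclic boundary between $x_n$ and $x_1$. The natural strategy is to observe that the forbidden subwords (types $S_{(1)},S_{(2,2g+1)},S_{(3,t)}$ for $t\ge 2$, and $S_{(4,1)}$, by Lemma~\ref{lem four operations}) all have bounded length $2\le k\le 4g$, and that the word $\overline{D^2}$ is long enough to ``see'' every cyclic subword of $\overline{D}$ provided $|\overline{D}|\ge$ half the maximal forbidden length. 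This suggests splitting into a length bookkeeping argument plus a clean subword-matching argument.

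First I would prove the easy direction, $(\Rightarrow)$: if $\overline{D}$ is cyclically irreducible, then $\overline{D^2}$ is irreducible. The contrapositive is cleanest: suppose $\overline{D^2}=\overline{DD}$ contains a forbidden subword $\overline{W'}$, of one of the types listed in Lemma~\ref{lem four operations}. If $\overline{W'}$ lies entirely within either copy of $\overline{D}$, then $\overline{D}$ itself is reducible, contradicting that $\overline{D}$ (a cyclic permutation of itself) is irreducible. Otherwise $\overline{W'}$ straddles the junction, so $\overline{W'}=\overline{S T}$ where $\overline{S}$ is a suffix of the first $\overline{D}$ and $\overline{T}$ is a prefix of the second; then the cyclic permutation $\overline{T' S}$ (reading the word starting just after $\overline{S}$) contains $\overline{W'}$ as a subword, so some cyclic permutation of $\overline{D}$ is reducible, again contradicting cyclic irreducibility. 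The only subtlety is to ensure $\overline{W'}$ is not so long that it wraps past the second copy entirely; this is where I would invoke $|\overline{W'}|\le 4g$ together with the standing assumption that $\overline{D}$ is freely reduced and nonempty of sufficient length, handling short $\overline{D}$ as trivial or degenerate cases separately.

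For the harder direction, $(\Leftarrow)$, I would again argue the contrapositive: if $\overline{D}$ is \emph{not} cyclically irreducible, then some cyclic permutation $\overline{D}'=\overline{x_{i+1}\cdots x_n x_1\cdots x_i}$ is reducible, i.e.\ contains a forbidden subword $\overline{W'}$. The goal is to locate a corresponding forbidden subword inside $\overline{D^2}=\overline{x_1\cdots x_n x_1\cdots x_n}$. The key observation is that every length-$k$ cyclic subword of $\overline{D}$ (with $k\le n$) appears literally as an ordinary subword of $\overline{D^2}$, since $\overline{D^2}$ contains $\overline{x_{i+1}\cdots x_n x_1\cdots x_i}$ as a contiguous block whenever $k\le n$. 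Thus $\overline{W'}$ reappears verbatim in $\overline{D^2}$, making $\overline{D^2}$ reducible. I expect the \textbf{main obstacle} to be precisely the length bound: the forbidden subwords of type $S_{(3,t)}$ grow with $t$ and can be as long as $2gt+2$, which may exceed $n$ for small~$\overline{D}$, so a cyclic forbidden subword of $\overline{D}$ might be longer than one copy of $\overline{D}$ and hence not obviously sit inside $\overline{D^2}$. Resolving this requires showing that whenever such a long forbidden subword appears cyclically, a \emph{shorter} forbidden subword (e.g.\ an $S_{(2,2g+1)}$ or $S_{(1)}$ pattern, which are the ``minimal'' forbidden patterns) already appears, so that one may always reduce to the case $|\overline{W'}|\le 2g+1\le n$; alternatively, one may replace $\overline{D^2}$ by a sufficiently high power in an auxiliary lemma and then descend. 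I would settle this by a careful inspection of the four forbidden types, confirming that a genuinely cyclic reduction forces a short forbidden pattern across the junction, after which the verbatim-subword argument closes the proof.
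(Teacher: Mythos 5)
First, a point of comparison: the paper does not prove Proposition~\ref{prop D cyclically irreducible= DD irreducible} at all — it is quoted from Gu--Zhao \cite[Corollary 4.2]{GZ22} — so your proposal has to stand on its own, and it does not. The genuine gap is in your direction ``cyclically irreducible $\Rightarrow$ $\overline{D^2}$ irreducible.'' There you dispose of a junction-straddling forbidden subword $\overline{W'}$ by rotating $\overline{D}$, which requires $|\overline{W'}|\le n=|\overline{D}|$, and you propose to guarantee this via the bound $|\overline{W'}|\le 4g$ plus ``handling short $\overline{D}$ separately.'' That bound is false: by Lemma~\ref{lem four operations} the minimal forbidden patterns include the type $S_{(3,t)}$ words $\overline{b_1(b_2\cdots b_{2g})^tb_{2g+1}}$ of length $(2g-1)t+2$ for every $t\ge 2$ (already $t=3$ exceeds $4g$), so for \emph{every} $n$, not just small $n$, one must genuinely exclude a forbidden subword of $\overline{D^2}$ longer than $n$; this is exactly where the content of the proposition lies, and your sketch replaces it with an incorrect length bound and an unexecuted ``degenerate case.'' The missing argument is short but real: $\overline{D^2}$ is $n$-periodic (letters at positions $p$ and $p+n$ agree), so a forbidden subword $\overline{V}=\overline{v_1\cdots v_\ell}$ with $\ell>n$ would satisfy $v_1=v_{1+n}$; but in each minimal type the first letter never recurs — for $S_{(1)}$, $S_{(2,2g+1)}$, $S_{(4,1)}$ all letters of a fractional relator are pairwise distinct, and for $S_{(3,t)}$ the letter $b_1$ differs from $b_2,\dots,b_{2g}$ and from $b_{2g+1}=b_1^{-1}$ — a contradiction. (Alternatively, Lemma~\ref{lem some observations}(7) lets one excise the block between two equal letters at distance $n$ and induct down to a forbidden subword of length $\le n$.)

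You also mislocate the difficulty. Under the definition you yourself start from (every cyclic permutation of $\overline{D}$ is irreducible), the direction ``$\overline{D^2}$ irreducible $\Rightarrow$ cyclically irreducible'' has no length issue at all: a forbidden subword of a cyclic permutation has length $\le n$ by definition, and each cyclic permutation $\overline{x_{i+1}\cdots x_nx_1\cdots x_i}$ occurs verbatim as a contiguous block of $\overline{D^2}$, so your ``key observation'' already finishes that direction. The ``main obstacle'' you describe there — a cyclic forbidden subword longer than one copy of $\overline{D}$ — only makes sense if forbidden patterns may wrap around the circle more than once, i.e.\ under a different definition than the one you stated; and the work needed to handle it (showing long forbidden patterns force short ones, or a periodicity argument) is precisely the argument that is absent from the direction where it is actually needed.
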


%For a given word $\overline{W}\in \W(\gs)$, we can reduce $\overline{W}$ to a normal form by several $D_{(i)}$ or $S_{(i)}$. For example, when $g=2$, suppose $\overline{W}=\overline{c_1c_2c_3c_4c_1^{-1}c_2^{-1}c_3^{-1}}$. Then, we can reduce $\overline{W}$ to the normal form by one $D_{(4)}$ and three $D_{(8)}$'s:
%\begin{eqnarray*} \overline{c_1c_2c_3c_4c_1^{-1}c_2^{-1}c_3^{-1}}&\xrightarrow{D_{(4)}}&\overline{c_4c_3c_2c_1c_1^{-1}c_2^{-1}c_3^{-1}}\\ &\xrightarrow{D_{(8,1)}}&\overline{c_4c_3c_2c_2^{-1}c_3^{-1}}\\ &\xrightarrow{D_{(8,2)}}&\overline{c_4c_3c_3^{-1}}\\ &\xrightarrow{D_{(8,3)}}&\overline{c_4}.\end{eqnarray*}
%In fact, we can reduce $\overline{W}$ by one $S_{(2,7)}$: $$\overline{c_1c_2c_3c_4c_1^{-1}c_2^{-1}c_3^{-1}}\xrightarrow{S_{(2,7)}}\overline{c_4}.$$ However, this is not the main reason why we do not just use the classification in Theorem \ref{thm GZ} but give a new classification. Actually, this new classification is given by the types of $\llfr$s of reducible words, which is suitable for the argument in the subsequent sections.

\begin{rem}\label{rem of maximality}
Note that there are some cases, for instance, $\overline{ABC}$ is of type $S_{(i_1)}$ and $\overline{B}$ is of type $S_{(i_2)}$. Then, there is a conflict that we cannot do $S_{(i_2)}$ if we do $S_{(i_1)}$ first or vice versa. Then, we need the following precedence of reducing operations: 
$$S_{(1)}>S_{(2,k+1)}>S_{(2,k)}>S_{(3,t)}>S_{(4,t+1)}>S_{(4,t)}.$$
This precedence comes from the idea ``using less operations to get a normal form''. If two or more operations are equivalent to one operation, we choose to do the one directly. However, the precedence is merely a general setting for the discussions in the subsequent sections but not necessary, since there are some situations where it cannot be used to decide.	
\begin{exam}
In the surface group $\pi_1(\Sigma_3)$ with the symmetric presentation (\ref{symmetric presentation}), let $\overline{b_1\cdots b_{12}}\in \RR$. The word $\overline{b_1\cdots b_{8}}$ has two ways to be irreducible:
	\begin{eqnarray*}
		\overline{b_1\cdots b_8}&\xrightarrow{S_{(2,8)}}& \overline{b_{12}^{-1}b_{11}^{-1}b_{10}^{-1}b_9^{-1}}  ~~~\mbox{ or } \\
		\overline{b_1\cdots b_8}&\xrightarrow{S_{(2,7)}}&\overline{b_{12}^{-1}b_{11}^{-1}b_{10}^{-1}b_9^{-1}b_{8}^{-1}b_{8}}\xrightarrow{S_{(1)}}\overline{b_{12}^{-1}b_{11}^{-1}b_{10}^{-1}b_9^{-1}}.
	\end{eqnarray*}
	Then we choose $S_{(2,8)}$ directly. Furthermore, if $b_1\succ b_6$, then the word $\overline{b_1(b_2\cdots b_{6})^2}$ has two ways to be irreducible:
	\begin{eqnarray}\label{rem of max S4}
		\overline{b_1(b_2\cdots b_6)^2}&\xrightarrow{S_{(4,2)}}& \overline{(b_6\cdots b_2)^2b_1}  ~~~\mbox{ or } \notag\\
		\overline{b_1(b_2\cdots b_6)^2}&\xrightarrow{S_{(4,1)}}&\overline{b_6\cdots b_1 (b_2\cdots b_6)}\xrightarrow{S_{(4,1)}}\overline{(b_6\cdots b_2)^2b_1}.
	\end{eqnarray}
We choose $S_{(4,2)}$ directly. Besides, a word of type $S_{(2,4g)}$ can be irreducible by one $S_{(2,k)}$ and $ (4g-k) $ $S_{(1)}$. In this case, we choose to do $S_{(2,4g)}$ directly. 

Therefore, whenever the $S_{(2)}$ or $S_{(4)}$-reduction is used, we always use the \emph{maximal} reducing operation. Namely, we take the longest subword $\overline{W'}$ of type $S_{(2,k)}$ or $S_{(4,t)}$ with the maximal $k$ or $t$ (i.e, there is no subword $\overline{W''}$ of types $S_{(2,k+1)},S_{(4,t+1)}$ such that $\overline{W'}\subset \overline{W''} \subset \overline{W}$). 
    \end{exam}
\end{rem}

%Moreover, we need the following definition of a special kind of reducible words, which is frequently used.

%\begin{defn}[$S_{(4,t)}+S_{(1)}$]
%Let $G,\gs,\W(\gs),\RR$ be defined as before. 
%We say a word $\overline{W}\in \W(\gs)$ is of type $S_{(4,t)}+S_{(1)}~(t\geq 2)$ if %it has a form as follows.	$$\overline{W}=\overline{b_{1}(b_{2}\cdots b_{2g})^tb_{2g+1}}, t\geq 2, $$ where $\overline{b_1\cdots b_{4g}}\in\RR$ and ($b_1 \succ b_{2g} $ or $b_{2}\succ b_{2g+1}$). The corresponding polynomial is $$\overline{b_{1}(b_{2}\cdots b_{2g})^tb_{2g+1}}-\overline{(b_{2g}\cdots b_2)^t},$$ where $\overline{b_1\cdots b_{4g}}\in\RR$ and ($b_1 \succ b_{2g} $ or $b_{2}\succ b_{2g+1}$).
%\end{defn}

%\begin{defn}[$S_{(4,t)}+S_{(1)}$]
%Let $$S_{(4,t)}+S_{(1)}=\{\overline{b_{1}(b_{2}\cdots b_{2g})^tb_{2g+1}}-\overline{(b_{2g}\cdots b_2)^t}\mid t\geq 2,\overline{b_1\cdots b_{4g}}\in\RR,~b_1 \succ b_{2g} \mbox{ or }b_{2}\succ b_{2g+1}\}.$$ Similarly, we say a word \emph{of type $S_{(4,t)}+S_{(1)}$} if it is the leading term of a polynomial in the above set.
%\end{defn}

\subsection{Some facts on reduction }
 Before the main argument, we need some easy but frequently used observations. 
\begin{lem}\label{lem some observations}
Let 
$\overline{b_1\cdots b_{4g}}\in \RR$. We have the following facts (with subscript$\mod 4g$).
	\begin{enumerate}
		\item $b_{k}=b_{k\pm2g}^{-1}$ for any $k=1,2,\dots,4g$.
		\item $b_{k}b_{k+1}\cdots b_{k+2g-1}=b_{k+2g-1}\cdots b_{k+1}b_{k}$ for any $k=1,2,\dots,4g$.
		\item $b_{m}\prec b_{n}$ if and only if $b_{m}^{-1}\succ b_{n}^{-1}$ for any $ ~m, n=1,2,\dots,4g$.
		\item If $\overline{b_1\cdots b_{k}}$ is an $\llfr$ of a word $\overline{W}$ and $2\leq k\leq 2g-2$, then any subword of $\overline{W}$ containing $\overline{b_1\cdots b_k}$ cannot be of types $S_{(i)}~(i=1,2,3,4)$.
        \item  $b_1(b_2\cdots b_{2g})^tb_{2g+1}=(b_{2g}\cdots b_2)^t$ for any $t\geq 0$, and hence the word $$\overline{b_1(b_2\cdots b_{2g})^tb_{2g+1}}$$ is always reducible.
		
        \item Let $\{\overline{X_i}=\overline{x_{i_1}\cdots x_{i_{n_i}}}\in \W(\gs)\mid n_i\geq 1,i=1,\dots,m\}$ be a finite set of irreducible words. If the product $\overline{X_1\cdots X_m}$ is reducible, then every reducible subword of it must contain some $\overline{x_{i_{n_i}}x_{(i+1)_1}}$.

        \item Let $\overline{U}=\overline{u_1\cdots u_n}$ be of type $S_{(3, t)}(t\geq 2)$. If $u_i=u_j\in\overline{U}$ for some $1< i<j<n$, then
        $$\overline{U'}:=\overline{u_1\cdots u_{i-1}u_j\cdots u_n}$$ is of type $S_{(2,2g+1)}$ or $S_{(3, t')}(2\leq t'<t)$, and hence is reducible.

\end{enumerate}
\end{lem}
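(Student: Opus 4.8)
The plan is to dispatch the seven items in order of logical dependence: items (1), (2) and (3) are structural facts read off the relator and the order, items (5), (6) and (7) then follow from short computations built on them, and item (4), the only genuinely combinatorial statement, is where the real work lies. For (1), I would write $\overline{d_1\cdots d_{4g}}=\overline{c_1\cdots c_{2g}c_1^{-1}\cdots c_{2g}^{-1}}$ and observe directly from its two halves that the letter in position $i$ and the letter in position $i+2g$ (indices mod $4g$) are mutually inverse; since every element of $\RR$ is a cyclic shift of $\overline{d_1\cdots d_{4g}}$ or of its inverse, the relation $b_k=b_{k+2g}^{-1}$ is preserved. For (3), I would note that the map $x\mapsto x^{-1}$ carries the descending chain (\ref{order of generators}) onto its exact reverse, so inversion reverses $\succ$ on $\gs^\pm$, and the equivalence $b_m\prec b_n\Leftrightarrow b_m^{-1}\succ b_n^{-1}$ is then immediate. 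For (2), I would use that the full relator is trivial in $G$, so $b_k\cdots b_{k+2g-1}=(b_{k+2g}\cdots b_{k+4g-1})^{-1}$, and then rewrite each $b_{k+2g+j}=b_{k+j}^{-1}$ via (1) to turn the right-hand side into $b_{k+2g-1}\cdots b_k$.

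Items (5), (6) and (7) are then short. For (5), setting $P=b_2\cdots b_{2g}$ and $Q=b_{2g}\cdots b_2$, item (2) with $k=1$ gives $b_1P=Qb_1$, hence $b_1P^t=Q^tb_1$ by induction, and since $b_{2g+1}=b_1^{-1}$ by (1) we obtain $b_1P^tb_{2g+1}=Q^t$; the word is reducible because it represents the same element as the strictly shorter $\overline{(b_{2g}\cdots b_2)^t}$. For (6), I would argue by contraposition using Lemma \ref{lem four operations}: any subword of an irreducible word is again irreducible, since an $S$-type subword of it would be an $S$-type subword of the whole word, so a reducible subword of $\overline{X_1\cdots X_m}$ cannot sit inside a single irreducible factor $\overline{X_i}$ and must therefore straddle some boundary $\overline{x_{i_{n_i}}x_{(i+1)_1}}$. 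For (7), I would use that the $4g$ letters of a relator are pairwise distinct, so the middle block of $\overline{U}=\overline{b_1(b_2\cdots b_{2g})^tb_{2g+1}}$ has period exactly $2g-1$; a coincidence $u_i=u_j$ with $1<i<j<n$ then forces $j-i=s(2g-1)$ for some $s\ge1$, and since $j\le n-1$ we get $1\le t-s<t$, so deleting $u_i\cdots u_{j-1}$ removes whole blocks and leaves $\overline{b_1(b_2\cdots b_{2g})^{t-s}b_{2g+1}}$, which is of type $S_{(2,2g+1)}$ when $t-s=1$ and of type $S_{(3,t-s)}$ when $t-s\ge2$.

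The main obstacle is item (4). The first key remark is that every subword of a fractional relator is again a fractional relator, so the LLFR $\overline{b_1\cdots b_k}$ with $k\le 2g-2$ can never contain a \emph{junction} pair $\overline{b_{2g}b_2}$ at which a power $(b_2\cdots b_{2g})^t$ repeats, because such a pair is not a fractional relator. I would then examine each $S$-type leading term in turn. For $S_{(2,m)}$ with $m\ge 2g+1$ and for $S_{(4,1)}$ the leading term is itself a fractional relator of length $\ge 2g>k$, so the LLFR is a proper subword and can be extended by one letter into a $(k+1)$-fractional relator lying inside $\overline{W}$, contradicting the LLFR hypothesis; for $S_{(1)}$, which has length $2$, only $k=2$ is possible, and that would force $\overline{b_1b_2}=\overline{xx^{-1}}$, which never occurs among two consecutive relator letters. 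The delicate cases are $S_{(3,t)}$ and $S_{(4,t)}$ with $t\ge2$: here I would decompose the leading term into its maximal fractional-relator runs, each of length $2g-1$ or $2g$, observe that the LLFR must lie inside a single such run since it cannot cross a junction, and note that every run is strictly longer than $k$, so once more the LLFR can be extended by one letter within the run and hence within $\overline{W}$, the desired contradiction. The part that demands care is keeping the bookkeeping of these runs and the exact position of the LLFR inside them straight across all four $S$-types.
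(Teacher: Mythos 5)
Your proposal is correct and, on the two items the paper actually proves, follows essentially the same route: your argument for item (6) is verbatim the paper's, and your periodicity count $j-i=s(2g-1)$ for item (7) is just a repackaging of the paper's deletion of the block $\overline{u_i\cdots u_{j-1}}$, yielding the same reduced word $\overline{b_1(b_2\cdots b_{2g})^{t-s}b_{2g+1}}$ of type $S_{(2,2g+1)}$ or $S_{(3,t-s)}$. The paper dismisses items (1)--(5) as obvious, so your verifications there --- in particular, for item (4), the decomposition of the $S_{(3,t)}$ and $S_{(4,t)}$ leading terms into fractional-relator runs of length $2g-1$ or $2g$ separated by junction pairs that are not $2$-fractional relators (using that the $4g$ letters of a relator are pairwise distinct and $2g-1>2$ for $g\geq 2$), so that the $\llfr$ of length $k\leq 2g-2$ lies strictly inside one run and extends to a $(k+1)$-fractional relator within $\overline{W}$ --- are sound fillings-in of what the paper leaves implicit, not a divergence.
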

\begin{proof}
Since items (1)--(5) are obvious, we only give the proofs of items (6) and (7) here.
  
 (6) If $\overline{W}\subset\overline{X_1\cdots X_m}$ does not contain any $\overline{x_{i_{n_i}}x_{(i+1)_1}}$, then $\overline{W}$ is a subword of some irreducible word $\overline{X_i}$ and hence it is again irreducible.

 (7) Since $\overline{U}$ is of type $S_{(3, t)}(t\geq 2)$, we can suppose $\overline{U}=\overline{b_1(b_2\cdots b_{2g})^t b_{2g+1}}.$
There are exactly $(2g-1)$ letters $b_2,\dots, b_{2g}$ appearing $t\geq 2$ times in $\overline{U}$. Note that $\overline{U'}$ can be obtained by the following process:
\begin{eqnarray*}
    \overline{U}&\xlongequal{\qquad\quad}&\overline{b_1(b_2\cdots b_{2g})^{t_1}b_2\cdots b_{i-1}\underbrace{b_{i}\cdots b_{2g}(b_2\cdots b_{2g})^{t_2}b_2\cdots b_{i-1}}_{A}b_i\cdots b_{2g}(b_2\cdots b_{2g})^{t_3}b_{2g+1}}\\
    &\xrightarrow{\mbox{delete }\overline{A}}&\overline{b_1(b_2\cdots b_{2g})^{t_1}b_2\cdots b_{i-1}b_{i}\cdots b_{2g}(b_2\cdots b_{2g})^{t_3}b_{2g+1}}=:\overline{U'}.
\end{eqnarray*}
 Hence, $\overline{U'}$ is of type $S_{(2,2g+1)}$ when $t_1+t_3=0$ and of type $S_{(3)}$ when $t_1+t_3\geq 1$.
\end{proof}

The following facts are frequently used in reduction discussions in Section \ref{sect. proof of key Proposition} and Appendix \ref{sect appendix for some detailed proofs of props}.     

\begin{lem}\label{lem frequently used}
Let $\overline{X}=\overline{x_1\cdots x_{n}},\overline{Y}=\overline{y_1\cdots y_m}\in \W(\gs)$ be both irreducible and $\overline{b_1\cdots b_{4g}}\in\RR$.
   \begin{enumerate}
\item Suppose 
        $$\overline{x_{n-r+1}\cdots x_{n}y_1\cdots y_s}=\overline{b_1\cdots b_{r+s}}$$ containing $\overline{x_ny_1}$ is an $\llfr$ of $\overline{XY}$ with length $r+s\in \{2g+1,\dots,4g-2\}$. Then $$\overline{XY}\xrightarrow{S_{(2,r+s)}}\overline{x_1\cdots x_{n-r}b_{2g}\cdots b_{r+s-2g+1}y_{s+1}\cdots y_{m}}=:\overline{Z}.$$
Moreover, if $\overline{Z}$ is reducible, then every potential reducible subword of $\overline{Z}$ of type $S_{(i)}$ must be of the form $\overline{x_{r'}\cdots x_{n-r}b_{2g}}$ or $\overline{b_{r+s-2g+1}y_{s+1}\cdots y_{s+s'}}$.

		\item Suppose there is a reducible subword of $\overline{XY}$ of type $S_{(3,t)}~(t\geq 2)$. Then $$\overline{XY}\xrightarrow{S_{(3,t)}}\overline{x_1\cdots x_{n-r}(b_{2g}\cdots b_2)^ty_{s+1}\cdots y_m}=:\overline{Z}.$$ Moreover, if $\overline{Z}$ is reducible, then every potential reducible subword of $\overline{Z}$ of type $S_{(i)}$ must be of the forms $\overline{x_{r'}\cdots x_{n-r}b_{2g}}$ or $\overline{b_{2}y_{s+1}\cdots y_{s+s'}}$.

        \item Suppose there is a reducible subword of $\overline{XY}$ of type $S_{(4,t)}~(t\geq 1)$. Then $$\overline{XY}\xrightarrow{S_{(4,t)}}\overline{x_1\cdots x_{n-r}(b_{2g}\cdots b_2)^tb_1y_{s+1}\cdots y_m}=:\overline{Z}.$$ Moreover, if $\overline{Z}$ is reducible, then every potential reducible subword of $\overline{Z}$ of type $S_{(i)}$ must be of the forms $\overline{x_{r'}\cdots x_{n-r}b_{2g}}$ or $\overline{b_{1}y_{s+1}\cdots y_{s+s'}}$.
	\end{enumerate}
\end{lem}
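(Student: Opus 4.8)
The plan is to treat the three cases in parallel, writing $\overline{Z}=\overline{X'\,M\,Y'}$ with $\overline{X'}=\overline{x_1\cdots x_{n-r}}$, $\overline{Y'}=\overline{y_{s+1}\cdots y_m}$, and $\overline{M}$ the block produced by the reduction. For the reduction itself I would simply read off the relevant rule. In (1) the $\llfr$ has length $r+s\in\{2g+1,\dots,4g-2\}$, hence is of type $S_{(2,r+s)}$ and is replaced by $\overline{b_{4g}^{-1}\cdots b_{r+s+1}^{-1}}$; the identity $b_j^{-1}=b_{j-2g}$ from Lemma \ref{lem some observations}(1) rewrites this as $\overline{M}=\overline{b_{2g}\cdots b_{r+s-2g+1}}$, yielding the stated $\overline{Z}$. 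Cases (2) and (3) are the same bookkeeping with $\overline{M}=\overline{(b_{2g}\cdots b_2)^t}$ and $\overline{M}=\overline{(b_{2g}\cdots b_2)^tb_1}$, the reduced right-hand sides of the rules in $S_{(3,t)}$ and $S_{(4,t)}$.

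For the ``moreover'' assertion the first point is that $\overline{Z}$ is a product of three \emph{irreducible} words: $\overline{X'},\overline{Y'}$ are subwords of the irreducible $\overline{X},\overline{Y}$, and $\overline{M}$ is irreducible, being the reduced side of a rule (in case (1) it is in addition a fractional relator of length $4g-r-s\le 2g-1$, too short to carry any forbidden subword). Lemma \ref{lem some observations}(6) then forces every reducible subword of $\overline{Z}$ to contain one of the two junctions, the left one $\overline{x_{n-r}b_{2g}}$ (between $\overline{X'}$ and $\overline{M}$) or the right one $\overline{m\,y_{s+1}}$ (between $\overline{M}$ and $\overline{Y'}$), where $m$ is the last letter of $\overline{M}$.

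What remains is to \emph{anchor} such a subword at the boundary of $\overline{M}$: a reducible subword $\overline{V}$ through the left junction cannot contain the second letter $b_{2g-1}$ of $\overline{M}$ (and symmetrically on the right), so that $\overline{V}$ has exactly the shape $\overline{x_{r'}\cdots x_{n-r}b_{2g}}$. Suppose it did, so $\overline{V}\supseteq\overline{x_{n-r}b_{2g}b_{2g-1}}$. If these three letters are traversed within a single period of $\overline{V}$, they form a contiguous piece of one relator in $\RR$; since $\overline{b_{2g}b_{2g-1}}$ pins that relator down as the inverse relator, whose predecessor of $b_{2g}$ is $b_{2g+1}=b_1^{-1}$ by Lemma \ref{lem some observations}(1), we get $x_{n-r}=b_1^{-1}=x_{n-r+1}^{-1}$, a free cancellation contradicting the irreducibility of $\overline{X}$. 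The right junction is identical, using $y_s=b_{r+s}$. This settles the non-periodic type $S_{(2)}$ completely, since there every subword is traversed within the single ``period''.

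The main obstacle is the remaining configuration for the periodic types $S_{(3)}$ and $S_{(4)}$, in which the junction $\overline{x_{n-r}b_{2g}}$ is itself a \emph{block boundary} of $\overline{V}$, so that a whole period of $\overline{V}$ ends at $x_{n-r}$ and lies inside $\overline{X'}$. Matching this period against $\overline{M}$ forces $\overline{x_{n-r-2g+2}\cdots x_{n-r}}=\overline{b_{2g}\cdots b_2}$, hence, using $x_{n-r+1}=b_1$, exhibits the length-$2g$ subword $\overline{b_{2g}\cdots b_2 b_1}$ of the irreducible $\overline{X}$. This is a fractional relator of length $2g$, so it is forbidden of type $S_{(4,1)}$ precisely when $b_{2g}\succ b_1$, and I would extract the contradiction from this. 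Verifying the relevant order inequality (here $b_{2g}\succ b_1$) over the possible relators $\overline{R}\in\RR$, and handling the opposite case $b_{2g}\prec b_1$ where one must instead invoke the maximality of the original $\llfr$, is exactly where the case-by-case bookkeeping lives and is the step I expect to be hardest. Once interior penetration is excluded at both junctions, only subwords of the two asserted shapes survive, which proves the lemma.
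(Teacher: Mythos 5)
Your reduction bookkeeping, your use of Lemma \ref{lem some observations}(6) to force any reducible subword $\overline{V}\subset\overline{Z}$ through one of the two junctions, and your relator-pinning argument at the junction (for type $S_{(2)}$, and for periodic types when $\overline{x_{n-r}b_{2g}b_{2g-1}}$ lies in a single block) are all correct and run parallel to the paper's proof. The genuine gap is exactly the step you flag: the block-boundary configuration for periodic $\overline{V}$ of types $S_{(3)},S_{(4)}$. Extracting only one period gives $\overline{b_{2g}\cdots b_2b_1}\subset\overline{X}$, which is forbidden (type $S_{(4,1)}$) only when $b_{2g}\succ b_1$; the opposite order can genuinely hold (for $r<2g$ nothing in the hypotheses of item (1) constrains $b_1$ versus $b_{2g}$), and your proposed fallback fails: maximality of the original $\llfr$ only yields $x_{n-r}\neq b_{4g}$ and $y_{s+1}\neq b_{r+s+1}$, which says nothing here, since in this configuration $x_{n-r}=b_2$. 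So the case $b_{2g}\prec b_1$ with $\overline{V}$ of type $S_{(3,t_0)}$ is left open, and the proof is incomplete.

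The missing idea (and the paper's fix) is to use the \emph{entire} prefix of $\overline{V}$ lying inside $\overline{x_1\cdots x_{n-r}}$, not just one period. If $\overline{V}$ is of type $S_{(3,t_0)}$, or of type $S_{(4,t_0)}$ in its first form $\overline{a_1(a_2\cdots a_{2g})^{t_0}}$, then the repeating blocks $\overline{b_{2g}\cdots b_2}$ are preceded by the initial letter $a_1=b_{2g+1}$ of $\overline{V}$, which also lies in $\overline{X}$; hence $\overline{X}$ contains $\overline{b_{2g+1}(b_{2g}\cdots b_2)^{t'}b_1}$, a word of type $S_{(2,2g+1)}$ (if $t'=1$) or $S_{(3,t')}$ (if $t'\geq 2$), forbidden with no order hypothesis whatsoever. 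If instead $\overline{V}$ is of type $S_{(4,t_0)}$ in its second form $\overline{(a_1\cdots a_{2g-1})^{t_0}a_{2g}}$, its own type condition $a_1\succ a_{2g}$ reads precisely $b_{2g}\succ b_1$, so the inequality you need is automatic there. This is exactly how the paper closes the case (its step ``$\overline{x_{r'}\cdots x_{n-r}}=\overline{b_{2g+1}(b_{2g}\cdots b_2)^{t'}}$, hence $\overline{x_{r'}\cdots x_{n-r+1}}$ is a reducible subword of $\overline{X}$''). Note also the paper's cheaper device for item (1) when $r+s\geq 2g+2$: the inserted block $\overline{b_{2g}\cdots b_{r+s-2g+1}}$ is an $\llfr$ of $\overline{Z}$ of length $4g-(r+s)\leq 2g-2$, whereas every $\llfr$ of a word of types $S_{(2)},S_{(3)},S_{(4)}$ has length at least $2g-1$; this rules out all penetration at once and confines the delicate block-matching analysis to the single boundary case $r+s=2g+1$ (and to items (2) and (3)), substantially shrinking the case-by-case work you anticipated.
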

\begin{proof}
Since the proofs of the three items are similar, we only provide the detailed proof of item (1) in the following.
Suppose $\overline{V}$ is a reducible subword of $\overline{Z}$ of type $S_{(i)}$. Since $$\overline{x_{n-r+1}\cdots x_{n}y_1\cdots y_s}=\overline{b_1\cdots b_{r+s}}$$ is an $\llfr$ of $\overline{XY}$, we have $$x_{n-r}\neq b_{4g},b_{2g+1}; \quad y_{s+1}\neq b_{r+s+1},b_{r+s-2g}.$$ 
Hence, $\overline{Z}$ is freely reduced and $\overline{V}$ is not of type  $S_{(1)}$ and thus is of $S_{(i)}~(i=2,3,4)$. Then every $\llfr$ in $\overline{V}$ has length $\geq 2g+1$ (resp. $2g-1$) if $\overline{V}$ is of type $S_{(2)}$ (resp. $S_{(i)}(i=3,4)$). By Lemma \ref{lem some observations}(6), we obtain that $\overline{V}$ must contain $\overline{x_{n-r}b_{2g}}$ or $\overline{b_{r+s-2g+1}y_{s+1}}$. 

If $\overline{V}\supset\overline{x_{n-r}b_{2g}}$, note that  $\overline{b_{2g}\cdots b_{r+s-2g+1}}$ with length $4g-(r+s)\in\{2,\dots,2g-1\}$ is an $\llfr$ of $\overline{Z}$, then there exists $1\leq r'\leq n-r$ such that $\overline{V}$  is the following three possible cases: 
    
Case (a) $\overline{V}=\overline{x_{r'}\cdots x_{n-r}b_{2g}}$;

Case (b) $\overline{V}=\overline{x_{r'}\cdots x_{n-r}b_{2g}\cdots b_{2g-k+1}} (2\leq k\leq 4g-r-s\leq 2g-1) $;

Case (c) $\overline{V}=\overline{x_{r'}\cdots x_{n-r}b_{2g}\cdots b_{r+s-2g+1}y_{s+1}\cdots y_{s+l}}$.

Indeed, both Case (b) and Case (c) are impossible:

When $2g+2\leq r+s\leq 4g-2$,  $\overline{V}$ contains an $\llfr$ $\overline{b_{2g}\cdots b_{2g-p+1}}$ of length $p\leq 2g-2$, contradicting that $\overline{V}$ is of type $S_{(i)}$. When $r+s=2g+1$, $\overline{V}$ contains an $\llfr$ with length $\leq 2g-1$ and hence $\overline{V}$ cannot be of types $S_{(2)}$ or $S_{(4,1)}$. Furthermore, if $\overline{V}$ is of type $S_{(3,t)}$ or $ S_{(4,t)}~(t\geq2)$, then we always have
	$$\overline{x_{r'}\cdots x_{n-r}}=\overline{b_{2g+1}(b_{2g}\cdots b_2)^{t'}}(t'\geq 1),$$ 
%(Notice that $k$ must be $2g-1$ in Case (2). Otherwise, if $k\leq 2g-2$, then $\overline{b_{2g}\cdots b_{2g-k+1}}\subset \overline{V}$ is an $\llfr$ with length $=k\leq 2g-2$.)
and hence
$\overline{x_{r'}\cdots x_{n-r+1}}=\overline{b_{2g+1}(b_{2g}\cdots b_2)^{t'}b_1}$ is a reducible subword of $\overline{X}$. It contradicts the assumption that $\overline{X}$ is irreducible. Therefore, only Case (a) is possible. 

	If $\overline{V}\supset\overline{b_{r+s-2g+1}y_{s+1}}$, by the same discussion as above, we can get $\overline{V}=\overline{b_{r+s-2g+1}y_{s+1}\cdots y_{s+s'}}$.
    
Therefore,  we conclude that all the potential reducible subwords of $\overline{Z}$ of type $S_{(i)}$ must be of the form $\overline{x_{r'}\cdots x_{n-r}b_{2g}}$ or $\overline{b_{r+s-2g+1}y_{s+1}\cdots y_{s+s'}}$.
\end{proof}

\section{Cyclically irreducible words}\label{sect cyclically irreducible words}
In this section, we will provide some sufficient conditions for words to be cyclically irreducible. 

\subsection{Lemmas related to normal forms}

First, we have the following lemma, which helps us obtain normal forms of higher powers in Section \ref{sect proof of main thms}. %Let $G,\gs,\W(\gs),\succ$ be defined as before in this section.

\begin{lem}\label{lem from 2-nd power to n-th power}
If $\overline{ADE},\overline{D^2}\in\W(\gs)$ are irreducible, then $\overline{AD^kE}$ is always irreducible for every $k\geq1$.
\end{lem}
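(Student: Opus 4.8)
The plan is to use the characterization of irreducibility by forbidden subwords and to argue directly, exploiting the rigidity of fractional relators rather than inducting on $k$. First I would record the consequences of the hypotheses: every subword of $\overline{ADE}$ is irreducible (in particular $\overline{A},\overline{D},\overline{E},\overline{AD},\overline{DE}$), and since $\overline{D^2}$ is irreducible, $\overline D$ is cyclically irreducible by Proposition \ref{prop D cyclically irreducible= DD irreducible}. By Lemma \ref{lem four operations} it suffices to show that $\overline{AD^kE}$ has no subword of type $S_{(1)}$, $S_{(2,2g+1)}$, $S_{(3,t)}\,(t\ge 2)$ or $S_{(4,1)}$. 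Since the three kinds of junctions in the factorization $\overline{A\,D\cdots D\,E}$ all occur already inside the irreducible words $\overline{ADE}$ (the $A|D$ and $D|E$ junctions) and $\overline{D^2}$ (every $D|D$ junction), the word $\overline{AD^kE}$ is freely reduced, which rules out type $S_{(1)}$. For the remaining types, let $\overline V$ be such a forbidden subword; by Lemma \ref{lem some observations}(6) it must cross at least one block boundary, and the whole argument consists in confining $\overline V$ so tightly that it already lives inside one of $\overline{AD},\overline{D^2},\overline{DE}$, or else forces $\overline{ADE}$ itself to be reducible.

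Next I would dispose of the two bounded types $S_{(2,2g+1)}$ and $S_{(4,1)}$, where $\overline V$ is a single fractional relator of length $L\le 2g+1$. The key point is that if $\overline V$ were to meet three consecutive copies of $\overline D$, it would contain a whole copy $D^{(j)}$ in its interior, flanked by copies on both sides; writing $\overline V=\overline{b_1\cdots b_L}$ as an arc of a relator and $D^{(j)}=\overline{b_p\cdots b_{p+d-1}}$ with $d:=|\overline D|$, the letter following $D^{(j)}$ would be both $b_{p+d}$ and the first letter $b_p$ of the next copy, giving $b_{p+d}=b_p$. Because a relator word $c_1\cdots c_{2g}c_1^{-1}\cdots c_{2g}^{-1}$ has $4g$ pairwise distinct letters, this forces $4g\mid d$, which is impossible since $d\le L-2\le 2g-1$. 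Hence $\overline V$ meets at most two consecutive copies, and the same interior argument shows it cannot simultaneously reach into $\overline A$ (or $\overline E$) and a second copy; thus $\overline V$ lies in $\overline{AD}$, in some $\overline{D^{(i)}D^{(i+1)}}=\overline{D^2}$, or in $\overline{DE}$, contradicting their irreducibility.

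The substantial case is $S_{(3,t)}$, where $\overline V=\overline{b_1(b_2\cdots b_{2g})^t b_{2g+1}}$ and the middle block $M=(b_2\cdots b_{2g})^t$ is periodic of period $2g-1$. I would split on whether $\overline V$ contains a whole copy of $\overline D$ lying inside $M$. If it does, then $\overline D$ is a subword of $M$ and therefore uses only the letters $b_2,\dots,b_{2g}$; comparing the period $2g-1$ of $M$ with the period $d$ of the core $\overline{D^k}$ on their overlap, and using that $b_2,\dots,b_{2g}$ are distinct, forces $(2g-1)\mid d$ and $\overline D$ to be a power $(b_2\cdots b_{2g})^m$ (up to cyclic rotation) of that segment, where $m=d/(2g-1)$. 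Since $b_1,b_{2g+1}\notin\{b_2,\dots,b_{2g}\}$, the two end letters of $\overline V$ must come from $\overline A$ and $\overline E$, so $\overline V$ runs across the whole core; deleting $k-1$ of the copies of $\overline D$ then removes a whole number of periods and collapses $\overline V$ to $\overline{b_1(b_2\cdots b_{2g})^{t'}b_{2g+1}}$ with $t'=t-(k-1)m\ge 1$, which is a forbidden subword (of type $S_{(3,t')}$, or of type $S_{(2,2g+1)}$ when $t'=1$) of $\overline{ADE}$ — a contradiction. If instead $\overline V$ contains no such interior copy, then exactly as in the bounded case it meets at most two consecutive copies and lies in $\overline{AD}$, $\overline{D^2}$, or $\overline{DE}$, again a contradiction.

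The hard part, and the step I expect to absorb most of the work, is this $S_{(3,t)}$ collapse: one must check carefully that a full copy of $\overline D$ caught inside the periodic middle really forces $(2g-1)\mid d$ with the copies aligned to the period, so that deleting $k-1$ of them leaves an honest forbidden subword of $\overline{ADE}$, and one must verify the count $t'\ge 1$, which follows from $|M|\ge|\overline{D^k}|$ (whence $t\ge km$). A pleasant feature of this route is that it needs neither an induction on $k$ nor the finer reduction bookkeeping of Lemma \ref{lem frequently used}: the relator's $4g$ distinct letters and the incompatibility of the two periods do all the confining at once. Should that rigidity prove insufficient near the boundaries, the fallback is to factor $\overline{AD^kE}=\overline{(AD^{k-1})(DE)}$ into the irreducible words $\overline{AD^{k-1}}$ and $\overline{DE}$ and feed it to Lemma \ref{lem frequently used}, localizing every reduction at the central $D\mid D$ junction that is already harmless in $\overline{D^2}$.
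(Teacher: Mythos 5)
Your proof is correct, but it is organized quite differently from the paper's. The paper proves only the case $k=2$ directly, using the same opening moves as you (free reducedness across the junctions, Lemma \ref{lem some observations}(6) to force a forbidden subword $\overline V$ to cross block boundaries, and the observation that types $S_{(2,2g+1)}$ and $S_{(4,1)}$ admit no repeated letter, so $\overline V$ must be of type $S_{(3,t)}$), and then finishes in one line by citing Lemma \ref{lem some observations}(7): since $\overline V=\overline{A'd_1\cdots d_nd_1X}$ repeats the letter $d_1$, deleting the segment between its two occurrences yields a reducible subword $\overline{A'd_1X}$ of the irreducible word $\overline{ADE}$, a contradiction; higher powers are then dispatched by induction via $\overline{AD^{k}E}=\overline{(AD^{k-1})DE}$. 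You instead treat all $k$ at once: your confinement argument (a flanked copy of $\overline D$ forces a repeated letter, respectively an interior copy inside the periodic middle $M$) plays the role of the paper's three-position analysis, and your periodicity/alignment step --- $(2g-1)\mid d$, so $\overline D$ is a rotation of $(b_2\cdots b_{2g})^m$, then delete $k-1$ full copies and check $t'\ge 1$ --- replaces both the citation of Lemma \ref{lem some observations}(7) and the induction. What the paper's route buys is brevity: the deletion lemma makes the collapse a one-liner, and the induction means one never tracks more than two copies of $\overline D$. What your route buys is a self-contained argument with a sharper structural payoff (a forbidden $S_{(3,t)}$ word can trap a copy of $\overline D$ only if $\overline D$ is a rotation of a power of the period word). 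The boundary cases you flag as the hard part do work out cleanly: a copy of $\overline D$ inside $M$ that sits flush against $b_1$ or $b_{2g+1}$ would force $d_n=b_1$ or $d_1=b_{2g+1}$, which is impossible because $d_1,d_n\in\{b_2,\dots,b_{2g}\}$ while $b_1,b_{2g+1}$ are distinct letters of the same relator; hence the neighbouring occurrence of $d_1$ (or $d_n$) always lands inside $M$ and the alignment $(2g-1)\mid d$ follows.
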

\begin{proof}
The case when $\overline{D}=1$ is trivial. So we let 
\begin{eqnarray*}
\overline{D}&=&\overline{d_1\cdots d_n}~(n\geq 1),\\
\overline{A}&=&\overline{a_1\cdots a_m}~(m\geq 0),\\
\overline{E}&=&\overline{e_1\cdots e_l}~(l\geq 0),
\end{eqnarray*}
where $\overline{A}$ and $ \overline{E}$ are possibly empty. When $k=2$,
$$\overline{AD^kE}=\overline{\underbrace{a_1\cdots a_m}_A\underbrace{d_1\cdots d_n}_D\underbrace{d_1\cdots d_n}_D\underbrace{e_1\cdots e_l}_E}.$$
Since $\overline{ADE}$ and $\overline{D^2}$ are irreducible, $\overline{AD^2E}$ is freely reduced, i.e., it does not contain a subword of type $S_{(1)}$. By Lemma \ref{lem four operations}, to prove $\overline{AD^2E}$ is irreducible, it suffices to show that there is no subword of $\overline{AD^2E}$ of types $S_{(2,2g+1)},S_{(3,t)}(t\geq 2)$ or $S_{(4,1)}$. 

Now we assume there is such a reducible subword $\overline{V}\subset \overline{AD^2E}$. Note that $\overline{AD},\overline{D^2},\overline{DE}$ are all irreducible. Then by Lemma \ref{lem some observations}(6), we have three possible positions of $\overline{V}$ in $\overline{AD^2E}$.

Case (1). $\overline{V}$ starts in $\overline{A}$ and ends in the right $\overline{D}$;

Case (2). $\overline{V}$ starts in $\overline{A}$ and ends in $\overline{E}$;

Case (3). $\overline{V}$ starts in the left $\overline{D}$ and ends in $\overline{E}$.

Notice that Case (1) and Case (3) are symmetric. Thus, we only need to consider Case (1) and Case (2). Then  $\overline{V}$ has a form
$$\overline{V}=\overline{A'd_1\cdots d_n d_1X},$$
where $1\neq\overline{A'}\subset \overline{A}$ and $|\overline{X}|\geq 0$. Since a word of types $S_{(2,2g+1)}$ or $S_{(4,1)}$ does not allow a letter to appear more than once, we obtain that $\overline{V}$ is of type $S_{(3,t)}(t\geq 2)$. Then, by Lemma \ref{lem some observations}(7), $\overline{A'd_1X}$
is a reducible subword of the irreducible word $\overline{ADE}$, which is contradictory.

In conclusion, $\overline{AD^2E}$ contains no subword of types $S_{(1)},S_{(2,2g+1)},S_{(3,t)}(t\geq 2)$ or $ S_{(4,1)}$ and thus is irreducible by Lemma \ref{lem four operations}.
     
When $k>2$, note that $\overline{AD^{k}E}=\overline{(AD^{k-1})DE}$. Then by induction on $k$, we obtain that $\overline{AD^kE}$ is irreducible for every $k\geq 1$.
\end{proof}

In the following lemma, we give a complete classification of the reduction of $\overline{Xw}$ with $\overline{X}$ irreducible and $w$ a letter in $\gs^{\pm}$.

\begin{lem}\label{lem for Xw types}
    Suppose $1\neq \overline{X}=\overline{x_1\cdots x_n}\in \W(\gs)$ is irreducible, and $w\in \gs^{\pm}$. Then $\nf(Xw)$ can be obtained by at most once reduction of $S_{(i)}$ and has the following five distinct cases for some $\overline{b_1\cdots b_{4g}}\in \RR$.
    \begin{center}
    \begin{tabular}{|l|l|M{7.3cm}|}
    \hline
       & $\overline{Xw}\xrightarrow{S_{(i)}}\nf(Xw)$  & Parameter declaration  \\
    \hline
    (1)& $\overline{Xw}\xrightarrow{S_{(1)}}\overline{x_1\cdots x_{n-1}}$ & $\overline{x_nw}=\overline{b_1b_1^{-1}}$  \\
    \hline
    (2)& $\overline{Xw}\xrightarrow{S_{(2,2g+1)}}\overline{x_1\cdots x_{n-2g}b_{2g-1}\cdots b_1}$ & $\overline{x_{n-2g+1}\cdots x_{n}w}=\overline{b_{4g}b_1\cdots b_{2g-1}b_{2g}}$\\%, ~~~~~$b_{4g}\prec b_{2g-1}$\\
    \hline
    (3)& $\overline{Xw}\xrightarrow{S_{(3,t)}}\overline{x_1\cdots x_{n-(2g-1)t-1}(b_{2g-1}\cdots b_1)^t}$ & $\overline{x_{n-(2g-1)t}\cdots x_{n}w}=\overline{b_{4g}(b_1\cdots b_{2g-1})^tb_{2g}}(t\geq 2)$\\%, b_{4g}\prec b_{2g-1}$\\
    \hline
    (4)& $\overline{Xw}\xrightarrow{S_{(4,t)}}\overline{x_1\cdots x_{n-(2g-1)t}b_{2g}(b_{2g-1}\cdots b_1)^t}$ & $\overline{x_{n-(2g-1)t+1}\cdots x_{n}w}=\overline{(b_1\cdots b_{2g-1})^tb_{2g}}$, $b_1\succ b_{2g},x_{n-(2g-1)t}\ne b_{4g}$; $t\geq 1$ is maximal. \\
    \hline
    (5)& $\overline{Xw}$ is irreducible &  $\overline{Xw}$ is not in the previous four cases. \\
    \hline
    \end{tabular}
    \label{tab: types of Xw}
\end{center}
For $\overline{wX}$, we also have a symmetric classification.
\begin{center}
    \begin{tabular}{|l|l|M{7.3cm}|}
    \hline
       & $\overline{wX}\xrightarrow{S_{(i)}}\nf(wX)$  & Parameter declaration  \\
    \hline
    (1)& $\overline{wX}\xrightarrow{S_{(1)}}\overline{x_2\cdots x_{n}}$ & $\overline{wx_1}=\overline{b_1b_1^{-1}}$  \\
    \hline
    (2)& $\overline{wX}\xrightarrow{S_{(2,2g+1)}}\overline{b_{2g-1}\cdots b_1x_{2g+1}\cdots x_{n}}$ & $\overline{wx_1\cdots x_{2g}}=\overline{b_{4g}b_1\cdots b_{2g-1}b_{2g}}$; $b_{1}\prec b_{2g}$\\
    \hline
    (3)& $\overline{wX}\xrightarrow{S_{(3,t)}}\overline{(b_{2g-1}\cdots b_1)^tx_{(2g-1)t+2}\cdots x_{n}}$ & $\overline{wx_1\cdots x_{(2g-1)t+1}}=\overline{b_{4g}(b_1\cdots b_{2g-1})^tb_{2g}}(t\geq 2)$\\%, b_{1}\prec b_{2g}$\\
    \hline
    (4)& $\overline{wX}\xrightarrow{S_{(4,t)}}\overline{(b_{2g-1}\cdots b_1)^tb_{4g}x_{(2g-1)t+1}\cdots x_{n}}$ & $\overline{wx_{1}\cdots x_{(2g-1)t}}=\overline{b_{4g}(b_1\cdots b_{2g-1})^t}$; $b_{4g}\succ b_{2g-1},x_{(2g-1)t+1}\ne b_{2g}$; $t\geq 1$ is maximal. \\
    \hline
    (5)& $\overline{wX}$ is irreducible &  $\overline{wX}$ is not in the previous four cases. \\
    \hline
    \end{tabular}
    \label{tab: types of wX}
\end{center}
\end{lem}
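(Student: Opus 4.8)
The plan is to localize every possible reduction to the freshly created suffix, classify that suffix by its fractional-relator type, and then show that a single $S_{(i)}$-reduction already lands in the normal form. First I would invoke Lemma \ref{lem some observations}(6): since $\overline{X}$ is irreducible, any reducible subword of $\overline{Xw}$ must contain the junction $\overline{x_nw}$, and because $w$ is the last letter this subword is forced to be a suffix $\overline{x_{n-k+1}\cdots x_nw}$. By Lemma \ref{lem four operations} such a reducible suffix is of type $S_{(1)}$, $S_{(2,2g+1)}$, $S_{(3,t)}(t\geq 2)$, or $S_{(4,1)}$. If $w=x_n^{-1}$ we are in the $S_{(1)}$-situation (case (1)), and the result $\overline{x_1\cdots x_{n-1}}$ is a subword of the irreducible $\overline{X}$, hence irreducible, so one reduction suffices. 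If no reducible suffix exists we are in case (5). Otherwise $\overline{x_nw}$ is a $2$-fractional relator, and I would fix a relator $\overline{b_1\cdots b_{4g}}\in\RR$ with $w=b_{2g}$ and $x_n=b_{2g-1}$.

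Next I would read off the longest suffix of $\overline{X}$ that continues the relator backwards from $b_{2g-1}$ (along $b_{2g-2},b_{2g-3},\dots,b_1,b_{4g},\dots$), and use the irreducibility of $\overline{X}$ to constrain it. Since $\overline{X}$ contains no subword of type $S_{(2,2g+1)}$, $S_{(3,t)}(t\geq 2)$, or $S_{(4,1)}$, the matched suffix of $\overline{Xw}$ is forced into exactly one of the three shapes in cases (2)--(4), and these are mutually exclusive. The dichotomy is whether the backward match passes through the pivot letter $b_{4g}$: if it does, total length $2g+1$ gives an $S_{(2,2g+1)}$-suffix (case (2)) and a full power gives an $S_{(3,t)}$-suffix (case (3)); if it does not, the suffix is a pure power $\overline{(b_1\cdots b_{2g-1})^tb_{2g}}$, which is reducible of type $S_{(4,t)}$ precisely when $b_1\succ b_{2g}$ (case (4)). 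The side conditions $x_{n-(2g-1)t}\neq b_{4g}$ and the maximality of $t$ are exactly what separate case (4) from cases (2)--(3) and guarantee we apply the maximal reduction in the sense of Remark \ref{rem of maximality}; the explicit reduced forms then follow from Lemma \ref{lem some observations}(2) and the definition of the corresponding $S_{(i)}$-polynomial.

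Finally I would verify that this single reduction already yields the normal form. The reduction produces $\overline{Z}=\overline{x_1\cdots x_{n-r}}\,\overline{V'}$, where $\overline{V'}$ is the reduced suffix. Because the appended piece $\overline{Y}=\overline{w}$ is a single letter, the right-hand forms in Lemma \ref{lem frequently used} collapse, so that lemma guarantees that any remaining reducible subword of $\overline{Z}$ must be of the left-junction form $\overline{x_{r'}\cdots x_{n-r}b_{2g}}$; this in turn is excluded by the irreducibility of $\overline{X}$ together with the maximality of the backward match, since otherwise $\overline{X}$ would already contain a forbidden subword. Hence $\overline{Z}=\nf(Xw)$. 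The classification for $\overline{wX}$ then comes for free by symmetry: the involution $\phi\colon\overline{x_1\cdots x_n}\mapsto\overline{x_n^{-1}\cdots x_1^{-1}}$ carries the set of forbidden words onto itself, hence preserves irreducibility, and by Lemma \ref{lem D-reduced = normal form} the normal form is the unique irreducible representative, so $\phi$ sends $\nf(X)$ to $\nf(X^{-1})$; applying the $\overline{Xw}$-classification to $\overline{X^{-1}}$ and $w^{-1}$ and then applying $\phi$ produces the symmetric table.

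The step I expect to be the main obstacle is the bookkeeping in the middle paragraph: proving cleanly that the three fractional-relator cases are mutually exclusive and exhaustive, that the parameter $t$ is genuinely maximal, and above all that exactly one reduction reaches the normal form rather than triggering a cascade. This is precisely where the structural constraints coming from the irreducibility of $\overline{X}$ and the specialization of Lemma \ref{lem frequently used} to a single appended letter must be combined with care.
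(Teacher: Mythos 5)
Your proposal is correct and follows essentially the same route as the paper's proof: localize any reducible subword of $\overline{Xw}$ to a suffix containing $\overline{x_nw}$ via Lemma \ref{lem some observations}(6), classify it into the five cases via Lemma \ref{lem four operations}, and then show the single reduction is terminal by using Lemma \ref{lem frequently used} to pin any residual reducible subword at the left junction $\overline{\cdots x_{n-r}b_{2g}}$, which is then ruled out by the irreducibility of $\overline{X}$ (cases (2)--(3)) and the maximality of $t$ (case (4)). The only addition is that you make the passage to the $\overline{wX}$ table explicit through the inversion map $\phi$ (which indeed preserves the set of forbidden words, hence irreducibility), whereas the paper simply invokes symmetry and leaves that table as an exercise.
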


The following corollary can be deduced from Lemma \ref{lem for Xw types} directly and is frequently used to obtain the irreducibility of some words in the proofs in Section \ref{sect proof of main thms}. Incidentally, Lemma \ref{lem for Xw types} can be used to estimate the standard growth rate, see Section \ref{sect remark}.

\begin{cor}\label{very useful cor}
    Let $\overline{b_1\cdots b_{4g}}\in \RR$. Then
    \begin{enumerate}
        \item if $\overline{Ab_1\cdots b_{2g}}$ is irreducible, then $\overline{A(b_{2g}\cdots b_2)^t}(t\geq 1)$ is again irreducible;
        \item if $\overline{b_2\cdots b_{2g+1}A}$ is irreducible, then $\overline{(b_{2g}\cdots b_2)^tA} (t\geq 1)$ is again irreducible.
    \end{enumerate}
\end{cor}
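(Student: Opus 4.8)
The plan is to deduce both items from Lemma~\ref{lem for Xw types} together with the ``power'' Lemma~\ref{lem from 2-nd power to n-th power}. Writing $\overline{D}=\overline{b_{2g}\cdots b_2}$, item (1) asks for irreducibility of $\overline{AD^t}$ and item (2) for that of $\overline{D^tA}$. By Lemma~\ref{lem from 2-nd power to n-th power} (taking $\overline{E}=1$ in the first case and $\overline{A}=1$ in the second), it suffices in each case to prove two things: the base case $t=1$ (namely that $\overline{AD}$, resp.\ $\overline{DA}$, is irreducible) and the irreducibility of the squared block $\overline{D^2}=\overline{(b_{2g}\cdots b_2)^2}$. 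This splits the work into a computation with a single appended letter and an inspection of one short word.

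For the base case of item (1) I would rewrite the target element as $A(b_{2g}\cdots b_2)=(Ab_1\cdots b_{2g})b_1^{-1}$, using $b_1\cdots b_{2g}=b_{2g}\cdots b_2b_1$ from Lemma~\ref{lem some observations}(2). Then I apply the $\overline{Xw}$-classification of Lemma~\ref{lem for Xw types} to the irreducible word $\overline{X}=\overline{Ab_1\cdots b_{2g}}$ and the letter $w=b_1^{-1}=b_{2g+1}$ (the last equality by Lemma~\ref{lem some observations}(1)). The suffix $\overline{x_{n-2g+1}\cdots x_n w}=\overline{b_1\cdots b_{2g}b_{2g+1}}$ is then a $(2g+1)$-fractional relator of the shape $\overline{b_{4g}b_1\cdots b_{2g}}$ matching row (2) of the table; moreover it cannot extend to the left, since $x_{n-2g}=b_{4g}$ would make $\overline{b_{4g}b_1\cdots b_{2g}}$ an $S_{(2,2g+1)}$-subword of the irreducible $\overline{Ab_1\cdots b_{2g}}$ (if $A$ is empty this issue is vacuous). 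Hence row (2) applies and gives $\overline{Xw}\xrightarrow{S_{(2,2g+1)}}\overline{Ab_{2g}\cdots b_2}$; as Lemma~\ref{lem for Xw types} returns the normal form, $\overline{Ab_{2g}\cdots b_2}=\overline{AD}$ is irreducible.

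It remains to see that $\overline{D^2}=\overline{b_{2g}\cdots b_2\,b_{2g}\cdots b_2}$ is irreducible, which I would check directly against the criterion of Lemma~\ref{lem four operations}. The word is freely reduced (the junction pair $\overline{b_2b_{2g}}$ is not an inverse pair), each block $\overline{b_{2g}\cdots b_2}$ is a locally longest fractional relator of length $2g-1$, and the junction $\overline{b_2b_{2g}}$ is not a fractional relator, so no fractional relator of length $\geq 2g$ occurs. Since its total length is $4g-2<4g$, it can contain no subword of type $S_{(2,2g+1)}$, $S_{(3,t)}$ ($t\geq 2$) or $S_{(4,1)}$, hence is irreducible. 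Combining this with the base case and Lemma~\ref{lem from 2-nd power to n-th power} yields item (1).

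Item (2) is entirely symmetric: using Lemma~\ref{lem some observations}(1)--(2) one gets $b_{2g}\cdots b_2=b_1\,b_2\cdots b_{2g+1}$, so $(b_{2g}\cdots b_2)A=b_1(b_2\cdots b_{2g+1}A)$, and I would apply the $\overline{wX}$-classification of Lemma~\ref{lem for Xw types} with $w=b_1$ prepended to the irreducible word $\overline{b_2\cdots b_{2g+1}A}$. The irreducibility of that word forces $b_2\prec b_{2g+1}$ (otherwise $\overline{b_2\cdots b_{2g+1}}$ would be an $S_{(4,1)}$-subword), which is exactly the inequality needed to land in the symmetric row (2), producing $\overline{b_{2g}\cdots b_2A}=\overline{DA}$ as the normal form. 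The squared block $\overline{D^2}$ is the same word as before, and Lemma~\ref{lem from 2-nd power to n-th power} (with $\overline{A}=1$) finishes. The only genuine care needed is in the base cases: one must correctly match the appended/prepended configuration to the right row of the tables of Lemma~\ref{lem for Xw types} and verify the maximality/non-extension condition, which follows from the irreducibility hypothesis but is where I expect the main (if routine) effort to lie.
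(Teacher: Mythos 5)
Your proposal is correct and follows essentially the same route as the paper: apply Lemma~\ref{lem for Xw types}(2) to the irreducible word $\overline{Ab_1\cdots b_{2g}}$ with the appended letter $b_{2g+1}$ to obtain the base case $\nf(Ab_1\cdots b_{2g}b_{2g+1})=\overline{Ab_{2g}\cdots b_2}$, note that $\overline{(b_{2g}\cdots b_2)^2}$ is irreducible, and invoke Lemma~\ref{lem from 2-nd power to n-th power} to reach all powers $t\geq 1$, with item (2) handled symmetrically. The extra verifications you supply (the non-extension of the fractional relator, the explicit check that $\overline{(b_{2g}\cdots b_2)^2}$ is irreducible, and the inequality $b_2\prec b_{2g+1}$ in item (2)) are details the paper leaves implicit, and they are all correct.
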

\begin{proof}
Considering the symmetry of items (1) and (2), we will focus our proof on item (1). By applying Lemma \ref{lem for Xw types}(2), we can perform the following reduction: 
	\begin{eqnarray*}
		\overline{Ab_1\cdots b_{2g}b_{2g+1}}\xrightarrow{S_{(2,2g+1)}}\overline{Ab_{2g}\cdots b_2}=\nf(Ab_1\cdots b_{2g}b_{2g+1}).\end{eqnarray*}
Additionally, observe that $\overline{(b_{2g}\cdots b_2)^2}$ is irreducible. Consequently, by applying Lemma \ref{lem from 2-nd power to n-th power}, it follows that $\overline{A(b_{2g}\cdots b_2)^t}$ remains irreducible for any $t\geq 1$.
\end{proof}

\begin{proof}[\textbf{Proof of Lemma \ref{lem for Xw types}}]
Since these two tables are symmetric, we provide a detailed proof for the first one and leave the second as an exercise for the reader. 

(1) Since $\overline{x_nw}=\overline{b_1b_1^{-1}}$ is of type $S_{(1)}$, we have	$$\overline{Xw}\xrightarrow{S_{(1)}}\overline{x_1\cdots x_{n-1}}=\nf(Xw),$$
where the ``='' holds because $\overline{x_1\cdots x_{n-1}}\subset \overline{X}$ is irreducible.

(2) Since
$\overline{x_{n-2g+1}\cdots x_{n}w}=\overline{b_{4g}b_1\cdots b_{2g}}$  is of type $S_{(2, 2g+1)}$,  we have
\begin{eqnarray*}
	\overline{Xw}&=&\overline{x_1\cdots x_{n-2g}b_{4g}b_1\cdots b_{2g-1}b_{2g}}\\
&\xrightarrow{S_{(2,2g+1)}}&\overline{x_1\cdots x_{n-2g}b_{2g-1}\cdots b_{1}}=:\overline{Y},
\end{eqnarray*}
where $x_{n-2g}\neq b_{2g},b_{4g-1}$ (Otherwise, $\overline{X}$ contains a reducible subword $\overline{b_{2g}b_{4g}}$ or $\overline{b_{4g-1}b_{4g}b_1\cdots b_{2g-1}}$, which contradicts the assumption that $\overline{X}$ is irreducible). According to Lemma \ref{lem frequently used}(1), if $\overline{Y}$ is reducible, there is a reducible subword $$\overline{V}=\overline{x_{n-2g-r+1}\cdots x_{n-2g}b_{2g-1}}\subset\overline{Y}$$ of types $S_{(i)}~(i=2,3,4)$. Thus, we have three subcases:
\begin{enumerate}
	\item[$1^\circ$] $\overline{V}=\overline{b_{4g-1}b_{4g}b_1\cdots b_{2g-1}}$ of type $S_{(2,2g+1)}$. Then, $\overline{X}$ contains a reducible subword 
    $$\overline{x_{n-4g+1}\cdots x_{n}}=\overline{b_{4g-1}(b_{4g}b_1\cdots b_{2g-2})^2b_{2g-1}}.$$
    
	\item[$2^\circ$] $\overline{V}=\overline{b_{4g-1}(b_{4g}b_1\cdots b_{2g-2})^tb_{2g-1}}$ of type $S_{(3,t)}$. Then, $\overline{X}$ contains a reducible subword $$\overline{x_{n-(2g-1)t-2g}\cdots x_{n}}= \overline{b_{4g-1}(b_{4g}b_1\cdots b_{2g-2})^{t+1}b_{2g-1}}.$$
    
	\item[$3^\circ$] $\overline{V}=\overline{b_{4g}b_1\cdots b_{2g-1}}~(b_{4g}\succ b_{2g-1})$ of type $S_{(4,1)}$. Then, $\overline{X}$ contains a reducible subword $$\overline{x_{n-2g+1}\cdots x_n}=\overline{b_{4g}b_1\cdots b_{2g-1}}.$$
\end{enumerate}
In all three cases, the irreducible word $\overline{X}$ contains reducible subwords, which is contradictory. Therefore, $\overline{Y}$ is irreducible.

(3) Since $\overline{x_{n-(2g-1)t}\cdots x_{n}w}=\overline{b_{4g}(b_1\cdots b_{2g-1})^tb_{2g}}~(t\geq 2)$ is of type $S_{(3,t)}$, we have
$$\overline{Xw}\xrightarrow{S_{(3,t)}}\overline{x_1\cdots x_{n-(2g-1)t-1}(b_{2g-1}\cdots b_1)^t}=:\overline{Y},$$
where $x_{n-(2g-1)t-1}\neq b_{2g},b_{4g-1}$. Then the remaining proof is very similar to that of Case (2).

(4) Since $\overline{x_{n-(2g-1)t+1}\cdots x_{n}w}=\overline{(b_1\cdots b_{2g-1})^tb_{2g}}(b_1\succ b_{2g})$, we have 
\begin{eqnarray*}
	\overline{Xw}&=&\overline{x_1\cdots x_{n-(2g-1)t}(b_1\cdots b_{2g-1})^tb_{2g}}\\
	&\xrightarrow{S_{(4,t)}}&\overline{x_1\cdots x_{n-(2g-1)t}b_{2g}(b_{2g-1}\cdots b_1)^t}=:\overline{Y}.
\end{eqnarray*}
Since $x_{n-(2g-1)t}\neq b_{4g}$, we have $\overline{Y}$ is freely reduced. We assume that $\overline{Y}$ has a reducible subword $\overline{V}$ of types $S_{(2,2g+1)},S_{(3,t)}~(t\geq 2)$ or $S_{(4,1)}$ containing $\overline{x_{n-(2g-1)t}b_{2g}}$. By Lemma \ref{lem frequently used}(3), we know that $\overline{V}=\overline{x_{n'}\cdots x_{n-(2g-1)t}b_{2g}}$ for some $n'\leq n-(2g-1)t$. Recall that $x_{n-(2g-1)t}\neq b_{2g+1}$, then we have the following possible cases:
\begin{equation*}
    \overline{V}=\left\{\begin{array}{ll}
        \overline{x_{n'}\cdots x_{n-(2g-1)t}b_{2g}}=\overline{b_{4g}b_1\cdots b_{2g-1}b_{2g}} & \mbox{$\overline{V}$ is of type $S_{(2,2g+1)}$} \\
        \overline{x_{n'}\cdots x_{n-(2g-1)t}b_{2g}}=\overline{b_{4g}(b_1\cdots b_{2g-1})^{t'}b_{2g}} &  \mbox{$\overline{V}$ is of type $S_{(3,t')}~(t'\geq 2)$} \\
         \overline{x_{n'}\cdots x_{n-(2g-1)t}b_{2g}}=\overline{b_1\cdots b_{2g-1}b_{2g}}& \mbox{$\overline{V}$ is of type $S_{(4,1)}$}
    \end{array}\right..
\end{equation*}
In either cases, $\overline{Xw}$ has a reducible subword of type $S_{(4,t+1)}$:
$$\overline{x_{n-(2g-1)(t+1)+1}\cdots x_{n}w}=\overline{(b_1\cdots b_{2g-1})^{t+1}b_{2g}}\subset \overline{Xw}.$$
 It contradicts the maximality of $t$ (see Remark \ref{rem of maximality}). Hence, $\overline{Y}$ does not have a reducible subword of types $S_{(1)},S_{(2,2g+1)},S_{(3,t')}~(t'\geq 2)$ or $S_{(4,1)}$ and thus it is irreducible.
	
(5) Since $\overline{Xw}$ is not in the previous four cases, it contains no reducible subword $\overline{V}$ containing $\overline{x_nw}$ of types $S_{(i)}$. Therefore, it is irreducible.
\end{proof}

\subsection{Cyclically irreducible words for Section \ref{sect. proof of key Proposition} and Appendix \ref{Appendix A1}}
As we have seen in Lemma \ref{lem from 2-nd power to n-th power}, a key step in generalizing the normal forms of squares to higher powers is to prove that the squares of some words are irreducible, i.e., the word itself is cyclically irreducible (see Proposition \ref{prop D cyclically irreducible= DD irreducible}). Thus, in what follows, we will prove in advance that the squares of words satisfying certain conditions are irreducible. Consequently, readers can skip the following two lemmas and return to them when they are cited in the proofs in Section \ref{sect. proof of key Proposition} and Appendix \ref{Appendix A1}.

\begin{lem}\label{lem types of W}
Let $\overline{b_1\cdots b_{4g}}\in\RR$. Then $\overline{A^k}$ is irreducible for every irreducible word $\overline{A}$ in Table \ref{tab: types of cyclically irreducible words 1} and any $k\geq 1$.
\begin{table}[ht]
    \centering
    \begin{tabular}{|l|c|M{5cm}|}
    \hline
       & Irreducible word $\overline{A}$  &Parameter declaration  \\
    \hline
    (1)&$\overline{x_1\cdots x_n(b_{2g}\cdots b_{2})^t}$& $\overline{Ax_1\cdots x_{n}}$ is irreducible;$\quad$
    $x_{1}\neq b_{1}$; $x_{n}\ne b_{2g+1}$\\
    \hline
    (2)&$\overline{x_1\cdots x_n(b_{2g-1}\cdots b_1)^{t}b_{2g-1}\cdots b_2(b_{2g}\cdots b_{2})^{t'}}$& $\overline{Ax_1\cdots x_{n}}$ is irreducible; $\quad$ $x_1\neq b_1$; $x_n\neq b_{2g}$ \\
    \hline
    (3)&$\overline{x_1\cdots x_n(b_{2g}\cdots b_2)^tb_1}$& $\overline{Ax_1\cdots x_{n}}$ is irreducible;$\quad$ $x_1\neq b_{4g}, b_{2g+1}$; $x_n\neq b_{2g+1},b_{4g}$; $b_{1}\succ b_{2g}$; $\overline{x_1\cdots x_n}\neq \overline{b_{2}\cdots b_{2g-1}}$\\
    \hline
    (4)&$\overline{b_{2g-1}\cdots b_2(b_{2g}\cdots b_2)^{t}b_1}$ &   $b_1\succ  b_{2g}$\\
    \hline
    (5)&$\overline{x_1\cdots x_nb_{2g}(b_{2g-1}\cdots b_1)^{t}b_{2g-1}\cdots b_2(b_{2g}\cdots b_{2})^{t'}}$& $\overline{Ax_1\cdots x_{n}}$ is irreducible;$\quad$ $x_1\neq b_1\succ b_{2g}$;  $x_n\neq b_{2g+1}$ \\
    \hline
    (6)&$\overline{x_1\cdots x_nb_{2g}(b_{2g-1}\cdots b_1)^{t}b_{2g-1}\cdots b_2(b_{2g}\cdots b_2)^{t'}b_1}$& $\overline{Ax_1\cdots x_{n}}$ is irreducible;$\quad$  $b_1\succ  b_{2g}$;  $\overline{x_1\cdots x_n}\neq \overline{b_2\cdots b_{2g-1}}$ \\
    \hline
    (7)&$\overline{(b_{2g-1}\cdots b_1)^tb_{2g-1}\cdots b_2(b_{2g}\cdots b_2)^{t'}b_1}$ & $b_1\succ b_{2g}$\\
    \hline
    \end{tabular}
\newline
	\caption{In all the cases, $n\geq 0$ and $t, t'\geq 1$. In particular, if $n=0$, the conditions regarding $x_i$ is satisfied by default.} \label{tab: types of cyclically irreducible words 1}
    \end{table}
\end{lem}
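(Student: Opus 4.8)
The plan is to reduce the statement for all powers to the single case of the square, and then to verify that square row by row. Concretely, Lemma \ref{lem from 2-nd power to n-th power}, applied with empty prefix and suffix (so that its $\overline{D}$ is our $\overline{A}$), shows that once both $\overline{A}$ and $\overline{A^2}$ are irreducible, $\overline{A^k}$ is irreducible for every $k\geq 1$; equivalently, by Proposition \ref{prop D cyclically irreducible= DD irreducible}, it suffices to prove that each $\overline{A}$ listed in Table \ref{tab: types of cyclically irreducible words 1} is cyclically irreducible. As $\overline{A}$ is irreducible by hypothesis, the entire lemma comes down to showing that $\overline{A^2}$ is irreducible in each of the seven rows.

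For the five rows carrying the hypothesis that $\overline{Ax_1\cdots x_n}$ is irreducible --- rows (1), (2), (3), (5), (6) --- I would use that each such $\overline{A}$ begins with the block $\overline{x_1\cdots x_n}$ and ends with a trailing block $\overline{R}$ built from powers of $\overline{b_{2g}\cdots b_2}$, possibly laced with the pattern $(b_{2g-1}\cdots b_1)^t$ and/or followed by a single $b_1$. Writing $\overline{A}=\overline{x_1\cdots x_nR}$, one has
$$\overline{A^2}=\overline{(Ax_1\cdots x_n)R},$$
a product of the two irreducible words $\overline{W_1}:=\overline{Ax_1\cdots x_n}$ (irreducible by hypothesis) and $\overline{W_2}:=\overline{R}$ (irreducible as a subword of $\overline{A}$). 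By Lemma \ref{lem some observations}(6) every reducible subword of $\overline{A^2}$ must straddle the single seam between $\overline{W_1}$ and $\overline{W_2}$, namely the junction $\overline{x_nr_1}$ where $r_1$ is the first letter of $\overline{R}$. This seam is exactly the junction that already sits inside the irreducible $\overline{A}$, so it is freely reduced and no subword of type $S_{(1)}$ can appear there. For the remaining types I would reduce the irreducibility of $\overline{A^2}$ to a bounded junction check: when $\overline{R}$ is a pure power $(b_{2g}\cdots b_2)^t$ (as in rows (1), (3)) this is immediate from Corollary \ref{very useful cor}, and any trailing $b_1$ rides harmlessly at the very end of $\overline{A^2}$; for the composite trailing blocks one argues similarly after splitting $\overline{R}$ into its fractional-relator pieces. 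Lemmas \ref{lem frequently used} and \ref{lem for Xw types} then classify the possible straddling subwords of type $S_{(2,2g+1)}$, $S_{(3,t)}$ or $S_{(4,1)}$, and the parameter constraints ($x_1\neq b_1$, $x_n\neq b_{2g+1}$, the orderings $b_1\succ b_{2g}$, and $\overline{x_1\cdots x_n}\neq\overline{b_2\cdots b_{2g-1}}$) force every such candidate either to descend into a forbidden subword already present in $\overline{W_1}$ or $\overline{A}$, or to contradict one of these constraints.

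Rows (4) and (7) carry no free prefix $\overline{x_1\cdots x_n}$ and serve as the base cases; here I would verify $\overline{A^2}$ irreducible directly. Each is a concatenation of fractional-relator blocks ending in $b_1$, so $\overline{A^2}$ splits at a seam of the form $b_1\mid b_{2g-1}$; the hypothesis $b_1\succ b_{2g}$ is precisely what prevents an $S_{(4,1)}$-reduction from being triggered there, while Lemma \ref{lem frequently used} (on LLFR lengths) and Lemma \ref{lem for Xw types} exclude the $S_{(2)}$- and $S_{(3)}$-type reductions. These two rows also function as the seeds that the previous five rows are ultimately reduced to through Corollary \ref{very useful cor} and Lemma \ref{lem from 2-nd power to n-th power}.

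The step I expect to be the main obstacle is the seam analysis for the structurally heaviest rows (2), (5) and (6), whose trailing blocks contain the nested powers $(b_{2g-1}\cdots b_1)^t$ and $(b_{2g}\cdots b_2)^{t'}$. There one must track the locally longest fractional relators crossing the seam and show, using Lemma \ref{lem some observations}(4) (an LLFR of length between $2$ and $2g-2$ cannot lie inside any $S_{(i)}$-subword), that every candidate reducible subword either has an LLFR too short to realize the claimed type or else forces an already-forbidden subword of $\overline{A}$ or a violation of the parameter conditions. Getting this LLFR bookkeeping correct across all the sub-positions of the seam is the crux, and is exactly the kind of lengthy case enumeration that the paper postpones to its later sections and appendix.
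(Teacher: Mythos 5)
Your reduction of the whole lemma to the single claim that $\overline{A^2}$ is irreducible (via Lemma \ref{lem from 2-nd power to n-th power} and Proposition \ref{prop D cyclically irreducible= DD irreducible}), and the observation that every reducible subword of $\overline{A^2}=\overline{(AX)R}$ must cross the seam between $\overline{AX}$ and $\overline{R}$ (Lemma \ref{lem some observations}(6)), coincide with the paper's set-up. The gaps lie in what you substitute for the paper's case analysis. Rows (1) and (3) are \emph{not} ``immediate from Corollary \ref{very useful cor}'': to invoke Corollary \ref{very useful cor}(1) with its $\overline{A}$ taken to be your $\overline{W_1}=\overline{AX}$, you need $\overline{AXb_1\cdots b_{2g}}$ to be irreducible, whereas row (1) only gives irreducibility of $\overline{AX}$. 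These are genuinely different: row (1) imposes no hypothesis on the order of $b_1$ versus $b_{2g}$, and whenever $b_1\succ b_{2g}$ the word $\overline{b_1\cdots b_{2g}}$ is itself a leading word of type $S_{(4,1)}$, so $\overline{AXb_1\cdots b_{2g}}$ is reducible for every such instance (e.g.\ already for $n=0$), and the corollary is unavailable even though the conclusion still holds; the paper proves row (1) by a direct seam analysis instead. Likewise, in row (3) the $b_1$ does not ``ride harmlessly at the very end'': in $\overline{A^2}$ the first copy's $b_1$ sits in the middle, directly in front of the second copy of $\overline{X}$ and the seam, and since $b_1\succ b_{2g}$ the excluded configuration $\overline{X}=\overline{b_2\cdots b_{2g-1}}$ would create precisely there the $S_{(4,1)}$-word $\overline{b_1b_2\cdots b_{2g}}$. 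Note that $\overline{AX}$ is irreducible even for that excluded $\overline{X}$ (the word $\overline{b_1\cdots b_{2g-1}}$ has only $2g-1$ letters), so the exclusion is not a consequence of your hypotheses; it has to be invoked exactly at the seam, which your plan never does.

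The second, more structural omission is that you have no mechanism for excluding reducible subwords that begin in the \emph{first} copy of $\overline{X}$; they cross your seam too, and neither of the lemmas you cite handles them (Lemma \ref{lem frequently used} describes what can remain reducible \emph{after} a reduction has been performed on a product of two irreducible words, and Lemma \ref{lem for Xw types} only classifies $\nf(Xw)$ for a single appended letter). The paper disposes of them with a uniform trick absent from your proposal: a subword of $\overline{XRXR}$ starting in the first $\overline{X}$ and ending in the second $\overline{R}$ contains the letter $x_n$ twice, so it has a repeated letter and can only be of type $S_{(3)}$; Lemma \ref{lem some observations}(7) (deleting the segment between the two occurrences) then yields a reducible subword of $\overline{A}=\overline{XR}$ itself, a contradiction. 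Once those are gone, what remains are the subwords starting in the first $\overline{R}$, and eliminating them is exactly the row-by-row derivation of contradictions (forcing $b_{2g+1}\in\overline{R}$, or forcing $\overline{X}=\overline{b_2\cdots b_{2g-1}}$, etc.) that you deferred as ``LLFR bookkeeping''. That deferred part, not the shortcuts you propose, is the actual proof.
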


\begin{proof}
If $\overline{A^2}$ is irreducible, then by Lemma \ref{lem from 2-nd power to n-th power}, $\overline{A^k}$ is irreducible for any $k\geq 1$. We now prove the irreducibility of $\overline{A^2}$ in the following.

Note that in items (4) and (7), $t,t'\geq 1$ and $b_1\succ b_{2g}$, then $\overline{A^2}$ is irreducible because it is of the following forms respectively and does not have a subword of types $S_{(i)} (i=1,\ldots, 4)$:
\begin{enumerate}
    \item[(4)] $\overline{A^2}=\overline{b_{2g-1}\cdots b_2(b_{2g}\cdots b_2)^{t}b_1b_{2g-1}\cdots b_2(b_{2g}\cdots b_2)^{t}b_1}$; 
    \item[(7)] $\overline{A^2}=\overline{(b_{2g-1}\cdots b_1)^tb_{2g-1}\cdots b_2(b_{2g}\cdots b_2)^{t'}b_1(b_{2g-1}\cdots b_1)^tb_{2g-1}\cdots b_2(b_{2g}\cdots b_2)^{t'}b_1}.$ 
\end{enumerate}

For the remaining items (1), (2), (3), (5) and (6), we uniformly denote $\overline{X}:=\overline{x_1\cdots x_n}(n\geq 0)$ and $\overline{A}:=\overline{XY}$. Since $\overline{XYX}$ and $\overline{XY}$ are irreducible, $\overline{XYXY}$ is freely reduced. Furthermore, if there is a reducible subword 
$$\overline{V}\subset\overline{A^2}=\overline{XYXY}$$
of types $S_{(2,2g+1)},S_{(3)}$ or $S_{(4,1)}$, then $\overline{V}$ has two possible positions: 
\begin{enumerate}
    \item [(i)] starts in the left $\overline{X}$ and ends in the right $\overline{Y}$, or 
    
\item [(ii)] starts in the left $\overline{Y}$ and ends in the right $\overline{Y}$.
\end{enumerate}
However, in case (i), $\overline{X}\neq 1$ and $\overline{V}=\overline{x_{i}\cdots x_nYx_1\cdots x_ny_1\cdots y_j}$ contains a repeating letter $x_n$ thus is of types $S_{(3)}$. By applying Lemma \ref{lem some observations}(7), $\overline{x_i\cdots x_n y_1\cdots y_j} \subset \overline{XY}$ is reducible, which contradicts the irreducibility of $\overline{XY}$.  Therefore, the case (i) is impossible.
In the following, we shall prove that the case (ii): \emph{$\overline{V}$ starts in the left $\overline{Y}$ and ends in the right $\overline{Y}$}, is also impossible by deriving a contradiction for each item individually.

(1) Since $x_{1}\neq b_{1}$ and $x_{n}\ne b_{2g+1}$, we have
\begin{eqnarray*}
		\overline{V}=\overline{\cdots b_2X b_{2g}\cdots }
		\subset \overline{\underbrace{(b_{2g}\cdots b_3b_2)^t}_Y\underbrace{x_1\cdots x_n}_X \underbrace{(b_{2g}b_{2g-1}\cdots b_2)^t}_Y}
	\end{eqnarray*}
cannot be of type $S_{(2,2g+1)}$, $S_{(3)}$ or $S_{(4,1)}$. Otherwise, $\overline{V}$ must be of the following forms respectively:
$$\overline{b_2b_3\cdots b_{2g}b_{2g+1}b_{2g+2}}, \quad \overline{b_2(b_3\cdots b_{2g+1} )^{t'}b_{2g+2}}, \quad\overline{b_2b_3\cdots b_{2g+1}},$$ 
which implies $b_{2g+1}\in \overline{Y}$, a contradiction.

(2) Since $x_1\neq b_1$ and $x_n\neq b_{2g}$, we have
\begin{eqnarray*}
	\overline{V}&=&\overline{\cdots b_2Xb_{2g-1}\cdots }\\
	&\subset &\overline{\underbrace{(b_{2g-1}\cdots b_1)^{t}b_{2g-1}\cdots b_2(b_{2g}\cdots b_2)^{t'}}_Y\underbrace{x_1\cdots x_n}_X \underbrace{(b_{2g-1}\cdots b_1)^{t}b_{2g-1}\cdots b_2(b_{2g}\cdots b_2)^{t'}}_Y}.
\end{eqnarray*}
Then after a discussion similar to that of item (1), we can also get a contradiction.

(3) Since $b_{1}\succ b_{2g}$, $x_1\neq b_{4g}, b_{2g+1}$, $\overline{x_1\cdots x_n}\neq \overline{b_{2}\cdots b_{2g-1}}$ and $x_n\neq b_{2g+1},b_{4g}$, we have
\begin{eqnarray*}
	\overline{V}=\overline{\cdots b_{1} Xb_{2g} \cdots}
\subset \overline{\underbrace{(b_{2g}\cdots b_2)^tb_1}_Y \underbrace{x_1\cdots x_n}_X\underbrace{(b_{2g}\cdots b_2)^tb_1}_Y }.
\end{eqnarray*}
If $\overline{V}$ is of type $S_{(2,2g+1)}$,  $S_{(3)}$ or $S_{(4,1)}$, then it is of the following forms respectively:
$$\overline{b_1b_2\cdots b_{2g-1}b_{2g}b_{2g+1} },\quad \overline{b_1(b_2\cdots b_{2g})^{t'}b_{2g+1} },\quad \overline{b_1b_2\cdots b_{2g-1}b_{2g} }.$$
Note that the first two cases imply $b_{2g+1}\in \overline{Y}$, and the last one implies $\overline{x_1\cdots x_n}=\overline{b_2\cdots b_{2g-1}}$, a contradiction.

(5) Since $x_1\neq b_1\succ b_{2g}$ and $x_n\neq b_{2g+1}$, we have
\begin{eqnarray*}
	\overline{V}&=&\overline{\cdots b_{2} Xb_{2g} \cdots}\\
    &=&\overline{\underbrace{b_{2g}(b_{2g-1}\cdots b_1)^{t}b_{2g-1}\cdots b_2(b_{2g}\cdots b_{2})^{t'}}_Y \underbrace{x_1\cdots x_n}_X\underbrace{b_{2g}(b_{2g-1}\cdots b_1)^{t}b_{2g-1}\cdots b_2(b_{2g}\cdots b_{2})^{t'}}_Y }.
\end{eqnarray*}
Then after a similar discussion to that of item (1), we can obtain a contradiction.

(6) Since $b_1\succ  b_{2g}$ and $\overline{x_1\cdots x_n}\neq \overline{b_2\cdots b_{2g-1}}$, we have
\begin{eqnarray*}
	\overline{V}&=&\overline{\cdots b_{1} Xb_{2g} \cdots}\\
	&\subset &\overline{ \underbrace{b_{2g}(b_{2g-1}\cdots b_1)^{t}b_{2g-1}\cdots b_2(b_{2g}\cdots b_2)^{t'}b_1}_Y\underbrace{x_1\cdots x_n}_X\underbrace{b_{2g}(b_{2g-1}\cdots b_1)^{t}b_{2g-1}\cdots b_2(b_{2g}\cdots b_2)^{t'}b_1}_Y}.
\end{eqnarray*}
Then by repeating the same discussion outlined in item (3), we can again derive a contradiction. 
\end{proof}

\subsection{\texorpdfstring{Words of type $\overline{U}(i,j)$ and cyclically irreducible words for Appendix \ref{Appendix A2}}{Words of Type overline{U}(i,j) and cyclically irreducible words for Appendix \ref{Appendix A2}}}
The following lemma mainly serves the proof of Proposition \ref{prop classification of LLFR of X^2}(\ref{XX reducible})($|\overline{T}|\in\{2g+1,\dots,4g-2\}$), see Appendix \ref{Appendix A2}. So, readers may skip it and revisit it when necessary. First, we need a definition as follows.

\begin{defn}[Word of Type $\overline{U}(i,j)$]\label{defn irreducible word pair of type (i,j)}
 Let $\overline{b_1\cdots b_{4g}}\in\RR$ and let $\overline{U}=\overline{u_1\cdots u_n}~(n\geq 0)$ be an irreducible word in $\W(\gs)$. We say $\overline{A}\in \W(\gs)$ is \emph{of type $\overline{U}(i,j)$} ($2 \leq i\leq j\leq 5$) if it satisfies the following conditions:
$$\overline{A}=\overline{UL_i'MR_j'},$$
where $\overline{M}=\overline{b_{2g-1}b_{2g-2}\cdots b_{k-2g+2}}$~($2g+1\leq k \leq 4g-2$, and $\overline{M}=1$ when $k=4g-2$) and $\overline{L_i'},~\overline{R_j'}$ are of the forms corresponding to $i,j$ in the following tables, respectively. In particular, $\overline{U}\neq 1$ (i.e. $n> 0$) when $k=4g-2$ and $ 2\leq i\leq j\leq 3$. The conditions regarding $u_1, u_n$ are satisfied by default when $n=0$.
    \begin{center}
    %\centering
    \begin{tabular}{|M{0.3cm}|M{3.7cm}|M{5cm}|}
    \hline
       $i$ & $\overline{L_i'}$ & Parameters    \\
    \hline
    $2$&$\overline{b_{2g-1}\cdots b_1}$ &$u_n\neq b_{2g} $ \\
    \hline
    $3$ &$\overline{(b_{2g-1}\cdots b_1)^t}$ &$u_n\neq b_{2g}; ~t\geq 2$ \\
    \hline
    $4$ &$\overline{b_{2g}(b_{2g-1}\cdots b_1)^t} $ &$u_n\neq b_{2g+1}; ~b_1\succ b_{2g}; ~t\geq 1$ \\
    \hline
    $5$ &$\overline{b_{2g}}$ & $u_n\neq b_{2g+1}, b_{4g}$ \\
    \hline
    \end{tabular}
    \end{center}
    \begin{center}
    \begin{tabular}{|M{0.3cm}|M{3.7cm}|M{5cm}|}
    \hline
       $j$ & $\overline{R_j'}$ & Parameters    \\
    \hline
    $2$ &$\overline{b_k\cdots b_{k-2g+2}} $ &$u_1\neq b_{k-2g+1} $ \\
    \hline
    $3$&$\overline{(b_k\cdots b_{k-2g+2})^{t'}} $ &$u_1\neq b_{k-2g+1}; ~t'\geq 2$\\
    \hline
    $4$&$\overline{(b_k\cdots b_{k-2g+2})^{t'}b_{k-2g+1}} $ &$u_1\neq b_{k-2g}; ~b_{k-2g+1}\succ b_k;~t'\geq 1 $ \\
    \hline
    $5$&$\overline{b_{k-2g+1}}$ &$u_1\neq b_{k-2g},b_{k+1} $ \\
    \hline
    \end{tabular}    
    \end{center}
\end{defn}

Then, we can obtain the following lemma.
\begin{lem}\label{lem types of W for lengh =2g+1 to 4g-2}
If $\overline{A}\in \W(\gs)$ is of type $\overline{U}(i,j)$ and
$\overline{AU}$ is irreducible, then $\overline{A^k}$ is irreducible for any $k\geq 1$.
\end{lem}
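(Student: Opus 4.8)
The plan is to follow the two‑step template already used for Lemma \ref{lem types of W}: first reduce the statement to the single assertion that $\overline{A^2}$ is irreducible, and then localize and exclude every hypothetical reduction of $\overline{A^2}$. For the reduction, observe that $\overline{A}$ is a prefix of the irreducible word $\overline{AU}$ and is therefore itself irreducible. Hence, once $\overline{A^2}$ is shown to be irreducible, Lemma \ref{lem from 2-nd power to n-th power} applied with both flanking words empty (taking $\overline{D}=\overline{A}$) yields at once that $\overline{A^k}$ is irreducible for every $k\geq 1$. Thus everything comes down to proving that $\overline{A^2}$ is irreducible.

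\textbf{Localizing a hypothetical reduction.} Set $\overline{W'}:=\overline{L_i'MR_j'}$, so that $\overline{A}=\overline{UW'}$ and $\overline{A^2}=\overline{UW'UW'}=\overline{AU}\cdot\overline{W'}$. Both factors $\overline{AU}$ (given) and $\overline{W'}$ (a suffix, hence subword, of the irreducible $\overline{A}$) are irreducible, so $\overline{A^2}$ is freely reduced; by Lemma \ref{lem four operations} any reducible subword $\overline{V}$ of $\overline{A^2}$ must be of type $S_{(2,2g+1)}$, $S_{(3,t)}\,(t\geq2)$ or $S_{(4,1)}$. I would then apply Lemma \ref{lem some observations}(6) to the two‑block factorization $\overline{A^2}=\overline{AU}\cdot\overline{W'}$: such a $\overline{V}$ must contain the junction pair $\overline{u_n\,\ell}$, where $\ell$ is the first letter of $\overline{L_i'}$. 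If $\overline{V}$ stayed inside the second copy $\overline{A}=\overline{U\,W'}$ it would contradict the irreducibility of $\overline{A}$; hence $\overline{V}$ must reach back across the central $\overline{U}$ into the $\overline{R_j'}$ of the first copy, giving the canonical shape $\overline{V}=\overline{R''\,U\,Q}$, where $\overline{R''}$ is a suffix of the first copy ending in $\overline{R_j'}$ and $\overline{Q}$ is a prefix of the second $\overline{W'}$ beginning with $\overline{L_i'}$. In particular $\overline{V}$ contains the entire central $\overline{U}$ in its interior, with genuine fractional‑relator material on both sides.

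\textbf{The heart of the argument.} It remains to rule out the configuration $\overline{V}=\overline{R''\,U\,Q}$, and here I would carry out the case analysis over the ten pairs $(i,j)$ with $2\leq i\leq j\leq 5$. The governing principle is that a word of type $S_{(2)}$, $S_{(3)}$ or $S_{(4)}$ is assembled from a single $\overline R\in\RR$; reading $\overline{V}$ from left to right, the relator position at the end of $\overline{R_j'}$ forces a prescribed value for $u_1$, while the position at the start of $\overline{L_i'}$ forces a prescribed value for $u_n$. The forbidden‑letter hypotheses of Definition \ref{defn irreducible word pair of type (i,j)} (for instance $u_1\neq b_{k-2g+1}$ when $j=2$, coming from the descending run ending $\overline{R_2'}$, and $u_n\neq b_{2g}$ when $i=2,3$) are exactly these prohibited values, and the order conditions $b_1\succ b_{2g}$ (for $i=4$) and $b_{k-2g+1}\succ b_k$ (for $j=4$) are precisely what obstruct the $S_{(4)}$‑type alignments. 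In each case the alignment either violates a forbidden‑letter condition or forces $\overline{AU}$ (respectively $\overline{A}$) to contain a strictly shorter reducible subword, contradicting its irreducibility. When $\overline{V}$ is of type $S_{(3,t)}$ I would additionally invoke Lemma \ref{lem some observations}(7), together with the propagation control of Lemma \ref{lem frequently used}, to collapse the repeated blocks of $\overline{V}$ and reduce to the same contradiction. Finally, the degenerate subcase $k=4g-2$ (so $\overline{M}=1$) with $2\leq i\leq j\leq 3$ is exactly where the first copy's $\overline{R_j'}$ could abut the second copy's $\overline{L_i'}$ and continue a descending relator without any buffer; this is why the definition imposes $\overline{U}\neq1$ there, and that hypothesis is used precisely in this subcase to separate the two connectors.

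\textbf{Main obstacle.} The difficulty is concentrated entirely in this final case analysis. Because $\overline{U}$ is an essentially arbitrary irreducible word, no contradiction can be extracted from its interior: it must come solely from the two boundary letters $u_1,u_n$ and from the maximality built into the connectors $\overline{L_i'}$ and $\overline{R_j'}$. Keeping track of how the single relator underlying $\overline{V}$ threads through $\overline{R_j'}$, across $\overline{U}$, and into $\overline{L_i'}$ (remembering, via Lemma \ref{lem some observations}(2), that the descending connectors correspond to reversed elements of $\RR$), while respecting the $S_{(2)}/S_{(3)}/S_{(4)}$ precedence and the maximality convention of Remark \ref{rem of maximality}, is where the bookkeeping is heaviest and where the ten‑fold case split genuinely has to be run.
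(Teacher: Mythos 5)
Your proposal is correct and follows essentially the same route as the paper's proof: the same reduction of the statement to the irreducibility of $\overline{A^2}$ via Lemma \ref{lem from 2-nd power to n-th power}, the same localization of a hypothetical reducible subword $\overline{V}$ so that it spans the central $\overline{U}$ (using Lemma \ref{lem some observations}(6), the irreducibility of $\overline{AU}$ and $\overline{A}$, and the repeated-letter collapse of Lemma \ref{lem some observations}(7) for the $S_{(3)}$ configurations), and the same concluding case analysis over $(i,j)$ driven by the forbidden-letter and order conditions of Definition \ref{defn irreducible word pair of type (i,j)}, including the correct identification of the degenerate subcase $k=4g-2$, $2\leq i\leq j\leq 3$ as the place where $\overline{U}\neq 1$ is needed. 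The only difference is one of completeness rather than of method: you describe the ten-fold case split and the contradictions it should yield, whereas the paper actually executes it (grouped into six cases), so your sketch would still need that verification carried out to be a full proof.
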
	
\begin{proof}
To prove  that $\overline{A^k}$ is irreducible for any $k\geq 1$, by Lemma \ref{lem from 2-nd power to n-th power}, it suffices to prove $\overline{A^2}$ is irreducible. Since $\overline{A}$ is of type $\overline{U}(i,j)$ and $\overline{AU}$ is irreducible, $\overline{A}$ is cyclically freely reduced and has a uniform expression $$\overline{A}=\overline{\underbrace{u_1\cdots u_n}_U\underbrace{L_i'MR_j'}_W} ~(n\geq 0, ~ 2\leq i\leq j\leq 5).$$ 
If $\overline{A^2}=\overline{UWUW}$ is reducible, then by Lemma \ref{lem four operations}, it contains a reducible subword $\overline{V}$ of types $S_{(2,2g+1)},S_{(3)}$ or $S_{(4,1)}$. Furthermore, since $\overline{AU}=\overline{UWU}$ is irreducible, by Lemma \ref{lem some observations}(6), $\overline{V}$ must start in the left $\overline{UW}$ and end in the right $\overline{W}$. However, if $\overline{V}$ starts in the left $\overline{U}$ and ends in the right $\overline{W}$, then $\overline{V}=\overline{u_i\cdots u_nWu_1\cdots u_nW_0}$ contains a repeating letter $u_n$ and thus is of type $S_{(3)}$. Then, by Lemma \ref{lem some observations}(7), there is a reducible subword $\overline{u_i\cdots u_nW_0}\subset\overline{UW}$, which contradicts the irreducibility of $\overline{UW
}$. Therefore, $$\overline{V}\subset\overline{A^2}=\overline{UWUW}$$ can only start in the left $\overline{W}$ and end in the right $\overline{W}$. 

In the subsequent discussion, we will demonstrate that such reducible subword $\overline{V}$ cannot be found, thereby indicating that $\overline{A^2}$ is irreducible. It is important to note that we will repeatedly rely on the fact ``every $\llfr$  $\overline{T}\subset\overline{V}$ has length $|\overline{T}|\geq 2g-1$''. To facilitate our discussion, we need to divide these cases as follows.

	\textbf{Case (1).} $k=4g-2$ and $i,j\neq 5$. Then $\overline{M}=1$ and hence $$\overline{W}=\overline{\underbrace{\cdots b_{2g-1}\cdots b_2b_1}_{L_i'}\underbrace{b_{4g-2}b_{4g-3}\cdots b_{2g}\cdots }_{R_j'}}.$$ Since $\overline{b_1b_{4g-2}}\subset \overline{W}$ is not a subword of any word of types $S_{(2,2g+1)},S_{(3)}$ or $S_{(4,1)}$, we have $\overline{b_1b_{4g-2}}\notin\overline{V}$, and hence $\overline{V}\subset \overline{WUW}=\overline{L'_iR'_jUL'_iR'_j}$ must start in the left $\overline{R_j'}$ and end in the right $\overline{L_i'}$.

Subcase (1.1). $i=2,3;j=2,3$. Then $\overline{U}\neq 1$ and
		\begin{eqnarray*}
			\overline{V}=\overline{\cdots b_{2g}Ub_{2g-1}\cdots }
			\subset \overline{\underbrace{(b_{4g-2}\cdots b_{2g})^{t'}}_{R_j'}\underbrace{u_1\cdots u_n}_{U\neq 1} \underbrace{(b_{2g-1}\cdots b_{1})^t}_{L_i'}},
		\end{eqnarray*}
		where $u_1\neq b_{2g-1};u_n\neq b_{2g}$ and $t,t'\geq 1$. It implies that $\overline{V}$ cannot be  of types $S_{(2,2g+1)},S_{(4,1)}$ or $S_{(3)}$,  a contradiction.

        Subcase (1.2). $i=2,3;j=4$. Then
		\begin{eqnarray*}
			\overline{V}=\overline{\cdots b_{2g-1}Xb_{2g-1}\cdots }
			\subset \overline{\underbrace{(b_{4g-2}\cdots b_{2g})^{t'}b_{2g-1}}_{R_4'}\underbrace{u_1\cdots u_n}_{U}\underbrace{(b_{2g-1}\cdots b_{1})^t}_{L'_i}},
		\end{eqnarray*}
		where $u_1\neq b_{2g-2};u_n\neq b_{2g}$ and $t,t'\geq 1$. Note that the letter $b_{2g-1}$ appears more than once in $\overline{V}$,  thus 
		$$\overline{V}=\overline{b_{4g-2}(b_{4g-3}\cdots b_{2g-1})^{t_0}b_{2g-2}}$$
is of type $S_{(3)}$,	which leads to $u_n=b_{2g}$, a contradiction.
		
Subcase (1.3). $i=4;j= 4$. Then
		\begin{eqnarray*}
			\overline{V}=\overline{\cdots b_{2g-1}Ub_{2g}\cdots }
			\subset \overline{\underbrace{(b_{4g-2}\cdots b_{2g})^{t'}b_{2g-1}}_{R_4'}\underbrace{u_1\cdots u_n}_{U}\underbrace{b_{2g}(b_{2g-1}\cdots b_{1})^t}_{L_4'}},
		\end{eqnarray*}
		where $u_1\neq b_{2g-2};u_n\neq b_{2g+1}$; $t,t'\geq 1$; $b_{2g-1}\succ b_{4g-2}$ and $ ~b_1\succ b_{2g}$. Since $\overline{R'_4L'_4}$ is irreducible, it contains no reducible subword. Therefore, we have $\overline{U}\neq 1$ and $\overline{V}$ is not of types $S_{(2,2g+1)}$ or $S_{(4,1)}$. Hence $\overline{V}$ is of type $S_{(3)}$ but $$\overline{V}=\overline{b_{2g-1}(b_{2g}\cdots b_{4g-2})^{t_0}b_{4g-1}}\nsubseteq \overline{R_4'UL_4'}.$$
Therefore, in either case, it is contradictory.

\textbf{Case (2).} $k=4g-2$ and $i\neq 5;j=5$. Then $\overline{M}=1$ and $\overline{R_5'}=\overline{b_{2g-1}}$.
	
Subcase (2.1). $i=2,3$. Then 
	\begin{eqnarray*}
    \overline{W}&=&\overline{\underbrace{(b_{2g-1}\cdots b_1)^t}_{L_i'}\underbrace{b_{2g-1}}_{R_5'}}\\
	\overline{V}&=&\overline{\cdots b_{2g-1}Ub_{2g-1}\cdots }
	\subset \overline{\underbrace{(b_{2g-1}\cdots b_1)^tb_{2g-1}}_{W}\underbrace{u_1\cdots u_n}_{U}\underbrace{(b_{2g-1}\cdots b_1)^tb_{2g-1}}_{W}},
	\end{eqnarray*}
where $t\geq 1;\ u_1\neq b_{2g-2},b_{4g-1}$ and $u_n\neq b_{2g}$. Furthermore, by Lemma \ref{lem some observations}(7), if we delete $\overline{b_{2g-1}U}$ from $\overline{V}$ and $\overline{WUW}$ to get $\overline{V'}$ and $\overline{W'}$ respectively, then $\overline{V'}\subset \overline{W'}$ will be of types $S_{(2,2g+1)}$ or $S_{(3)}$. However, $$\overline{W'}=\overline{(b_{2g-1}\cdots b_1)^t(b_{2g-1}\cdots b_1)^tb_{2g-1}}$$
does not contain a reducible subword, which is contradictory.
	
Subcase (2.2). $i=4$. Then 
	\begin{eqnarray*}
    \overline{W}&=&\overline{\underbrace{b_{2g}(b_{2g-1}\cdots b_1)^t}_{L_4'}\underbrace{b_{2g-1}}_{R_5'}}\\
	\overline{V}&=&\overline{\cdots b_{2g-1}Ub_{2g}\cdots }
		\subset \overline{\underbrace{b_{2g}(b_{2g-1}\cdots b_1)^tb_{2g-1}}_{W}\underbrace{u_1\cdots u_n}_{U}\underbrace{b_{2g}(b_{2g-1}\cdots b_1)^tb_{2g-1}}_{W}},
	\end{eqnarray*}
	where $t\geq 1;\ u_1\neq b_{2g-2},b_{4g-1}$ and $u_n\neq b_{2g}$. Apparently, $\overline{V}$ is not of types $S_{(2,2g+1)}$ or $S_{(4,1)}$ and thus is of type $S_{(3)}$. Therefore, $\overline{V}=\overline{b_{2g-1}(b_{2g}\cdots b_{4g-2})^{t_0}b_{4g-1}}\nsubseteq \overline{WUW}$, which is contradictory.
	
\textbf{Case (3).} $i,j=5$. Then 
	\begin{eqnarray*}
    \overline{W}&=&\overline{\underbrace{b_{2g}}_{L_5'}\underbrace{b_{2g-1}\cdots b_{k-2g+2}}_{M}\underbrace{b_{k-2g+1}}_{R_5'}}\\
		\overline{V}&=&\overline{\cdots b_{k-2g+1}U b_{2g}\cdots}
		\subset \overline{\underbrace{b_{2g}\cdots b_{k-2g+1}}_{W}\underbrace{u_1\cdots u_n}_{U}\underbrace{b_{2g}\cdots b_{k-2g+1}}_{W}},
	\end{eqnarray*}
	where $u_1\neq b_{k-2g},b_{k+1};u_n\neq b_{2g+1},b_{4g}$.  Furthermore, if $\overline{V}$ is of types $S_{(2,2g+1)}$ or $S_{(4,1)}$, then 
	$$\overline{V}=\overline{b_{k-2g+1}b_{k-2g+2}\cdots b_{2g-1}b_{2g}}$$
 with length $=4g-k\leq 2g-1$ contradicts $|\overline{V}|=2g+1, 2g$; if $\overline{V}$ is of type $S_{(3)}$, then
	$$\overline{V}=\overline{b_{k-2g+1}(b_{k-2g+2}\cdots b_{k})^{t_0}b_{k+1}}\nsubseteq\overline{WUW},$$
	which is also contradictory.
	
\textbf{Case (4).} $2g+1\leq k\leq 4g-3$ and $i,j\neq 5$. Then $\overline{M}$ has length $\leq 2g-3$.  Since $\overline{M}$ is an $\llfr$ or a single letter that cannot form a fractional relator with adjacent letters in $\overline{WUW}$, we obtain that if $$\overline{V}\subset \overline{WUW}=\overline{L_i'MR_j'UL_i'MR_j'}$$ contains a subword $ \overline{W'_2}\subset\overline{M}$, then $\overline{V}$ cannot be of types $S_{(2,2g+1)},S_{(3)}$ or $S_{(4,1)}$ because $|\overline{W'_2}|\leq |\overline{M}|\leq 2g-3$. Therefore, $\overline{V}$ starts in the left $\overline{R_j'}$ and ends in the right $\overline{L_i'}$.

Subcase (4.1). $i=2,3;j=2,3$. Then
		\begin{eqnarray*}
		\overline{V}=\overline{\cdots b_{k-2g+2}Ub_{2g-1} \cdots}
			\subset \overline{\underbrace{(b_{k}\cdots b_{k-2g+2})^{t'}}_{R_j'}\underbrace{u_1\cdots u_n}_{U}\underbrace{(b_{2g-1}\cdots b_1)^{t}}_{L_i'}},
		\end{eqnarray*}
		where $t,t'\geq 1;u_1\neq b_{k-2g+1};u_n\neq b_{2g}$ and $b_{k-2g+2}\in\{b_{3},\dots ,b_{2g-1}\}$. Apparently, $\overline{V}$ is not of types $S_{(2,2g+1)},S_{(4,1)}$ and thus is of type $S_{(3)}$. Upon a simple check, we obtain that $\overline{V}$ cannot be of type $S_{(3)}$, neither.
		
Subcase (4.2). $i=2,3;j=4$. Then
		\begin{eqnarray*}
			\overline{V}=\overline{\cdots b_{k-2g+1}Ub_{2g-1} \cdots}
			\subset \overline{\underbrace{(b_{k}\cdots b_{k-2g+2})^{t'}b_{k-2g+1}}_{R_4'}\underbrace{u_1\cdots u_n}_{U}\underbrace{(b_{2g-1}\cdots b_1)^{t}}_{L_i'}},
		\end{eqnarray*}
		where $t,t'\geq 1;x_1\neq b_{k-2g};x_n\neq b_{2g}$ and $b_{k-2g+1}\in\{b_{2},\dots ,b_{2g-2}\}$. With the same argument as in Subcase (4.1), we can prove that there is no such reducible subword $\overline{V}$ of types $S_{(2,2g+1)},S_{(3)}$ or $S_{(4,1)}$.
		
Subcase (4.3). $i=4;j=4$. Then
		\begin{eqnarray*}
			\overline{V}=\overline{\cdots b_{k-2g+1}Ub_{2g} \cdots}
			\subset \overline{\underbrace{(b_{k}\cdots b_{k-2g+2})^{t'}b_{k-2g+1}}_{R_4'}\underbrace{u_1\cdots u_n}_{U}\underbrace{b_{2g}(b_{2g-1}\cdots b_1)^{t}}_{L_4'}},
		\end{eqnarray*}
		where $t,t'\geq 1;n\geq 0;x_1\neq b_{k-2g};x_n\neq b_{2g}$ and $b_{k-2g+1}\in\{b_{2},\dots ,b_{2g-2}\}$. The argument is the same as the above two subcases.

\textbf{Case (5).} $2g+1\leq k\leq 4g-3$ and $i=2,3;j= 5$. Then
	 \begin{eqnarray*}
     \overline{W}&=&\overline{\underbrace{(b_{2g-1}\cdots b_{1})^t}_{L_i'}\underbrace{b_{2g-1}\cdots b_{k-2g+2}}_{M}\underbrace{b_{k-2g+1}}_{R_5'}}\\
	 	\overline{V}&=&\overline{\cdots b_{k-2g+1}Ub_{2g-1}\cdots }\\
	 	&\subset &\overline{WUW}=\overline{\underbrace{(b_{2g-1}\cdots b_1)^{t}}_{L_i'}\underbrace{b_{2g-1}\cdots b_{k-2g+1}}_{MR_5'}\underbrace{u_1\cdots u_n}_{U}\underbrace{(b_{2g-1}\cdots b_1)^{t}}_{L_i'}\underbrace{b_{2g-1}\dots b_{k-2g+1}}_{MR_5'}},
	 \end{eqnarray*} 
	where $t\geq 1;u_1\neq b_{k-2g},b_{k+1}$ and $u_n\neq b_{2g}$. Note that the two $\overline{MR_5'}$ have length $2\leq |\overline{MR_5'}|\leq 2g-2$ and are both $\llfr$s in $\overline{WUW}$. Furthermore, if $$\overline{V}=\overline{\cdots b_{k-2g+2}\underbrace{b_{k-2g+1}}_{R_5'}\underbrace{u_1\cdots u_n}_{U}b_{2g-1}\cdots }~~\mbox{ or }~~\overline{\cdots \underbrace{b_{k-2g+1}}_{R_5'} \underbrace{u_1\cdots u_n}_{U}\underbrace{(b_{2g-1}\cdots b_1)^t}_{L_i'}b_{2g-1}b_{2g-2}\cdots },$$
	there will be an $\llfr~(\subset\overline{MR_5'})$  with length $\leq 2g-2$ at the beginning or end of $\overline{V}$, respectively, which contradicts the fact ``every $\llfr$ $\overline{T}\subset\overline{V}$ has length $|\overline{T}|\geq 2g-1$''. Therefore, 
	\begin{eqnarray*}
		\overline{V}=\overline{R_5'Ub_{2g-1}\cdots }\subset \overline{\underbrace{b_{k-2g+1}}_{R_5'} \underbrace{u_1\cdots u_n}_{U}\underbrace{(b_{2g-1}\cdots b_1)^{t}}_{L_i'}b_{2g-1}}.
	\end{eqnarray*}
    Note that $b_{k-2g+1}\in\{b_{2},\dots ,b_{2g-2}\}$ and $u_1\neq b_{k-2g}$ and $u_n\neq b_{2g}$.
It is straightforward to verify that there is no such reducible subword $\overline{V}$ of types $S_{(2,2g+1)},S_{(3)}$ or $S_{(4,1)}$.

\textbf{Case (6).} $2g+1\leq k\leq 4g-3$ and $i=4;j= 5$. Then 
\begin{eqnarray*}
\overline{W}&=&\overline{\underbrace{b_{2g}(b_{2g-1}\cdots b_1)^t}_{L_4'}\underbrace{b_{2g-1}\cdots b_{k-2g+2}}_{M}\underbrace{b_{k-2g+1}}_{R_5'}}\\
\overline{V}&\subset&\overline{WUW}=\overline{L_4'MR_5'UL_4'MR_5'},
\end{eqnarray*} 
where $t\geq 1;u_1\neq b_{k-2g},b_{k+1}$ and $u_n\neq b_{2g+1}$. Note that the two $\overline{MR_5'}$ have length $2\leq |\overline{MR_5'}|\leq 2g-2$ and are both $\llfr$s in $\overline{WUW}$. Furthermore, if  $$\overline{V}=\overline{\cdots b_{k-2g+2}\underbrace{b_{k-2g+1}}_{R_5'}\underbrace{u_1\cdots u_n}_Ub_{2g}\cdots }~~\mbox{ or }~~\overline{\cdots \underbrace{b_{k-2g+1}}_{R_5'}\underbrace{u_1\cdots u_n}_U\underbrace{b_{2g}(b_{2g-1}\cdots b_1)^t}_{L_4'}b_{2g-1}b_{2g-2}\cdots },$$
there will be an $\llfr~(\subset\overline{MR_5'})$ with length $\leq 2g-2$ at the beginning or end of $\overline{V}$, respectively, which is contradictory. Therefore, 
\begin{eqnarray*}
	\overline{V}=\overline{R_5'Ub_{2g}\cdots }
	\subset \overline{\underbrace{b_{k-2g+1}}_{R_5'}\underbrace{u_1\cdots u_n}_{U}\underbrace{b_{2g}(b_{2g-1}\cdots b_1)^{t}}_{L_4'}b_{2g-1}}.
\end{eqnarray*}
Note that $b_{k-2g+1}\in\{b_{2},\dots ,b_{2g-2}\}$.
It is straightforward to verify that there is no such reducible subword $\overline{V}$ of types $S_{(2,2g+1)},S_{(3)}$ or $S_{(4,1)}$.

In conclusion, through an examination of the following table, we have proved that for all cases, there does not exist a reducible word $\overline{V}\subset \overline{A^2}=\overline{UWUW}$. Therefore, $\overline{A^2}$ is irreducible and the proof is complete. \end{proof}
\begin{center}
	\begin{tabular}{|c|c|c|}
		\hline
		& $k=4g-2$ & $k=2g+1,\cdots,4g-3$ \\
		\hline
		$i=2,3; ~j=2,3$ & Case (1) & Case (4)  \\
		\hline
		$i=2,3; ~j=4$ & Case (1) & Case (4)  \\
		\hline
		$i=4; ~j=4$ & Case (1)  & Case (4)  \\
		\hline
		$i=2,3;~j=5$ & Case (2)  & Case (5)  \\
		\hline
		$i=4; ~j=5$&  Case (2) & Case (6) \\
		\hline
		$i=5;~j=5$ & Case (3) & Case (3)  \\
		\hline
	\end{tabular}	
	\end{center}

\section{Proof of Theorem \ref{main thm1 (original main thm2)}}\label{sect proof of main thms}

In this section, we shall prove Theorem \ref{thm normal forms of power elements (original main thm1)} and Theorem \ref{main thm1 (original main thm2)}. 
First, we have the following lemma, which serves as the basis for the case-by-case discussion in the key Proposition \ref{prop classification of LLFR of X^2}.

\begin{lem}\label{prop length classification of LLFR at the joint}
 Let $\overline{U}=\overline{u_1\cdots u_l}$ and $\overline{V}=\overline{v_1\cdots v_m}$ be two nonempty irreducible words in $\W(\gs)$. If $\overline{u_lv_1}$ is a fractional relator, then the $\llfr$ $\overline{T}$ of $\overline{UV}$ that contains $\overline{u_lv_1}$ has length 
 $$|\overline{T}|\in\{2,3,\dots,4g-1\}.$$ Furthermore, for any nonempty irreducible word $\overline{X}=\overline{x_1\cdots x_n}\in\W(\gs)$, if there exists an $\llfr$ of $\overline{X^2}$ that contains $\overline{x_nx_1}$ with length $\leq 2g-2$, then $\overline{X^2}$ is irreducible.
 \end{lem}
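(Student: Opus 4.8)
The plan is to prove the two assertions separately, in both cases arguing by contradiction and exploiting the fact (Lemma \ref{lem four operations}) that an irreducible word contains no subword of type $S_{(2,2g+1)}$, $S_{(3,t)}$ ($t\geq 2$) or $S_{(4,1)}$, together with two elementary observations drawn from Definition \ref{def S-reduced}: a fractional relator of length $\geq 2g+1$ is automatically of type $S_{(2)}$, and a fractional relator $\overline{b_1\cdots b_{2g}}$ of length $2g$ with $b_1\succ b_{2g}$ is of type $S_{(4,1)}$.

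For the first assertion the lower bound $|\overline{T}|\geq 2$ is immediate since $\overline{T}\supseteq\overline{u_lv_1}$, and for the upper bound it suffices to rule out $|\overline{T}|=4g$, as $4g$ is the maximal possible length of a fractional relator. So I would suppose $\overline{T}=\overline{b_1\cdots b_{4g}}\in\RR$ and write it straddling the junction as $\overline{u_{l-r+1}\cdots u_l}=\overline{b_1\cdots b_r}$ (a suffix of $\overline{U}$) and $\overline{v_1\cdots v_s}=\overline{b_{r+1}\cdots b_{4g}}$ (a prefix of $\overline{V}$), with $r+s=4g$ and $1\leq r\leq 4g-1$, and then split into three cases. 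If $r\geq 2g+1$, the prefix $\overline{b_1\cdots b_{2g+1}}$ of $\overline{T}$ lies inside $\overline{U}$ and is of type $S_{(2,2g+1)}$, contradicting irreducibility of $\overline{U}$; symmetrically, if $r\leq 2g-1$ the suffix $\overline{b_{2g}\cdots b_{4g}}$ (length $2g+1$) lies inside $\overline{V}$ and is of type $S_{(2,2g+1)}$, contradicting irreducibility of $\overline{V}$. The remaining case $r=2g$ is the delicate one: here $\overline{b_1\cdots b_{2g}}$ is a suffix of $\overline{U}$ and $\overline{b_{2g+1}\cdots b_{4g}}$ a prefix of $\overline{V}$, and since the $4g$ letters of an element of $\RR$ are pairwise distinct we have $b_1\neq b_{2g}$. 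Using Lemma \ref{lem some observations}(1),(3), one of the two halves is of type $S_{(4,1)}$: namely $\overline{b_1\cdots b_{2g}}$ if $b_1\succ b_{2g}$, while if $b_1\prec b_{2g}$ then $b_{2g+1}=b_1^{-1}\succ b_{2g}^{-1}=b_{4g}$ forces $\overline{b_{2g+1}\cdots b_{4g}}$ (the first half of the rotation $\overline{b_{2g+1}\cdots b_{4g}b_1\cdots b_{2g}}\in\RR$) to be of type $S_{(4,1)}$. Either way one of $\overline{U},\overline{V}$ contains a reducible subword, a contradiction, so $|\overline{T}|\leq 4g-1$.

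For the second assertion I would suppose $\overline{X^2}$ is reducible. By Lemma \ref{lem four operations} it then contains a reducible subword $\overline{V}$ of type $S_{(2,2g+1)}$, $S_{(3,t)}$ ($t\geq 2$) or $S_{(4,1)}$, and since $\overline{X}$ is irreducible, Lemma \ref{lem some observations}(6) forces $\overline{V}$ to contain the junction $\overline{x_nx_1}$. The goal is to exhibit a fractional relator of $\overline{X^2}$ through $\overline{x_nx_1}$ of length at least $2g-1$, contradicting the hypothesis that the $\llfr$ at the junction---that is, the maximal fractional relator there---has length $\leq 2g-2$. If $\overline{V}$ is of type $S_{(2,2g+1)}$ or $S_{(4,1)}$, then $\overline{V}$ is itself a fractional relator (of length $2g+1$ or $2g$) containing $\overline{x_nx_1}$, and we are done. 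If $\overline{V}=\overline{b_1(b_2\cdots b_{2g})^tb_{2g+1}}$ is of type $S_{(3,t)}$, I would observe that the junction cannot sit at a period boundary $\overline{b_{2g}b_2}$, since for $g\geq 2$ such a pair is not a fractional relator whereas $\overline{x_nx_1}$ is; hence $\overline{x_nx_1}$ lies inside one of the runs $\overline{b_1\cdots b_{2g}}$, $\overline{b_2\cdots b_{2g}}$ or $\overline{b_2\cdots b_{2g+1}}$, each a fractional relator of length $\geq 2g-1$. This yields the desired contradiction and shows $\overline{X^2}$ is irreducible.

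The main obstacle in both parts is the borderline case. In the first assertion it is the split $r=2g$, where length-$(2g+1)$ reducibility no longer applies and one must invoke the $S_{(4,1)}$ dichotomy governed by $b_1\succ b_{2g}$ versus $b_1\prec b_{2g}$. In the second assertion it is the type $S_{(3,t)}$ case, where $\overline{V}$ is not itself a single fractional relator; the crucial point is that consecutive letters across a period of $\overline{V}$ do not form a fractional relator, so the junction is trapped inside a run of length $\geq 2g-1$. Underlying both, and worth isolating as a preliminary observation, is the principle that all fractional relators through a fixed consecutive pair embed in a single cyclic rotation of the relator and are therefore nested; consequently the $\llfr$ at a junction is the unique longest fractional relator there and dominates the length of any fractional relator passing through that junction, which is exactly what legitimizes the length comparisons above.
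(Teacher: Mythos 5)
Your proposal is correct and takes essentially the same route as the paper's proof: for the first claim you rule out a length-$4g$ $\llfr$ at the junction by forcing both halves to have length exactly $2g$ and then deriving the contradiction from the order-reversal $b_1\prec b_{2g}\Leftrightarrow b_1^{-1}\succ b_{2g}^{-1}$ (the paper phrases this as both halves being irreducible forcing $b_1\prec b_{2g}$ and $b_1^{-1}\prec b_{2g}^{-1}$ simultaneously), and for the second claim you observe, as the paper does, that any reducible subword of type $S_{(2,2g+1)}$, $S_{(3,t)}$ or $S_{(4,1)}$ through the junction would force a fractional relator of length $\geq 2g-1$ there. The one small omission is that Lemma \ref{lem four operations} also allows a reducible subword of type $S_{(1)}$, which you never exclude; this is harmless, since your own hypothesis makes $\overline{x_nx_1}$ a $2$-fractional relator, hence $x_1\neq x_n^{-1}$, which is exactly the clause the paper inserts to dismiss that case.
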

\begin{proof}
   To prove $|\overline T|\in\{2,3,\ldots,4g-1\},$ it suffices to prove $|\overline T|\neq 4g$ according to the definition of $\llfr$. Suppose $$\overline T=\overline{u_{l-s+1}\cdots u_lv_1\cdots v_{4g-s}}=\overline{b_1\cdots b_{4g}}\in \RR$$ is an $\llfr$ containing $\overline{u_lv_1}$ with length $4g$. Then
    \begin{eqnarray*}
      \overline{u_{l-s+1}\cdots u_{l}}&=&\overline{b_1\cdots b_{s}}\subset \overline{U},\\
      \overline{v_1\cdots v_{4g-s}}&=&\overline{b_{s+1}\cdots b_{4g}}\subset \overline{V},  
    \end{eqnarray*} are both irreducible, and hence $s,4g-s\leq 2g$. Thus $s=4g-s=2g$ and $$\overline{u_{l-2g+1}\cdots u_{l}}=\overline{b_1\cdots b_{2g}}, ~\quad\overline{v_1\cdots v_{2g}}=\overline{b_{2g+1}\cdots b_{4g}}=\overline{b_{1}^{-1}\cdots b_{2g}^{-1}}.$$
	Since these two words are irreducible, we have $b_1\prec b_{2g}$ and $b_{1}^{-1}\prec b_{2g}^{-1}$, which is contradictory.

For the irreducible word $\overline{X}$, since there exists an $\llfr$ that contains $\overline{x_nx_1}$ and has length $\leq 2g-2$, we have $x_n\neq x_1^{-1}$. Moreover, note that $\llfr$s of reducible words of types $S_{(2,2g+1)},S_{(3,t)}$ or $S_{(4,1)}$ have length $\geq 2g-1$. Thus, $\overline{X^2}$ does not contain a reducible subword of types $S_{(1)},S_{(2,2g+1)}$, $S_{(3,t)}$ or $S_{(4,1)}$, by Lemma \ref{lem four operations}, $\overline{X^2}$ is irreducible.
\end{proof}

Before posting the key Proposition \ref{prop classification of LLFR of X^2}, we need a definition of three special kinds of irreducible, cyclically freely reduced but not cyclically irreducible words.

\begin{defn}[Word of Types $\mathfrak{A},\mathfrak{B}$ or $\mathfrak{C}$]\label{defn word of types ABC}
Let $\overline{X}=\overline{x_1\cdots x_n}\in \W(\gs)$. We say that

\begin{enumerate}
    \item $\overline{X}$ is of \emph{type $\mathfrak{A}$} if for some $\overline{b_1\cdots b_{4g}}\in\RR,~b_1\succ b_{2g}, 1\leq r\leq 2g-1$ and $t_1,t_2\geq 0$, 
    $$\overline{X}=\overline{b_{r+1}\cdots b_{2g}(b_2\cdots b_{2g})^{t_1}b_2\cdots b_{2g-1}(b_1\cdots b_{2g-1})^{t_2}b_1\cdots b_r};$$ 
    \item $\overline{X}$ is of \emph{type $\mathfrak{B}$} if for some
    $\overline{b_1\cdots b_{4g}}\in\RR,~b_1\prec b_{2g}$ and $t\geq 1$,
    $$\overline{X}=\overline{b_1(b_2\cdots b_{2g})^tx_{m_1}\cdots x_{m_2}(b_{2g+2}\cdots b_{4g})^t},$$
    where $x_{m_1}\neq b_{4g},~ x_{m_2}\neq b_2$ and $\overline{b_1x_{m_1}\cdots x_{m_2}}$ is of type $\mathfrak{A}$;
    \item $\overline{X}$ is of \emph{type $\mathfrak{C}$} if for some
    $\overline{b_1\cdots b_{4g}}\in\RR,~b_{2g+1}\prec b_1$ and $t\geq 1$,
    $$\overline{X}=\overline{(b_2\cdots b_{2g})^tx_{m_1}\cdots x_{m_2}(b_{2g+2}\cdots b_{4g})^tb_1},$$
    where $x_{m_1}\neq b_{4g},~x_{m_2}\neq b_2$ and $\overline{x_{m_1}\cdots x_{m_2}b_1}$ is of type $\mathfrak{A}$.
\end{enumerate}  
\end{defn}

\begin{prop}\label{prop classification of LLFR of X^2} Let $G$ be a surface group with the symmetric presentation (\ref{symmetric presentation}) and the order (\ref{order of generators}).
Let $1\neq x\in G$ and $\nf(x)=\overline{YXY^{-1}}$ with $\overline{X}=\overline{x_1\cdots x_n}(n\geq 1)$ cyclically freely reduced and $\overline{Y}=\overline{y_1\cdots y_m}(m\geq 0)$ possibly empty.

 \begin{enumerate}
\item  \label{XX irreducible} If $\overline{X^2}$ is irreducible (i.e., $\overline{X}$ is cyclically irreducible), then $\nf(x^k)=\overline{YX^kY^{-1}}$ for any $k\geq 1$.

\item \label{XX reducible} If $\overline{X^2}$ is reducible, then for any $k\geq 1$,
     $$\nf(x^k)=\overline{Yx_1\cdots x_i W^{k-1}x_{i+1}\cdots x_nY^{-1}}$$
     with cyclically irreducible word $\overline{W}=\nf(x_{i+1}\cdots x_nx_1\cdots x_i)  ~(1\leq i\leq n)$,
    except for the following three special cases:   
(a) $\overline{X}$ is of type $\mathfrak{A}$, (b) $\overline{X}$ is of type $\mathfrak{B}$, (c) $\overline{X}$ is of type $\mathfrak{C}$.
 In each of the cases (a), (b) and (c), for any $k\geq 2$, we obtain
\begin{equation}\label{word of prop 2g}
		\nf(x^k)=\overline{Yx_1\cdots x_i(x'_1\cdots x'_j W^{k-2}x'_{j+1}\cdots x'_{n'})x_{i+1}\cdots x_nY^{-1}} \quad(1\leq i\leq n; ~1\leq j\leq n'),
	\end{equation}
where $|\overline{W}|=n'$ and
$$\overline{x'_1\cdots x'_{n'}}=\nf(x_{i+1}\cdots x_nx_1\cdots x_i), \quad \overline{W}=\nf(x'_{j+1}\cdots x'_{n'}x'_1\cdots x'_j).$$
  \end{enumerate}
 \end{prop}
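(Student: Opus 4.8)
The plan is to reduce the entire statement to an analysis of $\nf(x^2)$ (and, in the three exceptional cases, of $\nf(x^3)$), and then to propagate the resulting shape to arbitrary $k$ using Lemma~\ref{lem from 2-nd power to n-th power}. Item (\ref{XX irreducible}) is then immediate: taking $\overline{A}=\overline{Y}$, $\overline{D}=\overline{X}$ and $\overline{E}=\overline{Y^{-1}}$, the hypotheses of Lemma~\ref{lem from 2-nd power to n-th power} are precisely that $\overline{ADE}=\nf(x)$ and $\overline{D^2}=\overline{X^2}$ are irreducible. Hence $\overline{YX^kY^{-1}}=\overline{AD^kE}$ is irreducible for every $k\geq1$, and since it represents $x^k$ it must equal $\nf(x^k)$.

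For item (\ref{XX reducible}) the first observation is that the lifting from $k=2$ to general $k$ is again purely formal. In the generic case, once one knows $\nf(x^2)=\overline{Yx_1\cdots x_i\,W\,x_{i+1}\cdots x_nY^{-1}}$ with $\overline{W}$ cyclically irreducible, one applies Lemma~\ref{lem from 2-nd power to n-th power} with $\overline{A}=\overline{Yx_1\cdots x_i}$, $\overline{D}=\overline{W}$, $\overline{E}=\overline{x_{i+1}\cdots x_nY^{-1}}$: here $\overline{D^2}=\overline{W^2}$ is irreducible by Proposition~\ref{prop D cyclically irreducible= DD irreducible}, and $\overline{AD^{k-1}E}$ represents $x^k$ because $W=x_{i+1}\cdots x_nx_1\cdots x_i$ in $G$. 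In the exceptional cases the same device is applied one level deeper, taking $\overline{ADE}=\nf(x^3)$ and $\overline{D}=\overline{W}$ with $\overline{W}=\nf(x'_{j+1}\cdots x'_{n'}x'_1\cdots x'_j)$, which reproduces formula~(\ref{word of prop 2g}). Thus all the real content is the determination of the shape of $\nf(x^2)$ (and $\nf(x^3)$) together with the verification that the extracted core is cyclically irreducible.

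To analyse $\nf(X^2)$ I would classify by the joint. Since $\overline{X}$ is cyclically freely reduced but $\overline{X^2}$ is reducible, Lemma~\ref{prop length classification of LLFR at the joint} forces the $\llfr$ $\overline{T}$ of $\overline{X^2}$ containing $\overline{x_nx_1}$ to have length $|\overline{T}|\in\{2g-1,2g,\ldots,4g-1\}$, and the proof splits according to this length. For the lengths $|\overline{T}|\in\{2g+1,\ldots,4g-2\}$ (the $S_{(2)}$-type reductions) I would carry out the single reduction $\overline{X^2}\to\overline{Z}$ and use Lemma~\ref{lem frequently used} to confine every further reducible subword of $\overline{Z}$ to the two ends of the rewritten block; tracking these boundary reductions pins down an index $i$ so that the cyclic permutation $\overline{x_{i+1}\cdots x_nx_1\cdots x_i}$ has a cyclically irreducible normal form $\overline{W}$, which is certified by the tables of Lemma~\ref{lem types of W for lengh =2g+1 to 4g-2} (words of type $\overline{U}(i,j)$). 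The extreme lengths $|\overline{T}|\in\{2g-1,2g,4g-1\}$ (the $S_{(4)}$-type reductions) are where the rewriting can cascade, and this is exactly where the words of types $\mathfrak{A},\mathfrak{B},\mathfrak{C}$ appear: in those configurations one round of cyclic reduction produces $\overline{x'_1\cdots x'_{n'}}$ that is still not cyclically irreducible, so a second round is needed to reach $\overline{W}$, with cyclic irreducibility of the cores supplied by Lemma~\ref{lem types of W}.

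The main obstacle is precisely this case analysis, which must be long and exhaustive: one has to show that in every non-exceptional configuration a single cyclic permutation already reaches a cyclically irreducible core, and that the only obstructions are the three families $\mathfrak{A},\mathfrak{B},\mathfrak{C}$, with no further exceptions arising. The delicate part throughout is the bookkeeping of boundary letters (the conditions $x_1\neq b_1$, $x_n\neq b_{2g+1}$, $b_1\succ b_{2g}$, and so on) as the reductions propagate, since it is exactly these conditions that let Lemmas~\ref{lem types of W} and~\ref{lem types of W for lengh =2g+1 to 4g-2} apply. By contrast, the conjugating prefix $\overline{Y}$ causes no trouble: since $\nf(x)=\overline{YXY^{-1}}$ is already irreducible, every new reduction created in passing from $x$ to $x^k$ is localised at the interior $X$-joints, so the end segments $\overline{Y}$ and $\overline{Y^{-1}}$ never interact with the core and may simply be carried along.
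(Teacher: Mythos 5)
Your skeleton is the paper's own: item (\ref{XX irreducible}) via Lemma \ref{lem from 2-nd power to n-th power}, item (\ref{XX reducible}) by classifying the joint of $\overline{X^2}$ and lifting the shape of $\nf(x^2)$ (resp.\ $\nf(x^3)$ in the exceptional cases) to all $k$ with the same lemma, certifying cyclic irreducibility of the extracted cores by Lemmas \ref{lem types of W} and \ref{lem types of W for lengh =2g+1 to 4g-2}. However, your classification is incomplete. Lemma \ref{prop length classification of LLFR at the joint} constrains the $\llfr$ containing $\overline{x_nx_1}$ only \emph{when $\overline{x_nx_1}$ is a fractional relator}, and reducibility of $\overline{X^2}$ does not force this: the leading word $\overline{b_1(b_2\cdots b_{2g})^tb_{2g+1}}$ of an $S_{(3,t)}$-reduction ($t\geq 2$) repeats letters and is not a fractional relator, so $\overline{X^2}$ can be reducible while $\overline{x_nx_1}$ lies in no fractional relator at all. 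The paper treats this as a separate first case; your list $|\overline{T}|\in\{2g-1,\dots,4g-1\}$ omits it, so your case analysis would never even begin on such words.

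Second, you place the exceptional families in the wrong cases, and by the wrong mechanism. Type $\mathfrak{A}$ arises only at $|\overline{T}|=2g$ (an $S_{(4,t)}$-joint), while types $\mathfrak{B}$ and $\mathfrak{C}$ arise only at $|\overline{T}|=4g-1$, where the relevant mechanism is not an $S_{(4)}$-reduction but iterated $S_{(2,4g-1)}$-reductions that peel off the two power blocks and leave an inner word $\overline{X_0}=\overline{b_1x_{n_5}\cdots x_{n_6}}$; it is $\overline{X_0}$ being of type $\mathfrak{A}$ that makes $\overline{X}$ of type $\mathfrak{B}$ or $\mathfrak{C}$. The case $|\overline{T}|=2g-1$ is an $S_{(3,t)}$-case ($t\geq 3$) and produces no exceptions, so your grouping $\{2g-1,2g,4g-1\}$ as the ``$S_{(4)}$-type'' cascading cases would send you hunting for exceptions where there are none and by the wrong reduction at $4g-1$. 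Finally, your claim that $\overline{Y}$ ``never interacts with the core and may simply be carried along'' because $\nf(x)$ is irreducible is a conclusion of the case analysis, not an input to it: after the joint of $\overline{X^2}$ is rewritten, cascading reducible subwords can a priori extend into $\overline{Y}$ (Lemma \ref{lem frequently used} only confines them to subwords of the form $\overline{\cdots x_{n-r}b_{2g}}$, whose left end may lie in $\overline{Y}$), and the paper excludes this in every case by deriving contradictions with the irreducibility of $\overline{YX}$; in the $|\overline{T}|=4g-1$ case a dedicated final argument, using conditions on $y_m$, is still needed to conclude $\nf(x^k)=\overline{Y\nf(X^k)Y^{-1}}$. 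These are steps you must prove, not assumptions you may make.
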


Since the discussion in the proof of Proposition \ref{prop classification of LLFR of X^2} is extensive, to enhance readability and ensure completeness, we shall introduce the key ideas and procedures of our proof in Section \ref{sect. proof of key Proposition}, with the detailed proof provided in Appendix \ref{sect appendix for some detailed proofs of props}. 

By directly summarizing the items in Proposition \ref{prop classification of LLFR of X^2}, we obtain the following key theorem.

\begin{thm}\label{thm normal forms of power elements (original main thm1)}
Let $G$ be a surface group with the symmetric presentation (\ref{symmetric presentation}) and the order (\ref{order of generators}). Then, for any $1\neq x\in G$ with the normal form $\nf(x)=\overline{YXY^{-1}}$, where $\overline{X}=\overline{x_1\cdots x_n}$ is cyclically freely reduced, we have
\begin{equation}\label{eq. nf(xk)}
   \nf(x^k)=\overline{Y\nf(X^k)Y^{-1}}=\overline{Yx_1\cdots x_{i}(x'_{1}\cdots x'_{j} W^{k-2}x'_{j+1}\cdots x'_{n'})x_{i+1}\cdots x_nY^{-1}} \quad (k\geq 2),
\end{equation}
	where $\overline{x'_1\cdots x'_{n'}}=\nf(x_{i+1}\cdots x_nx_1\cdots x_i)$ and $\overline{W}=\nf(x_{j+1}'\cdots x_{n'}'x_1'\cdots x_j')$ is cyclically irreducible with $|\overline{W}|=n'\geq 1$ for some  $1\leq i\leq n$ and $1\leq j\leq n'$.
\end{thm}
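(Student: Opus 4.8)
The plan is to read Theorem \ref{thm normal forms of power elements (original main thm1)} off directly from Proposition \ref{prop classification of LLFR of X^2}, treating the theorem's Eq. (\ref{eq. nf(xk)}) as the common template into which every case of the proposition reindexes. So the proof is a case analysis following the trichotomy of the proposition --- (i) $\overline{X^2}$ irreducible; (ii) $\overline{X^2}$ reducible and non-exceptional; (iii) the three exceptional types $\mathfrak{A},\mathfrak{B},\mathfrak{C}$ --- preceded by a short argument that the conjugator $\overline{Y}$ can be peeled off.

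First I would establish the outer equality $\nf(x^k)=\overline{Y\nf(X^k)Y^{-1}}$. In each case Proposition \ref{prop classification of LLFR of X^2} presents $\nf(x^k)$ as $\overline{YZ_kY^{-1}}$ for an explicit word $\overline{Z_k}$. Since $\nf(x)=\overline{YXY^{-1}}$ means $x=YXY^{-1}$ in $G$, we have $x^k=YX^kY^{-1}$, whence $Z_k=X^k$ as elements of $G$. As $\overline{YZ_kY^{-1}}$ is a normal form, its subword $\overline{Z_k}$ is irreducible, so by Lemma \ref{lem four operations} (uniqueness of the normal form) $\overline{Z_k}=\nf(X^k)$. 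This yields the first equality and reduces everything to exhibiting $\nf(X^k)$ as the inner word of Eq. (\ref{eq. nf(xk)}).

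Next I would reindex each case into that inner template. In case (i), with $\overline{X}$ cyclically irreducible, I take $i=j=n$ and $\overline{W}=\overline{X}$; then $\overline{x'_1\cdots x'_{n'}}=\nf(X)=\overline{X}$, and the empty subwords $\overline{x_{i+1}\cdots x_n}$, $\overline{x'_{j+1}\cdots x'_{n'}}$ collapse the template to $\overline{X\cdot X\cdot X^{k-2}}=\overline{X^k}$, matching $\nf(x^k)=\overline{YX^kY^{-1}}$. In the non-exceptional reducible case I keep the proposition's index $i$, set $\overline{x'_1\cdots x'_{n'}}=\nf(x_{i+1}\cdots x_nx_1\cdots x_i)$, and choose $j=n'$; then $\overline{W}=\nf(x'_1\cdots x'_{n'})$ equals this cyclically irreducible word, and the template collapses to $\overline{x_1\cdots x_i\,W^{k-1}\,x_{i+1}\cdots x_n}$, precisely the proposition's main clause. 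Finally, in the three exceptional cases Eq. (\ref{word of prop 2g}) is verbatim the inner word of Eq. (\ref{eq. nf(xk)}), so nothing further is needed. In each situation I would then verify the side conditions $1\le i\le n$, $1\le j\le n'$, $n'=|\overline{W}|\ge 1$ (nonempty because $\overline{W}$ represents a conjugate of the nontrivial element $x$), and that $\overline{W}$ is cyclically irreducible.

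The genuine difficulty does not lie here but entirely inside Proposition \ref{prop classification of LLFR of X^2}, whose lengthy case-by-case reduction analysis (deferred to Section \ref{sect. proof of key Proposition} and Appendix \ref{sect appendix for some detailed proofs of props}) is what powers this theorem. Within the present argument the only points demanding care are the peeling-off of $\overline{Y}$, which rests on the uniqueness of normal forms via Lemma \ref{lem four operations}, and the bookkeeping showing that the degenerate index choices in the two non-exceptional cases, together with the identities $\nf(\overline{X})=\overline{X}$ and $\nf(\overline{W})=\overline{W}$ for irreducible and cyclically irreducible words, genuinely specialize the unified template to the per-case expressions (in particular handling the empty subwords and the factor $W^{k-2}=1$ when $k=2$).
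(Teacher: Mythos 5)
Your proposal is correct and matches the paper's approach: the paper obtains Theorem \ref{thm normal forms of power elements (original main thm1)} precisely by summarizing the cases of Proposition \ref{prop classification of LLFR of X^2} into the single template of Eq. (\ref{eq. nf(xk)}), which is exactly your case analysis with the degenerate index choices $i=j=n$ (irreducible case) and $j=n'$ (non-exceptional reducible case). Your explicit justification for peeling off $\overline{Y}$ via uniqueness of normal forms is a correct spelling-out of a step the paper leaves implicit.
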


Note that in Eq. (\ref{eq. nf(xk)}), we have $x_1'\cdots x_{n'}'$ conjugate to $X$ and $W$ conjugate to $x_1'\cdots x_{n'}'$. Thus, $W$ is conjugate to $X$  and hence conjugate to $x$. 
However, the cyclically irreducible word $\overline{W}$ is not unique and any such two district $\overline{W}$'s differ by a cyclic permutation. For convenience, we define the \emph{cyclically irreducible core} of $x$ as follows:  
\begin{equation}\label{def ciw}
   \ciw(x):= \mbox{the ~word} ~\overline{W} ~\mbox{such ~that the pair} ~(i, j) ~\mbox{is ~maximal ~in ~lexicographical ~order ~in Eq.} ~(\ref{eq. nf(xk)}).
\end{equation} 

We have the following corollary on $\ciw(x)$.

\begin{cor}\label{cor cyclically irreducible word notation}
With the same notations as in Theorem \ref{thm normal forms of power elements (original main thm1)}, and let $x, y\in G$. Then 

\begin{enumerate}
    \item $\ciw(x)$ is a cyclically irreducible word,  and as an element of  $G$, it is conjugate to $x$;
    \item $\ciw(X)=\overline{X}$ provided $\overline{X}$ itself is a cyclically irreducible word;
     \item $|\ciw(x)|=\swl(x)$;
     \item $|\ciw(x^r)|=r\cdot|\ciw(x)|$ and $\ciw(x^r)$ is a cyclic permutation of $\ciw(x)^r$ for every $r\geq 1$ ;
    \item $x$ and $y$ are conjugate $\Longleftrightarrow$ $\ciw(x)$ and $\ciw(y)$ are conjugate.
\end{enumerate}
\end{cor}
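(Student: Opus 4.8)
The plan is to establish the five items in the order (1), (5), (2), (3), (4), taking Theorem~\ref{thm normal forms of power elements (original main thm1)} together with the definition~(\ref{def ciw}) of $\ciw$ as the main inputs. Item (1) is immediate: in~(\ref{eq. nf(xk)}) the word $\overline{W}$ is cyclically irreducible, and $\ciw(x)$ is by~(\ref{def ciw}) one such $\overline{W}$; the conjugacy $W\sim x$ is exactly the chain recorded just after Theorem~\ref{thm normal forms of power elements (original main thm1)}, namely $\overline{x'_1\cdots x'_{n'}}$ is a cyclic permutation of $\overline{X}$, $\overline{W}$ is a cyclic permutation of $\overline{x'_1\cdots x'_{n'}}$, and $X=Y^{-1}xY$. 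Item (5) is then a formal consequence of (1): since $\ciw(x)\sim x$ and $\ciw(y)\sim y$, transitivity of conjugacy gives $x\sim y\Longleftrightarrow\ciw(x)\sim\ciw(y)$. For item (2), if $\overline{X}$ is cyclically irreducible then $\overline{X^2}$ is irreducible by Proposition~\ref{prop D cyclically irreducible= DD irreducible}, placing us in case~(\ref{XX irreducible}) of Proposition~\ref{prop classification of LLFR of X^2}, so $\nf(X^k)=\overline{X^k}$. I would then verify that $i=n$ forces $\overline{x'_1\cdots x'_{n'}}=\nf(X)=\overline{X}$ (hence $n'=n$) and that $j=n$ forces $\overline{W}=\overline{X}$, making the right-hand side of~(\ref{eq. nf(xk)}) equal to $\overline{X^k}$; since $(n,n)$ is lexicographically maximal among the admissible pairs, $\ciw(X)=\overline{X}$.

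Item (3) is a direct length count from~(\ref{eq. nf(xk)}). Writing $|\overline{Y}|=m$, $|\overline{X}|=n$ and $|\overline{W}|=n'$, the letters lying outside the central block $\overline{W}^{\,k-2}$ contribute $2m+i+(n-i)+j+(n'-j)=2m+n+n'$, so that $|x^k|=(k-1)n'+(2m+n)$ for all $k\ge2$. Dividing by $k$ and letting $k\to\infty$ yields $\swl(x)=n'=|\ciw(x)|$. (This simultaneously recovers $|x^2|-|x|=n'$, in agreement with Theorem~\ref{main thm1 (original main thm2)}.)

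Item (4) is the substantive step. The length equality is quick: applying item (3) to $x^r$ and using $\swl(x^r)=r\,\swl(x)$ (the limit along the subsequence $rk$ coincides with the full limit) gives $|\ciw(x^r)|=\swl(x^r)=r\,\swl(x)=r\,|\ciw(x)|$. For the cyclic-permutation claim I would first note that $\overline{W^r}$ is again cyclically irreducible: $\overline{W^2}$ is irreducible, so Lemma~\ref{lem from 2-nd power to n-th power} (with $\overline{A},\overline{E}$ empty) makes $\overline{W^s}$ irreducible for every $s$, whence $\overline{W^{2r}}$ is irreducible and $\overline{W^r}$ is cyclically irreducible by Proposition~\ref{prop D cyclically irreducible= DD irreducible}. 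Writing $\nf(x^k)=\overline{P\,W^{\,k-2}\,Q}$ for fixed $\overline{P},\overline{Q}$ and all $k\ge2$, substituting $k\mapsto rk$ gives $\nf\big((x^r)^k\big)=\nf(x^{rk})=\overline{P\,W^{\,rk-2}\,Q}$. Parsing this word by~(\ref{def ciw}) for the element $z=x^r$, the core $W_z=\ciw(x^r)$ is a cyclically irreducible word of length $|W_z|=\swl(z)=rn'$ (by item (3)) occurring as consecutive blocks inside the periodic region $\overline{W^{\,rk-2}}$ for large $k$. Since that region is literally a concatenation of copies of $\overline{W}$, the periodicity forces $W_z$ to inherit the period $\overline{W}$; a length-$rn'$ word with period $n'$ equals a cyclic permutation of $\overline{W}$ raised to the power $r$, which is itself a cyclic permutation of $\overline{W^r}=\ciw(x)^r$. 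Hence $\ciw(x^r)$ is a cyclic permutation of $\ciw(x)^r$.

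The hard part will be the final step of item (4): matching $\overline{P\,W^{\,rk-2}\,Q}$ to the canonical decomposition~(\ref{eq. nf(xk)}) for $z=x^r$ and confirming that the extracted core has length exactly $rn'$ and sits inside the $\overline{W}$-periodic region, so that the elementary periodicity (Fine--Wilf) argument applies. In particular, the special cases (a)--(c) of Proposition~\ref{prop classification of LLFR of X^2}, where an extra cyclic shift is introduced in~(\ref{eq. nf(xk)}), must be checked not to disturb the block alignment; the length bookkeeping furnished by item (3) is what keeps the overlap long enough to conclude.
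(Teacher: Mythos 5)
Your proposal is correct and follows essentially the same route as the paper: items (1), (2) and (5) from the definition of $\ciw$, item (3) by the same length count on Eq.~(\ref{eq. nf(xk)}), and item (4) by equating the two decompositions $\nf(x^{rk})=\overline{L_1\,\ciw(x^r)^{k-2}R_1}=\overline{L_2\,\ciw(x)^{rk-2}R_2}$ together with $|\ciw(x^r)|=r|\ciw(x)|$. Your extra detail (Proposition~\ref{prop D cyclically irreducible= DD irreducible} for item (2), and the explicit periodicity argument for item (4)) only spells out steps the paper leaves implicit.
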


\begin{proof}
   Item (1) and (2) follow from the definition of $\ciw(x)$ and item (3) follows from 
   $$\swl(x)=\lim_{k\to \infty}\dfrac{|x^k|}{k}=\lim_{k\to \infty}\dfrac{|\nf(x^k)|}{k}=\lim_{k\to \infty}\dfrac{2|\overline{Y}|+n+n'+(k-2)\cdot|\overline{W}|}{k}=|\overline{W}|=|\ciw(x)|.$$ 
   For item (4), by Theorem \ref{thm normal forms of power elements (original main thm1)}, for any $k\geq 2$, we have
   $$\nf(x^{rk})=\overline{L_1\ciw(x^r)^{k-2}R_1}=\overline{L_2\ciw(x)^{rk-2}R_2}=\overline{L_2\ciw(x)^{2r-2}\ciw(x)^{r(k-2)}R_2}.$$ Since $|\ciw(x^r)|=\swl(x^r)=r\cdot \swl(x)=|\ciw(x)^r|$, we obtain that $\ciw(x^r)$ is a cyclic permutation of $\ciw(x)^r$. Item (5) follows from item (1) clearly.
\end{proof}

\begin{exam}\label{exam calculate cic of b1...bk} Here, we give a simple example for calculating $\ciw(x)$ with $x:= b_1\cdots b_k$($1\leq k<4g$) for some $\overline{b_1\cdots b_{4g}}\in\RR$. For any $t\geq 1$, we have
\begin{equation*}
   \nf(x^t)=\left\{\begin{array}{ll}
        \overline{(b_1\cdots b_k)^t} & k<2g  \\
        \overline{(b_1\cdots b_{2g})^t} &k=2g,~ b_1\prec b_{2g}\\
        \overline{(b_{2g}\cdots b_{1})^t}& k=2g,~ b_1\succ b_{2g}\\
        \overline{(b_{2g}\cdots b_{k-2g+1})^t}& k>2g
    \end{array}\right..
\end{equation*}
Then, pick the maximal $(i,j)$ as in Eq. (\ref{def ciw}), we have 
\begin{equation*}
   \ciw(x)=\left\{\begin{array}{lll}
        \overline{b_1\cdots b_k} &\mbox{ with }(i,j)=(k,k)& k<2g  \\
        \overline{b_1\cdots b_{2g}} &\mbox{ with }(i,j)=(2g,2g)&k=2g,~ b_1\prec b_{2g}\\
        \overline{b_{2g}\cdots b_{1}}&\mbox{ with }(i,j)=(2g,2g)& k=2g,~ b_1\succ b_{2g}\\
        \overline{b_{2g}\cdots b_{k-2g+1}}&\mbox{ with }(i,j)=(4g-k,4g-k)& k>2g
    \end{array}\right.. 
\end{equation*}
\end{exam}

In Theorem \ref{thm normal forms of power elements (original main thm1)} above, by setting $k=2$, we arrive at the following immediate corollary. 

\begin{cor}\label{main cor1}\label{cor reduction of 2-nd power}
  With the same notations as in Theorem \ref{thm normal forms of power elements (original main thm1)},  then there exists a partition $\nf(x)=\overline{X_LX_R}$ such that
  $$\nf(x^2)=\overline{X_L\nf(X_RX_L)X_R}.$$
\end{cor}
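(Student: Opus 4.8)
The plan is to obtain the corollary as the special case $k=2$ of Theorem~\ref{thm normal forms of power elements (original main thm1)}, so that essentially no new argument is needed beyond bookkeeping of the partition. Writing $\nf(x)=\overline{YXY^{-1}}$ with $\overline{X}=\overline{x_1\cdots x_n}$ cyclically freely reduced, Eq.~(\ref{eq. nf(xk)}) supplies indices $1\leq i\leq n$ and $1\leq j\leq n'$ for which
\[
\nf(x^k)=\overline{Yx_1\cdots x_i(x'_1\cdots x'_j W^{k-2}x'_{j+1}\cdots x'_{n'})x_{i+1}\cdots x_n Y^{-1}},
\]
where $\overline{x'_1\cdots x'_{n'}}=\nf(x_{i+1}\cdots x_n x_1\cdots x_i)$. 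Setting $k=2$ collapses the middle factor, since $W^{k-2}=W^{0}=1$ is the empty word, so the parenthetical block becomes simply $\overline{x'_1\cdots x'_{n'}}$ and
\[
\nf(x^2)=\overline{Yx_1\cdots x_i\,x'_1\cdots x'_{n'}\,x_{i+1}\cdots x_n Y^{-1}}.
\]

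Next I would define the partition by cutting $\nf(x)$ at the position dictated by $i$, setting $\overline{X_L}:=\overline{Yx_1\cdots x_i}$ and $\overline{X_R}:=\overline{x_{i+1}\cdots x_n Y^{-1}}$. This is a genuine factorization of the normal form, because $\overline{X_L X_R}=\overline{Yx_1\cdots x_n Y^{-1}}=\overline{YXY^{-1}}=\nf(x)$. With this choice the displayed expression for $\nf(x^2)$ reads $\overline{X_L\,(x'_1\cdots x'_{n'})\,X_R}$, so it remains only to identify the middle block $\overline{x'_1\cdots x'_{n'}}$ with $\nf(X_RX_L)$.

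The single point that requires verification is precisely this identification. As a word, $\overline{X_RX_L}=\overline{x_{i+1}\cdots x_n Y^{-1}Y x_1\cdots x_i}$, and the factor $\overline{Y^{-1}Y}$ cancels at the group level, so as an element of $G$ we have $X_RX_L=x_{i+1}\cdots x_n x_1\cdots x_i$. Since the normal form depends only on the underlying group element, $\nf(X_RX_L)=\nf(x_{i+1}\cdots x_n x_1\cdots x_i)=\overline{x'_1\cdots x'_{n'}}$ by the definition of the latter in Theorem~\ref{thm normal forms of power elements (original main thm1)}. Substituting this back gives $\nf(x^2)=\overline{X_L\,\nf(X_RX_L)\,X_R}$, as claimed. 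I do not anticipate a genuine obstacle: the entire content resides in Theorem~\ref{thm normal forms of power elements (original main thm1)}, and the corollary is a transparent re-reading of it once one notes the $W^{0}$ collapse and the $\overline{Y^{-1}Y}$ cancellation.
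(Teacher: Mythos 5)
Your proposal is correct and follows exactly the route the paper intends: the paper presents this corollary as immediate from Theorem \ref{thm normal forms of power elements (original main thm1)} by setting $k=2$, and your bookkeeping (collapsing $W^{0}$ to the empty word, cutting at position $|\overline{Y}|+i$ to define $\overline{X_L}$, $\overline{X_R}$, and noting that $\nf(X_RX_L)=\nf(x_{i+1}\cdots x_nx_1\cdots x_i)=\overline{x'_1\cdots x'_{n'}}$ since normal forms depend only on the group element) is precisely the verification being left implicit there.
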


\begin{rem}\label{rem 2-nd power not always producing cyclically irreducible form}
For every irreducible word $\overline{X}$, there always exists a partition $\overline{X}=\overline{X_LX_R}$ such that $\nf(X^2)=\overline{X_L\nf(X_RX_L)X_R}$ as in Corollary \ref{cor reduction of 2-nd power}. In most cases, $\nf(X_RX_L)$ is cyclically irreducible. Consequently, by Lemma \ref{lem from 2-nd power to n-th power}, $\nf(\overline{X^k})=\overline{X_L(\nf(X_RX_L))^{k-1}X_R}$. However, in a few cases ( listed in Proposition \ref{prop classification of LLFR of X^2}(\ref{XX reducible}a, \ref{XX reducible}b, \ref{XX reducible}c)), there is no partition $\overline{X}=\overline{X_LX_R}$ such that $\nf(X_RX_L)$ is cyclically irreducible. Fortunately, there is always a desired cyclically irreducible subword of $\nf(X^3)$ for these special words. This is the reason why we need $k\geq 2$ in Proposition \ref{prop classification of LLFR of X^2}(\ref{XX reducible}a, \ref{XX reducible}b, \ref{XX reducible}c) while $k\geq 1$ in Proposition \ref{prop classification of LLFR of X^2}(\ref{XX irreducible}).
\end{rem}

Finally, we obtain the proof of Theorem \ref{main thm1 (original main thm2)} as follows.

\begin{proof}[\textbf{Proof of Theorem} \ref{main thm1 (original main thm2)}] 
For any $1\neq x\in G$, let $\nf(x)=\overline{YXY^{-1}}$ with $\overline{X}=\overline{x_1\cdots x_n}$ cyclically freely reduced. By Eq. (\ref{eq. nf(xk)}) in Theorem \ref{thm normal forms of power elements (original main thm1)}, we have
$$|x^2|=2|\overline{Y}|+n+n'>2|\overline{Y}|+n=|x|,$$ 
and for any $k\geq 2$, $$|x^k|=2|\overline{Y}|+n+n'+(k-2)\cdot|\overline{W}|=(k-1)(|x^2|-|x|)+|x|.$$ 
Therefore, 
$$\swl(x)=\lim_{k\to \infty}\frac{|x^k|}{k}=|x^2|-|x|.$$
\end{proof}

\section{Ideas and procedures of proof of Proposition \ref{prop classification of LLFR of X^2}}\label{sect. proof of key Proposition}

Considering the extensive discussions in the proof of  Proposition \ref{prop classification of LLFR of X^2}, we first introduce the ideas and procedures of our proof in the following. 

Given an element $1\neq x\in G$, we can always write its normal form as $\nf(x)=\overline{YXY^{-1}}$ with $\overline{X}=\overline{x_1\cdots x_n}\neq 1$ cyclically freely reduced and $\overline{Y}=\overline{y_1\cdots y_m}$ possibly empty. We will first reduce $\overline{(YXY^{-1})^2}$ to obtain $\nf(x^2)$. Then, in most cases, we can obtain $\nf(x^k)(k\geq 1)$ directly by Lemmas \ref{lem from 2-nd power to n-th power}, \ref{lem types of W} and \ref{lem types of W for lengh =2g+1 to 4g-2}; in the remaining few cases (Proposition \ref{prop classification of LLFR of X^2}(\ref{XX reducible}a, \ref{XX reducible}b, \ref{XX reducible}c)), we need to conduct a further analysis. 

Hence, the most important step is reducing $\overline{(YXY^{-1})^2}$ to its normal form $\nf(x^2)$. First, we can reduce $\overline{(YXY^{-1})^2}$ to $\overline{YXXY^{-1}}$ by $|\overline{Y}|$ times $S_{(1)}$:
$$\overline{(YXY^{-1})^2}\xrightarrow{|\overline{Y}|\cdot S_{(1)}}\overline{YXXY^{-1}}=\overline{Yx_1\cdots x_nx_1\cdots x_nY^{-1}}.$$

If $\overline{X^2}$ is irreducible, we have $$\nf(x^k)=\overline{YX^kY^{-1}}$$ for any $k\geq 1$ by Lemma \ref{lem from 2-nd power to n-th power}, that is, \textbf{Proposition \ref{prop classification of LLFR of X^2}(\ref{XX irreducible}) holds}. 

If $\overline{X^2}$ is reducible, then it must have a reducible subword $\overline{V}$ that contains $\overline{x_nx_1}$. By Lemma \ref{prop length classification of LLFR at the joint}, we shall divide the proof of \textbf{Proposition \ref{prop classification of LLFR of X^2}(\ref{XX reducible})} into five cases based on the length of the $\llfr$ $\overline{T}$ such that
$$\overline{x_nx_1}\subset\overline{T}\subset \overline{X^2}.$$
\begin{enumerate}
    \item $\overline{x_nx_1}$ is not a fractional relator;
    \item $|\overline{T}|=2g-1$;
    \item $|\overline{T}|=2g$;
    \item $|\overline{T}|\in\{2g+1,\dots, 4g-2\}$;
    \item $|\overline{T}|=4g-1$.
\end{enumerate}

\begin{proof}[\textbf{Proof of Proposition \ref{prop classification of LLFR of X^2}(\ref{XX reducible}) ($\overline{x_nx_1}$ is not a fractional relator)}]
Since $\overline{X^2}$ is reducible, there exists a reducible subword $\overline{V}$ of $\overline{X^2}$ that contains $\overline{x_nx_1}$ and is of types $S_{(2,2g+1)},S_{(3,t)}$ or $S_{(4,1)}$. In fact, $\overline{V}$ can only be of type $S_{(3,t)}(t\geq 2)$ because ($\overline{x_nx_1}$ and hence) $\overline{V}$ is not a fractional relator. Then  $$\overline{V}=\overline{b_1(b_2\cdots b_{2g})^tb_{2g+1}}\xrightarrow{S_{(3,t)}}\overline{(b_{2g}\cdots b_2)^{t}}, \quad \overline{x_{n}x_1}=\overline{b_{2g}b_2}$$
for some $\overline{b_1\cdots b_{4g}}\in\RR$, and
    $$\overline{YX}=\overline{\cdots b_1(b_2\cdots b_{2g})^{t_1}}(t_1\geq 1), ~~\overline{XY^{-1}}=\overline{(b_2\cdots b_{2g})^{t_2}b_{2g+1}\cdots}(t_2\geq 1), ~t_1+t_2=t.$$
 Since $\overline{YXY^{-1}}=\nf(x)$ is irreducible and
    \begin{equation}\label{eq. 4.1(1)1} b_{1}\notin\overline{(b_2\cdots b_{2g})^{t_2}b_{2g+1}}, \quad b_{2g+1}\notin\overline{b_1(b_2\cdots b_{2g})^{t_1}},
    \end{equation}
we have
\begin{equation}\label{length=0 condition 1}
		\overline{X}=\overline{(b_2\cdots b_{2g})^{t_2}b_{2g+1}x_{n_1}\cdots x_{n_2}b_1(b_2\cdots b_{2g})^{t_1}},
	\end{equation}
where $x_{n_1}\neq b_1,b_{2g+2}$ and $x_{n_2}\neq b_{4g}, b_{2g+1}$ for $n_1\leq n_2.$
It follows that
\begin{eqnarray}\label{length=0 word1}
   \overline{YX^2Y^{-1}}
   &=&\overline{Y(b_2\cdots b_{2g})^{t_2}b_{2g+1}x_{n_1}\cdots x_{n_2}Vx_{n_1}\cdots x_{n_2} b_1(b_2\cdots b_{2g})^{t_1}Y^{-1}}\nonumber\\
&\xrightarrow{S_{(3,t)}}&\overline{Y(b_2\cdots b_{2g})^{t_2}b_{2g+1}\underbrace{x_{n_1}\cdots x_{n_2}(b_{2g}\cdots b_2)^{t}}_{W}x_{n_1}\cdots x_{n_2}b_1(b_2\cdots b_{2g})^{t_1}Y^{-1}}\nonumber\\
     &=&\overline{Yx_1\cdots x_{n_1-1}Wx_{n_1}\cdots x_{n}Y^{-1}}\notag\\
   &=:&\overline{Z}.
\end{eqnarray} 
	Therefore, $\overline{Z}$ is freely reduced clearly.
    
    %$$\overline{YX^2Y^{-1}}=\overline{Yx_1\cdots x_{n_2}Vx_{n_1}\cdots x_{n}Y^{-1}}=\overline{Yx_1\cdots x_{n_2}b_1(b_2\cdots b_{2g})^tb_{2g+1}x_{n_1}\cdots x_{n}Y^{-1}},$$
    %where $n_1-1\leq n_2,\overline{x_{n_2+1}\cdots x_n}=\overline{b_1(b_2\cdots b_{2g})^{t_1}}$ and $\overline{x_1\cdots x_{n_1-1}}=\overline{(b_2\cdots b_{2g})^{t_2}b_{2g+1}}$. If $n_2=n_1-1$ and $\overline{x_{n_2}x_{n_2+1}}=\overline{x_{n_1-1}x_{n_2+1}}=\overline{b_{2g+1}b_1}=\overline{b_1^{-1}b_1}\subset \overline{X}$, which contradicts the irreducibility of $\overline{X}$. It follows that $n_2\geq n_1$ and
	%\begin{eqnarray}\label{length=0 word1}	\overline{YX^2Y^{-1}}&\xrightarrow{S_{(3,t)}}&\overline{Yx_1\cdots x_{n_1}\cdots x_{n_2}(b_{2g}\cdots b_2)^{t}x_{n_1}\cdots  x_{n_2}\cdots x_{n}Y^{-1}}=:\overline{Z}. \end{eqnarray}
    %Moreover, note that $\overline{x_{n_1-2g}\cdots x_{n_1-1}x_{n_1}}=\overline{b_2\cdots b_{2g+1}x_{n_1}}$ and $\overline{x_{n_2}x_{n_2+1}\cdots x_{n_2+2g}}=\overline{x_{n_2}b_1\cdots b_{2g}}$. By the irreducibility of $\overline{X}$, we have
	%\begin{equation}\label{length=0 condition 1}x_{n_1}\neq b_1,b_{2g+2};~x_{n_2}\neq b_{4g},b_{2g+1}.\end{equation}

    Below, let us prove that $\overline{Z}$ (and hence $\overline{W}$) is irreducible. Suppose, for the sake of contradiction, that it is reducible. By Lemma \ref{lem frequently used}(2), there exists a reducible subword $\overline{U}:=\overline{u_1\cdots u_l}~(\subset\overline{Z})$ of one of the forms
    $$\overline{\cdots x_{n_2-1} x_{n_2}b_{2g}}(\subset\overline{Yx_1\cdots x_{n_2}b_{2g}})~~\mbox{ or }~~\overline{b_2x_{n_1} x_{n_1+1}\cdots}(\subset \overline{b_2x_{n_1}\cdots x_nY^{-1}}).$$
    Without loss of generality, we assume $\overline{U}=\overline{\cdots x_{n_2-1} x_{n_2}b_{2g}}$. Then, we have the following three cases.
	
	Case (1). $\overline{U}$ is of type $S_{(2,2g+1)}$. Since $x_{n_2}\neq b_{2g+1}$, we have $x_{n_2}=b_{2g-1}$ and $$\overline{U}=\overline{\cdots x_{n_2-1}x_{n_2}b_{2g}}=\overline{b_{4g}b_1\cdots b_{2g-1}b_{2g}}.$$
	Thus, 
	\begin{eqnarray*}
		\overline{u_1\cdots u_{2g}x_{n_2+1}\cdots x_{n}}&=&\overline{b_{4g}(b_1\cdots b_{2g-1})b_{1}(b_2\cdots b_{2g})^{t_1}}\\\notag
		&=&\overline{b_{4g}(b_1\cdots b_{2g-1})^2b_{2g}(b_2\cdots b_{2g})^{t_1-1}}\subset\overline{YX},
	\end{eqnarray*}
	where $t_1\geq 1$. Thus, the irreducible word $\overline{YX}$ contains a reducible subword $\overline{b_{4g}(b_1\cdots b_{2g-1})^2b_{2g}}$, which is contradictory.
	
	Case (2). $\overline{U}$ is of type $S_{(3,t')}$. Since $x_{n_2}\neq b_{2g+1}$, we have $$\overline{U}=\overline{\cdots x_{n_2-1}x_{n_2}b_{2g}}=\overline{b_{4g}(b_1\cdots b_{2g-1})^{t'}b_{2g}}.$$
It leads to a contradiction similar to that of Case (1).

Case (3). $\overline{U}$ is of type $S_{(4,1)}$. Since $x_{n_2}\neq b_{2g+1}$, we have $x_{n_2}= b_{2g-1}$ and 
$$\overline{U}=\overline{\cdots x_{n_2-1} x_{n_2}b_{2g}}=\overline{b_1\cdots b_{2g-1}b_{2g}}.$$
However, $\overline{b_1\cdots b_{2g}}$ is not of type $S_{(4,1)}$ because $$\overline{x_{n_2+1}\cdots x_n}=\overline{b_1(b_2\cdots b_{2g})^{t_1}}\subset \overline{X}$$ is irreducible and thus $b_1\prec b_{2g}$. It's a contradiction.

We conclude that $\overline{Z}=\overline{Yx_1\cdots x_{n_1-1}Wx_{n_1}\cdots x_{n}Y^{-1}}$ in Eq. (\ref{length=0 word1}) does not contain a subword of types $S_{(1)},S_{(2,2g+1)},S_{(3)}$ or $S_{(4,1)}$. Consequently, both $\overline{Z}$ and its subword $\overline{W}$ are  irreducible. Therefore,
$$\overline W=\overline{x_{n_1}\cdots x_{n_2}(b_{2g}\cdots b_2)^t}(t\geq 2)=\nf(x_{n_1}\cdots x_{n}x_1\cdots x_{n_1-1}).$$ 
Then $\overline{W^2}$ is irreducible by Lemma \ref{lem types of W}(1). According to Lemma \ref{lem from 2-nd power to n-th power}, for every $k\geq 1$, 
\begin{eqnarray}\label{length=0 word2}
    \overline{Yx_1\cdots x_{n_1-1}W^{k-1}x_{n_1}\cdots x_{n}Y^{-1}}
\end{eqnarray} is again irreducible, which is exactly $\nf(x^k)$.
\end{proof}

\begin{rem}
    In the above proof, we proved that the freely reduced word $\overline{Z}$ in Eq. (\ref{length=0 word1}) is irreducible by assuming it contains a reducible subword of types $S_{(2,2g+1)},S_{(3)}$ or $S_{(4,1)}$ and deriving a contradiction in each case. This kind of procedure will frequently appear in the subsequent discussions. Moreover, there will always be some conditions similar to Eqs. (\ref{eq. 4.1(1)1})(\ref{length=0 condition 1}) in the later proofs. To streamline the proofs, we may, on occasion, omit these procedures, and readers are expected to verify them independently. 
\end{rem}

\begin{proof}[\textbf{Proof of Proposition \ref{prop classification of LLFR of X^2}(\ref{XX reducible}) ($|\overline{T}|=2g-1$)}]
Since $\overline{X^2}$ is reducible, there exists a reducible subword $\overline{V}\subset \overline{X^2}$ containing $\overline{x_nx_1}$. Moreover, since the $\llfr$ $\overline{T}$ has length $=2g-1$, $\overline{V}$ can only be of type $S_{(3,t)}(t\geq 3)$ and
\begin{eqnarray*}
    \overline{YX}&=&\overline{\cdots b_1(b_2\cdots b_{2g})^{t_1}b_2\cdots b_{r+1}}(t_1\geq 1),\\
\overline{XY^{-1}}&=&\overline{b_{r+2}\cdots b_{2g}(b_2\cdots b_{2g})^{t_2}b_{2g+1}\cdots}(t_2\geq 1),
\end{eqnarray*}
 for some $\overline{b_1\cdots b_{4g}}\in\RR$. Since $\overline{YXY^{-1}}$ is irreducible and $$b_1\notin \overline{b_{r+2}\cdots b_{2g}(b_2\cdots b_{2g})^{t_2}b_{2g+1}}, \quad b_{2g+1}\notin\overline{b_1(b_2\cdots b_{2g})^{t_1}b_2\cdots b_{r+1}},$$ 
we have 
\begin{equation}\label{X in |LLFR|=2g-1}
    \overline{X}=\overline{b_{r+2}\cdots b_{2g}(b_2\cdots b_{2g})^{t_2}b_{2g+1} x_{n_1}\cdots x_{n_2} b_1(b_2\cdots b_{2g})^{t_1}b_2\cdots b_{r+1}},
\end{equation}
where $\overline{x_{n_1}\cdots x_{n_2}}\neq 1$. Then, there is a similar term as in Eq. (\ref{length=0 word1}): 
	\begin{eqnarray*}
		\overline{YX^2Y^{-1}}&=&\overline{Yx_1\cdots x_{n_2}b_1(b_2\cdots b_{2g})^{t_1}b_2\cdots b_{r+1}b_{r+2}\cdots b_{2g}(b_2\cdots b_{2g})^{t_2}b_{2g+1}x_{n_1}\cdots x_nY^{-1}}\\
	&\xrightarrow{S_{(3,t)}}&\overline{Yx_1\cdots x_{n_2}(b_{2g}\cdots b_{2})^tx_{n_1}\cdots x_{n}Y^{-1}}\notag\\
    &=&\overline{Yx_1\cdots x_{n_1-1}Wx_{n_1}\cdots x_{n}Y^{-1}}\notag\\
    &=:&\overline{Z},
	\end{eqnarray*}
	where $\overline{x_nx_1}=\overline{b_{r+1}b_{r+2}}$, $t=t_1+t_2+1$ for $t_1,t_2\geq 1$, and
$$\overline W=\overline{x_{n_1}\cdots x_{n_2}(b_{2g}\cdots b_2)^t}(t\geq 3).$$
    Then after an argument nearly identical to that following Eq. (\ref{length=0 word1}), we can prove that $\overline{Z}$ and $\overline{W}$ are irreducible, and then obtain the same normal form of $x^k~(k\geq 1)$ as in Eq. (\ref{length=0 word2}):
$$\nf(x^k)= \overline{Yx_1\cdots x_{n_1-1}W^{k-1}x_{n_1}\cdots x_{n}Y^{-1}}$$
where $\overline W=\overline{x_{n_1}\cdots x_{n_2}(b_{2g}\cdots b_2)^t}(t\geq 3)=\nf(x_{n_1}\cdots x_nx_1\cdots x_{n_1-1})$.
\end{proof}

For the readability and completeness of this paper, we relegate the detailed \textbf{Proofs of Proposition} \ref{prop classification of LLFR of X^2}(\ref{XX reducible}) ($|\overline{T}|\in\{2g,\dots,4g-1\}$) to Appendix \ref{sect appendix for some detailed proofs of props} and merely outline the proof strategy below. 

In \textbf{Proofs of Proposition} \ref{prop classification of LLFR of X^2}(\ref{XX reducible}) ($|\overline{T}|\in \{2g,2g+1,\dots,4g-2\}$) (see Appendix \ref{Appendix A1} and \ref{Appendix A2}), we will conduct two discussions analogous to the proof of Proposition \ref{prop classification of LLFR of X^2}(\ref{XX reducible}) ($\overline{x_nx_1}$ is not a fractional relator). The outline of the discussion process is as follows. (See Figure \ref{fig: reduction of most cases} for the reduction processes.)

\begin{enumerate}
    \item[Step 1.] According to the length of $\llfr$, we use different $S_{(i)}$'s to reduce $\overline{YX^2Y^{-1}}$. Whether $S_{(2,k)}$ (with maximal $k$) and $S_{(3)}$, or $S_{(4)}$ (not reducible by $S_{(2)}$ and $S_{(3)}$ first) is used first,  there will be no subword of type $S_{(1)}$. These two potential reductions are the step ``replace $\overline{A}$ with $\overline{B}$'' in Figure \ref{fig: reduction of most cases}.
    
    \item[Step 2.] For the junctions of $\overline{X_1}$ and $\overline{B}$, and of $\overline{B}$ and $\overline{X_2}$, we can respectively revisit the above-mentioned discussion and once again perform the reduction process. These potential reductions correspond to ``replace $\overline{C}$ with $\overline{E}$'' and ``replace $\overline{D}$ with $\overline{F}$'' in Figure \ref{fig: reduction of most cases}.

    \item[Step 3.] After the above three potential reductions, we will obtain $\nf(x^2)$ through a straightforward discussion. Then, we may derive the representative of $\nf(x^k)$ by making use of $\nf(x^2)$ and the prepared cyclically irreducible words in Section \ref{sect cyclically irreducible words}. In particular, there still exists a special case as described in Proposition \ref{prop classification of LLFR of X^2}(\ref{XX reducible}a).  
\end{enumerate}

\begin{figure}[ht]
    \centering
    \includegraphics[width=1\linewidth]{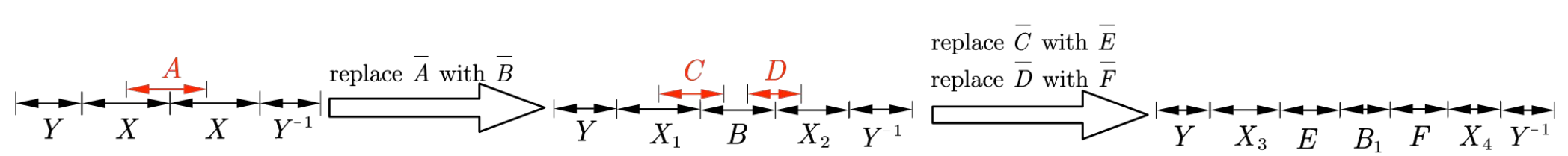}
    \caption{In Proofs of Proposition \ref{prop classification of LLFR of X^2}(\ref{XX reducible}) ($\overline{x_nx_1}$ is not a fractional relator, $|\overline{T}|\in\{2g-1,\dots,4g-2\}$), the reduction of $\overline{YX^2Y^{-1}}$ always concludes  after \textbf{at most} three operations at different positions, where each reduction corresponds to one $S_{(i)}$.  It should be noted that the reducible subwords $\overline{C}$ and $\overline{D}$ do not exist in Proposition \ref{prop classification of LLFR of X^2}(\ref{XX reducible}) ($\overline{x_nx_1}$ is not a fractional relator, $|\overline{T}|=2g-1$).}
    \label{fig: reduction of most cases}
\end{figure}

In \textbf{Proof of Proposition} \ref{prop classification of LLFR of X^2}(\ref{XX reducible}) ($|\overline{T}|=4g-1$), the discussion remains similar yet more complicated. Therefore, we also give the sketch of reductions and place the detailed proof in Appendix \ref{Appendix A3}.  (See Figure \ref{fig: reduction of 4g-1} for the reduction processes.)

\begin{enumerate}
    \item[Step 1.] The first reduction is always $S_{(2,4g-1)}$. Then, we may repeat this reduction finitely many times until there is no such subword of type $S_{(2,4g-1)}$. These reductions correspond to the first two rows in Figure \ref{fig: reduction of 4g-1}.
    \item[Step 2.] Then, there will be at most three potential reductions (replacing $\overline{C}$ with $\overline{D}$, replacing $\overline{E}$ with $\overline{H}$ and replacing $\overline{F}$ with $\overline{K}$), which is consistent with what is stated in Proofs of Proposition \ref{prop classification of LLFR of X^2}(\ref{XX reducible}) ($\overline{x_nx_1}$ is not a fractional relator, $|\overline{T}|\in\{2g-1,\dots,4g-2\}$). These reductions correspond to the last row in  Figure \ref{fig: reduction of 4g-1}.
    \item[Step 3.] After the above reductions, we will obtain $\nf(x^2)$ through a complicated discussion. Then, we can derive the presentation of $\nf(x^k)$ by making use of $\nf(x^2)$ and Proposition \ref{prop classification of LLFR of X^2}(\ref{XX reducible}) ($\overline{x_nx_1}$ is not a fractional relator, $|\overline{T}|\in\{2g-1,\dots,4g-2\}$). In particular, there still exist two special cases (as described in Proposition\ref{prop classification of LLFR of X^2}(\ref{XX reducible}b, \ref{XX reducible}c)), which originates from Proposition\ref{prop classification of LLFR of X^2}(\ref{XX reducible}a).
\end{enumerate}

\begin{figure}[ht]
\centering
    \includegraphics[width=1\linewidth]{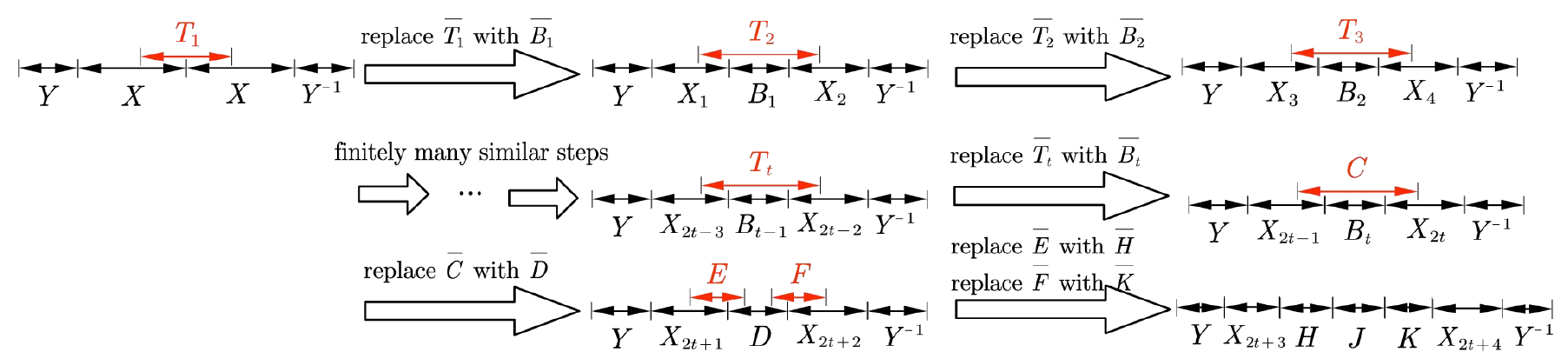}
\caption{In Proof of Proposition \ref{prop classification of LLFR of X^2}(\ref{XX reducible}) ($|\overline{T}|=4g-1$), the reduction of $\overline{YX^2Y^{-1}}$ always contains two steps:  first finitely many reductions of $S_{(2,4g-1)}$ and then some potential reductions as in Proofs of Proposition \ref{prop classification of LLFR of X^2}(\ref{XX reducible}) ($\overline{x_nx_1}$ is not a fractional relator, $|\overline{T}|\in\{2g-1,\dots,4g-2\}$).}
    \label{fig: reduction of 4g-1}
\end{figure}
%\renewcommand{\qedsymbol}{} % 此证明结束后不加方框
%\end{proof}

\section{Classification of reductions}\label{sect classification of reductions}

In this section, we shall classify all potential reductions of any two irreducible words. Let $G,\gs,\W(\gs)$ be defined as before.

Let $\overline{W_1}=\overline{x_1\cdots x_m}$ and $\overline{W_2}=\overline{y_1\cdots y_n}$ be two nonempty irreducible words in $\W(\gs)$ with $x_m\neq y_1^{-1}$. Apparently, the reductions of $\overline{X^2}$ ($\overline{X}$ is an irreducible and cyclically freely reduced word) that we discussed in Section \ref{sect proof of main thms} are special cases of those of $\overline{W_1W_2}$. Therefore, the reduction cases of $\overline{W_1W_2}$ cover those of $\overline{X^2}$. In fact, we will prove that the converse also holds, see Lemma \ref{thm the reduction of W_1W_2 = X^2's}. This result means that we can summarize all the reductions of $\overline{X^2}$ directly to get all the reductions of $\overline{W_1W_2}$. First, we need a standard definition for reduction.

\begin{defn}[Reducing-subword Pair]\label{def for reduction}
Let $\overline{W_1}, \overline{W_2}\in\W(\gs)$ be two irreducible words. The \emph{reducing-subword pair} of 
$(\overline{W_1},\overline{W_2})$ is defined as
$$\rsp(\overline{W_1},\overline{W_2}):=\min\{(\overline{C_1},\overline{C_2})\mid \overline{W_1}=\overline{AC_1}, \overline{W_2}=\overline{C_2B}, \nf(W_1W_2)=\overline{AC'B}\},$$
where ``$\min$'' is in the sense of the length-lexicographical order on $\W(\gs)\times\W(\gs)$ (see Definition \ref{def order in word set}).
\end{defn}

Note that the reducing-subword pair $\rsp(\overline{W_1},\overline{W_2})$ is unique. Moreover, once $\nf(W_1W_2)$ is known, it can be readily determined as follows: select the longest subword $\overline{A}$ of $\overline{W_1}$ such that $\overline{W_1}=\overline{AC_1}$ and $\nf(W_1W_2)=\overline{AD}$; then, pick the longest subword $\overline{B}$ of $\overline{W_2}$ such that $\overline{W_2}=\overline{C_2B}$ and $\nf(W_1W_2)=\overline{AC'B}$.
We now obtain $$\rsp(\overline{W_1},\overline{W_2})=(\overline{C_1},\overline{C_2}).$$

Let $\overline{X}$ be an arbitrary nonempty irreducible and cyclically freely reduced word in $\W(\gs)$. Now, we will summarize all the reductions and $\rsp(\overline{X},\overline{X})$ by checking the proofs of Proposition \ref{prop classification of LLFR of X^2}. Thus, we still classify by the length of the $\llfr$ at the junction and \textbf{omit all the trivial reductions} $\overline{XX^{-1}}$. For the sake of reading, we place these tables of the classification in Appendix \ref{sect appendix tables for reductions}.

Before stating the main result of this section, we list several observations.

\begin{lem}\label{lem observations of reductions}
 Let $\overline{X}\in \W(\gs)$ be irreducible and cyclically freely reduced with $\rsp(\overline{X},\overline{X})=(\overline{C_1},\overline{C_2})$. Then
    \begin{enumerate}
    \item 
   for any $b\in \gs^{\pm}$, we always have $\overline{b^2}\not\subset\overline{C_1C_2}$;
 
    \item if $|\overline{C_1C_2}|\geq 4g+4(2g-1)t$ with $t\geq 2$, then there must be a subword $\overline{(b_1\cdots b_{2g-1})^{t}}\subset\overline{C_1C_2}$ for some $\overline{b_1\cdots b_{4g}}\in\RR$.
    \end{enumerate}
\end{lem}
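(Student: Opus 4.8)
The plan is to read off the explicit shape of $\rsp(\overline{X},\overline{X})=(\overline{C_1},\overline{C_2})$ from the classification established in the proof of Proposition \ref{prop classification of LLFR of X^2} (recorded in the appendix tables), and then to dispatch the two assertions by a structural inspection and a pigeonhole count. In every branch of that classification the reducing core $\overline{C_1C_2}$ is a concatenation of a bounded number of \emph{periodic blocks} of the form $\overline{(b_2\cdots b_{2g})^{s}}$ or $\overline{(b_1\cdots b_{2g-1})^{s}}$ (each a power of a $(2g-1)$-letter fractional relator), together with a bounded amount of boundary material such as $\overline{b_1}$, $\overline{b_{2g}}$ or $\overline{b_{2g+1}}$, for a single $\overline{b_1\cdots b_{4g}}\in\RR$ (or two consecutive such relators at the centre). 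I would first isolate this common description as the input for both parts, recalling also that $\overline{C_1C_2}\subset\overline{X^2}$ is freely reduced.

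For part (1), I would argue that no two consecutive letters of $\overline{C_1C_2}$ can coincide. The key fact is that the $4g$ entries $b_1,\dots,b_{4g}$ of any $\overline{b_1\cdots b_{4g}}\in\RR$ are pairwise distinct elements of $\gs^\pm$, since the relator is $c_1\cdots c_{2g}c_1^{-1}\cdots c_{2g}^{-1}$. Reading the explicit forms above, every consecutive pair in $\overline{C_1C_2}$ is of one of two kinds: either the two letters are adjacent inside a single fractional relator, i.e.\ of the form $b_ib_{i+1}$, or they sit at a junction of two repeated blocks, i.e.\ a ``turn-around'' $b_{2g}b_2$ or $b_{2g-1}b_1$ (and at the centre $\overline{C_1}\mid\overline{C_2}$ the pair $x_nx_1$, which is freely reduced and lies in a fractional relator). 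In all these cases the two entries occupy different positions of one relator word, hence are distinct as elements of $\gs^\pm$; therefore $\overline{b^2}\not\subset\overline{C_1C_2}$.

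For part (2), I would run a pigeonhole argument on the same decomposition. The aperiodic boundary material has total length $<4g$, and there are at most four periodic blocks — at most two coming from the tail $\overline{C_1}$ of the first copy of $\overline{X}$ and at most two from the head $\overline{C_2}$ of the second copy (this matches the two-run shape of the exceptional types $\mathfrak A,\mathfrak B,\mathfrak C$ and their doubling in $\overline{X^2}$). Hence if $|\overline{C_1C_2}|\ge 4g+4(2g-1)t$, the four periodic blocks together have length $>4(2g-1)t$, so at least one of them, say $\overline{(b_2\cdots b_{2g})^{s}}$, has length $s(2g-1)\ge (2g-1)t$, i.e.\ $s\ge t$. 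Its length-$(2g-1)t$ prefix is then exactly $\overline{(b_2\cdots b_{2g})^{t}}$, which equals $\overline{(b_1'\cdots b_{2g-1}')^{t}}$ for the cyclically shifted relator $\overline{b_1'\cdots b_{4g}'}\in\RR$ with $\overline{b_1'\cdots b_{2g-1}'}=\overline{b_2\cdots b_{2g}}$, giving the required subword.

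The step I expect to be the main obstacle is justifying the uniform ``bounded decomposition'' of $\overline{C_1C_2}$, namely that in every branch of the classification the reducing core splits into at most four period-$(2g-1)$ runs plus fewer than $4g$ boundary letters. This is precisely where the lengthy case analysis of Proposition \ref{prop classification of LLFR of X^2} and the tables it produces are needed; once that structural description is granted, the consecutive-letter inspection in (1) and the counting in (2) are both routine.
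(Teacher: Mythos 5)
Your overall strategy---reading the shape of $\rsp(\overline{X},\overline{X})$ off the classification tables, then inspecting consecutive letters for (1) and counting period-$(2g-1)$ blocks for (2)---is the same as the paper's, and it does work verbatim for the cases of Tables \ref{tab: reduction class 1}, \ref{tab: reduction class 2g-1}, \ref{tab: reduction class 2g} and \ref{tab: reduction class 2g+1 to 4g-2}: there $\overline{C_1C_2}$ is spelled entirely in the letters of a single relator, so every adjacent pair is either consecutive inside a fractional relator or a turn-around $\overline{b_{2g}b_2}$, $\overline{b_{2g-1}b_1}$, and all such letters are distinct. The gap is the case $|\overline{T}|=4g-1$ (Table \ref{tab: reduction class 4g-1}). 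There the pair is \emph{recursive},
$$\rsp(\overline{X},\overline{X})=\bigl(\overline{x_{p_1}\cdots x_{p_2}(b_{2g+2}\cdots b_{4g})^t},~\overline{b_1(b_2\cdots b_{2g})^tx_{q_1}\cdots x_{q_2}}\bigr),$$
where $(\overline{x_{p_1}\cdots x_{p_2}},\overline{b_1x_{q_1}\cdots x_{q_2}})$ is itself a reducing-subword pair of one of the earlier types. The two junction pairs $\overline{x_{p_2}b_{2g+2}}$ and $\overline{b_{2g}x_{q_1}}$ are neither ``adjacent inside a single fractional relator'' nor ``turn-arounds'', so your dichotomy of consecutive pairs simply does not cover them; item (1) at these junctions is exactly the assertion $x_{p_2}\neq b_{2g+2}$ and $x_{q_1}\neq b_{2g}$, which does not follow from distinctness of the letters of one relator.

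Those two inequalities follow only from the parameter declaration $x_{p_2}\in\{b_{4g},b_{2g-1}\}$, $x_{q_1}\in\{1,b_2,b_{2g+3}\}$ stated in Table \ref{tab: reduction class 4g-1} --- and that declaration is \emph{not} something Proposition \ref{prop classification of LLFR of X^2} hands you: in the paper its proof is the main body of the proof of this very lemma (writing $\overline{X}=\overline{b_1(b_2\cdots b_{2g})^tx_{n_5}\cdots x_{n_6}(b_{2g+2}\cdots b_{4g})^t}$ with $t$ maximal, passing to $\overline{X_0}=\overline{b_1x_{n_5}\cdots x_{n_6}}$, and analyzing $\rsp(\overline{X_0},\overline{X_0})$ case by case, Eqs. (\ref{Eq 0 for 4g-1 remark})--(\ref{Eq 4 for 4g-1 remark})). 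So when you say the structural description may be ``granted'' from the tables, you are assuming precisely the content whose proof constitutes the lemma; as a proof of this statement your proposal is therefore circular at its crucial point. Once that junction analysis is supplied, the rest of your argument is sound: your pigeonhole for (2) is a cleaner packaging of the paper's item-by-item length check (note only that in the $4g-1$ case the aperiodic boundary can have up to $r+s+3\leq 4g+1$ letters rather than $<4g$, but since each block lacking a $t$-th power has length at most $(2g-1)(t-1)$, one still gets $|\overline{C_1C_2}|<4g+4(2g-1)t$).
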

\begin{proof}
(1) This result holds in all the cases of Tables \ref{tab: reduction class 1}, \ref{tab: reduction class 2g-1}, \ref{tab: reduction class 2g}, \ref{tab: reduction class 2g+1 to 4g-2} in Appendix \ref{sect appendix tables for reductions} directly. In Table \ref{tab: reduction class 4g-1}, note that $x_{p_2}\neq b_{2g+2}$ and $ x_{q_1}\neq b_{2g}$, hence the result also holds. 

Here, we give a proof of the parameter declaration in Table \ref{tab: reduction class 4g-1}:
\begin{enumerate}
    \item[i)] $\overline{x_{p_1}\cdots x_{p_2}}=\overline{x_{q_1}\cdots x_{q_2}}=1$, or
    \item[ii)] $   
        (\overline{x_{p_1}\cdots x_{p_2}}, ~\overline{b_1x_{q_1}\cdots x_{q_2}})$ is a reducing-subword pair in Tables \ref{tab: reduction class 1}, \ref{tab: reduction class 2g-1}, \ref{tab: reduction class 2g}, \ref{tab: reduction class 2g+1 to 4g-2} with $x_{p_2}\in\{ b_{4g}, b_{2g-1}\}$ and $x_{q_1}\in\{1, b_2,b_{2g+3}\}$.
\end{enumerate}

It suffices to consider the following one of the two symmetric cases (see Eqs. (\ref{Eq X with maximal t for LLFR=4g-1})(\ref{|LLFR|=4g-1, condition for n_5,n_6})):
  $$\overline{X}=\overline{x_1\cdots x_n}=\overline{b_1(b_2\cdots b_{2g})^tx_{n_5}\cdots x_{n_6}(b_{2g+2}\cdots b_{4g})^t}\quad\mbox{with }t \mbox{ maximal},$$
  $$ b_1\prec b_{2g},\qquad x_{n_5}\neq b_{2g+1},b_{4g},\qquad x_{n_6}\neq b_{2g+1},b_2.$$
  Then $\overline{X_0}=\overline{b_1x_{n_5}\cdots x_{n_6}}$ (see Eq. (\ref{Eq. A.3 X0})) is irreducible and cyclically freely reduced. Moreover, $\overline{x_{n_6}b_1}$ is not a $2$-fractional relator or the $\llfr$ $\overline{T'}\subset\overline{X_0^2}$ containing $\overline{x_{n_6}b_1}$ has length $\leq 4g-2$ (i.e., $\overline{x_{n_6}b_1}$ is a $2$-fractional relator).
  If $\overline{X_0^2}$ is irreducible, then by Claim \ref{claim 3} in Appendix \ref{Appendix A3}, we have 
  \begin{equation}\label{Eq 0 for 4g-1 remark}
 \rsp(\overline{X}, \overline{X})=(\overline{(b_{2g+2}\cdots b_{4g})^t}, ~\overline{b_1(b_2\cdots b_{2g})^t}).
  \end{equation}
  
  Now we suppose $\overline{X_0^2}=\overline{b_1x_{n_5}\cdots x_{n_6}b_1x_{n_5}\cdots x_{n_6}}$ is reducible. We have two cases.
  
  Case (a). $\overline{x_{n_6}b_1}$ is not a $2$-fractional relator. Since $x_{n_5}\neq b_{4g}$, we have 
  $$\overline{\cdots x_{n_6-1}x_{n_6}b_1x_{n_5}\cdots }=\overline{\underbrace{ b_{4g}(b_1\cdots b_{2g-1})^{t_1}}_{\cdots x_{n_6-1}x_{n_6}}\underbrace{(b_1\cdots b_{2g-1})^{t_2}b_{2g}}_{b_1x_{n_5}\cdots}} ~\mbox{ with }~x_{n_5}=b_2, x_{n_6}=b_{2g-1},$$
and the reducing-subword pair of $(\overline{X},\overline{X})$ is of the form:
\begin{equation}\label{Eq 1 for 4g-1 remark}
    \rsp(\overline{X},\overline{X})=(\overline{\cdots b_{2g-1}(b_{2g+2}\cdots b_{4g})^t}, ~\overline{b_1(b_2\cdots b_{2g})^tb_2\cdots }).
\end{equation}

  Case (b). $\overline{x_{n_6}b_1}$ is a $2$-fractional relator. Then we have $x_{n_6}=b_{4g}$. If $x_{n_5}\in \overline{T'}$, we have $x_{n_5}=b_2$ and 
  \begin{equation}\label{Eq 2 for 4g-1 remark}
      \rsp(\overline{X},\overline{X})=(\overline{\cdots b_{4g}(b_{2g+2}\cdots b_{4g})^t}, ~\overline{b_1(b_2\cdots b_{2g})^tb_2\cdots}).
  \end{equation}
  If $x_{n_5}\notin \overline{T'}$, then $x_{n_5}\neq b_2$. Furthermore, if $\rsp(\overline{X_0},\overline{X_0})=(\overline{\cdots x_{n_6-1}x_{n_6}}, ~\overline{b_1})$, we have
  \begin{equation}\label{Eq 3 for 4g-1 remark}
     \rsp(\overline{X}, \overline{X})=(\overline{\cdots b_{4g}(b_{2g+2}\cdots b_{4g})^t}, ~\overline{b_1(b_2\cdots b_{2g})^t}).
  \end{equation}
Otherwise, we have $$\rsp(\overline{X_0},\overline{X_0})=(\overline{\cdots x_{n_6-1}x_{n_6}}, ~\overline{b_1x_{n_5}\cdots})=(\overline{\cdots b_{4g-1}b_{4g}}, ~\overline{b_1x_{n_5}\cdots}).$$ Note $x_{n_5}\neq b_2,b_{2g+1},b_{4g}$. We check the tables (except Table \ref{tab: reduction class 4g-1}) in Appendix \ref{sect appendix tables for reductions} and obtain $x_{n_5}=b_{2g+3}$. Then,
\begin{equation}\label{Eq 4 for 4g-1 remark}
    \rsp(\overline{X},\overline{X})=(\overline{\cdots b_{4g}(b_{2g+2}\cdots b_{4g})^t}, ~\overline{b_1(b_2\cdots b_{2g})^tb_{2g+3}\cdots}).
\end{equation}

After concluding Eqs. (\ref{Eq 0 for 4g-1 remark})(\ref{Eq 1 for 4g-1 remark})(\ref{Eq 2 for 4g-1 remark})(\ref{Eq 3 for 4g-1 remark})(\ref{Eq 4 for 4g-1 remark}), we have obtained the parameter declaration.

(2) By calculating the length of $\overline{C_1C_2}$ in Appendix \ref{sect appendix tables for reductions} item by item, we can easily obtain $$|\overline{C_1C_2}|< 4g+4(2g-1)t$$ provided that $\overline{C_1C_2}$ has no subword of form $\overline{(b_1\cdots b_{2g-1})^{t}}$  ( $t\geq 2$) for any $\overline{b_1\cdots b_{4g}}\in\RR$. Therefore,  the conclusion holds.
\end{proof}

\begin{lem}\label{thm the reduction of W_1W_2 = X^2's} 
    For any two irreducible words $\overline{W_1}, \overline{W_2}\in\W(\gs)$ with $\overline{W_1W_2}$ freely reduced,   there is an irreducible and cyclically freely reduced word $\overline{X}\in\W(\gs)$ such that  $$\rsp(\overline{X},\overline{X})=\rsp(\overline{W_1},\overline{W_2}).$$
\end{lem}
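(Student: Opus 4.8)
The plan is to realize $\rsp(\overline{W_1},\overline{W_2})$ as a self-reduction by gluing $\overline{W_2}$ in front of $\overline{W_1}$ through a buffer, thereby turning the two-word junction into the single cyclic junction of a square. Write $\rsp(\overline{W_1},\overline{W_2})=(\overline{C_1},\overline{C_2})$, so that $\overline{W_1}=\overline{AC_1}$, $\overline{W_2}=\overline{C_2B}$, the reduction is confined to $\overline{C_1C_2}\to\overline{C'}$, and $\nf(W_1W_2)=\overline{AC'B}$ is irreducible. Since $\overline{W_1W_2}$ is freely reduced, the junction $\overline{C_1C_2}$ is freely reduced, i.e.\ the last letter of $\overline{C_1}$ is not the inverse of the first letter of $\overline{C_2}$. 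The guiding principle is that the pair $(\overline{C_1},\overline{C_2})$ depends only on bounded neighborhoods of the junction, so the reduction of $\overline{W_1W_2}$ is governed by the very same case analysis (organized by the length of the $\llfr$ $\overline{T}\ni\overline{x_my_1}$) that produced Tables \ref{tab: reduction class 1}, \ref{tab: reduction class 2g-1}, \ref{tab: reduction class 2g}, \ref{tab: reduction class 2g+1 to 4g-2}, \ref{tab: reduction class 4g-1}; indeed that analysis only ever used irreducibility of the word on each side of the junction, which here is furnished by $\overline{W_1}$ and $\overline{W_2}$.

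First I would set $\overline{X}:=\overline{W_2MW_1}=\overline{C_2BMAC_1}$, where $\overline{M}$ is a buffer word to be chosen. The point of this shape is that $\overline{X^2}=\overline{W_2M(W_1W_2)MW_1}$ contains a central copy of $\overline{W_1W_2}$, flanked on the left by $\overline{A}$ (inside the first $\overline{X}$) and on the right by $\overline{B}$ (inside the second $\overline{X}$) --- exactly the words that bound the reduction in the standalone $\overline{W_1W_2}$. The central junction of $\overline{X^2}$ is $\overline{C_1C_2}$, and $\overline{X}$ is cyclically freely reduced once it is irreducible, because its wrap-around letter pair is precisely the freely reduced junction $\overline{C_1}\mid\overline{C_2}$. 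Once $\overline{X}$ is known to be irreducible, Lemma \ref{lem some observations}(6) shows that every reducible subword of $\overline{X^2}=\overline{X}\cdot\overline{X}$ meets this central junction, so no competing reduction can occur; reducing it reproduces $\overline{C_1C_2}\to\overline{C'}$ and leaves the word $\overline{C_2BMAC'BMAC_1}$.

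It then remains to choose $\overline{M}$ so that (i) $\overline{X}=\overline{W_2MW_1}$ is irreducible, and (ii) the once-reduced word $\overline{C_2BMAC'BMAC_1}=\overline{W_2\,M\,(AC'B)\,M\,W_1}$ is again irreducible; for then it equals $\nf(X^2)=\overline{(C_2BMA)\,C'\,(BMAC_1)}$, and since the transformed letters are exactly those of $\overline{C_1C_2}$, minimality is inherited from that of $\rsp(\overline{W_1},\overline{W_2})$, giving $\rsp(\overline{X},\overline{X})=(\overline{C_1},\overline{C_2})$. I would verify (i) and (ii) by viewing each word as a product of irreducible blocks ($\overline{W_2},\overline{M},\overline{AC'B},\overline{M},\overline{W_1}$) and applying Lemma \ref{lem some observations}(6): any reducible subword must straddle one of the block junctions. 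Choosing $\overline{M}$ to contain a \emph{wall} --- a two-letter subword that is not a $2$-fractional relator, which exists since $g\ge 2$ supplies enough generators --- prevents any fractional relator, and hence any reduction of type $S_{(2)},S_{(3)},S_{(4)}$, from crossing $\overline{M}$; prescribing the first and last letters of $\overline{M}$ to satisfy the same boundary conditions that already stop the central reduction from extending into $\overline{A}$ and $\overline{B}$ (the ``$x_{n_i}\neq\cdots$'' conditions recorded in the tables) then eliminates the remaining candidate subwords near $\overline{AC'}$ and $\overline{C'B}$, exactly as in Lemma \ref{lem frequently used} and the proof of Proposition \ref{prop classification of LLFR of X^2}.

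The main obstacle is the bookkeeping of this last step across all reduction types. One must confirm that, in each case of the classification by $|\overline{T}|\in\{2,2g-1,2g,2g+1,\dots,4g-1\}$, the boundary letters forced on $\overline{M}$ are simultaneously compatible with the irreducibility of $\overline{X}$ and with blocking every secondary reduction at $\overline{AC'}$ and $\overline{C'B}$. Here the cyclic-irreducibility lemmas (Lemmas \ref{lem types of W} and \ref{lem types of W for lengh =2g+1 to 4g-2}) and the local analysis of Lemma \ref{lem frequently used} provide precisely the blocking needed, so the verification reduces to matching parameters case by case; the length bound on the reach of any reduction (every $\llfr$ has length $\le 4g-1$) guarantees that a fixed-length buffer always suffices. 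Once a valid $\overline{M}$ is fixed, $\overline{X}$ is irreducible and cyclically freely reduced with $\rsp(\overline{X},\overline{X})=\rsp(\overline{W_1},\overline{W_2})$, which completes the proof.
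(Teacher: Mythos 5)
Your overall strategy---concatenating the two words through a buffer so that the self-junction of a square reproduces the original junction, with ``walls'' stopping the reduction from spreading---is the same as the paper's. The problem is that your wall criterion is false, and it is the mechanism the whole proof rests on. You claim that a two-letter subword of $\overline{M}$ which is not a $2$-fractional relator ``prevents any fractional relator, and hence any reduction of type $S_{(2)},S_{(3)},S_{(4)}$, from crossing $\overline{M}$''. The ``hence'' fails: leading terms of $S_{(3,t)}$ and $S_{(4,t)}$ with $t\geq 2$ are not fractional relators, they have seams, and the two-letter word sitting at a seam is itself not a $2$-fractional relator. Concretely, $\overline{b_{2g}b_2}\subset\overline{b_1(b_2\cdots b_{2g})^tb_{2g+1}}$ and $\overline{b_{2g-1}b_1}\subset\overline{(b_1\cdots b_{2g-1})^tb_{2g}}$, while for $g\geq 2$ neither $\overline{b_{2g}b_2}$ nor $\overline{b_{2g-1}b_1}$ is a subword of any element of $\RR$ (the indices are not cyclically adjacent). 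So a wall chosen by your criterion can itself be a seam pair, in which case $S_{(3)}$- and $S_{(4)}$-reductions pass straight through $\overline{M}$, the word $\overline{X}=\overline{W_2MW_1}$ need not be irreducible, and your conditions (i) and (ii) can both fail. The two-letter words that occur in \emph{no} leading term of any $S_{(i)}$ are exactly the letter squares $\overline{bb}$; that, not ``non-$2$-fractional relator'', is the wall the paper uses.

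Even with the wall repaired, what you defer is the actual content. The paper's proof is only a few lines because of two moves you do not make: it glues only the reducing parts, setting $\overline{X}=\overline{C_2y_m^2b^2x_1^2C_1}$ with $b\in\gs^{\pm}\setminus\{x_1^{-1},y_m^{-1}\}$, so that every square-free contiguous subword of $\overline{X}$ lies inside one of the irreducible words $\overline{C_2}$, $\overline{y_mb}$, $\overline{bx_1}$, $\overline{C_1}$ and irreducibility of $\overline{X}$ is immediate; and it never computes $\nf(X^2)$ at all---in place of your condition (ii) it cites Lemma \ref{lem observations of reductions}(1) (a reducing-subword pair never contains a letter square), which prevents $\rsp(\overline{X},\overline{X})$ from reaching past the squares, and combined with the observation that this pair must engulf $(\overline{C_1},\overline{C_2})$, equality follows. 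Your version keeps the full words $\overline{W_1},\overline{W_2}$, which creates several block junctions (against $\overline{W_1}$, against $\overline{W_2}$, and, when $\overline{A}$ or $\overline{B}$ is empty, against $\overline{C'}$) whose boundary-letter conditions would have to be checked against every reduction type in Appendix \ref{sect appendix tables for reductions}; you name this the ``main obstacle'' but do not carry it out, and when $g=2$ (only $8$ letters in $\gs^{\pm}$) it is not even clear that all the simultaneous constraints on $\overline{M}$ can be met. As written, the proposal is a program whose key blocking lemma is wrong and whose verification is missing.
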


\begin{proof}
We omit all the trivial cases. Denote 
$\rsp(\overline{W_1},\overline{W_2})=(\overline{C_1},\overline{C_2})$ for $$\overline{C_1}=\overline{x_1\cdots x_n}\neq 1, \quad \overline{C_2}=\overline{y_1\cdots y_m}\neq 1$$ with
$x_n\neq y_1^{-1}$. For a letter  $b\in\gs\cup \gs^{-1}\backslash\{x_1^{-1},y_{m}^{-1}\}$, denote $$\overline{X}=\overline{C_2y_{m}^2b^2x_1^2C_1}.$$ 
Since no word of type $S_{(i)}$ contains $\overline{b^2}$ for any $b\in\gs^{\pm}$ and $x_n\neq y_1^{-1}$, we have $\overline{X}$ irreducible and cyclically freely reduced.
Suppose $\rsp(\overline{X},\overline{X})=(\overline{C_1'},\overline{C_2'})$. Apparently, $\overline{C_1'}$ ends with $\overline{C_1}$ and $\overline{C_2'}$ begins with $\overline{C_2}$. By Lemma \ref{lem observations of reductions}(1), $\overline{C_1'}$ does not contain $\overline{x_1^2}$ and $\overline{C_2'}$ does not contain $\overline{y_m^2}$. Therefore, $\overline{C_i'}=\overline{C_i}$. The proof is complete.
\end{proof}

Lemma \ref{thm the reduction of W_1W_2 = X^2's}  shows that the classification in Appendix \ref{sect appendix tables for reductions} is equally applicable to every pair of irreducible words $(\overline{W_1},\overline{W_2})$ with $\overline{W_1W_2}$ freely reduced. Hence it is a complete classification, and we have the following lemma by Lemma \ref{lem observations of reductions}(2) directly.

\begin{lem}\label{lem reducing formulae}
    Let $\overline{W_1}$ and $\overline{W_2}$ be two irreducible words with $\overline{W_1W_2}$ freely reduced and $\rsp(\overline{W_1},\overline{W_2})=(\overline{C_1},\overline{C_2})$. If $|\overline{C_1C_2}|\geq 4g+4(2g-1)t$ with $t\geq 2$, then there must be a subword $$\overline{(b_1\cdots b_{2g-1})^{t}}\subset\overline{C_1C_2}$$ for some $\overline{b_1\cdots b_{4g}}\in \RR$.
\end{lem}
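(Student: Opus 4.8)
The plan is to deduce this statement directly from its ``diagonal'' counterpart, Lemma \ref{lem observations of reductions}(2), by means of the realization result Lemma \ref{thm the reduction of W_1W_2 = X^2's}. The key observation is that the conclusion depends only on the reducing-subword pair $(\overline{C_1},\overline{C_2})=\rsp(\overline{W_1},\overline{W_2})$, and not on the particular words $\overline{W_1},\overline{W_2}$ that produce it. Since Lemma \ref{lem observations of reductions}(2) already establishes exactly this implication for pairs of the diagonal form $(\overline{X},\overline{X})$ with $\overline{X}$ irreducible and cyclically freely reduced, it suffices to realize the given pair $(\overline{C_1},\overline{C_2})$ as $\rsp(\overline{X},\overline{X})$ for some such $\overline{X}$.

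First I would invoke Lemma \ref{thm the reduction of W_1W_2 = X^2's}: since $\overline{W_1},\overline{W_2}$ are irreducible with $\overline{W_1W_2}$ freely reduced, there exists an irreducible and cyclically freely reduced word $\overline{X}\in\W(\gs)$ with
$$\rsp(\overline{X},\overline{X})=\rsp(\overline{W_1},\overline{W_2})=(\overline{C_1},\overline{C_2}).$$
In particular, the word $\overline{C_1C_2}$ attached to $\overline{X}$ is literally the same word as the one attached to $(\overline{W_1},\overline{W_2})$, so both the length hypothesis $|\overline{C_1C_2}|\geq 4g+4(2g-1)t$ and the desired conclusion about containing a subword $\overline{(b_1\cdots b_{2g-1})^t}$ are assertions about one and the same word.

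Then I would simply apply Lemma \ref{lem observations of reductions}(2) to $\overline{X}$: with $t\geq 2$ and $|\overline{C_1C_2}|\geq 4g+4(2g-1)t$, that lemma guarantees a subword $\overline{(b_1\cdots b_{2g-1})^t}\subset\overline{C_1C_2}$ for some $\overline{b_1\cdots b_{4g}}\in\RR$, which is precisely the statement to be proved.

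The argument carries no genuine obstacle of its own, since all of the real work has already been absorbed into the two cited lemmas. The substantive content of Lemma \ref{lem observations of reductions}(2) is the case-by-case length count over the classification tables in Appendix \ref{sect appendix tables for reductions}, showing that $|\overline{C_1C_2}|<4g+4(2g-1)t$ whenever no such power-subword occurs; and the substantive content of Lemma \ref{thm the reduction of W_1W_2 = X^2's} is that every admissible reducing-subword pair is realized diagonally by padding with squared letters that cannot occur inside any word of type $S_{(i)}$. The only point requiring care is to confirm that the realization lemma is applicable, i.e.\ that $\overline{W_1W_2}$ is freely reduced (which is hypothesized); once this is in place the deduction is immediate.
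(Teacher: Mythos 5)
Your proposal is correct and is essentially identical to the paper's own argument: the paper derives Lemma \ref{lem reducing formulae} precisely by combining Lemma \ref{thm the reduction of W_1W_2 = X^2's} (to realize $\rsp(\overline{W_1},\overline{W_2})$ as a diagonal pair $\rsp(\overline{X},\overline{X})$) with the length count in Lemma \ref{lem observations of reductions}(2). No further review is needed.
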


\section{Proof of Theorem \ref{main thm3 conj. class} and some algorithms}\label{sect normal form of conjugacy classes}

In this section, let $G,\gs,\W(\gs), \RR$ be defined as before. We first prove Theorem \ref{main thm3 conj. class}, then provide some algorithms for root-finding and the conjugacy problem.%a specific characterization of conjugacy classes 
 %Recall that $G=\pi_1(\Sigma_g)$ is a surface group with the symmetric presentation (\ref{symmetric presentation}) and the length-lexicographical order (\ref{order of generators}).

\subsection{Proof of Theorem \ref{main thm3 conj. class}}
Before our proof, we give an observation as follows.

\begin{lem}\label{prop observations of b_1 cdots b_{2g-1} }
  Suppose $\overline{b_1\cdots b_{4g}}, ~\overline{b_1'\cdots b_{4g}'}\in \RR$. If $b_1\cdots b_{k}$ and $b_1'\cdots b_{k}'$ ($k\leq 2g$) are conjugate in the surface group $G$, then $\overline{b_1\cdots b_{k}}=\overline{b_1'\cdots b_{k}'}$ or $\overline{b_1\cdots b_{k}}=\overline{b_{k}'\cdots b_1'}$.  
\end{lem}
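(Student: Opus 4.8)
The plan is to use the abelianization, which is the simplest conjugacy invariant available, and to bypass the normal-form machinery entirely. Let $\phi\colon G\to H_1(\Sigma_g)\cong\Z^{2g}$ be the abelianization homomorphism, sending each generator $c_i$ to the $i$-th standard basis vector $e_i$ (so $\phi(c_i^{-1})=-e_i$). Since $\phi$ factors through conjugation, the hypothesis that $b_1\cdots b_k$ and $b_1'\cdots b_k'$ are conjugate gives at once $\phi(b_1\cdots b_k)=\phi(b_1'\cdots b_k')$. First I would record two structural facts about a length-$k$ fractional relator $\overline{b_1\cdots b_k}$ with $k\le 2g$: it is a contiguous arc of the cyclic word $\overline{c_1\cdots c_{2g}c_1^{-1}\cdots c_{2g}^{-1}}$ or of its inverse, and, by Lemma \ref{lem some observations}(1) (i.e. $b_j=b_{j\pm 2g}^{-1}$), such an arc of length at most $2g$ never contains both $c_m$ and $c_m^{-1}$. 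Consequently $\phi(\overline{b_1\cdots b_k})$ is a $\{0,\pm1\}$-vector with exactly $k$ nonzero coordinates, and its support together with its sign pattern records, for each coordinate, which of $c_m, c_m^{-1}$ occurs in the arc.

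The key step is to show that $\phi$ is injective on the set of forward arcs (subwords of rotations of the relator) of a fixed length $k\le 2g$. Identifying the relator with the positions $1,\dots,4g$ of $\Z/4g$, a forward arc is a window $\{i,i+1,\dots,i+k-1\}$; because the window has length $\le 2g$ it meets each antipodal pair $\{m,m+2g\}$ in at most one position, so from $\phi$ I can recover, for every $m$ in the support, whether the window contains position $m$ (sign $+$) or position $m+2g$ (sign $-$). This recovers the window as a subset of $\Z/4g$, hence recovers $i$, so distinct forward arcs have distinct images under $\phi$. I would also check that reversing a word (reversing the order of its letters) carries a fractional relator to a fractional relator and leaves $\phi$ unchanged: reversal does not alter the multiset of letters, and reading the relator and its inverse in opposite directions matches position $p$ with position $2g+1-p$, so the reverse of an arc of the relator is an arc of its inverse. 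Here the inclusion of the inverse of the relator in $\RR$ is exactly what makes fractional relators closed under reversal.

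Assembling these, I would attach to each of $g=b_1\cdots b_k$ and $h=b_1'\cdots b_k'$ its associated forward arc $\tilde g,\tilde h$ (namely the word itself if it is already a forward arc, or its reversal otherwise), so that $\phi(\tilde g)=\phi(g)$ and $\phi(\tilde h)=\phi(h)$, while each original word lies in the two-element set $\{\tilde g,\mathrm{rev}(\tilde g)\}$, respectively $\{\tilde h,\mathrm{rev}(\tilde h)\}$. The equality $\phi(g)=\phi(h)$ then forces $\phi(\tilde g)=\phi(\tilde h)$, and injectivity gives $\tilde g=\tilde h$; hence $\overline{b_1\cdots b_k}$ and $\overline{b_1'\cdots b_k'}$ both belong to the same two-element set $\{\tilde g,\mathrm{rev}(\tilde g)\}$, which yields exactly $\overline{b_1\cdots b_k}=\overline{b_1'\cdots b_k'}$ or $\overline{b_1\cdots b_k}=\overline{b_k'\cdots b_1'}$. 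The only delicate point, and the one I would write out with care, is the injectivity bookkeeping in the second paragraph, where the hypothesis $k\le 2g$ is used essentially to guarantee that no antipodal pair lies inside a single window; for $k>2g$ the abelianization no longer determines the arc and the statement genuinely fails.
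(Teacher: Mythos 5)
Your proposal is correct and takes essentially the same approach as the paper: both pass to the abelianization, use the hypothesis $k\leq 2g$ to guarantee that no letter and its inverse occur together (so the abelianized image suffers no cancellation and determines the set of letters), and conclude that the two fractional relators agree up to reversal. The only difference is that you write out explicitly, via the window-in-$\Z/4g$ injectivity argument, the final combinatorial step that the paper compresses into ``Therefore, $\{b_1,\dots,b_{k}\}=\{b_1',\dots,b_{k}'\}$ and the conclusion holds.''
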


\begin{proof}Since $b_1\cdots b_{k}$ and  $b_1'\cdots b_{k}'$ are conjugate in $G$, we have
   $$f(b_1\cdots b_{k})=f(b_1'\cdots b_{k}'),$$
   where $$f:G\to \Z^{2g}, ~c_i\mapsto e_i=(0,\dots,0,1,0,\dots,0)$$
is the abelianization of $G$. Furthermore, note that $b_i\neq b_j^{\pm 1}$ and  $b_i'\neq b_j'^{\pm 1}$ for any $1\leq i<j\leq k\leq 2g$,  we have  
   $$\sum_{i=1}^{k}f(b_i)=\sum_{i=1}^{k}f(b_i').$$
Therefore, $\{b_1,\dots,b_{k}\}=\{b_1',\dots,b_{k}'\}$ and the conclusion holds.
\end{proof}

Recall for any $x\in G$, there is a unique cyclically irreducible core $\ciw(x)$ conjugate to $x$, see Eq. (\ref{def ciw}) and Corollary \ref{cor cyclically irreducible word notation}. Now we can prove Theorem \ref{main thm3 conj. class}.

\begin{thm}[Theorem \ref{main thm3 conj. class}]
    Let $G$ be a surface group with the symmetric presentation (\ref{symmetric presentation}) and the length-lexicographical order (\ref{order of generators}). Then
    \begin{enumerate}
        \item Two nontrivial elements $x, y\in G$ are conjugate if and only if one of the following holds:
    \begin{enumerate}
        \item [(a)] $\ciw(x)=\overline{x_1\cdots x_m}$ and $\ciw(y)= \overline{x_{i+1}\cdots x_mx_1\cdots x_i}$ for some $1\leq i\leq m$;
        \item [(b)] $\ciw(x)=\overline{(b_{i+1}\cdots b_{2g-1}b_1\cdots b_i)^t}$ and $\ciw(y)=\overline{(b_{j}\cdots b_1b_{2g-1}\cdots b_{j+1})^t}$ for some $\overline{b_1\cdots b_{4g}}\in\RR$, $t\geq 1$ and $\space 1\leq i,j\leq 2g-1$.
    \end{enumerate}
    \item If $\ciw(x)$ has a form as in the above case (b), then 
    $$\nf([x])=\min\{\overline{(b_{j+1}\cdots b_{2g-1}b_1\cdots b_j)^t}, ~\overline{(b_{j}\cdots b_1b_{2g-1}\cdots b_{j+1})^t}\mid 1\leq j\leq 2g-1\}.$$ Otherwise 
$$\nf([x])=\min\{\overline{W}\mid \overline{W}\mbox{ is a cyclic permutation of }\ciw(x)\}.$$
    \end{enumerate}
\end{thm}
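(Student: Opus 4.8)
The plan is to use Corollary \ref{cor cyclically irreducible word notation}(5) to replace $x,y$ by their cyclically irreducible cores: $x$ and $y$ are conjugate if and only if $\ciw(x)$ and $\ciw(y)$ are conjugate, and both of these are cyclically irreducible words. Since $|\ciw(x)|=\swl(x)$ is a conjugacy invariant (Corollary \ref{cor cyclically irreducible word notation}(3)), any two conjugate cyclically irreducible words automatically have the same length. Thus the entire theorem becomes a question about cyclically irreducible words of equal length, and the whole content of part (1) is the structural claim: \emph{the cyclically irreducible words conjugate to a fixed cyclically irreducible $\overline{W}$ form a single cyclic-permutation orbit, except when $\overline{W}$ is a power of a cyclic permutation of a $(2g-1)$-fractional relator, in which case they form the union of the two orbits of $\overline{(b_1\cdots b_{2g-1})^t}$ and its reverse $\overline{(b_{2g-1}\cdots b_1)^t}$}. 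Throughout I will use that cyclic permutations of a cyclically irreducible word are again cyclically irreducible: a cyclic permutation $\overline{W'}$ satisfies $\overline{W'^2}\subset\overline{W^3}$, and $\overline{W^3}$ is irreducible by Lemma \ref{lem from 2-nd power to n-th power}, so $\overline{W'}$ is cyclically irreducible by Proposition \ref{prop D cyclically irreducible= DD irreducible}.

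For the direction ($\Leftarrow$) of (1), I would argue directly. In case (a) the two cores are cyclic permutations of one another, hence conjugate, so $x\sim\ciw(x)\sim\ciw(y)\sim y$. In case (b) I would first record, from Lemma \ref{lem some observations}(2) applied to the block ending at $b_{4g}$, the identity $b_{4g}b_1\cdots b_{2g-1}=b_{2g-1}\cdots b_1 b_{4g}$ (indices mod $4g$); this exhibits $b_1\cdots b_{2g-1}$ as conjugate to its reverse $b_{2g-1}\cdots b_1$ (by $b_{4g}$), and raising to the $t$-th power gives $(b_1\cdots b_{2g-1})^t\sim(b_{2g-1}\cdots b_1)^t$. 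As $\overline{(b_{i+1}\cdots b_{2g-1}b_1\cdots b_i)^t}$ is a cyclic permutation of $\overline{(b_1\cdots b_{2g-1})^t}$ and $\overline{(b_j\cdots b_1 b_{2g-1}\cdots b_{j+1})^t}$ is a cyclic permutation of $\overline{(b_{2g-1}\cdots b_1)^t}$, both $\ciw(x)$ and $\ciw(y)$ are conjugate to this common element, hence to each other.

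The direction ($\Rightarrow$) of (1) is the main work. The plan is to reduce conjugacy of cyclically irreducible words to a chain of single-generator ``elementary'' conjugations, and then to analyze, for a cyclically irreducible $\overline{W}=\overline{w_1\cdots w_n}$ and a letter $a\in\gs^{\pm}$, the cyclically irreducible core of $a^{-1}Wa$. When $a=w_1$ or $a=w_n^{-1}$ the conjugation is a rotation and returns a genuine cyclic permutation of $\overline{W}$. Otherwise $\overline{a^{-1}Wa}$ has length $n+2$, yet its core still has length $\swl=n$, so a relator-driven reduction must occur at the two new joints; here I would feed the two joints into the complete reduction classification of Section \ref{sect classification of reductions} (Lemma \ref{thm the reduction of W_1W_2 = X^2's} together with the Appendix tables). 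The key claim to extract is that such a step again returns a cyclic permutation of $\overline{W}$ \emph{unless} $\overline{W}$ is a power of a cyclic permutation of a $(2g-1)$-fractional relator, in which case the only new possibility is a flip to (a cyclic permutation of) the reverse word. \textbf{This case analysis is the main obstacle}: one must verify that no cyclically irreducible word of length $n$ other than a cyclic permutation (or, in the exceptional family, its reverse) can be produced. To close the exceptional family I would reduce to the base case $t=1$, $n=2g-1$ by Corollary \ref{cor cyclically irreducible word notation}(4) (the core of a power is a cyclic permutation of the power of the core), and there apply the abelianization argument of Lemma \ref{prop observations of b_1 cdots b_{2g-1} }, which pins down that the only $(2g-1)$-fractional relator conjugate to $b_1\cdots b_{2g-1}$ is itself or its reverse. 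Assembling these, a cyclically irreducible conjugate of $\ciw(x)$ that is not a cyclic permutation of it forces $\ciw(x)$ into the shape of case (b).

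For part (2) the plan is first to show that $\nf([x])$ is cyclically irreducible. Indeed, being the length-lexicographically minimal word in $[x]$, it is certainly a normal form; if it were irreducible but not cyclically irreducible, then Theorem \ref{main thm1 (original main thm2)} gives $|x^2|<2|x|$, so $\swl(x)<|\nf([x])|$ and $\ciw(x)$ would be a strictly shorter conjugate, contradicting minimality. Hence $\nf([x])$ lies among the cyclically irreducible members of $[x]$, which by part (1) are exactly the cyclic permutations of $\ciw(x)$ in the generic case, and the union of the two families $\overline{(b_{j+1}\cdots b_{2g-1}b_1\cdots b_j)^t}$ and $\overline{(b_j\cdots b_1 b_{2g-1}\cdots b_{j+1})^t}$ with $1\le j\le 2g-1$ in case (b). Taking the minimum over these finitely many explicit candidates then yields the two displayed formulas; the only remaining checks are that every listed candidate really lies in $[x]$ (which is the ($\Leftarrow$) direction already proved) and that the list is exhaustive (the ($\Rightarrow$) direction), so that no smaller cyclically irreducible conjugate is overlooked.
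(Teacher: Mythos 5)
Your preliminaries are sound: passing to cores via Corollary \ref{cor cyclically irreducible word notation}, the ($\Leftarrow$) direction, and the observation that cyclic permutations of cyclically irreducible words are again cyclically irreducible all check out and agree with the paper. The gap is in the ($\Rightarrow$) direction, and it is structural: conjugacy cannot be tracked through ``a chain of single-generator conjugations'' at the level of cores, because $\ciw$ is an invariant of an \emph{element}, not of its conjugacy class. Along your chain $x_i=a_ix_{i+1}a_i^{-1}$, the core $\ciw(x_i)$ is not determined by $\ciw(x_{i+1})$ and $a_i$: by Theorem \ref{thm normal forms of power elements (original main thm1)} one only has $x_{i+1}=c\,\ciw(x_{i+1})\,c^{-1}$ for a conjugator $c$ whose length is comparable to $|x_{i+1}|$, so $x_i=(a_ic)\,\ciw(x_{i+1})\,(a_ic)^{-1}$ and the conjugator does not get shorter; hence any induction on conjugator length (which is what your chaining amounts to) fails to close. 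Your single-letter lemma, even if proved, would control $\ciw(a^{-1}Wa)$ only for the cyclically irreducible \emph{word} $W$ itself, not for the elements merely conjugate to it that the chain actually produces. What you need is precisely a Dehn-type statement --- conjugate cyclically irreducible words differ by a cyclic permutation composed with a uniformly bounded conjugation --- and that is the substance of the theorem, not an available tool. (Also, the case analysis you defer is not a routine table check: Section \ref{sect classification of reductions} classifies reductions of a product of \emph{two} irreducible words, whereas $\ciw(a^{-1}Wa)$ must be read off from $\nf(a^{-1}W^ka)$ for all $k$, with reductions at the two joints interacting; that is Proposition \ref{prop classification of LLFR of X^2} territory, exceptional types $\mathfrak{A},\mathfrak{B},\mathfrak{C}$ included.) The paper avoids both problems with an asymptotic argument you would have to reinvent: from $ZX^{\ell}=Y^{\ell}Z$ it analyzes the reducing-subword pairs $\rsp(\overline{Z},\overline{X^{\ell}})$ and $\rsp(\overline{Y^{\ell}},\overline{Z})$ as $\ell\to\infty$; if the cancellation stays bounded the normal forms stabilize and force $\overline{Y}$ to be a cyclic permutation of $\overline{X}$, while if it is unbounded Lemma \ref{lem reducing formulae} produces arbitrarily long subwords $\overline{(b_1\cdots b_{2g-1})^{t}}$, forcing both cores to be cyclic permutations of powers of $(2g-1)$-fractional relators, after which uniqueness of roots and the abelianization argument (Lemma \ref{prop observations of b_1 cdots b_{2g-1} }) yield case (b).

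There is also a concrete error in your part (2): ``irreducible but not cyclically irreducible'' does not imply $|x^2|<2|x|$. Reductions of type $S_{(4)}$ preserve length, so there exist irreducible, cyclically freely reduced words whose square is reducible but which satisfy $\swl(x)=|x|$ and $|x^2|=2|x|$ exactly: the type-$\mathfrak{A}$ words of Definition \ref{defn word of types ABC} are such examples (one computes that $|\overline{X}|$ and $|\ciw(X)|$ both equal $(2g-1)(t_1+t_2+2)$), cf.\ the $S_{(4,t_0)}$ rows of Table \ref{tab: reduction class 2g}. For exactly this exceptional family --- the one the theorem is designed to handle --- your length argument gives no contradiction, and ruling such a word out as the class minimum requires a lexicographic comparison that itself relies on part (1). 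So both halves of your proposal break down at the same place: the exceptional words, which are the crux of Theorem \ref{main thm3 conj. class}.
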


%As , we have:

%\begin{cor}\label{cor normal form of conjugacy class and ciw} Let $x$ be in a surface group $G$ with genus $g$.  If $\ciw(x)=\overline{(b_{i+1}\cdots b_{2g-1}b_1\cdots b_i)^t}$ for some $t\geq 1$, $1\leq i\leq 2g-1$ and $\overline{b_1\cdots b_{4g}}\in\RR$, then $$\nf([X])=\min\{\overline{(b_{j+1}\cdots b_{2g-1}b_1\cdots b_j)^t}, ~\overline{(b_{j}\cdots b_1b_{2g-1}\cdots b_{j+1})^t}\mid 1\leq j\leq 2g-1\}.$$ Otherwise, $$\nf([x])=\min\{\overline{W}\mid \overline{W}\mbox{ is a cyclic permutation of }\ciw(x)\}.$$\end{cor}

\begin{proof}
Item (2) directly follows from item (1). Therefore, we prove item (1) in the following.

``$\Leftarrow$''. Note that $x_1\cdots x_m$ and $x_{i+1}\cdots x_mx_1\cdots x_i $ are conjugate, $b_{4g}=b_{2g}^{-1}$ and 
$$b_{4g}(b_1\cdots b_{2g-1})^tb_{2g}=(b_{2g-1}\cdots b_1)^t$$ for every $\overline{b_1\cdots b_{4g}}\in \RR$. We can easily deduce that $x$ and $y$ are conjugate.

``$\Rightarrow$''.  Since $x$ and $y$ are conjugate, $\ciw(x)$ and $\ciw(y)$ are also conjugate in $G$. Notice that conjugate elements have the same translation number by \cite[Lemma 6.3]{GS91ann}. Hence
by Corollary \ref{cor cyclically irreducible word notation}, we have the translation number
$$\swl(x)=|\ciw(x)|=\swl(y)=|\ciw(y)|.$$
We now suppose the cyclically irreducible cores
$$\ciw(x)=\overline{X}=\overline{x_1\cdots x_m},\qquad \ciw(y)=\overline{Y}=\overline{y_1\cdots y_m},$$
and suppose $ZXZ^{-1}=Y$ as group elements with $\overline{Z}$ irreducible. In addition, we can suppose $\overline{ZXZ^{-1}}$ is freely reduced. (Otherwise, we can replace $\overline{X}$ and $\overline{Z}$ with a cyclic permutation $\overline{X'}$ of $\overline{X}$ and a subword $\overline{Z'}$ of $\overline{Z}$, respectively. Then $Z'X'Z'^{-1}=Y,~\overline{X'}=\overline{x_{i+1}\cdots x_nx_1\cdots x_i}$ and $\overline{Z'X'Z'^{-1}}$ is freely reduced.) 

For any $\ell\geq 1$, let us consider the reducing-subword pairs:
\begin{eqnarray*}
    \rsp(\overline{Z},\overline{X^\ell}):=(\overline{Z_\ell},\overline{X_\ell}), \qquad \rsp(\overline{Y^\ell},\overline{Z}):=(\overline{Y_\ell},\overline{Z_\ell'}).
\end{eqnarray*}

Case (1). As $\ell \to \infty$, the lengths of $\overline{X_\ell}$ and $\overline{Y_\ell}$ are uniformly bounded. 
Then there is a sufficiently large positive integer $\ell_0$ such that
\begin{eqnarray*}
\rsp(\overline{Z},\overline{X^{\ell_0+k}})=\rsp(\overline{Z},\overline{X^{\ell_0}})= (\overline{Z_0},\overline{X_0})&\quad\mbox{with} &\overline{Z}=\overline{Z_1 Z_0}, ~\overline{X^{\ell_0}}=\overline{X_0X_1}, \\
    \rsp(\overline{Y^{\ell_0+k}},\overline{Z})=\rsp(\overline{Y^{\ell_0}},\overline{Z})=(\overline{Y_0},\overline{Z_0'})&\quad\mbox{with}&\overline{Y^{\ell_0}}=\overline{Y_1Y_0},~\overline{Z}=\overline{Z_0'Z_1'},   
\end{eqnarray*}
for every $k\geq 0$, and hence we have
\begin{equation}
\begin{array}{lllll}
\nf(ZX^{\ell_0+k})&=&\overline{Z_1\nf(Z_0X_0)X_1X^{k}}&=&\overline{\nf(ZX^{\ell_0})X^{k}},\label{Eq. finite rsp X}\\
\nf(Y^{\ell_0+k}Z)&=&\overline{Y^{k}Y_1\nf(Y_0Z'_0)Z'_1}&=&\overline{Y^{k}\nf(Y^{\ell_0}Z)}.
\end{array}
\end{equation} 
Moreover, note that $ZX^{\ell_0+k}=Y^{\ell_0+k}Z$, and the lengths $|\nf(ZX^{\ell_0})|$ and $|\nf(Y^{\ell_0}Z)|$ are bounded and independent of $k$, then for sufficiently large $k$, we have
\begin{eqnarray*}
&&\overline{\nf(ZX^{\ell_0})\underbrace{(x_1\cdots x_m)( x_1\cdots x_m)(x_1\cdots x_m) \cdots (x_1\cdots x_m)(x_1\cdots x_m)(x_1\cdots x_m)}_{X^{k}}}\\
&=&\overline{\underbrace{(y_1\cdots y_m)(y_1\cdots y_m)(y_1\cdots y_m)\cdots (y_1\cdots y_m)(y_1\cdots y_m)(y_1\cdots y_m)}_{Y^{k}}\nf(Y^{\ell_0}Z)}.    
\end{eqnarray*}
It implies $\overline{Y}=\overline{x_{i+1}\cdots x_mx_1\cdots x_{i}}$ for some  $1\leq  i \leq m$.

Case (2). As $\ell\to\infty$, the lengths of
both $\overline{X_\ell}$ and $\overline{Y_\ell}$ are unbounded. 
Then we can pick a sufficiently large positive integer $\ell_0$ such that
$$|\overline{Z_{\ell_0}X_{\ell_0}}|>4g+8100(2g-1)(|\overline{X}|+|\overline{Z}|).$$
By Lemma \ref{lem reducing formulae}, there must be a subword $\overline{(b_1\cdots b_{2g-1})^t}\subset\overline{Z_{\ell_0}X_{\ell_0}}$ with $t=2025(|\overline{X}|+|\overline{Z}|)$ for some $\overline{b_1\cdots b_{4g}}\in\RR$. Then
\begin{eqnarray*}
\overline{Z_{\ell_0}X_{\ell_0}}&=&\overline{Z_{\ell_0}(x_1\cdots x_m)(x_1\cdots x_m)(x_1\cdots x_m)\cdots (x_1\cdots x_m)(x_1\cdots x_m)(x_1\cdots x_m)x_1\cdots x_i}\\
    &=&\overline{\cdots \underbrace{(b_1\cdots b_{2g-1})(b_1\cdots b_{2g-1})(b_1\cdots b_{2g-1})\cdots(b_1\cdots b_{2g-1})(b_1\cdots b_{2g-1})}_{(b_1\cdots b_{2g-1})^t}\cdots},
\end{eqnarray*}
which implies that $\overline{X}=\overline{x_1\cdots x_m}$ is a cyclic permutation of $\overline{(b_1\cdots b_{2g-1})^{t_1}}$ for some $t_1>0$. Similarly, we have $\overline{Y}$ is also a cyclic permutation of $\overline{(b_1'\cdots b_{2g-1}')^{t_2}}$ for some $\overline{b_1'\cdots b_{4g}'}\in\RR$. Since $|\overline{X}|=|\overline{Y}|$, we have $t_1=t_2$, and hence $(b_1\cdots b_{2g-1})^{t_1}$ and $(b_1'\cdots b_{2g-1}')^{t_1}$ are conjugate in the surface group $G$. It follows that $b_1\cdots b_{2g-1}$ and $b_1'\cdots b_{2g-1}'$ are also conjugate in $G$, by the fact that every element in a surface group has a unique $t_1$-th root. Then by Lemma  \ref{prop observations of b_1 cdots b_{2g-1} },  we have
$$\overline{b_1\cdots b_{2g-1}}=\overline{b_1'\cdots b_{2g-1}'}\qquad \mbox{or}\qquad \overline{b_1\cdots b_{2g-1}}=\overline{b_{2g-1}'\cdots b_1'}.$$
Therefore, Theorem \ref{main thm3 conj. class} holds in this case.

Case (3). As $\ell\to \infty$,  the length of  $\overline{X_\ell}$ is uniformly bounded but the length of $\overline{Y_\ell}$ is not. 
Then there is a sufficiently large $\ell_0$ such that Eq. (\ref{Eq. finite rsp X}) holds:
$$\nf(ZX^{\ell_0+k})=\overline{\nf(ZX^{\ell_0})X^{k}},$$
and $Y$ is a cyclic permutation of $\overline{(b_1'\cdots b_{2g-1}')^{t_2}}$ for some $\overline{b_1'\cdots b_{4g}'}\in\RR$ as in Case (2).  Denote $$\rsp(\nf(ZX^{\ell_0+k}), ~\overline{Z^{-1}})=\rsp(\overline{\nf(ZX^{\ell_0})X^k}, ~\overline{Z^{-1}})=(\overline{C_k},\overline{D_k}).$$ 
If the length of $\overline{C_k}$ is not uniformly bounded as $k\to \infty$, then $\overline{X}$ is a cyclic permutation of $\overline{(b_1\cdots b_{2g-1})^{t_1}}$ for some $\overline{b_1\cdots b_{4g}}\in \RR$. We have reduced the discussion to Case (2).
If the length of $\overline{C_k}$ is uniformly bounded as $k\to \infty$,
then there is a sufficiently large $k_0$ such that for every $n\geq 0$, we have $$\nf(ZX^{\ell_0+k_0+n}Z^{-1})=\overline{\nf(ZX^{\ell_0})X^n\nf(X^{k_0}Z^{-1})}=\overline{Y^{\ell_0+k_0+n}}.$$
Now by a same discussion as in Case (1), we have $\overline{Y}=\overline{x_{i+1}\cdots x_mx_1\cdots x_i}$ for some $1\leq i\leq m$. The discussion of Case (3) is complete.

Case (4). As $\ell\to \infty$, the length of $\overline{X_\ell}$ is unbounded but the length of $\overline{Y_\ell}$ is uniformly bounded. The argument is symmetric to that of Case (3).

Therefore, the proof of Theorem \ref{main thm3 conj. class} is complete.
\end{proof}

By combining Theorem \ref{main thm3 conj. class} and Example \ref{exam calculate cic of b1...bk}, we can immediately strengthen Lemma \ref{prop observations of b_1 cdots b_{2g-1} } to the following.

\begin{cor} 
Suppose $\overline{b_1\cdots b_{4g}}, ~\overline{b_1'\cdots b_{4g}'}\in \RR$. Then $b_1\cdots b_{k}$ and $b_1'\cdots b_{k}'$ ($k\leq 2g$) are conjugate in the surface group $G$ if and only if 
\begin{eqnarray*}
\overline{b_1\cdots b_{k}}=\left\{ 
\begin{array}{ll}
    \overline{b_1'\cdots b_{k}'} & k\neq 2g-1, 2g\\
    \overline{b_1'\cdots b_{k}'} ~\mbox{or} ~ \overline{b_{k}'\cdots b_1'} & k=2g-1, 2g
\end{array}\right..
\end{eqnarray*}
\end{cor}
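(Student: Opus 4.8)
The plan is to combine the forward/reverse dichotomy already supplied by Lemma~\ref{prop observations of b_1 cdots b_{2g-1} } with the conjugacy criterion of Theorem~\ref{main thm3 conj. class} and the explicit cores computed in Example~\ref{exam calculate cic of b1...bk}. By Lemma~\ref{prop observations of b_1 cdots b_{2g-1} }, conjugacy of $b_1\cdots b_k$ and $b_1'\cdots b_k'$ already forces either $\overline{b_1\cdots b_k}=\overline{b_1'\cdots b_k'}$ (the \emph{forward} case) or $\overline{b_1\cdots b_k}=\overline{b_k'\cdots b_1'}$ (the \emph{reverse} case). Hence only the backward implication requires real work: I must decide, as a function of $k$, which of these two relations actually produces conjugate elements. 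The forward case is immediate, since then $b_1\cdots b_k=b_1'\cdots b_k'$ as elements of $G$. Thus the whole statement reduces to the single question: for $2\le k\le 2g$, when is $b_1\cdots b_k$ conjugate to its reverse $b_k\cdots b_1$?

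First I would settle the two exceptional values. For $k=2g$, Lemma~\ref{lem some observations}(2) gives $b_1\cdots b_{2g}=b_{2g}\cdots b_1$ as elements, so the reverse word represents the same element and conjugacy is automatic. For $k=2g-1$, I would exhibit an explicit conjugator: applying Lemma~\ref{lem some observations}(2) with the cyclic index shifted so that $b_{4g}b_1\cdots b_{2g-1}=b_{2g-1}\cdots b_1 b_{4g}$, and using $b_{4g}=b_{2g}^{-1}$ from Lemma~\ref{lem some observations}(1), yields $b_{4g}(b_1\cdots b_{2g-1})b_{4g}^{-1}=b_{2g-1}\cdots b_1$; equivalently this is exactly case (b) of Theorem~\ref{main thm3 conj. class} with $t=1$ and the boundary choices $i=j=2g-1$, where $\ciw(b_1\cdots b_{2g-1})=\overline{b_1\cdots b_{2g-1}}$ and $\ciw(b_{2g-1}\cdots b_1)=\overline{b_{2g-1}\cdots b_1}$. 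This establishes the backward implication in the two exceptional cases.

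The heart of the proof, and the step I expect to be the main obstacle, is showing that for the remaining $k$ the reverse word is \emph{not} conjugate to the forward one. Here I would invoke Example~\ref{exam calculate cic of b1...bk}: for $k<2g$ it gives $\ciw(b_1\cdots b_k)=\overline{b_1\cdots b_k}$, and since the descending block $\overline{b_k\cdots b_1}$ is again a cyclically irreducible fractional relator of length $<2g$, the same computation gives $\ciw(b_k\cdots b_1)=\overline{b_k\cdots b_1}$. By Theorem~\ref{main thm3 conj. class} the two are conjugate only if either (a) $\overline{b_k\cdots b_1}$ is a cyclic permutation of $\overline{b_1\cdots b_k}$, or (b) both cores have the special power shape, which forces $(2g-1)\mid k$ and hence $k=2g-1$ in the range $k<2g$. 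The crucial rigidity is that the letters $b_1,\dots,b_k$ of a fractional relator of length $\le 2g$ are pairwise distinct by Lemma~\ref{lem some observations}(1); matching $\overline{b_k\cdots b_1}$ against a cyclic permutation $\overline{b_{i+1}\cdots b_k b_1\cdots b_i}$ then turns into index equalities ($k+1-j=i+j$ on the first block and $2j=2k+1-i$ on the second), and a strictly decreasing versus strictly increasing comparison shows each can hold on at most one position. Carrying out this count forces $k$ down to its extreme values, so that for $3\le k\le 2g-2$ case (a) is impossible; combined with the exclusion of case (b) away from $k=2g-1$, this leaves only the forward equality in that range. The delicate bookkeeping lives precisely in this distinct-letter index count, in correctly aligning the two admissible shapes against cases (a) and (b) of the conjugacy theorem, and in the separate, careful treatment of the short boundary values and of $k=2g$ via the length-$2g$ relation of Lemma~\ref{lem some observations}(2).
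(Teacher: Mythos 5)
Your overall strategy is exactly the paper's: the paper obtains this corollary in one line by combining Theorem \ref{main thm3 conj. class}, Example \ref{exam calculate cic of b1...bk} and Lemma \ref{prop observations of b_1 cdots b_{2g-1} }, precisely the three ingredients you assemble. Your handling of $k=2g$ (equality of elements via Lemma \ref{lem some observations}(2)), of $k=2g-1$ (the conjugator $b_{4g}$, i.e.\ case (b) of Theorem \ref{main thm3 conj. class} with $t=1$, $i=j=2g-1$), and your exclusion of cases (a) and (b) for $3\le k\le 2g-2$ are all correct.

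The genuine gap is the value $k=2$, which you explicitly defer (``the separate, careful treatment of the short boundary values'') and never carry out --- and it cannot be carried out, because the statement is false there. Your own index count shows that $\overline{b_k\cdots b_1}$ can match a cyclic permutation of $\overline{b_1\cdots b_k}$ precisely when $k\le 2$; at $k=2$ the reverse $\overline{b_2b_1}$ is itself a cyclic permutation of $\overline{b_1b_2}$, so case (a) of Theorem \ref{main thm3 conj. class} applies (indeed $b_2b_1=b_1^{-1}(b_1b_2)b_1$ in any group). Moreover $\overline{b_2b_1}$ is a legitimate prefix of an element of $\RR$, being a descending $2$-block: for $g=2$, take $\overline{b_1\cdots b_8}$ to be the relator itself and $\overline{b_1'\cdots b_8'}=\overline{c_2c_1c_4^{-1}c_3^{-1}c_2^{-1}c_1^{-1}c_4c_3}\in\RR$, a cyclic permutation of the relator's inverse. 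Then $b_1b_2=c_1c_2$ and $b_1'b_2'=c_2c_1$ are conjugate, yet $\overline{b_1b_2}\ne\overline{b_1'b_2'}$ and $k=2\notin\{2g-1,2g\}$ since $g\ge 2$, contradicting the ``only if'' direction of the corollary in the range $k\ne 2g-1,2g$. So the exceptional list must be enlarged to $k\in\{2,2g-1,2g\}$ (at $k=2$ ``reverse'' and ``cyclic permutation'' coincide, so the forward-or-reverse criterion is again the right one). Your argument is sound for $3\le k\le 2g-2$ and for $k=2g-1,2g$, but as written it claims the full statement, which fails at $k=2$; note that the paper's own one-line derivation overlooks the same point.
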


%\begin{proof}
%The ``$\Leftarrow$'' can be verified directly. We prove the ``$\Rightarrow$'' in the following. Denote $x:=b_1\cdots b_k$ and $y:=b_1'\cdots b_k'$.Note that we have already calculated $\ciw(x)$ and $\ciw(y)$ in Example \ref{exam calculate cic of b1...bk}.If  $k=2g-1$. Then $\ciw(x)$ and $\ciw(y)$ are of forms as in item (2) of Theorem \ref{main thm3 conj. class}, and the conclusion clearly follows from Theorem \ref{main thm3 conj. class}. If $k\leq 2g-2$, then $\overline{b_1\cdots b_k}=\ciw(x)=\ciw(y)=\overline{b_1'\cdots b_k'}$. If $k=2g$, the conclusion clearly follows from Lemma \ref{prop observations of b_1 cdots b_{2g-1} }.\end{proof}

\subsection{Conjugation algorithm and root-finding algorithm}
First, by Theorem \ref{thm GZ} and the $S$-set, we provide Algorithm \ref{algorithm: normal form of an element} for computing the normal form of any given word in $\W(\gs)$. In general, the complexity of computing the normal form by using Gr\"{o}bner-Shirshov basis is not polynomial, but our case turns out to be simple. Suppose that the input word has length $n$. In Algorithm~\ref{algorithm: normal form of an element}, each letter is scanned once to compare with the leading term of some $S_{(1)}$, and is scanned twice  to compare with the leading term of some $S_{(i)}$ with $i>1$. Note that the letter $b_1$ of the leading term of any $S_{(i)}$ with $i>1$
has $4g$ choices. The total operation (scaning, comparing and replacing) is at most $3\times (n+2n\times 4g+2n\times 4g)$. Hence, the complexity of this algorithm is linear, i.e. $O(n)$.  

Then, according to Theorem \ref{thm normal forms of power elements (original main thm1)} and Theorem \ref{main thm3 conj. class}, we provide Algorithm \ref{algorithm: find normal form of conjugacy class} to compute the normal form of a conjugacy class (i.e., the minimal word in this conjugacy class, see Definition \ref{normal form}). Since two elements $x,y\in G$ are conjugate if and only if $\nf([x])=\nf([y])$, we can decide whether $x$ and $y$ are conjugate by comparing $\nf([x])$ and $\nf([y])$.

\begin{algorithm}
\rule{\textwidth}{0.2pt}
\SetAlgoLined
    
\SetKwInOut{Input}{Input}
\SetKwInOut{Output}{Output}

\Input{A word $\overline{X}$.}
\Output{The normal form $\nf(X)$  of $\overline{X}$.}

\begin{enumerate}

\item[Step 1.]
Look for all the subwords of $\overline{X}$ which are the leading term of $S_{(1)}$, and remove them from $\overline{X}$, and update $\overline{X}$.

\item[Step 2.]
Set $q=1$, the position of the first letter need to be considered.

\item[Step 3.] 
Starting from the $q$-letter, look for the left most subword $A$ of $\overline{X}$ such that $A$ is a leading term of some $S_{(i)}$ and $Ax$ is not the leading term of any $S_{(i)}$, where $x$ is the letter of $X$ next to $A$. Set $\overline{X}=\overline{B_LAB_R}$. IF there is no such  $\overline{A}$, THEN STOP.

\item[Step 4.] Replace $A$ with corresponding non-leading term $\overline{A'}=\overline{y_1\cdots y_m}$, and update $\overline{X} =\overline{B_LA'B_R}$. 

\item[Step 5.] Starting from the $q$-letter, i.e. the first letter of $\overline{A'}$, look for (from rigth to left) maximal subword $\overline{C}$ of $\overline{B_Ly_1}$ such that $\overline{C}$ is the leading term of some $S_{(i)}$. IF there is such $\overline{C}$, THEN replace $\overline{C}$ with corresponding non-leading term $\overline{C'}$.

\item[Step 6.] Set $q$ to be the position of the letter before $\overline{B_R}$.

\item[Step 7.] GOTO step 3. 
\end{enumerate}
\rule{\textwidth}{0.2pt}
\caption{Algorithm for computing the normal form of a word.}\label{algorithm: normal form of an element}
\end{algorithm}

\begin{algorithm}[ht]
\rule{\textwidth}{0.2pt}
\SetAlgoLined
    
\SetKwInOut{Input}{Input}
\SetKwInOut{Output}{Output}

\Input{An irreducible word $\overline{X}$, the normal form $\overline{X^{(2)}}=\nf(X^2)$ of $X^2$ and the normal form $\overline{X^{(3)}}=\nf(X^3)$ of $X^3$.}

\Output{The normal form $\overline{N}= \nf([X])$ of the 
conjugacy class $[X]$ of $X$, and the conjugator $z$ from $N$ to $X$, i.e., $N=zXz^{-1}$ in the surface group $G$.}
\begin{enumerate}
    \item[Step 1.] Find the longest common initial subword $\overline{X_L}$  of  $\overline{X}$  and  $\overline{X^{(2)}}$.
    \item[Step 2.] Decompose $\overline{X}$, $\overline{X^{(2)}}$ and $\overline{X^{(3)}}$ into products $\overline{X_LX_R}$, $\overline{X_LX'X_R}$ and $\overline{X_LX''X_R}$.
  \item[Step 3.] Find the longest common initial subword $\overline{X'_L}$  of  $\overline{X'}$  and  $\overline{X''}$.
  \item[Step 4.] Decompose $\overline{X'}$  and  $\overline{X''}$  into products $\overline{X'_LX'_R}$ and $\overline{X'_L W X'_R}$. ($\overline{W}$ is actually $\ciw(X)$.)
\item[Step 5.] Let $\overline{W} = \overline{w_1\cdots w_{|\overline{W}|}}$. Pick $\overline{N}=\overline{w_k w_{k+1}\cdots w_{|\overline{W}|}w_1\cdots w_{k-1}}$ to be the minimal word among all rotations of $\overline{W}$, and pick $z$ to be $w_k\cdots w_{|\overline{W}|}X'_RX_R$.

\item[Step 6.] 
IF the word $\overline{W}$ has the form
$\overline{(b_{i+1}\cdots b_{2g-1}b_1\cdots b_{i})^t}$ for some $t$ and $i$ with $t\geq 1$ and $1\leq i\leq 2g-1$ and $\overline{b_1\cdots b_{4g}}\in\RR$ THEN
\begin{itemize}
    \item Pick $\overline{N'}$ to be the minimal word among the reverses of all rotations of $\overline{W}$.
 \item IF $\overline{N'}$ is smaller than $\overline{N}$ THEN
 \begin{itemize}
 \item Update $\overline{N}$ to be $\overline{N'} = \overline{w_j\cdots w_1w_{|\overline{W}|}\cdots w_{j+1}}$.
 \item Update $z$ to be $w_j\cdots w_1b_i\cdots b_1b_{4g}\cdots b_{2g+1+i}X'_RX_R$.
 \end{itemize}
\end{itemize}
\end{enumerate}
 \rule{\textwidth}{0.2pt}
\caption{Algorithm for computing the normal form of a conjugacy class.}
\label{algorithm: find normal form of conjugacy class}
\end{algorithm}

Let us consider the complexity of Algorithm~\ref{algorithm: find normal form of conjugacy class}, depending on its input $n$:  the length of given word $X$. The first four steps are all comparison among words of length $n$, $2n$ and $3n$. The times of comparison of letters are at most $n+2n=3n$. Step 5 involves a comparison among $n$ words of length $n$. Hence, the times of comparison of letters are at most $n^2$. Note that the number of elements in $\RR$ is fixed, depends only on the genus $g$.
The complexity of Step 6 is similar to that of step 5. Thus, the total complexity is about $O(n^2)$.  

In addition, we also provide Algorithm \ref{algorithm: root-finding} for computing the \emph{primitive root} (i.e. the root is not a proper power) of an arbitrary element. Clearly, the complexity of this algorithm is similar to that of Algorithm~\ref{algorithm: find normal form of conjugacy class}, and hence is about $O(n^2)$.

\begin{algorithm}[ht]
\rule{\textwidth}{0.4pt}
\SetAlgoLined

\SetKwInOut{Input}{Input}
\SetKwInOut{Output}{Output} 

\Input{An irreducible word $\overline{X}$, the normal form $\overline{X^{(2)}}=\nf(X^2)$ of $X^2$ and the normal form $\overline{X^{(3)}}=\nf(X^3)$ of $X^3$.}

\Output{The primitive root $Y$ and its power $r$ with $X=Y^r$ in the surface group $G$.}
\begin{enumerate}
    \item[Step 1.] Find the longest common initial subword $\overline{X_L}$  of  $\overline{X}$  and  $\overline{X^{(2)}}$.
    \item[Step 2.] Decompose $\overline{X}$, $\overline{X^{(2)}}$ and $\overline{X^{(3)}}$ into products $\overline{X_LX_R}$, $\overline{X_LX'X_R}$ and $\overline{X_LX''X_R}$.
  \item[Step 3.] Find the longest common initial subword $\overline{X'_L}$  of  $\overline{X'}$  and  $\overline{X''}$.
  \item[Step 4.] Decompose $\overline{X'}$  and  $\overline{X''}$  into products $\overline{X'_LX'_R}$ and $\overline{X'_L W X'_R}$. ($\overline{W}$ is actually $\ciw(X)$.)

    \item[Step 5.] Let $\overline{W}:=\overline{w_1w_2\cdots w_n}$ with $w_i\in\gs^{\pm}$, and calculate all the positive divisors $\{d_j\mid j=1, 2, \ldots, m \}$ of $n$ with $1=d_1<d_2<\cdots< d_m=n$.
\item[Step 6.]
 Find the minimal $j$ in $\{1,2,\ldots, m\}$ such that $\overline{W}=\overline{(w_1\cdots w_{d_j})^{n/d_j}}$
\item[Step 7.] Pick $Y=X_LX'_L(w_1\cdots w_{d_j})(X'_L)^{-1}X_L^{-1}$ and pick $r=\frac{n}{d_j}$.
\end{enumerate}
\rule{\textwidth}{0.4pt}
\caption{Algorithm for root-finding.}  %the shortest word $\overline{W}$ (in Step 4) such that $\ciw(X)=\overline{W^{r}}$ is a cyclic permutation of $\ciw(Y)$. Therefore, $r=\dfrac{n}{d_j}$ is the corresponding power and $Y=AFWF^{-1}A^{-1}$ is the primitive root of $X$.}
\label{algorithm: root-finding}
\end{algorithm}

Finally, by combining Algorithm \ref{algorithm: find normal form of conjugacy class} and Algorithm  \ref{algorithm: root-finding}, we obtain Corollary \ref{cor conj-power algo.} regarding the conjugate-power algorithm.

\begin{cor}[Corollary \ref{cor conj-power algo.}]
     For any $x,y\in G$, we have an algorithm to determine whether there exists a pair of integers $(m,n)$ such that $x^m$ is conjugate to $y^n$.
     If the answer is yes, we can determine $(m,n)$ and the conjugator $z$ such that $x^m=zy^nz^{-1}$.
\end{cor}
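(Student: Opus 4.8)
The plan is to reduce the conjugate-power problem for a pair $(x,y)$ to finitely many applications of the two algorithms already established, namely Algorithm~\ref{algorithm: find normal form of conjugacy class} (normal form of a conjugacy class) and Algorithm~\ref{algorithm: root-finding} (primitive root). The key observation is that by Corollary~\ref{cor cyclically irreducible word notation}(4), for every $r\geq 1$ we have $|\ciw(x^r)|=r\cdot|\ciw(x)|=r\cdot\swl(x)$, and $\ciw(x^r)$ is a cyclic permutation of $\ciw(x)^r$. Hence the stable word length scales linearly under taking powers, and this gives a necessary arithmetic constraint: if $x^m$ is conjugate to $y^n$, then $m\cdot\swl(x)=n\cdot\swl(y)$ by \cite[Lemma 6.3]{GS91ann}. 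This constraint pins down the \emph{ratio} $m/n$ up to a single rational number, so only one essential pair of exponents (and its multiples) can possibly work.

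First I would compute, using the normal forms and Algorithm~\ref{algorithm: root-finding}, the primitive roots $u$ of $x$ and $v$ of $y$, writing $x=u^{p}$ and $y=v^{q}$ for uniquely determined primitive $u,v$ and positive integers $p,q$ (here I use that every element of a surface group has a unique primitive root, which is implicit in the root-finding algorithm and was invoked in the proof of Theorem~\ref{main thm3 conj. class}). Since $x^m=(u)^{pm}$ and $y^n=(v)^{qn}$, and since taking a primitive root is compatible with conjugation, $x^m$ is conjugate to $y^n$ if and only if $pm=qn$ and $u$ is conjugate to $v$. The conjugacy of the primitive roots $u$ and $v$ is decided directly by Algorithm~\ref{algorithm: find normal form of conjugacy class}: compute $\nf([u])$ and $\nf([v])$ and compare them, and in the affirmative case the algorithm also returns conjugators $z_u,z_v$ with $\nf([u])=z_u u z_u^{-1}$ and $\nf([v])=z_v v z_v^{-1}$.

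The main steps in order are then: (1) run Algorithm~\ref{algorithm: root-finding} on $x$ and on $y$ to obtain $(u,p)$ and $(v,q)$; (2) run Algorithm~\ref{algorithm: find normal form of conjugacy class} to test whether $u$ and $v$ are conjugate, returning conjugators if so; (3) if they are not conjugate, answer ``no''; (4) if they are conjugate, set the smallest admissible exponents by solving $pm=qn$ in positive integers, i.e.\ take $m=q/\gcd(p,q)$ and $n=p/\gcd(p,q)$; (5) assemble the conjugator from $z_u,z_v$ by noting that $z_v^{-1}z_u$ conjugates $u$ to $v$, whence $x^m=u^{pm}$ and $y^n=v^{qn}$ with $pm=qn$ gives the desired $z$ with $x^m=z^{-1}y^n z$ (adjusting inverses and reading off the word from the normal-form data). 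All of these steps run in time $O(n^2)$ by the complexity analyses of the two algorithms, so the combined procedure is polynomial.

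The hard part will be justifying the reduction in step (2)–(4) cleanly: one must argue that conjugacy of the powers $x^m,y^n$ is \emph{equivalent} to the conjunction ``$pm=qn$ and $u\sim v$,'' rather than merely implied by it. The forward direction uses that conjugate elements have equal translation numbers (\cite[Lemma 6.3]{GS91ann}) together with $\swl(x)=p\cdot\swl(u)$ and the uniqueness of roots to force the primitive roots to agree up to conjugacy; the subtle point is that two conjugate powers $u^{N}$ and $v^{N}$ (with $N=pm=qn$) force $u\sim v$, which follows because in a surface group the $N$-th root is unique even up to conjugacy, exactly the fact exploited in Case~(2) of the proof of Theorem~\ref{main thm3 conj. class}. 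The converse direction is immediate. Once this equivalence is secured, the explicit recovery of $(m,n)$ and $z$ is a matter of bookkeeping with the conjugators produced by Algorithm~\ref{algorithm: find normal form of conjugacy class}.
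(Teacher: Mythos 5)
Your overall strategy is the same as the paper's: compute primitive roots $u,v$ with $x=u^p$, $y=v^q$ via Algorithm~\ref{algorithm: root-finding}, reduce conjugacy of powers to conjugacy of the primitive roots using uniqueness of roots, decide that conjugacy with Algorithm~\ref{algorithm: find normal form of conjugacy class}, and then solve the arithmetic for $(m,n)$ and assemble the conjugator. The reduction itself, including the recovery of $z$ from $z_v^{-1}z_u$, is sound as far as it goes.

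However, there is a genuine gap: you only test whether $u$ is conjugate to $v$, and you only solve $pm=qn$ in \emph{positive} integers, whereas the statement allows arbitrary non-zero integers $(m,n)$. The correct dichotomy (which is what the paper proves) is: $x^m$ is conjugate to $y^n$ if and only if either ($u\sim v$ and $pm=qn$) or ($u\sim v^{-1}$ and $pm=-qn$); accordingly the paper runs the conjugacy test against both $x_1$ and $x_1^{-1}$ and, in the second case, returns exponents of opposite sign. Your version fails on simple inputs: take $x=c_1$ and $y=c_1^{-1}$. Then $u=c_1$, $v=c_1^{-1}$ are both primitive, and $x^1=y^{-1}$, so the answer is ``yes'' with $(m,n)=(1,-1)$; but $c_1$ is not conjugate to $c_1^{-1}$ (their images $e_1$ and $-e_1$ in the abelianization $\Z^{2g}$ differ), so your algorithm would answer ``no''. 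Relatedly, your necessary condition ``$x^m\sim y^n$ implies $m\cdot\swl(x)=n\cdot\swl(y)$'' is only valid for positive exponents; in general conjugacy gives $|m|\cdot\swl(x)=|n|\cdot\swl(y)$, since $\swl(w^{-1})=\swl(w)$, and this weaker identity does not rule out oppositely-signed solutions. The fix is exactly the paper's: after computing $u,v$, run Algorithm~\ref{algorithm: find normal form of conjugacy class} on both pairs $(u,v)$ and $(u^{-1},v)$, and output $(m,n)=(q/d,\,\pm p/d)$ with $d=\gcd(p,q)$ according to which test succeeds.
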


\begin{proof}
  We first use Algorithm \ref{algorithm: root-finding} to obtain the primitive roots $x_1, y_1$ of $x$ and $y$  such that $x_1^{r_1}=x$ and $y_1^{r_2}=y$, respectively. Note that $r_1, r_2\geq 1$ and
  \begin{eqnarray*}
 y^n=zx^mz^{-1}\Leftrightarrow y_1^{nr_2}=(zx_1z^{-1})^{mr_1}\Leftrightarrow y_1=\left\{ \begin{array}{ll}
    zx_1z^{-1} & nr_2=mr_1\\
    zx_1^{-1}z^{-1} & nr_2=-mr_1
\end{array}\right., 
  \end{eqnarray*}
 where the last ``=" holds because $x_1$ and $y_1$ are primitive roots. Then using Algorithm \ref{algorithm: find normal form of conjugacy class}, we can verify whether $y_1=zx_1^{\pm 1}z^{-1}$  for some $z\in G$. If the answer is yes, we can compute the conjugator $z$ and take $m=r_2/d$ and $n=\pm r_1/d$, where $d$ is the greatest common divisor of $r_1$ and $r_2$.  
\end{proof}

\subsection{Dehn's conjugation algorithm} In the early 1900's, Dehn formulated three fundamental decision problems in group theory: the word problem, the conjugacy problem, and the isomorphism problem. Since then, these three problems have had a profound impact on shaping the entire field of group theory. He was also the first person to solve the word problem and the conjugacy problem for surface groups \cite{De11,De12,De87}. 
Now, we illustrate the differences between Dehn's work and our conclusions. Since he gave a more efficient algorithm in \cite{De12} than that in \cite{De11}, we introduce the algorithm in \cite{De12}.

Let $G=\pi_1(\Sigma_g)(g\geq 2)$ have a canonical presentation $\langle\gs_c\mid R_c\rangle$, where $\gs_c=\{a_1,a_2,\dots,a_{2g-1},a_{2g}\}$ and $R_c=[a_1,a_2]\cdots[a_{2g-1},a_{2g}]$. Following the convention of Definition \ref{fractional relator}, we denote by $\RR_c$ the set of cyclic permutations of $R_c$ and its inverse and call $\overline{b_1\cdots b_k}$ a \emph{$k$-fractional relator} if $\overline{b_1\cdots b_{4g}}\in\RR_c$ and $1\leq k\leq 4g$. For the word problem, Dehn presented the following theorem, which was later referred to as ``Dehn's algorithm''.
\begin{thm}[Dehn, \cite{De12}]\label{thm dehn word problem}
    If $\overline{W}\in\W(\gs_c)$ is freely reduced and represents the identity element in $G$, then there exists a $(2g+1)$-fractional relator $\overline{A}\subset\overline{W}$.
\end{thm}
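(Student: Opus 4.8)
The statement is precisely the assertion that the canonical presentation satisfies \emph{Dehn's algorithm}, and the cleanest route is through small cancellation theory. The plan is to verify that the symmetrized relator set $\RR_c$ satisfies the metric condition $C'(1/6)$, and then to invoke Greendlinger's Lemma, which for a $C'(1/6)$ presentation guarantees that any nonempty freely reduced word representing the identity contains a subword occupying more than half of some relator.

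First I would set up the symmetrized closure: $\RR_c$ consists of all cyclic rotations of $R_c=\prod_{i=1}^{g}[a_{2i-1},a_{2i}]$ and of $R_c^{-1}$, each of length $4g$. A \emph{piece} is a word that is a common prefix of two distinct members of $\RR_c$, equivalently a subword occurring in two distinct positions or orientations in the cyclic words $R_c$ and $R_c^{-1}$. The key computation is that \emph{every piece has length at most $1$}. Indeed, each generator $a_i^{\pm1}$ occurs exactly once in $R_c$, so all length-$2$ subwords of the cyclic word $R_c$ are pairwise distinct, and likewise for $R_c^{-1}$; a direct inspection of the length-$2$ subwords adjacent to a fixed letter (say $a_1$) shows that the collection arising from $R_c$ is disjoint from the one arising from $R_c^{-1}$, and by the cyclic symmetry of $R_c$ this holds for every letter. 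Hence no word of length $2$ is a piece. Since the maximal piece length is $1$ while $|R_c|=4g\ge 8$ for $g\ge2$, we have $1<\tfrac16\cdot 4g$, so $\RR_c$ satisfies $C'(1/6)$; this is exactly where the hypothesis $g\ge2$ enters, and it is essential, as the torus relator $[a_1,a_2]$ fails it and Dehn's algorithm indeed breaks down there.

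With $C'(1/6)$ in hand the conclusion is immediate from Greendlinger's Lemma: a nonempty freely reduced word $\overline{W}$ with $W=1$ in $G$ contains a subword $\overline{s}$ that is also a subword of some $\overline{r}\in\RR_c$ with $|\overline{s}|>(1-3\lambda)|\overline{r}|$, where $\lambda=\tfrac16$, so $|\overline{s}|>\tfrac12(4g)=2g$. As lengths are integers, $|\overline{s}|\ge 2g+1$, and any length-$(2g+1)$ block of $\overline{s}$ is, by definition, a $(2g+1)$-fractional relator contained in $\overline{W}$ (using that $\RR_c$ is closed under cyclic rotation, so every consecutive block of a relator is the prefix of some member of $\RR_c$). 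The degenerate case in which $\overline{W}$ is itself a rotation of $R_c^{\pm1}$ is trivial, since it then contains the entire relator.

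The main obstacle is the piece computation, which is where all the combinatorics of the relator is concentrated; once it is settled, the small cancellation machinery does the rest. If one prefers a self-contained argument that does not treat Greendlinger's Lemma as a black box, I would instead run a combinatorial Gauss--Bonnet (curvature) argument on a reduced van Kampen diagram $D$ with boundary label $\overline{W}$: assign each corner of each $4g$-gon the angle $\pi\bigl(1-\tfrac{2}{4g}\bigr)$, use that interior vertices have degree $\ge 3$ and that interior edges are pieces of length $1$ to show interior faces carry nonpositive curvature, and conclude from the disk relation $\sum(\text{curvature})=2\pi$ that some boundary face meets $\partial D$ along a path of length $>2g$. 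This recovers the same $(2g+1)$-fractional relator, at the cost of the explicit curvature bookkeeping.
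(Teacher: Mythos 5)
Your proof is correct, but it is worth pointing out that the paper itself contains no proof of this statement: Theorem \ref{thm dehn word problem} is quoted as a classical result of Dehn with the citation \cite{De12}, and it appears only as background for comparing Dehn's conjugation algorithm with the paper's Algorithm \ref{algorithm: find normal form of conjugacy class}. Your argument is the standard modern replacement for Dehn's original one: Dehn argued geometrically, via the tessellation of the hyperbolic plane by $4g$-gons, whereas you algebraize the statement through small cancellation theory. Your key computation is right: every letter $a_i^{\pm1}$ occurs exactly once in $R_c$, so distinct rotations of $R_c$ (and of $R_c^{-1}$) share no common prefix of length $2$, and a direct check shows no length-$2$ word occurs in both cyclic words $R_c$ and $R_c^{-1}$; hence pieces have length at most $1$, and $1<\tfrac{1}{6}\cdot 4g$ exactly when $g\geq 2$, giving $C'(1/6)$. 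Greendlinger's Lemma then produces a subword $\overline{s}$ of some relator with $|\overline{s}|>2g$, so $|\overline{s}|\geq 2g+1$, and any length-$(2g+1)$ block of $\overline{s}$ is a $(2g+1)$-fractional relator in the paper's sense, since $\RR_c$ is closed under rotation. One small point deserves care: the textbook form of Greendlinger's Lemma (e.g.\ Lyndon--Schupp, Theorem V.4.4) is usually stated for cyclically reduced words, while the theorem here assumes only free reducedness. The fix is one line — write $\overline{W}=\overline{UVU^{-1}}$ with no cancellation and $\overline{V}$ cyclically reduced and nonempty, apply the lemma to $\overline{V}$, and note that the resulting subword of $\overline{V}$ is an honest subword of $\overline{W}$ (using the two-disjoint-subword form of the lemma to avoid a subword that wraps around the cyclic word) — but it should be said. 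Your observation that $g\geq 2$ enters only through the inequality $1<\tfrac{2g}{3}$, the torus having the same piece length but a too-short relator, is also accurate. What the small cancellation route buys is brevity and generality; what Dehn's (and the paper's own Gr\"{o}bner--Shirshov) machinery buys is an explicit normal form and rewriting procedure, which a bare application of Greendlinger's Lemma does not provide.
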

By this theorem, for any $\overline{X}\in\W(\gs_c)$, we can reduce $\overline{X}=:\overline{X^{(0)}}$ to a freely reduced word $\overline{Y}$, and then reduce $\overline{Y}$ to $\overline{X^{(1)}}:=\overline{X_L(b_{2g+2}\cdots b_{4g})^{-1}X_R}$ if $\overline{Y}=\overline{X_L(b_1\cdots b_{2g+1})X_R}$ for some $\overline{b_1\cdots b_{4g}}\in\RR_c$. Repeat this operation $n$ times to obtain $\overline{X^{(n)}}$, until $\overline{X^{(n)}}$ is freely reduced and does not have a $(2g+1)$-fractional relator. Then, $X=1\in G$ if and only if $\overline{X^{(n)}}=1\in\W(\gs_c)$. 

Therefore, we may call $\overline{X}=\overline{x_1\cdots x_n}\in\W(\gs_c)$ a \emph{Dehn-reduced form} if $\overline{X}$ is freely reduced and contains no $(2g+1)$-fractional relator, and call $\overline{X}$ a \emph{cyclically Dehn-reduced form} if $\overline{X_i}:=\overline{x_i\cdots x_nx_1\cdots x_{i-1}}$ is a Dehn-reduced form for every $1\leq i\leq n$. With the above algorithm for word problem, Dehn presented the following theorem to solve the conjugacy problem.

\begin{thm}[Dehn, \cite{De12}]\label{thm dehn conjugacy problem} Let $\overline{U}=\overline{u_1\cdots u_m},\overline{V}=\overline{v_1\cdots v_n}\in\W(\gs_c)$ be any two cyclically Dehn-reduced forms. 
Then $U$ and $V$ are conjugate in $G$ if and only if $U_iV_j^{-1}=1$ or $aU_ia^{-1}V_j^{-1}=1$ in $G$ for some $a\in\gs_c^{\pm},~i\in\{1,\dots,m\}$ and $j\in\{1,\dots,n\}$ where 
$$\overline{U_i}:=\overline{u_i\cdots u_mu_1\cdots u_{i-1}},\qquad \overline{V_j}:=\overline{v_j\cdots v_nv_1\cdots v_{j-1}}.$$
\end{thm}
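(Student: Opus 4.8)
The plan is to prove the two implications separately: the ``if'' direction is elementary, while the converse rests on the theory of annular (conjugacy) van Kampen diagrams over small cancellation presentations. Throughout I would use that, for $g\geq 2$, the canonical relator $R_c=[a_1,a_2]\cdots[a_{2g-1},a_{2g}]$ has length $4g\geq 8$ and that $\langle\gs_c\mid R_c\rangle$ satisfies the $C'(1/6)$ small cancellation condition: every piece (a common initial subword of two distinct elements of $\RR_c$) has length $1$, and $1<|R_c|/6=2g/3$. This is what makes Dehn's word algorithm (Theorem \ref{thm dehn word problem}) valid, and it is the engine for the conjugacy statement as well.

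First I would dispose of the ``if'' direction. If $U_iV_j^{-1}=1$ in $G$, then $U_i=V_j$ as group elements; since $\overline{U_i}$ is a cyclic permutation of $\overline{U}$ and $\overline{V_j}$ of $\overline{V}$, the elements $U_i,V_j$ are conjugate to $U,V$ respectively, so $U$ and $V$ are conjugate. Likewise, if $aU_ia^{-1}V_j^{-1}=1$ for some $a\in\gs_c^{\pm}$, then $aU_ia^{-1}=V_j$, so $U_i$ and $V_j$ are conjugate, and transitivity again yields that $U$ and $V$ are conjugate.

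For the converse, suppose $U$ and $V$ are conjugate. Since conjugate elements bound a reduced annular diagram, there is a reduced annular diagram $M$ over $\langle\gs_c\mid R_c\rangle$ whose inner boundary cycle reads a cyclic rotation of $\overline{U}$ and whose outer boundary cycle reads a cyclic rotation of $\overline{V}^{-1}$. I would then run the curvature (Greendlinger / Gauss--Bonnet) argument of small cancellation theory on $M$ (see \cite{LS01}, Ch.\ V). The $C'(1/6)$ condition forces interior regions to have large interior degree, which combinatorially combined with the hypothesis that $\overline{U}$ and $\overline{V}$ are \emph{cyclically Dehn-reduced} — no cyclic rotation contains a $(2g+1)$-fractional relator, hence no region meets a single boundary cycle in an arc longer than half of $R_c$ — pins down the combinatorial type of $M$. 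The outcome is a dichotomy: either $M$ has no regions, or $M$ is a ``thin'' annulus, a single concentric ring of regions each of which meets \emph{both} boundary cycles, so that the annulus has combinatorial width one. In the first case the two boundary cyclic words coincide, giving $\overline{V_j}=\overline{U_i}$ and hence $U_iV_j^{-1}=1$ for suitable $i,j$. In the second case, a single radial edge (labelled by one generator $a\in\gs_c^{\pm}$) joins the inner boundary vertex to the corresponding outer boundary vertex, and reading around the annulus shows that $a$ conjugates $\overline{U_i}$ to $\overline{V_j}$, i.e.\ $aU_ia^{-1}V_j^{-1}=1$.

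The main obstacle I expect is the tight control of $M$: one must rule out annuli of width $\geq 2$ and, more delicately, rule out ``lopsided'' regions that touch only one of the two boundary cycles, since an arbitrary conjugacy diagram could a priori thicken or bulge toward one side. This is exactly where the cyclically Dehn-reduced hypothesis is essential — a region meeting one boundary in an arc of length $>2g$ would exhibit a $(2g+1)$-fractional relator on that boundary, contradicting Dehn-reducedness, while the $C'(1/6)$ curvature bound forbids the diagram from acquiring interior thickness. Making this bookkeeping precise (an explicit area/perimeter inequality, or a combinatorial curvature count assigning angles so that the total is $0$ on the annulus) is the heart of the argument; once the width-one structure is established, the conjugacy conclusion, and in particular the appearance of a \emph{single} generator $a$, reads off directly.
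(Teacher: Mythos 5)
Before anything else, note that the paper does not prove this statement at all: it is quoted, with attribution, as Dehn's classical theorem from \cite{De12}, and serves only as background against which the paper contrasts its own conjugacy criterion (Theorem \ref{main thm3 conj. class}), which concerns the symmetric presentation and is proved by entirely different, self-contained machinery (cyclically irreducible cores, reducing-subword pairs, Lemma \ref{lem reducing formulae}). So your proposal can only be measured against the literature, and there it follows the standard small-cancellation proof of Dehn's theorem (\cite{LS01}, Ch.~V), which is a legitimate route, different both from the paper (which offers no proof) and from Dehn's original hyperbolic-geometry argument. Your key observations are correct: every piece of the canonical relator has length $1$ (each letter $a_i^{\pm1}$ occurs exactly once in $R_c$, and no two-letter subword of $R_c$ occurs cyclically in $R_c^{-1}$), so $C'(1/6)$ holds for $g\geq2$; cyclic Dehn-reducedness caps the arc of any region on either boundary at $2g=\tfrac12|R_c|$; and since interior arcs of a reduced diagram are pieces, the radial arcs of a thin annulus are single edges, which is exactly what produces conjugation by a \emph{single} generator. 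The trade-off is also as you describe: your route gives Dehn's finite criterion cleanly, while the paper's method buys strictly more (unique normal forms $\nf([x])$, explicit conjugators, $O(n^2)$ algorithms) at the price of the long case analysis in the appendices.

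Two points need more care than your sketch gives them. First, your dichotomy ``$M$ has no regions (so the boundary cyclic words coincide), or $M$ is a width-one ring'' is not the right case split. The correct split is: either $\overline{U}$ and $\overline{V}$ are cyclic permutations of one another \emph{as words} (freely conjugate), or else the existence lemma for annular diagrams produces a \emph{genuine, non-degenerate} reduced annular diagram; it is this nondegeneracy clause that excludes pinched configurations in which the two boundary cycles meet. The distinction is not cosmetic, because distinct cyclically Dehn-reduced words can be equal in $G$: for $g=2$ take $\overline{U}=\overline{a_1a_2a_1^{-1}a_2^{-1}}$ and $\overline{V}=\overline{a_4a_3a_4^{-1}a_3^{-1}}$, which satisfy $UV^{-1}=R_c=1$ in $G$ although no cyclic rotations agree as words; the naive annular diagram here is a single $R_c$-face pinched along the boundary, which is neither of your two cases (its ``radial arcs'' are vertices, not edges), and Dehn's first alternative holds only as equality in $G$. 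Your argument survives once the split is set up correctly --- this pair is then handled by a genuine width-one diagram, e.g.\ $a_1\bigl(a_2a_1^{-1}a_2^{-1}a_1\bigr)a_1^{-1}=V$ in $G$ --- but the bookkeeping must be explicit. Second, the entire weight of the proof rests on the annular structure theorem (no region meets only one boundary, no region meets a boundary in two arcs, width one), which you motivate but do not prove; it should be invoked as a precise statement whose hypotheses you verify, including that $\overline{V}^{-1}$ is also cyclically Dehn-reduced, which holds because $\RR_c$ is closed under inversion. With these two repairs your outline becomes a complete proof.
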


Hence, for any $\overline{X},\overline{Y}\in\W(\gs_c)$, we can reduce them to cyclically Dehn-reduced forms $\overline{X'},\overline{Y'}$ respectively. Then, the conjugacy problem of $X$ and $Y$ becomes finite word problems.

Although Theorem \ref{thm dehn conjugacy problem} is similar to Theorem \ref{main thm3 conj. class}, there is still an important difference between them. Note that Theorem \ref{thm dehn word problem} solves the word problem but it does not give a normal form for every element in $G$. For a nontrivial element $x\in G$, it may have many Dehn-reduced forms. For the conjugacy class $[x]$ in $G$, there may exist many cyclically Dehn-reduced forms (up to cyclic permutation) representing elements in $[x]$, and the number of cyclically Dehn-reduced forms (up to cyclic permutation) of $[x]$ is unbounded as $x$ traverses the entire group $G$. Consequently, it is difficult to directly define and compute the normal form of a conjugacy class in the way of Dehn. However, for any conjugacy class $[x]$ in $G$, there are at most two cyclically irreducible words (up to cyclic permutation) representing the elements in $[x]$ by Theorem \ref{main thm3 conj. class}. Moreover, the normal form of the conjugacy class $\nf([x])$ is the minimal one of these cyclically irreducible words. This enables us to easily calculate $\nf([x])$. 
%In fact, we may also obtain an algorithm similar to Algorithm \ref{algorithm: find normal form of conjugacy class} by combining Theorem \ref{thm dehn conjugacy problem} with a rewriting system that can obtain a unique normal form for every element. But, since this section is a further result of Theorem \ref{thm normal forms of power elements (original main thm1)}, we do not consider this issue in this way.

\section{From the symmetric presentation to arbitrary minimal geometric presentations}\label{sect from sym. pre. to any pre.}

In this section, we shall consider the length formulae on arbitrary minimal geometric presentation of the surface group $\pi_1(\Sigma_g)(g\geq 2)$.

\begin{defn}\label{def minimal geometric presentation}
A presentation of $\pi_1(\Sigma_g)$ is \emph{minimal} if the number of generators is $2g$, and is \emph{geometric} if its Cayley $2$-complex is a plane. 
\end{defn}

Obviously, the following two presentations of $\pi_1(\Sigma_g)$ are both minimal and geometric:
    \begin{eqnarray*}
 \mbox{Canonical ~ presentation}: \quad P_c&=&\left<a_1,a_2,\dots,a_{2g}\mid [a_1,a_2]\cdots [a_{2g-1},a_{2g}]\right>, ~\gs_c=\{a_1 ,a_2,\dots,a_{2g}\}.\\
  \mbox{Symmetric ~presentation}: \quad P_s&=&\left<c_1,c_2,\dots,c_{2g}\mid c_1\cdots c_{2g}c_1^{-1}\cdots c_{2g}^{-1}\right>, \quad \gs_s=\{c_1, c_2,\dots,c_{2g}\}.
    \end{eqnarray*}

%There is an isomorphism between them: \begin{eqnarray*}        a_{2j-1}&\mapsto& c_{2j-1}\cdots c_{2g}c_{2j-1}^{-1},\\a_{2j}&\mapsto& c_{2j+1}\cdots c_{2g}c_{2j-1}^{-1},\\a_{2g-1}&\mapsto& c_{2g-1},\\a_{2g}&\mapsto& c_{2g}\end{eqnarray*}for $j=1,2,\dots,g-1.$

Observe that an orientable closed surface $\Sigma_g ~(g\geq 2)$ admits many representations of a $4g$-sided polygon. Each of these representations corresponds to a minimal geometric presentation. Moreover, their Cayley graphs are isomorphic to each other as graphs. The following result is classical and can be found in \cite[Section 1]{FP87}.

%\begin{thm}\label{thm all minimal geometric presentation's graph are isomorphism} Let $G=\pi_1(\Sigma_g)$ be a surface group with two minimal geometric presentations $$P_1=\left<\gs_1\mid R_1\right>, \quad P_2=\left<\gs_2\mid R_2\right>.$$ Then their Cayley graphs $\Gamma(G,\gs_1)$ and $\Gamma(G,\gs_2)$ are isomorphic.\end{thm}

\begin{lem}[Floyd-Plotnick, \cite{FP87}]\label{lem cyclics are preserved under group action}
Let $P=\left<\gs\mid \RR\right>$ be a geometric presentation of $\pi_1(\Sigma_g)$. There exists a cyclic order of $\gs^\pm$ preserved under the group action of $\pi_1(\Sigma_g)$ on the Cayley graph corresponding to $P$. In other words, the order is same at every vertex of the Cayley graph.
\end{lem}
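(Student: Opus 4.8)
The plan is to read the cyclic order directly off the planar embedding of the Cayley graph and then show that it is carried unchanged around the graph by the $G$-action. Since $P=\langle\gs\mid\RR\rangle$ is geometric, its Cayley $2$-complex $\tilde X$ is homeomorphic to $\Real^2$, and $G=\pi_1(\Sigma_g)$ acts on $\tilde X$ cellularly by left multiplication as the deck group; this action is free, properly discontinuous and cocompact, its vertex set is identified with $G$, and the edge issuing from a vertex $v$ toward $vs$ carries the label $s\in\gs^\pm$. At each vertex the incident edges are in bijection with $\gs^\pm$ (one for each generator and its inverse), so after fixing an orientation of the plane $\tilde X\cong\Real^2$ the embedding determines a counterclockwise cyclic order of the edges at $v$, and transporting this to labels yields a cyclic order $\sigma_v$ on $\gs^\pm$. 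This $\sigma_v$ is the candidate order in the statement, so the whole lemma reduces to proving that $\sigma_v$ is independent of $v$.

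The decisive point, and the step I expect to be the main obstacle, is to establish that every element of $G$ acts on $\tilde X\cong\Real^2$ by an \emph{orientation-preserving} homeomorphism. I would argue this through the quotient. Because the action on $\Real^2$ is free, properly discontinuous and cocompact, the quotient $\tilde X/G$ is a closed $2$-manifold (the quotient map is a covering) with fundamental group $G\cong\pi_1(\Sigma_g)$. Comparing abelianizations, $H_1(\Sigma_g)\cong\Z^{2g}$ is torsion-free, whereas the first homology of every closed non-orientable surface carries a $\Z/2$ summand; hence $\tilde X/G$ cannot be non-orientable and must be homeomorphic to $\Sigma_g$, in particular orientable. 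If some $g\in G$ reversed the chosen orientation of $\tilde X$, then that orientation would fail to descend and $\tilde X/G$ would be non-orientable, a contradiction. Therefore the entire group acts by orientation-preserving homeomorphisms of the plane.

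With orientation-preservation in hand the conclusion is immediate, and this last step is routine. Given two vertices $v,w$, the element $g=wv^{-1}\in G$ satisfies $gv=w$ and acts as an orientation-preserving homeomorphism of the plane, so it sends the counterclockwise cyclic order of the edges at $v$ to the counterclockwise cyclic order of the edges at $w$. Moreover left multiplication sends the $s$-labelled edge at $v$ (joining $v$ to $vs$) to the edge joining $w=gv$ to $gvs=ws$, which is exactly the $s$-labelled edge at $w$; thus $g$ matches labels while preserving their cyclic arrangement, giving $\sigma_v=\sigma_w$. Since $G$ acts transitively on vertices, all the orders $\sigma_v$ coincide with the order $\sigma_1$ at the identity vertex, and this common cyclic order on $\gs^\pm$ is by construction invariant under the $G$-action, as required.
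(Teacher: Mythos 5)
Your proof is correct. Note that the paper itself gives no argument for this lemma: it is quoted as a classical fact from \cite[Section 1]{FP87}, so there is no in-paper proof to compare yours against; your write-up is a self-contained justification of the cited statement, and it follows the route one would expect. The construction of the candidate order from the planar embedding (a rotation system on the $\gs^\pm$-labelled edges), the observation that left multiplication is a label-preserving cellular automorphism, and the reduction of the whole lemma to orientation-preservation are all exactly right, and your handling of the one substantive step is sound: the quotient of $\tilde X\cong\Real^2$ by the free, properly discontinuous, cocompact action is a closed surface with fundamental group $\pi_1(\Sigma_g)$; torsion-freeness of $H_1(\Sigma_g)\cong\Z^{2g}$, against the $\Z/2$ summand in the first homology of every closed non-orientable surface, forces this quotient to be orientable; and orientability of the quotient is equivalent to the deck group acting orientation-preservingly. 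One small point you assert rather than check is freeness of the action on the full $2$-complex (not just on vertices): a nontrivial element setwise stabilizing an edge or a $2$-cell would have finite order, so this follows from torsion-freeness of surface groups. That verification is routine, and with it your argument is complete.
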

    
For example, Figure \ref{fig: cyclic orders of minimal geometric presentations} illustrates the cyclic orders of the canonical and symmetric presentations $P_c, P_s$ of $\pi_1(\Sigma_2)$. %There is a natural bijection $\gs_c^\pm\to \gs_s^\pm$ by assigning the letter $c_j^{\epsilon_k}$ in the same position as $a_i^{\epsilon_l}$ to $\sigma_0(a_i^{\epsilon_l})$.  
\begin{figure}[ht]
    \centering
    \includegraphics[width=0.3\linewidth]{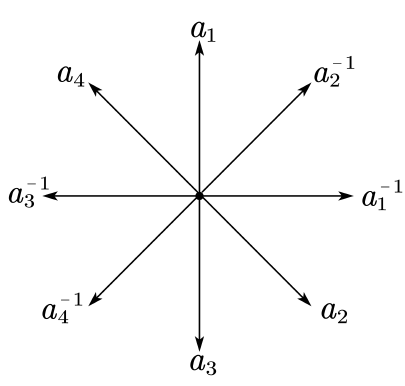}
    \includegraphics[width=0.3\linewidth]{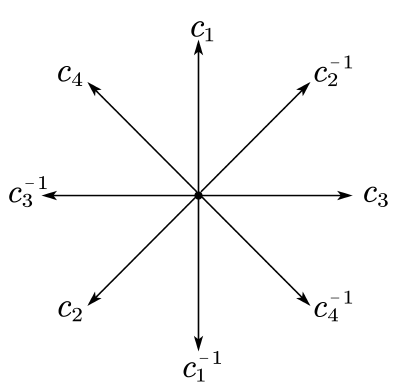}
    \caption{The cyclic orders of $\gs_c^\pm$ and $\gs_s^\pm$ are $(a_1,a_2^{-1},a_1^{-1},a_2,a_3, a_4^{-1},a_3^{-1},a_4)$ and $(c_1,c_2^{-1},c_3,c_4^{-1},c_1^{-1},c_2,c_3^{-1},c_4)$, respectively.\label{fig: cyclic orders of minimal geometric presentations}}
\end{figure}
%\begin{equation}\label{eq sigma_0}\sigma_0:\qquad\begin{array}{llll}
     %a_1\mapsto c_1, & a_2^{-1}\mapsto c_2^{-1}, & a_1^{-1}\mapsto c_3, &a_2\mapsto c_4^{-1}, \\
     %a_3\mapsto c_1^{-1}, &a_4^{-1}, \mapsto c_2, &a_3^{-1}\mapsto c_3^{-1}, & a_4\mapsto c_4.\end{array}   \end{equation}

From now on, we return to the general case.
 Let $P=\langle \gs \mid \RR \rangle$ represent any minimal geometric presentation of $\pi_1(\Sigma_g)$ with the Cayley graph $\Gamma$, and let $P_s=\langle \gs_s \mid \RR_s \rangle$ be the symmetric presentation of $\pi_1(\Sigma_g)$ with the Cayley graph $\Gamma_s$. According to Lemma \ref{lem cyclics are preserved under group action}, it is possible to select two cyclic orders $\mathcal{O}=(d_1,d_2,\dots,d_{4g})$ and $\mathcal{O}_s=(b_1,b_2,\dots,b_{4g})$ 
for $\gs^\pm$ and $\gs_s^\pm$, respectively.
Once these cyclic orders are determined and the initial generators $d_1$ and $b_1$ are assigned as above, we derive a natural bijection $$\sigma_0:\gs^\pm \to \gs_s^\pm,~d_i\mapsto b_i.$$
Notice that $\sigma_0$ functions as a local graph isomorphism from $\mathrm{St}(1)$ to $\mathrm{St}(1_s)$, where $\mathrm{St}(1)$ and $\mathrm{St}(1_s)$ denote the stars of the origins $1\in\Gamma$ and $1_s\in\Gamma_s$, respectively (refer to Figure \ref{fig: cyclic orders of minimal geometric presentations} for example). Furthermore, $\sigma_0$ can be expanded into a graph isomorphism $\phi:\Gamma\to \Gamma_s$ such that $\sigma_0$ serves as the restriction of $\phi$ to $\mathrm{St}(1)\subset\Gamma$. Additionally, for every vertex $v\in\Gamma$, let
\begin{equation}\label{eq sigma v is a restriction}
    \sigma_{v}:\gs^{\pm}\to \gs_s^\pm
\end{equation} be the restriction of $\phi$ to $\mathrm{St}(v)\subset\Gamma$. Observe that $\sigma_v$ preserves the cyclic orders, and it may differ from $\sigma_0$ by a rotation. By restricting $\phi$ to the vertex set of $\Gamma$ and to the set of paths originating at $1\in\Gamma$, we can obtain two bijections (which are not homomorphisms):
    \begin{equation}\label{eq def of f and h}
        f: \pi_1(\Sigma_g)\to \pi_1(\Sigma_g), \quad h:\W(\gs)\to\W(\gs_s).
    \end{equation} 

\begin{figure}[ht]
    \centering
    \includegraphics[width=0.75\linewidth]{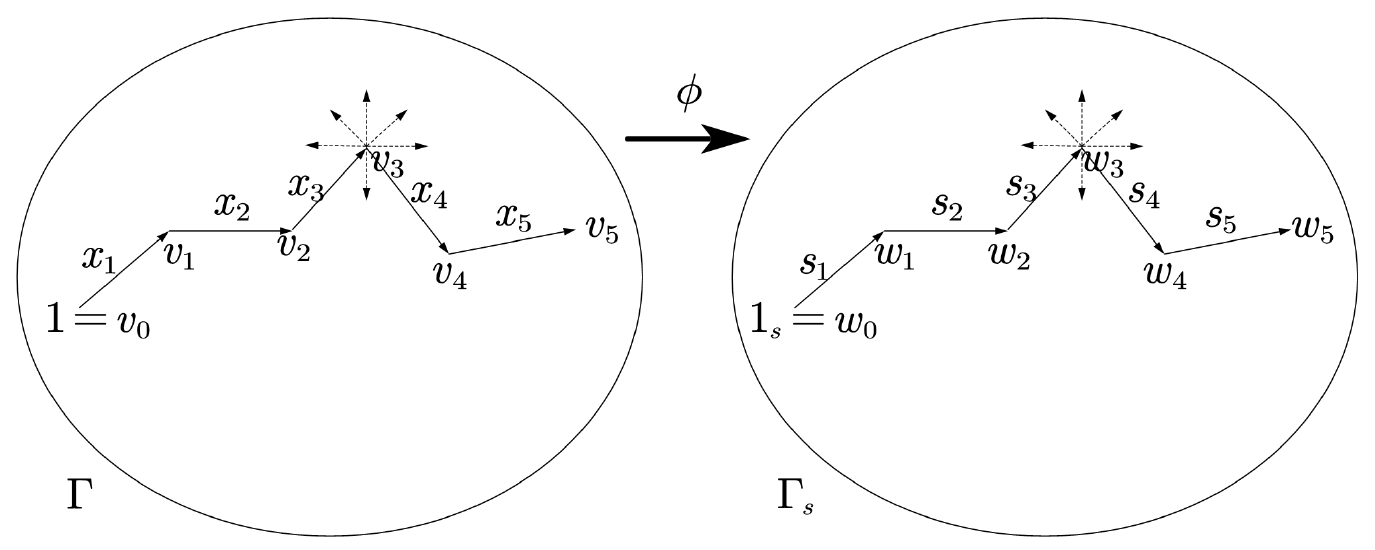}
    \caption{The graph isomorphism $\phi$ induces a bijection preserving the cyclic orders at every two corresponding vertices. For instance, $\sigma_{v_3}: x_3^{-1}\mapsto s_3^{-1},~x_4\mapsto s_4$.}
    \label{fig: graph isomorphism}
\end{figure}

Furthermore, we fix $\mathcal{O}_s=(b_1,b_2,\dots, b_{4g})=(c_1,c_2^{-1},\dots,c_{2g-1},c_{2g}^{-1},c_1^{-1},c_2,\dots,c_{2g-1}^{-1},c_{2g})$ illustrated in the right of Figure \ref{fig: cyclic orders of minimal geometric presentations}, and let 
\begin{equation*}
\begin{array}{llll}
  \theta_s:&  \gs_s^\pm&\to&\Z_{4g}\\
 &b_i&\mapsto& i
\end{array}\qquad\mbox{and}\qquad \begin{array}{llll}
 \theta=\theta_s\comp\sigma_0: & \gs^\pm&\to&\Z_{4g}\\
 &d_i&\mapsto& i
\end{array}.  
\end{equation*} 
Note that 
  \begin{equation}\label{eq. cyclic order of sym.}
    \theta_s(b_i)-\theta_s(b_i^{-1})=2g\in\Z_{4g},\qquad\forall ~b_i\in\gs_s^\pm. 
  \end{equation}
To calculate $f$ and $h$ in Eq. (\ref{eq def of f and h}), we need the following definition derived from the properties of group action and graph isomorphism (see Figure \ref{fig: graph isomorphism}).

\begin{defn}[$\mathcal{O}(\overline{X})$ and $\sigma_r$]\label{def graph-iso to word-iso}
For every $x\in\gs^\pm$, define 
$$\mathcal{O}(x):=\theta(x)-\theta(x^{-1})\in \mathbb Z_{4g}.$$
Then for any  $1\neq \overline{X}=\overline{x_1x_2\cdots x_n}\in\W(\gs)$, we define a sequence $$\mathcal{O}(\overline{X})=(\mathcal{O}(\overline{X})_i):=(\mathcal{O}(x_1),\mathcal{O}(x_2),\dots,\mathcal{O}(x_{n}))\in \mathbb Z^n_{4g}.$$  
For any $r\in \Z_{4g}$, we define a rotation $\sigma_r$ derived from $\sigma_0$,
        \begin{eqnarray}\label{restore function}
            \sigma_r: \quad\gs^\pm&\to& \gs_s^\pm\\
            x&\mapsto& \theta_s^{-1}(\theta(x)+r).\notag
        \end{eqnarray}
\end{defn}

We now calculate the desired bijection $h:\W(\gs)\to \W(\gs_s)$ (see Eq. (\ref{eq def of f and h})). 

Let $\overline{X}=\overline{x_1\cdots x_n}\in\W(\gs)$ and $h(\overline{X})=\overline{s_1\cdots s_n}\in  \W(\gs_s)$ with $s_1=\sigma_0(x_1)$. Note that the paths corresponding to these two words $\overline{X}$ and $h(\overline{X})$ in the Cayley graphs have the same shape, it means the numbers of angles  that paths turn through are the same at any two corresponding vertices (see Figure \ref{fig: graph isomorphism}):
   \begin{equation}\label{equation turning angles same}
        \begin{array}{rlcll}
         \theta(x_1)&=&(\theta_s\comp\sigma_0)(x_1)&=&\theta_s(s_1),\\
      \theta(x_j)-\theta(x_{j-1}^{-1})&=&\theta_s(s_j)-\theta_s(s_{j-1}^{-1})&=&\theta_s(s_j)-\theta_s(s_{j-1})+2g,\quad j=2,\dots,n,
        \end{array}
   \end{equation}
where the last ``='' holds by Eq. (\ref{eq. cyclic order of sym.}). Then for any $2\leq k\leq n$, summing both sides of Eq. (\ref{equation turning angles same}), we have 
\begin{eqnarray}\label{eq. s_k to s_k-1}
    \theta_s(s_k)&=&\theta(x_k)+\sum_{j=1}^{k-1}\left(\theta(x_j)-\theta(x_j^{-1})\right)+2g(k-1)\notag\\
     &=&\theta(x_k)+\sum_{j=1}^{k-1}\mathcal{O}(\overline{X})_j+2g(k-1)\in \mathbb Z_{4g}.
\end{eqnarray}

Recall that the map $\sigma_{v_{k}}:\gs^\pm\to\gs_s^\pm$ preserves the cyclic orders at every vertex $v_{k}\in\Gamma$. (See Eq. (\ref{eq sigma v is a restriction}) and Figure \ref{fig: graph isomorphism}. In particular, we have already fixed $\sigma_0$ for the origin $1$ of $\Gamma$.) Since the action of $\pi_1(\Sigma_g)$ on $\Gamma$ preserves the order, we can calculate $\sigma_{v_{k-1}}$ by solving the equation 
$$s_{k}=\sigma_{v_{k-1}}(x_{k})=\sigma_{r_{k-1}}(x_{k}),$$
where $r_{k-1}\in\mathbb Z_{4g}$ of the rotation $\sigma_{r_{k-1}}$ is the required value.
 
 When $k=1$, we have $s_1=\sigma_0(x_1)$ and thus $\sigma_{r_0}=\sigma_0,r_0=0\in\Z_{4g}$.
When $k\geq 2$, by Eq. (\ref{eq. s_k to s_k-1}),
  \begin{eqnarray}\label{eq for sigmar_k-1}
      s_k=\theta_s^{-1}\left(\theta_s(s_k)\right)
      =\theta_s^{-1}\left(\theta(x_k)+\sum_{j=1}^{k-1}\mathcal{O}(\overline{X})_j+2g(k-1)\right)=\sigma_{r_{k-1}}(x_k).
  \end{eqnarray}
Then combining Eq. (\ref{restore function}), we have
  \begin{equation}\label{eq for r_k-1}
      r_{k-1}=\sum_{j=1}^{k-1}\mathcal{O}(\overline{X})_j+2g(k-1), ~k=2,\dots,n.
  \end{equation}

Therefore, we obtain the specific expression of $h:\W(\gs)\to \W(\gs_s)$ as follows:
\begin{equation}\label{eq. specific h}
    h:\overline{x_1\cdots x_n}\mapsto \overline{s_1\cdots s_n}, \qquad s_k=\sigma_{r_{k-1}}(x_k),
\end{equation}
where $r_0=0$ and $r_{k-1}$ ($k=2,\dots, n$) is defined as in Eq. (\ref{eq for r_k-1}).

%Therefore, we can calculate the isomorphism $\phi$ and the bijections $f,h$ induced by $\sigma_0$. 

We now give a simple example to show how to use $\mathcal{O}(\cdot)$ and $\sigma_r$ to calculate the image of a word in $\W(\gs)$.

\begin{exam}
    In $\pi_1(\Sigma_2)$, let $\sigma_0:\gs_c^\pm\to \gs_s^\pm$ be a bijection defined as follows. Then we can get the definitions of $\theta,\theta_s$ by referring to Figure \ref{fig: cyclic orders of minimal geometric presentations}:
    \begin{equation*}
        \begin{array}{lllllllll}
         \sigma_0 :&a_1\mapsto c_1,& a_2^{-1}\mapsto c_2^{-1}, &a_1^{-1}\mapsto c_3, &a_2\mapsto c_4^{-1}, &a_3\mapsto c_1^{-1}, &a_4^{-1}\mapsto c_2, &a_3^{-1}\mapsto c_3^{-1}, &a_4\mapsto c_4;\\
       \theta :&a_1\mapsto 1, &a_2^{-1}\mapsto 2, &a_1^{-1}\mapsto 3, &a_2\mapsto 4, &a_3\mapsto 5, &a_4^{-1}\mapsto 6, &a_3^{-1}\mapsto 7, &a_4\mapsto 8;\\
       \theta_s :& c_1\mapsto 1, &c_2^{-1}\mapsto 2, &c_3\mapsto 3, &c_4^{-1}\mapsto 4, &c_1^{-1}\mapsto 5, &c_2\mapsto 6, &c_3^{-1}\mapsto 7, &c_4\mapsto 8.
        \end{array}
    \end{equation*}
    Then we have $h(\overline{a_1})=\overline{\sigma_0(a_1)}=\overline{c_1}$. For the word $\overline{a_1a_1}$, we have $\mathcal{O}(\overline{a_1a_1})=(6,6)$ and by Eq. (\ref{eq. specific h}),
    $$r_1=6+4=2\in\Z_{8},\quad \sigma_{r_1}(a_1)=\sigma_{2}(a_1)=\theta_s^{-1}(\theta(a_1)+2)=c_3.$$ Thus, $$h(\overline{a_1a_1})=\overline{c_1c_3}\neq \overline{c_1c_1}=h(\overline{a_1})^2.$$ %We also have a simple example of the above ``special cases'': 
    Similarly, for the word $\overline{a_1a_1^{-1}}$, we can obtain $\mathcal{O}(\overline{a_1a_1^{-1}})=(6,2)$, $h(\overline{a_1a_1^{-1}})=\overline{c_1c_1^{-1}}$ and $$h(\overline{(a_1a_1^{-1})^2})=\overline{(c_1c_1^{-1})^2}=h(\overline{a_1a_1^{-1}})^2.$$ 
    Furthermore,  for any $m\geq 1$, we have $$h(\overline{(a_1a_1^{-1})^m})=h(\overline{a_1a_1^{-1}})^m.$$ 
\end{exam}
%In fact, this can be also obtained from $h$
In general, the bijections $f:\pi_1(\Sigma_g)\to \pi_1(\Sigma_g)$ and $h:\W(\gs)\to \W(\gs_s)$ are not homomorphisms. However, we still have the following proposition.

\begin{prop}\label{prop property of graph isomorphism}
   For a word $\overline{X}=\overline{x_1\cdots x_n}\in\W(\gs)$, if $\sum_{k=1}^{n}\mathcal{O}(\overline{X})_k=2gn\in\Z_{4g}$, then for any $m\geq 1$,
   $$h(\overline{X^m})=h(\overline{X})^m,\qquad f(X^m)=f(X)^m.$$
\end{prop}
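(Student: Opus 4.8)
The plan is to first establish the word-level identity $h(\overline{X^m})=h(\overline{X})^m$ by a direct computation of the rotation indices, and then deduce the group-level identity $f(X^m)=f(X)^m$ from the geometric meaning of $f$ and $h$ as the vertex- and path-restrictions of the graph isomorphism $\phi$ (see Eq. (\ref{eq def of f and h})). The crucial role of the hypothesis $\sum_{k=1}^{n}\mathcal{O}(\overline{X})_k=2gn\in\Z_{4g}$ is that it forces the total rotation accumulated over one full period of $\overline{X}$ to vanish in $\Z_{4g}$; this is exactly what makes the sequence of local rotations $r_{k-1}$ of Eq. (\ref{eq for r_k-1}) genuinely $n$-periodic when $\overline{X}$ is repeated, and since $f$ and $h$ are not homomorphisms, some such periodicity condition is indispensable.

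For the word identity, I would write $\overline{X^m}$ as the concatenation of $m$ copies of $\overline{X}$ and index its letters by a pair $(p,k)$ with $p\in\{1,\dots,m\}$ (the copy) and $k\in\{1,\dots,n\}$ (the position within the copy), so that the $((p-1)n+k)$-th letter equals $x_k$. Since $\mathcal{O}(\cdot)$ depends only on the letter (Definition \ref{def graph-iso to word-iso}), the sequence $\mathcal{O}(\overline{X^m})$ is $n$-periodic. Applying Eq. (\ref{eq for r_k-1}) to $\overline{X^m}$ and splitting the sum over complete periods, the rotation preceding the $((p-1)n+k)$-th letter is
\[
R_{(p-1)n+k-1}=(p-1)\sum_{j=1}^{n}\mathcal{O}(\overline{X})_j+\sum_{j=1}^{k-1}\mathcal{O}(\overline{X})_j+2g\bigl((p-1)n+k-1\bigr).
\]
Using the hypothesis $\sum_{j=1}^{n}\mathcal{O}(\overline{X})_j=2gn$, the term $(p-1)\cdot 2gn$ together with $2g(p-1)n$ combine into $4gn(p-1)\equiv 0\pmod{4g}$, leaving
\[
R_{(p-1)n+k-1}\equiv\sum_{j=1}^{k-1}\mathcal{O}(\overline{X})_j+2g(k-1)=r_{k-1}\pmod{4g},
\]
uniformly in $p$ (with the boundary cases $(p,k)=(1,1)$ and $k=1,\ p\geq 2$ both reducing to $r_0=0$). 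Since $\sigma_r$ depends only on $r\bmod 4g$, the $((p-1)n+k)$-th letter of $h(\overline{X^m})$ computed via Eq. (\ref{eq. specific h}) is $\sigma_{r_{k-1}}(x_k)=s_k$, the $k$-th letter of $h(\overline{X})$; hence $h(\overline{X^m})=h(\overline{X})^m$.

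For the group identity, recall that $\phi$ is a graph isomorphism with $\phi(1)=1_s$, so the path $h(\overline{X})$ in $\Gamma_s$ ends at the vertex $\phi(X)=f(X)$; thus $h(\overline{X})$ represents $f(X)$ in the symmetric presentation, and more generally any word representing $X$ is sent by $h$ to a word representing $f(X)$. Because concatenation of paths based at the origin corresponds to multiplication of the represented elements, the word $h(\overline{X})^m$ represents $f(X)^m$. Taking $\overline{X^m}$ as a word representing $X^m$ and using the word identity just proved, I would conclude $f(X^m)=[h(\overline{X^m})]=[h(\overline{X})^m]=f(X)^m$, where $[\,\cdot\,]$ denotes the element of $\pi_1(\Sigma_g)$ represented by a word in $\W(\gs_s)$.

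The main obstacle is the bookkeeping in $\Z_{4g}$ inside the rotation computation: one must verify that the cross-period contributions collapse to a multiple of $4g$ precisely because of the hypothesis, and that Eq. (\ref{eq for r_k-1}) applies uniformly across copy boundaries (in particular at $k=1$ for $p\geq 2$, where one reads off $r_0=0$). Once the $n$-periodicity of the rotation indices is secured, both identities follow essentially formally.
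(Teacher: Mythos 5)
Your proposal is correct and follows essentially the same route as the paper: you apply Eq. (\ref{eq for r_k-1}) to $\overline{X^m}$, use the $n$-periodicity of $\mathcal{O}(\overline{X^m})$, and observe that the hypothesis makes the cross-period contributions collapse to multiples of $4g$, so the rotation indices repeat and the letters of $h(\overline{X^m})$ coincide with those of $h(\overline{X})^m$. The only differences are cosmetic — the paper treats $m=2$ and declares the general case identical, whereas you carry out the general-$m$ bookkeeping explicitly and also spell out the (asserted-without-proof in the paper) deduction of $f(X^m)=f(X)^m$ from the word identity via the path/vertex interpretation of $\phi$.
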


\begin{proof}
  It suffices to show the case of $m=2$ because the case when $m\geq 3$ is totally same. Let $h(\overline{X})=\overline{s_1\cdots s_{n}}$. By Eq. (\ref{eq. specific h}) and Definition \ref{def graph-iso to word-iso}, we can assume $h(\overline{X^2})=\overline{s_1\cdots s_ns_{n+1}\cdots s_{2n}}$. Note that $\mathcal{O}(\overline{X^2})=(\mathcal{O}(\overline{X}),\mathcal{O}(\overline{X}))$ and 
  \begin{equation}\label{eq O^2=OO}
      \mathcal{O}(\overline{X^2})_{n+k}=\mathcal{O}(\overline{X^2})_{k}=\mathcal{O}(\overline{X})_k,\qquad k=1,\dots,n.
  \end{equation}
  %We will show that $\overline{s_{n+1}\cdots s_{2n}}=\overline{s_1\cdots s_n}$: 
Then we have
\begin{eqnarray*}
         s_{n+k}&\xlongequal{\mbox{\tiny{Eq. (\ref{eq. specific h})}}}&\sigma_{r_{n+k-1}}(x_k)\\
         &\xlongequal{\mbox{\tiny{Eq. (\ref{eq for sigmar_k-1})}}}&\theta_s^{-1}\left(\theta(x_k)+\sum_{j=1}^{n+k-1}\mathcal{O}(\overline{X^2})_j+2g(n+k-1)\right)\\
         &\xlongequal{\mbox{\tiny{Eq. (\ref{eq O^2=OO})}}}&\theta_s^{-1}\left(\theta(x_k)+\sum_{j=1}^{k-1}\mathcal{O}(\overline{X})_j+2g(k-1)+\sum_{j=1}^{n}\mathcal{O}(\overline{X})_j+2gn\right)\\
         &\xlongequal{\mbox{\tiny{Eq. (\ref{eq for r_k-1})}}}&\theta_s^{-1}\left(\theta(x_k)+r_{k-1}+4gn\right)\\
         &\xlongequal{\mbox{\tiny{Eq. (\ref{restore function})}}}&\sigma_{r_{k-1}}(x_k)\\
         &\xlongequal{\mbox{\tiny{Eq. (\ref{eq. specific h})}}}&s_k.
     \end{eqnarray*}
     Therefore, $h(\overline{X^2})=h(\overline{X})^2$ and thus $f(X^2)=f(X)^2$. We have finished the proof.
 \end{proof}

Note that every minimal geometric presentation $P$ gives a cyclic order $\mathcal{O}$ of its generating set $\gs^\pm$ and thus induces a function $\mathcal{O}(\cdot)$ as in Definition \ref{def graph-iso to word-iso}. 
Then, using Proposition \ref{prop property of graph isomorphism}, we obtain the following similar formulae as in Theorem \ref{main thm1 (original main thm2)}. Here, $\gcd(\cdot)$ denotes the greatest common divisor.

\begin{thm}[Theorem \ref{main thm4 word length formulae more rough}]\label{thm app. to min. geo. presentation more general edition}
  For any minimal geometric presentation $P=\left<\gs\mid R\right>$ of a surface group $\pi_1(\Sigma_g)$ with cyclic order $\mathcal{O}$, 
  let $t=\dfrac{4g}{\gcd(2g,p)}$ where $1\leq p=\gcd\{\mathcal{O}(d)\mid d\in \gs^\pm\}\leq 4g$. Then for any $1\neq x\in \pi_1(\Sigma_g)$ and $m\geq 1$, we have
    \begin{enumerate}
        \item $|x^{2t}|_{\gs}>|x^t|_{\gs}$;
        \item $|x^{tm}|_{\gs}=(m-1)(|x^{2t}|_{\gs}-|x^t|_{\gs})+|x^t|_{\gs}$;
        \item $\swl_{\gs}(x)=\frac{1}{t}(|x^{2t}|_{\gs}-|x^t|_{\gs})$.
    \end{enumerate}
    In particular, for the canonical presentation $P_c$, we can take $t=2g$.
\end{thm}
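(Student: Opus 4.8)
The plan is to transport the problem to the symmetric presentation $P_s$ via the length-preserving bijection $f$ of Eq. (\ref{eq def of f and h}), and then invoke Theorem \ref{main thm1 (original main thm2)}. The only thing preventing this immediately is that $f$ is not a homomorphism; the whole point of choosing $t=\frac{4g}{\gcd(2g,p)}$ is to force $f$ to commute with the $t$-th power map through Proposition \ref{prop property of graph isomorphism}. So the heart of the argument is a number-theoretic fact guaranteeing that the hypothesis $\sum_k\mathcal{O}(\overline X)_k=2gn$ of Proposition \ref{prop property of graph isomorphism} is automatically satisfied by $\overline X=\nf(x^t)$ for every $x$.

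First I would introduce the turning functional $\Theta(\overline W):=\sum_{k}\mathcal{O}(\overline W)_k-2g\,|\overline W|\in\Z_{4g}$ on $\W(\gs)$. Since $\mathcal{O}(w^{-1})=-\mathcal{O}(w)$, one checks that $\Theta$ is additive under concatenation and invariant under free reduction, so it defines a homomorphism $F(\gs)\to\Z_{4g}$. Moreover, because the relator traverses each letter of $\gs^\pm$ exactly once we have $\sum_{i=1}^{4g}\mathcal{O}(d_i)=0$, while $2g\cdot 4g\equiv0\ (\mathrm{mod}\ 4g)$; hence $\Theta$ vanishes on the relator and descends to a homomorphism $\bar\Theta:\pi_1(\Sigma_g)\to\Z_{4g}$. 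The hypothesis of Proposition \ref{prop property of graph isomorphism} for a word $\overline X$ is exactly $\bar\Theta(X)=0$. Because $\gs^\pm$ is closed under inversion and $\mathcal{O}(d^{-1})=-\mathcal{O}(d)$, for each generator both $\mathcal{O}(c)$ and $4g-\mathcal{O}(c)$ occur, so $p=\gcd\{\mathcal{O}(d)\}$ divides $4g$ and divides every $\mathcal{O}(d)$. Consequently $\gcd(2g,p)$ divides each generator value $\bar\Theta(d)=\mathcal{O}(d)-2g$, hence divides every value of $\bar\Theta$, and therefore $t\,\bar\Theta\equiv 0$ since $t\cdot\gcd(2g,p)=4g\equiv0$. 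In particular $\bar\Theta(x^t)=t\,\bar\Theta(x)=0$, so $\overline X=\nf(x^t)$ meets the hypothesis of Proposition \ref{prop property of graph isomorphism}, yielding $f(x^{tm})=f(x^t)^m$ for all $m\geq1$.

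To finish, write $y:=f(x^t)$, viewed as an element measured with respect to $\gs_s$. The graph isomorphism underlying $f$ sends the origin to the origin and preserves graph distance, hence $|x^{tm}|_\gs=|f(x^{tm})|_{\gs_s}=|y^m|_{\gs_s}$ for every $m$; in particular $|x^t|_\gs=|y|_{\gs_s}$ and $|x^{2t}|_\gs=|y^2|_{\gs_s}$. Surface groups are torsion-free, so $x\neq 1$ forces $x^t\neq1$ and thus $y\neq1$; Theorem \ref{main thm1 (original main thm2)} then applies to $y$ in the symmetric presentation and gives $|y^2|_{\gs_s}>|y|_{\gs_s}$ together with $|y^m|_{\gs_s}=(m-1)(|y^2|_{\gs_s}-|y|_{\gs_s})+|y|_{\gs_s}$. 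Substituting the length identities produces formulae (1) and (2) verbatim. For (3), the translation number limit exists, so evaluating it along the subsequence $n=tm$ and inserting (2) gives $\swl_\gs(x)=\lim_m\frac{|x^{tm}|_\gs}{tm}=\frac{1}{t}\big(|x^{2t}|_\gs-|x^t|_\gs\big)$.

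For the canonical presentation $P_c$ I would read off the cyclic order $(a_1,a_2^{-1},a_1^{-1},a_2,\dots,a_{2g-1},a_{2g}^{-1},a_{2g-1}^{-1},a_{2g})$, which gives $\mathcal{O}(a_{2i-1})\equiv-2$ and $\mathcal{O}(a_{2i})\equiv 2\ (\mathrm{mod}\ 4g)$; hence $p=\gcd(4g,2)=2$, $\gcd(2g,p)=2$, and $t=\frac{4g}{2}=2g$. The main obstacle is the content of the second paragraph: verifying that $\bar\Theta$ is well defined (especially its vanishing on the relator) and establishing the divisibility $\gcd(2g,p)\mid \mathcal{O}(d)-2g$ that makes $t$ annihilate the image of $\bar\Theta$. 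Once that is secured, the transport across $f$ and the appeal to Theorem \ref{main thm1 (original main thm2)} are entirely routine.
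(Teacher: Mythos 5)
Your proposal is correct and takes essentially the same route as the paper: check that $t$-th powers satisfy the congruence hypothesis of Proposition \ref{prop property of graph isomorphism} (the arithmetic core being $t\cdot\gcd(2g,p)=4g\equiv 0$ in $\Z_{4g}$), transport word lengths through the origin-preserving graph isomorphism underlying $f$, apply Theorem \ref{main thm1 (original main thm2)} to $f(x^t)$ in the symmetric presentation, and finish with the subsequence argument for $\swl$. The only difference is bookkeeping: you organize the congruence as a homomorphism $\overline{\Theta}\colon\pi_1(\Sigma_g)\to\Z_{4g}$, which additionally requires the (true) fact that the relator of a minimal geometric presentation traverses each letter of $\gs^{\pm}$ exactly once so that $\Theta$ descends to the group, whereas the paper verifies the same congruence directly on the word $\overline{X^t}$ for an arbitrary word $\overline{X}$ representing $x$, thereby avoiding any descent.
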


\begin{proof} %Let $\sigma_0,\mathcal{O},h,f,\theta,\theta_s$ be defined from $P$ to $P_s$ as above. 
Let $\overline{X}=\overline{x_1\cdots x_n}$ be a word in $\W(\gs)$ representing $1\neq x\in \pi_1(\Sigma_g)$ and $\mathcal{O}(\overline{X})=(\mathcal{O}(x_1),\dots,\mathcal{O}(x_n))$. Then 
$$\mathcal{O}(\overline{X^{t}})=(\underbrace{\mathcal{O}(x_1),\dots,\mathcal{O}(x_n),\dots,\mathcal{O}(x_1),\dots,\mathcal{O}(x_n)}_{t\mbox{ times }(\mathcal{O}(x_1),\dots,\mathcal{O}(x_n))}).$$
 Note $tp=0=2gt\in \mathbb Z_{4g}$, thus for the word $\overline{X^{t}}$, we have $$\sum_{k=1}^{tn} \mathcal{O}(\overline{X^{t}})_k=t\cdot\sum_{k=1}^{n}\mathcal{O}(x_k)=t p\cdot\sum_{k=1}^{n}\dfrac{\mathcal{O}(x_k)}{p}=0=2gtn\in\Z_{4g}.$$
  According to Proposition \ref{prop property of graph isomorphism}, we have 
  $h(\overline{X^{tm}})=h(\overline{X^{t}})^m$ and $f(x^{tm})=f(x^{t})^m$ for every $m\geq 1$. 
  Note that $x^{tm}$ and $f(x^{tm})$ are located at the corresponding positions of $\Gamma=\Gamma(\pi_1(\Sigma_g),\gs)$ and the Cayley graph $\Gamma_s$ of the symmetric presentation, respectively. Thus, they have the same word length $$|x^{tm}|_{\gs}=|f(x^{tm})|_{\gs_s}=|f(x^{t})^m|_{\gs_s}.$$
  Therefore, combining Theorem \ref{main thm1 (original main thm2)}, the formulae in this theorem hold:
  
  (1) $|x^{2t}|_{\gs}=|f(x^t)^2|_{\gs_s}>|f(x^t)|_{\gs_s}=|x^{t}|_{\gs}$;
  
  (2) for any $m\geq 1$,
  \begin{eqnarray*}
       |x^{tm}|_{\gs}&=&|f(x^t)^m|_{\gs_s}\\
       &=&(m-1)(|f(x^t)^2|_{\gs_s}-|f(x^t)|_{\gs_s})+|f(x^t)|_{\gs_s}\\
       &=&(m-1)(|x^{2t}|_{\gs}-|x^t|_{\gs})+|x^t|_{\gs};   
      \end{eqnarray*}

(3) $\swl_{\gs}(x)=\lim_{m\to \infty}\frac{|x^{tm}|_{\gs}}{tm}=\frac{1}{t}(|x^{2t}|_{\gs}-|x^t|_{\gs}).$

\noindent In particular, for the canonical presentation, we have $t=4g/2=2g$ because $\mathcal{O}(d)=\pm 2$ for any $d\in \gs_c^\pm.$
\end{proof}

\begin{ques}
	For the canonical presentation of a surface group, do the formulae in Theorem \ref{main thm1 (original main thm2)} hold? More generally, do those formulae hold for every minimal geometric presentation?
\end{ques}

\section{A remark on some growths}\label{sect remark}
The study of standard growth \cite{AJLM16,CW92,FP87,GN97,Lo14}, conjugacy growth and primitive  (i.e., not a proper power) conjugacy growth \cite{AC17,CK02,CK04,GS10,GY22,Ri04,Ri10} provides a deep understanding of the structure of groups by quantifying these diverse algebraic and geometric invariants in geometric group theory. The standard growth rate is a classic research object, for example,% \cite[Theorem 1]{AJLM16} showed:
\begin{thm}[Alsed\`{a}-Juher-Los-Ma\~{n}osas, \cite{AJLM16}]\label{thm for standard growth rate}
    For $n>2$, let $G$ be an (orientable or non-orientable) surface group of rank $n$ with a minimal geometric presentation $P$. Then, the volume entropy of $G$ with respect to the presentation $P$ is $\log(\lambda_n)$. Moreover, for $n\geq 4$, $\lambda_n$ satisfies:
$$2n-1-\dfrac{1}{(2n-1)^{n-2}}<\lambda_n<2n-1.$$
\end{thm}
In fact, the standard growth rate was proven to be a root of a polynomial in \cite{AJLM16,CW92}:
$$x^n-2(n-1)\sum_{i=1}^{n-1}x^i+1.$$

In this section, we provide rough estimates of standard growth, conjugacy growth and primitive conjugacy growth for the surface group $G=\pi_1(\Sigma_g) ~(g\geq 2)$ under the symmetric presentation (\ref{symmetric presentation}). These estimates have already been established for a broader class of groups: the standard growth of surface groups \cite[Theorem 1.1]{AJLM16}, the (primitive) conjugacy growth of acylindrically hyperbolic groups \cite[Theorem 1.2]{AC17} and groups admitting an statistically convex-cocompact action on some proper geodesic metric space \cite[Main Theorem]{GY22}.

We say that the \emph{length} $|[x]|$ of a conjugacy class $[x]\subset G$ is the length of its normal form, that is
$$|[x]|:=|\nf([x])|=\min\{|yxy^{-1}| \mid y\in G\}.$$
Denote the \emph{number of group elements} (resp. \emph{conjugacy classes} and \emph{primitive conjugacy classes}) of \emph{length} $n$ in $G$ by
\begin{eqnarray*}
   \groupelement(n)&:=&\#\{x\in G\mid |x|=n\}=\#\{\overline{X}\in \W(S)\mid \overline{X} ~\mathrm{is ~irreducible~ and} ~|\overline{X}|=n\}, \\
    \generalconj(n)&:=&\#\{[x]\subset G\mid |[x]|=n\},\\
    \primitiveconj(n)&:=&\#\{[x]\subset G\mid |[x]|=n ~\mathrm{and} ~x ~\mathrm{is ~not ~a ~proper ~power} \},      
\end{eqnarray*}
respectively, and let
$$E(n):=\sum_{i=0}^n\groupelement(i), \quad C(n):=\sum_{i=0}^n\generalconj(i), \quad P(n):=\sum_{i=0}^n\primitiveconj(i).$$
Then the \emph{standard growth rate}, the \emph{conjugacy growth rate} and the \emph{primitive conjugacy growth rate} are defined as follows, respectively:
$$\limsup_{n\to\infty}\sqrt[n]{E(n)},\qquad\limsup_{n\to\infty}\sqrt[n]{C(n)},\qquad\limsup_{n\to\infty}\sqrt[n]{P(n)}.$$

\subsection{Standard growth rate}
We first provide a rough estimate for the standard growth rate using Lemma \ref{lem for Xw types}.

%\begin{exam}\label{exam standard growth}
  Given an irreducible word $\overline{X}=\overline{x_1\cdots x_n}\in\W(\gs)$ with $n\geq 1$, there are five distinct cases for $\overline{Xw}$ by Lemma \ref{lem for Xw types} with $\overline{b_1\cdots b_{4g}}\in\RR$. Let 
  $$N_i:=\#\{\mathrm{words} ~\overline{Xw} ~\mathrm{in ~item ~(i) ~of ~Lemma} ~\ref{lem for Xw types}\}, \quad i=1, \ldots, 5.$$
\begin{enumerate}
    \item $w=x_n^{-1}$. Then $N_1=\groupelement(n)$.
    \item $\overline{x_{n-2g+1}\cdots x_nw}=\overline{b_{4g}b_1\cdots b_{2g-1}b_{2g}}$. Note that for every irreducible word $\overline{x_1\cdots x_{n-2g}}$, there is always at least one choice for $\overline{b_{4g}b_1\cdots b_{2g}}$, and for every irreducible word $\overline{x_1\cdots x_{n-2g+1}}$, there is at most two choices for $\overline{b_1\cdots b_{2g}}$ with $x_{n-2g+1}=b_{4g}$. (In most cases, there are two choices available for $x_{n-2g+2}$, whereas in a minority of cases, only one choice exists or none at all. Then, once $x_{n-2g+1}$ and $x_{n-2g+2}$ are determined, $\overline{x_1\cdots x_nw}$ is also determined). Therefore,
$$\groupelement(n-2g)\leq N_2\leq 2\groupelement(n-2g+1).$$
    \item $\overline{x_{n-(2g-1)t}\cdots x_nw}=\overline{b_{4g}(b_1\cdots b_{2g-1})^tb_{2g}}$ for $t\geq 2$. For a given $t\geq 2$, by similar arguments as in item (2), the number $N_{3,t}$ of such $\overline{Xw}$ satisfies
$$\groupelement(n-(2g-1)t-1)\leq N_{3,t}\leq 2\groupelement(n-(2g-1)t).$$
Moreover, we have $N_3=\sum_{t=2}^{\lfloor \frac{n-1}{2g-1}\rfloor}N_{3, t}$.
    \item $\overline{x_{n-(2g-1)t+1}\cdots x_nw}=\overline{(b_1\cdots b_{2g-1})^tb_{2g}}$ with $b_1\succ b_{2g},t\geq 1$ maximal and $x_{n-(2g-1)t}\neq b_{4g}$. Note that for every irreducible word $\overline{x_1\cdots x_{n-(2g-1)t-1}}$, we choose $x_{n-(2g-1)t}=x_{n-(2g-1)t-1}$ and then there is at least one choice for $\overline{(b_1\cdots b_{2g-1})^tb_{2g}}$ such that $$\overline{x_1\cdots x_{n-(2g-1)t-1}x_{n-(2g-1)t}(b_1\cdots b_{2g-1})^tb_{2g}}$$ satisfies the conditions: $\overline{x_1\cdots x_{n-(2g-1)t}(b_1\cdots b_{2g-1})^t}$ is irreducible, $b_1\succ b_{2g}$, $t$ is maximal and $x_{n-(2g-1)t}\neq b_{4g}$.
Then for a given $t\geq 1$, the number $N_{4,t}$ of such $\overline{Xw}$ satisfies
$$\groupelement(n-(2g-1)t-1)\leq N_{4,t} \leq 2\groupelement(n-(2g-1)t+1).$$
and $N_4=\sum_{t=1}^{\lfloor \frac{n}{2g-1}\rfloor}N_{4, t}$.
    \item $\overline{Xw}$ is irreducible. Then $N_5=\groupelement(n+1)$.
\end{enumerate}

Since for every irreducible word $\overline{x_1\cdots x_n}$ there are $4g$ choices for $w\in\gs^\pm$, we have $$4g\groupelement(n)=N_1+N_2+N_3+N_4+N_5.$$
Thus
\begin{eqnarray}
    \groupelement(n+1)& \leq &(4g-1)\groupelement(n)- \sum_{t=1}^{\lfloor \frac{n}{2g-1}\rfloor}\groupelement(n-(2g-1)t-1)-\sum_{t=1}^{\lfloor\frac{n-1}{2g-1}\rfloor}\groupelement(n-(2g-1)t-1),\label{eq 1 in sect. growth}\\
    \groupelement(n+1)&\geq &(4g-1)\groupelement(n)-2\sum_{t=1}^{\lfloor \frac{n}{2g-1}\rfloor}\groupelement(n-(2g-1)t+1)-2\sum_{t=1}^{\lfloor \frac{n-1}{2g-1}\rfloor}\groupelement(n-(2g-1)t).\label{eq 2 in sect. growth}
\end{eqnarray}
Note when $1\leq n\leq 2g-2$, we clearly have $\groupelement(n+1)= (4g-1)\groupelement(n)>(4g-2)\groupelement(n)$. Then by induction on $n$, we can obtain the following.
\begin{lem}\label{lem for e(n+1)}
\begin{enumerate}
 \item $(4g-2)\groupelement(n)+1\leq  \groupelement(n+1)\leq  (4g-1)\groupelement(n)$ for every $n\geq 1$; 

 \item  $4g-2< \dfrac{E(n+1)}{E(n)}< 4g-1$ for every $n\geq 2g$.
\end{enumerate} 
\end{lem}
%\begin{proof}
%The latter ``$\leq$'' holds by Eq. (\ref{eq 1 in sect. growth}). We now prove the former one by induction on $n$. When $1\leq n\leq 2g-2$, we have $\groupelement(n+1)= (4g-1)\groupelement(n)>(4g-2)\groupelement(n)$. Now suppose $$\groupelement(n+1)\geq (4g-2)\groupelement(n)+1$$ for every $n\leq k-1$. When $n=k$, by Eq. (\ref{eq 2 in sect. growth}), we have \begin{eqnarray*}\groupelement(n+1)&\geq &(4g-2)\groupelement(n)+1+36\groupelement(n-2g+2)-2\sum_{t=1}^{\lfloor\frac{n}{2g-1}\rfloor}\groupelement(n-(2g-1)t+1)-2\sum_{t=1}^{\lfloor\frac{n-1}{2g-1}\rfloor}\groupelement(n-(2g-1)t)\\&\geq&(4g-2)\groupelement(n)+1+32\groupelement(n-2g+2)-2\sum_{t=2}^{\lfloor\frac{n}{2g-1}\rfloor}\groupelement(n-(2g-1)t+1)-2\sum_{t=2}^{\lfloor\frac{n-1}{2g-1}\rfloor}\groupelement(n-(2g-1)t)\\&\geq &(4g-2)\groupelement(n)+1.\end{eqnarray*}Hence, for any $n\geq 1$, the conclusion holds.    \end{proof}
% $$(4g-2)E(n)+n\leq E(n+1)-E(0).$$ For $n>2g$, $E(n+1)\leq (4g-1)E(n)$ since there is always an irreducible word $\overline{x_1\cdots x_n}$ not having $(4g-1)$ choice for $b\in\gs^\pm\backslash\{x_{n}^{-1}\}$ s.t. $\overline{x_1\cdots x_nb}$ is irreducible. Hence, for sufficiently large $n$,

Moreover, by Eq. (\ref{eq 1 in sect. growth}) and Eq. (\ref{eq 2 in sect. growth}), for $k=0,1,\dots,2g-2$ and $\groupelement(-1):=0$, we have
\begin{eqnarray}
\label{eq 3 in sect. growth}\\
\groupelement(n+1-k) \leq (4g-1)\groupelement(n-k)- \sum_{t=1}^{\lfloor \frac{n-k}{2g-1}\rfloor}\groupelement(n-(2g-1)t-1-k)-\sum_{t=1}^{\lfloor \frac{n-1-k}{2g-1}\rfloor}\groupelement(n-(2g-1)t-1-k)\notag,\\
\label{eq 4 in sect. growth}\\
\groupelement(n+1-k)\geq (4g-1)\groupelement(n-k)-2\sum_{t=1}^{\lfloor \frac{n-k}{2g-1}\rfloor}\groupelement(n-(2g-1)t+1-k)-2\sum_{t=1}^{\lfloor \frac{n-1-k}{2g-1}\rfloor}\groupelement(n-(2g-1)t-k)\notag.
\end{eqnarray}
By calculating $\sum_{k=0}^{2g-2}\mbox{Eq. (\ref{eq 3 in sect. growth})}$ and $\sum_{k=0}^{2g-2}\mbox{Eq. (\ref{eq 4 in sect. growth})}$, we have
\begin{eqnarray*}
  E(n+1)-E(n-2g+2)&\leq &(4g-1)(E(n)- E(n-2g+1))-2E(n-2g),\\
E(n+1)-E(n-2g+2)&\geq &(4g-1)(E(n)- E(n-2g+1))-2E(n-2g+2)-2E(n-2g+1).
\end{eqnarray*}
Furthermore, combining the above two equations with Lemma \ref{lem for e(n+1)}(2), for $n\geq 2g$, we have
$$\dfrac{E(n+1)}{E(n)}\leq 4g-1-\dfrac{2E(n-2g)}{E(n)}\leq 4g-1-\dfrac{2}{(4g-1)^{2g}},$$
and
$$\dfrac{E(n+1)}{E(n)}\geq 4g-1 -\dfrac{(4g+1)E(n-2g+1)+E(n-2g+2)}{E(n)}
\geq 4g-1-\dfrac{8g-1}{(4g-2)^{2g-1}}.$$
Therefore, we have obtained a rough estimate of the standard growth rate as follows.
\begin{prop}\label{exam standard growth}
For the surface group $G=\pi_1(\Sigma_g) ~(g\geq 2)$ under the symmetric presentation (\ref{symmetric presentation}),
$$4g-1-\dfrac{8g-1}{(4g-2)^{2g-1}}\leq \limsup_{n\to\infty}\sqrt[n]{E(n)}\leq 4g-1-\dfrac{2}{(4g-1)^{2g}}.$$
\end{prop}
\begin{rem}\label{rem ...}
 Indeed, the standard growth rate can be computed precisely using this method. To avoid tedious discussions, we give a rough estimation here.
\end{rem}

\subsection{Conjugacy growth and primitive conjugacy growth} %We now consider the (primitive) conjugacy growth rate.
Let $\overline{X}=\overline{x_1\cdots x_n}\in\W(\gs)$ be an irreducible word with $n$ sufficiently large. Denote all the positive divisors of $n$ as follows: $$1=d_1<d_2<\cdots<d_s<d_{s+1}=n.$$
We discuss in two cases.

Case (1). $\overline{X}$ is cyclically irreducible. Denote the number of such $\overline{X}$ as $A_n$. 

Let $\overline{Z}$ be the primitive root of $\overline{X}$. Suppose $\overline{X}=\overline{Z^{n/d_i}}$. Then $\overline{Z}$ has length $d_i$ and is cyclically irreducible. Denote the number of such words $\overline{X}$ as $A_{n,i}$. Note that if $\nf([y^t])=\overline{y_1\cdots y_m}$ for some $t,m>0$ and $y\in G$, then $\nf([y])=\overline{y_1\cdots y_k}$ and $\overline{y_1\cdots y_m}=\overline{(y_1\cdots y_k)^t}$ with $m=kt$, and hence 
\begin{equation}\label{eq C(n)=P(n)+...}
\generalconj(n)=\primitiveconj(n)+\sum_{i=1}^s\primitiveconj(d_i).
\end{equation}
Moreover, according to Theorem \ref{main thm3 conj. class}, there is exactly one exception between conjugacy class with length $n$ and cyclically irreducible word class with length $n$ if and only if $(2g-1)\mid n$. In particular, if $(2g-1)\mid n$, then there are exactly $4g(2g-1)=8g^2-4g$ additional, cyclically irreducible words representing $4g$ conjugacy classes. 
Therefore, 
\begin{equation*}
  A_{n,i}=\left\{\begin{array}{ll}
    d_i\primitiveconj(d_i)   &\mbox{when }d_i\neq 2g-1  \\
    d_i\primitiveconj(d_i)+8g^2-4g   & \mbox{when }d_i=2g-1
  \end{array}\right.\quad,  
\end{equation*}
and thus
\begin{equation}\label{eq A_n=nP(n)+...}
  A_n=\sum_{i=1}^{s+1}A_{n,i}=\delta(n)+n\primitiveconj(n)+\sum_{i=1}^{s}d_i\primitiveconj(d_i),  
\end{equation}
where
\begin{equation}\label{eq function delta}
    \delta(n)=\left\{\begin{array}{ll}
        8g^2-4g & \mbox{when } (2g-1)\mid n \\
        0 & \mbox{when } (2g-1)\nmid n
    \end{array}\right.\quad.
\end{equation}

Case (2). $\overline{X}$ is irreducible but not cyclically irreducible. Denote the number of such $\overline{X}$ as $B_n$.

In this case, $\overline{X}$ has a cyclic permutation $\overline{X'}$ which is reducible. We now have the following disjoint classification.

Subcase (2.1). $\overline{X'}$ is reducible by $S_{(1)}$. Then $\overline{X}=\overline{x_{1}\cdots x_{n-1}x_1^{-1}}$ and every such word $\overline{X}$ is determined by its irreducible subword $\overline{x_1\cdots x_{n-1}}$. Denote the number of such $\overline{X}$ as $B_{n,1}$. Then $$B_{n,1}\leq \groupelement(n-1).$$
To get a lower bound of $B_{n,1}$, we first fix an irreducible word $\overline{Y}=\overline{y_1\cdots y_{n-1}}$, and then have the following cases:
\begin{enumerate}
    \item $\overline{y_1\cdots y_{n-1}y_1^{-1}}$ is irreducible. The number of such words $\overline{Y}$ is $B_{n,1}$.
    \item $\overline{y_1\cdots y_{n-1}y_1^{-1}}$ is reducible by $S_{(1)}$, i.e. $y_{n-1}=y_1$. The number of such words $\overline{Y}$ denoted as $D_1$ satisfies
    $$0\leq D_1\leq \groupelement(n-2).$$
    \item $\overline{y_1\cdots y_{n_1}y_1^{-1}}$ is reducible by $S_{(i)}(i=2,3,4)$. Let $y_1=b_1$ for some $\overline{b_1\cdots b_{4g}}\in\RR$. Then, $y_1^{-1}=b_{2g+1}$ and 
    $$\overline{y_1\cdots y_{n-1}y_1^{-1}}=\overline{y_1\cdots y_{n-2g}b_2\cdots b_{2g}b_{2g+1}}\mbox{ or }\overline{y_1\cdots y_{n-2g}b_{4g}\cdots b_{2g+2}b_{2g+1}}.$$
For each case, such $\overline{Y}$ is determined by $\overline{y_1\cdots y_{n-2g}}$. Hence, the number of such words $\overline{Y}$ denoted by $D_2$ satisfies 
$$0\leq D_2\leq 2\groupelement(n-2g).$$
\end{enumerate}

Since $\groupelement(n-1)=B_{n,1}+D_1+D_2$, by the above analysis, we have
\begin{equation}\label{eq upper and lower bound of Bn,1}
   \groupelement(n-1)- \groupelement(n-2)-2\groupelement(n-2g)\leq B_{n,1}\leq \groupelement(n-1).
\end{equation}

Subcase (2.2). $\overline{X'}$ is reducible by $S_{(2,2g+1)}$ or $S_{(3,t)}(t\geq 2)$. 
Denote the number of such $\overline{X}$ as $B_{n,2}$. Then 
$$\overline{X}=\overline{b_{i+1}\cdots b_{2g}\underbrace{(b_2\cdots b_{2g})^{t_1}b_{2g+1}x_{n_3}\cdots x_{n_4} b_1(b_2\cdots b_{2g})^{t_2}}_W b_2\cdots b_i}$$ 
for some $\overline{b_1\cdots b_{4g}}\in\RR$, $2\leq i\leq 2g$ and $t_1,t_2\geq 0.$
Since every $2g-1$ such words
$$\left\{\overline{Wb_2\cdots b_{2g}}, ~\overline{b_{2g}Wb_2\cdots b_{2g-1}}, \ldots, \overline{b_3\cdots b_{2g}Wb_2}\right\}$$ 
are determined by the irreducible word $\overline{W}$ with $|\overline{W}|=n-2g-1$, we have
\begin{equation}\label{eq upper and lower bound of Bn,2}
  0\leq B_{n,2}\leq (2g-1)\groupelement(n-2g+1).  
\end{equation}

Subcase (2.3). $\overline{X'}$ is reducible by $S_{(4,t)}$ and no cyclic permutation of $\overline{X}$ is reducible by $S_{(2,2g+1)}$ or $S_{(3,t')}(t'\geq 2)$. Denote the number of such $\overline{X}$ as $B_{n,3}$. Then
$$\overline{X}=\overline{b_{i+1}\cdots \underbrace{b_{2g}x_{n_5}\cdots x_{n_6}}_W b_1\cdots b_{i}}$$
for some $1\leq i\leq 2g-1; x_{n_5}\neq b_{2g+1};x_{n_6}\neq b_{4g};b_1\succ b_{2g}$.
Since every $(2g-1)$ such words
$$\left\{\overline{Wb_1\cdots b_{2g-1}}, ~\overline{b_{2g-1}Wb_1\cdots b_{2g-2}}, \ldots, \overline{b_2\cdots b_{2g-2}Wb_1} \right\}$$ 
are determined by the irreducible word $\overline{W}$ with $|\overline{W}|=n-2g-1$, we again have
\begin{equation}\label{eq upper and lower bound of Bn,3}
  0\leq B_{n,3}\leq (2g-1)\groupelement(n-2g+1).  
\end{equation}\\

Since $\groupelement(n)=A_n+B_{n,1}+B_{n,2}+B_{n,3}$, by Eqs. (\ref{eq upper and lower bound of Bn,1})(\ref{eq upper and lower bound of Bn,2})(\ref{eq upper and lower bound of Bn,3}) and Lemma \ref{lem for e(n+1)}, we have
\begin{equation}\label{eq upper and lower bound of An}
%\groupelement(n)-\groupelement(n-1)+(4g-2)\groupelement(n-2g+1)\leq A_n\leq\groupelement(n)-\groupelement(n-1)+\groupelement(n-2)+2\groupelement(n-2g)\notag\\
\groupelement(n)-\groupelement(n-1)-(4g-2)\groupelement(n-2g+1) \leq A_n \leq \groupelement(n)-\frac{2g-2}{2g-1}\groupelement(n-1)-\ell\groupelement(n-2g),
\end{equation}
where $\ell=(4g-2)^{2g-2}-2\geq 34$.
Note that every primitive conjugacy class with length $k\geq 1$ is determined by at least $k$ cyclically irreducible words with length $k$ representing it. Therefore,
$$k\primitiveconj(k)\leq\groupelement(k).$$
Furthermore, by Lemma \ref{lem for e(n+1)}, we have
\begin{equation}\label{eq upper bound of sum di primitive conj. classes}
    \sum_{i=1}^{s}d_i\primitiveconj(d_i)\leq \sum_{i=1}^{s}\groupelement(d_i)\leq 2\groupelement(\lfloor \frac{n}{2}\rfloor).
\end{equation}
Therefore, for sufficiently large $n$, by Eqs. (\ref{eq A_n=nP(n)+...})(\ref{eq C(n)=P(n)+...}), we have
\begin{eqnarray*}
 &\generalconj(n)&=\dfrac{1}{n}(A_n-\delta(n)+\sum_{i=1}^{s}(n-d_i)\primitiveconj(d_i))\\
 &&\leq \dfrac{1}{n}(A_n-\delta(n)+n\sum_{i=1}^sd_i\primitiveconj(d_i))\\
 \mbox{by Eq. (\ref{eq upper bound of sum di primitive conj. classes})}&&\leq \dfrac{1}{n}(A_n-\delta(n)+2n\groupelement(\lfloor\frac{n}{2}\rfloor))\\
 \mbox{by Eq. (\ref{eq upper and lower bound of An})}&&\leq \dfrac{1}{n}(\groupelement(n)-\dfrac{2g-2}{2g-1}\groupelement(n-1)).
\end{eqnarray*}
On the other hand, by Eqs. (\ref{eq A_n=nP(n)+...})(\ref{eq upper bound of sum di primitive conj. classes}) and  Eq. (\ref{eq upper and lower bound of An}),
$$\primitiveconj(n)\geq \dfrac{1}{n}(A_n-\delta(n)-2\groupelement(\lfloor\frac{n}{2}\rfloor))
    \geq \dfrac{1}{n}(\groupelement(n)-\dfrac{4g-1}{4g-2}\groupelement(n-1)).$$
%Since $\primitiveconj(n)\leq \generalconj(n)$, we have $$\dfrac{1}{n}(\groupelement(n)-\dfrac{4g-1}{4g-2}\groupelement(n-1))\leq \primitiveconj(n)\leq\generalconj(n) \leq \dfrac{1}{n}(\groupelement(n)-\dfrac{2g-2}{2g-1}\groupelement(n-1)).$$
By Eqs. (\ref{eq C(n)=P(n)+...})(\ref{eq upper bound of sum di primitive conj. classes}),
$$\lim_{n\to \infty}\dfrac{\primitiveconj(n)}{\generalconj(n)}=\lim_{n\to\infty}\dfrac{\primitiveconj(n)}{\primitiveconj(n)+\sum_{i=1}^s\primitiveconj(d_s)}=1.$$

In conclusion, we have obtained the following theorem.

\begin{prop}\label{prop for primitive conjugacy}
    For a surface group $G=\pi_1(\Sigma_g)$ with a symmetric presentation (\ref{symmetric presentation}) and sufficiently large $n$, we have 
    \begin{enumerate}
        \item $\groupelement(n)-\dfrac{4g-1}{4g-2}\groupelement(n-1)\leq n\primitiveconj(n)\leq n\generalconj(n) \leq \groupelement(n)-\dfrac{2g-2}{2g-1}\groupelement(n-1)$;
        \item
        $(1-\dfrac{4g-1}{(4g-2)^2})\groupelement(n)\leq n\primitiveconj(n)\leq n\generalconj(n)\leq (1-\dfrac{2g-2}{(4g-1)(2g-1)})\groupelement(n)$;
        \item $\lim_{n\to\infty}\dfrac{\primitiveconj(n)}{\generalconj(n)}=1;$
        \item $\limsup_{n\to\infty}\sqrt[n]{E(n)}=\limsup_{n\to\infty}\sqrt[n]{C(n)}=\limsup_{n\to\infty}\sqrt[n]{P(n)}.$
    \end{enumerate}
\end{prop}
%As a corollary, the (primitive) conjugacy growth rate i equal to the standard growth rate.
\begin{rem}\label{rem conj.}
For items (1) and (2) in the above theorem, Antol\'in and Ciobanu \cite[Theorem 1.2]{AC17}, as well as Gekhtman and Yang \cite[Main Theorem]{GY22}, successively proved that the (primitive) conjugacy growth can be dominated by the standard growth in a broader class of groups. We present here a rough estimate of the coefficient. Indeed, this coefficient can likewise be computed precisely through a more extensive discussion. Item (3) was also proven by Gekhtman and Yang for groups admitting a statistically convex-cocompact action on some proper geodesic metric spaces.
\end{rem}

\noindent\textbf{Acknowledgements.} The authors would like to thank Dr. Renxing Wan for bringing the recent results in (primitive) conjugacy growth \cite{GY22} to our attention.

\appendix
\section{\texorpdfstring{Proof of Proposition \ref{prop classification of LLFR of X^2}(\ref{XX reducible}) ($|\overline{T}|\in\{2g,\dots,4g-1\}$)}{Proof of Proposition \ref{prop classification of LLFR of X^2}(\ref{XX reducible}) (|overline{T}| in \{2g,...,4g-1\})}}\label{sect appendix for some detailed proofs of props}

Note that in Proposition \ref{prop classification of LLFR of X^2}(\ref{XX reducible}),  $\overline{X}=\overline{x_1\cdots x_n}(n\geq 1)$ is cyclically freely reduced while $\overline{X^2}$ is reducible, then according to the discussion following Proposition \ref{prop classification of LLFR of X^2}, there exists a reducible subword  $\overline{V}$ satisfying
$$\overline{x_nx_1}\subset \overline{V}\subset \overline{X^2}.$$
Recall that $\overline{T}$ is the $\llfr$ of $\overline{X^2}$ containing $\overline{x_nx_1}$ in this section. 

\subsection{\texorpdfstring{Proof of Proposition \ref{prop classification of LLFR of X^2}(\ref{XX reducible}) ($|\overline{T}|=2g$)}{Proof of Proposition \ref{prop classification of LLFR of X^2}(\ref{XX reducible}) (|overline{T}|=2g)}}\label{Appendix A1}

Since $|\overline{T}|=2g$, the reducible subword $\overline{V}$ can only be of types $S_{(3,t)}~(t\geq 2)$ or $S_{(4,t_0)}~(t_0\geq 1)$. Now we have two cases.

	\textbf{Case (1). $\overline{V}$ is of type $S_{(3,t)}(t\geq 2)$.} Then $\overline{V}$ has two possible positions in $\overline{X^2}$:
\begin{eqnarray*}
\overline{V}&=&\overline{b_1(b_2\cdots b_{2g})^tb_{2g+1}}\\
&=&
\begin{cases}
  \overline{T (b_2\cdots b_{2g})^{t-1}b_{2g+1}} &\mbox{for}~\overline{T}=\overline{b_1\cdots b_{2g}}\\
  \overline{b_1(b_2\cdots b_{2g})^{t-1}T} & \mbox{for}~\overline{T}=\overline{b_2\cdots b_{2g+1}}
\end{cases}\\
&\xrightarrow{S_{(3,t)}}&\overline{(b_{2g}\cdots b_2)^t},
\end{eqnarray*}
where $\overline{b_1\cdots b_{4g}}\in\RR$. Since these two positions are symmetric, without loss of generality, let $\overline{V}$ be in the first position. Then $$\overline{x_nx_1}\subset\overline{T}=\overline{b_1\cdots b_{2g}}=\overline{x_{n-r+1}\cdots x_{n}x_1\cdots x_{2g-r}}\subset \overline{X^2},$$
where $1\leq r\leq 2g-1<n=|\overline{X}|$ since $\overline{T}$ does not contain a letter appearing more than once. Therefore,
\begin{eqnarray}\label{Eq. X}
		\overline{X}&=& \overline{b_{r+1}\cdots b_{2g}x_{2g-r+1}\cdots x_{n-r}b_1\cdots b_{r}}\nonumber\\
&=&\overline{\underbrace{b_{r+1}\cdots b_{2g}(b_2\cdots b_{2g})^{t-1}b_{2g+1}}_{x_1\cdots x_{n_1-1}=:A}~\underbrace{x_{n_1}\cdots x_{n-r}b_1\cdots b_r}_{x_{n_1}\cdots x_n=:B}},
        \end{eqnarray}
where $x_{n-r}\neq b_{2g+1},b_{4g}$ and  $x_{n_1}\neq b_{1},b_{2g+2}$ for
 \begin{equation}\label{position argument 1 |LLFR|=2g}
      n_1:=(2g-1)t-r+3\leq n-r.
    \end{equation}
(Proof of Eq. (\ref{position argument 1 |LLFR|=2g}): Since 
$x_{n-r+1}=b_1\notin \overline{b_{r+1}\cdots b_{2g}(b_2\cdots b_{2g})^{t-1}b_{2g+1}}=\overline{x_1\cdots x_{n_1-1}}\subset{\overline{V}},$ we have $n-r\geq n_1-1$. However, if $n-r=n_1-1$, then by Eq. (\ref{Eq. X}),
 $$\overline{x_{n-r}x_{n-r+1}}=\overline{x_{n_1-1}x_{n-r+1}}=\overline{b_{2g+1}b_{1}}\subset\overline{X},$$ which contradicts the irreducibility of $\overline{X}$. Therefore, $n_1\leq n-r$.)      
Hence, by Eq. (\ref{Eq. X}),
\begin{eqnarray}\label{Eq. H}
\overline{YX^2Y^{-1}}&=&\overline{Yx_1\cdots  x_{n_1}\cdots x_{n-r}Vx_{n_1}\cdots x_{n}Y^{-1}}\notag\\
&\xrightarrow{S_{(3,t)}}&\overline{Yx_1\cdots \underbrace{x_{n_1}\cdots  x_{n-r}(b_{2g}\cdots b_2)^t}_Wx_{n_1}\cdots x_{n}Y^{-1}}\notag\\
&=&\overline{Y\underbrace{b_{r+1}\cdots b_{2g}(b_2\cdots b_{2g})^{t-1}b_{2g+1}}_A W \underbrace{x_{n_1}\cdots x_{n-r}b_1\cdots b_r}_BY^{-1}}\notag\\
&=:&\overline{Z}.
\end{eqnarray}

	 If $\overline{Z}$ is irreducible, then by the same arguments as in the paragraph preceding Eq. (\ref{length=0 word2}), we have the same normal form as in Eq. (\ref{length=0 word2}) for every $k\geq 1$:
\begin{equation}\label{eq. nf 4.2.31}
\nf(x^k)=\overline{Yx_1\cdots x_{n_1-1}W^{k-1}x_{n_1}\cdots x_{n}Y^{-1}},
\end{equation} 
where 
$$\overline W=\overline{x_{n_1}\cdots x_{n-r}(b_{2g}\cdots b_2)^t}=\nf(x_{n_1}\cdots x_{n}x_1\cdots x_{n_1-1}).$$ 

If $\overline{Z}$ is reducible (note that $\overline{Z}$ is freely reduced), then it contains a subword $\overline{U}$ of types $S_{(2)},S_{(3,t)}$ or $S_{(4,1)}$ containing $\overline{x_{n-r}b_{2g}}$ or $\overline{b_{2}x_{n_1}}$ in Eq. (\ref{Eq. H}). If $\overline{U}$ contains $\overline{b_{2}x_{n_1}}$, then in one hand, by Lemma \ref{lem frequently used}(2), 
$$\overline{U}=\overline{b_2 x_{n_1}\cdots}\subset\overline{(b_{2g}\cdots b_2)^tx_{n_1}\cdots x_nY^{-1}}$$ is reducible; in the other hand, since  $\overline{b_2\cdots b_{2g+1}x_{n_1}\cdots x_nY^{-1}} \subset \overline{XY^{-1}}$ is irreducible, by Corollary \ref{very useful cor}, $\overline{(b_{2g}\cdots b_2)^tx_{n_1}\cdots x_nY^{-1}}$ is irreducible, a contradiction. Thus, the reducible word $\overline{U}$ must contain $\overline{x_{n-r}b_{2g}}$ and we have
\begin{equation}
  \overline{U}=\overline{\cdots x_{n-r}b_{2g}}\subset \overline{Yx_1\cdots x_{n-r}b_{2g}}\subset\overline{Z}  
\end{equation}
by Lemma \ref{lem frequently used}(2). Note that $\overline{Yx_1\cdots x_{n-r}}\subset \overline{YX}$ is irreducible. Then, according to Lemma \ref{lem for Xw types}, there are three cases for $\overline{U}$, which correspond to items (2, 3, 4) of Lemma \ref{lem for Xw types}, respectively.
     
	 Subcase (1.1). $\overline{U}=\overline{b_{4g}b_1\cdots b_{2g}}$ is of type $S_{(2,2g+1)}$. By Eq. (\ref{Eq. X}), we have $x_{n_1-1}=b_{2g+1}\notin \overline{U}$ and hence Eq. (\ref{Eq. H}) becomes
	 \begin{eqnarray}\label{eq 1.1 for Z in |LLFR|=2g}     
	 	\overline{Z}&=&\overline{Yx_1\cdots x_{n_2}Ub_{2g-1}\cdots b_2(b_{2g}\cdots b_2)^{t-1}x_{n_1}\cdots x_{n}Y^{-1}}\notag\\
	 	&\xrightarrow{S_{(2,2g+1)}}&\overline{Yx_1\cdots x_{n_2}b_{2g-1}\cdots b_2 [b_1b_{2g-1}]b_{2g-2}\cdots b_2(b_{2g}\cdots b_{2})^{t-1}x_{n_1}\cdots x_{n}Y^{-1}}\notag\\
        &=:&\overline{Z_1Z_2},
	 \end{eqnarray}
	 where $\overline{Z_1}=\overline{Yx_1\cdots x_{n_2}b_{2g-1}\cdots b_2 b_{1}}(n_2=n-r-2g\geq n_1-1)$ and
$$\overline{Z_2}=\overline{b_{2g-1}\cdots b_2(b_{2g}\cdots b_2)^{t-1}x_{n_1}\cdots x_nY^{-1}}\subset\overline{(b_{2g}\cdots b_2)^tx_{n_1}\cdots x_nY^{-1}}$$ 
are irreducible by Corollary \ref{very useful cor}. Furthermore, note that there is no reducible subword of $\overline{Z_1Z_2}$ of types $S_{(1)},S_{(2,2g+1)},S_{(3,t')}$ or $S_{(4,1)}$ that contains the bracketed (by $[\cdot]$) subword $\overline{b_1b_{2g-1}}$ in Eq. (\ref{eq 1.1 for Z in |LLFR|=2g}). Thus, $\overline{Z_1Z_2}$ and its subword $\overline{Wx_{n_1}\cdots x_{n_2}}\subset \overline{Z_1Z_2}$ are both irreducible, where
$$\overline{W}:=\overline{x_{n_1}\cdots x_{n_2}b_{2g-1}\cdots b_1b_{2g-1}\cdots b_2(b_{2g}\cdots b_2)^{t-1}}=\nf(x_{n_1}\cdots x_{n}x_{1}\cdots x_{n_{1}-1})$$
for  $n_2\geq n_1-1$ ($\overline{x_{n_1}\cdots x_{n_2}}=1$ provided $n_2=n_1-1$).
Then by Lemma \ref{lem types of W}(2) and Lemma \ref{lem from 2-nd power to n-th power}, we obtain
	$$\nf(x^k)=\overline{Yx_1\cdots x_{n_1-1}W^{k-1}x_{n_1}\cdots x_{n}Y^{-1}}(k\geq 1).$$

Subcase (1.2). $\overline{U}=\overline{b_{4g}(b_1\cdots b_{2g-1})^{t_1}b_{2g}}$ is of type $S_{(3,t_1)}(t_1\geq 2)$. Since $x_{n_1-1}=b_{2g+1}\notin\overline{U}$, Eq. (\ref{Eq. H}) becomes
	 \begin{eqnarray}
	 	\overline{Z}&=&\overline{Yx_1\cdots x_{n_2}Ub_{2g-1}\cdots b_2(b_{2g}\cdots b_2)^{t-1}x_{n_1}\cdots x_{n}Y^{-1}}\notag\\
	 	&\xrightarrow{S_{(3,t_1)}}&\overline{Yx_1\cdots x_{n_2}(b_{2g-1}\cdots b_1)^{t_1}b_{2g-1}\cdots b_2(b_{2g}\cdots b_{2})^{t-1}x_{n_1}\cdots x_{n}Y^{-1}}\notag\\
        &=:&\overline{Z_1Z_2},
	 \end{eqnarray}
	 where $n_2=n-r-(2g-1)t_1-1\geq n_1-1$. After a similar argument following Eq. (\ref{eq 1.1 for Z in |LLFR|=2g}), we also obtain
	$$\nf(x^k)=\overline{Yx_1\cdots x_{n_1-1}W^{k-1}x_{n_1}\cdots x_{n}Y^{-1}}(k\geq 1),$$
  where 
  $$\overline{W}=\overline{x_{n_1}\cdots x_{n_2}(b_{2g-1}\cdots b_1)^{t_1}b_{2g-1}\cdots b_2(b_{2g}\cdots b_2)^{t-1}}=\nf(x_{n_1}\cdots x_{n}x_{1}\cdots x_{n_{1}-1}).$$
	
Subcase (1.3). \label{subcase1.3} $\overline{U}=\overline{(b_1\cdots b_{2g-1})^{t_2}b_{2g}}(b_1\succ b_{2g})$ is of type $S_{(4,t_2)}(t_2\geq 1)$ (not of types $S_{(2,2g+1)}$ or $S_{(3,t_1)}$). Since $x_{n_1-1}=b_{2g+1}\notin\overline{U}$, Eq. (\ref{Eq. H}) becomes
\begin{eqnarray}
	 	\overline{Z}&=&\overline{Yx_1\cdots x_{n_2}Ub_{2g-1}\cdots b_2(b_{2g}\cdots b_2)^{t-2}x_{n_1}\cdots x_{n}Y^{-1}}\notag\\
	 &\xrightarrow{S_{(4,t_2)}}&\overline{Yx_1\cdots x_{n_2}b_{2g}(b_{2g-1}\cdots b_1)^{t_2}b_{2g-1}\cdots b_2(b_{2g}\cdots b_{2})^{t-1}x_{n_1}\cdots x_{n}Y^{-1}}\notag\\
     &=:&\overline{Z_1Z_2},
	 \end{eqnarray}
	 where $n_2=n-r-(2g-1)t_2\geq n_1-1$ and $x_{n_2}\neq b_{4g}$. If $n_2=n_1-1$, then $$\overline{b_{2g+1}b_{1}}=\overline{x_{n_1-1}x_{n_2+1}}=\overline{x_{n_1-1}x_{n_1}}\subset\overline{X},$$
     which contradicts the irreducibility of $\overline{X}$. Therefore, $n_2\geq n_1$.
     After verifying the irreducibility of $\overline{Z_1Z_2}$, by Lemma \ref{lem types of W}(5) and Lemma \ref{lem from 2-nd power to n-th power}, we also obtain
	$$\nf(x^k)=\overline{Yx_1\cdots x_{n_1-1}W^{k-1}x_{n_1}\cdots x_{n}Y^{-1}}(k\geq 1),$$
where
$$\overline{W}=\overline{x_{n_1}\cdots x_{n_2}b_{2g}(b_{2g-1}\cdots b_1)^{t_2}b_{2g-1}\cdots b_2(b_{2g}\cdots b_2)^{t-1}}=\nf(x_{n_1}\cdots x_{n}x_{1}\cdots x_{n_{1}-1}).$$
	
\textbf{Case (2). $\overline{V}$ is of type $S_{(4,t_0)}(t_0\geq 1)$ (not of type $S_{(3)}$).}	Then $\overline{V}$ also has two possible positions as follows: 
	\begin{equation}\label{two position of V |LLFR|=2g, case2}
	\overline{V}=\left\{\begin{array}{lllll}
	     \overline{b_1(b_2\cdots b_{2g})^{t_0}}&=&\overline{T(b_2\cdots b_{2g})^{t_0-1}} &\xrightarrow{S_{(4,t_0)}}\overline{(b_{2g}\cdots b_2)^{t_0}b_1}\\
	     \overline{(b_1\cdots b_{2g-1})^{t_0}b_{2g}}&=&\overline{(b_1\cdots b_{2g-1})^{t_0-1}T}&\xrightarrow{S_{(4,t_0)}}\overline{b_{2g}(b_{2g-1}\cdots b_1)^{t_0}}
	\end{array}\right. ,
    \end{equation}
where $b_1\succ b_{2g}$,  $\overline{T}=\overline{b_1\cdots b_{2g}}$ and $\overline{b_1\cdots b_{4g}}\in\RR$. Since these two positions are symmetric, without loss of generality, let $\overline{V}$ be in the first position. Then, 
    $$\overline{x_nx_1}\subset\overline{T}=\overline{b_1\cdots b_{2g}}=\overline{x_{n-r+1}\cdots x_nx_1\cdots x_{2g-r}}\subset\overline{X^2},$$
    where $1\leq r\leq 2g-1<n=|\overline{X}|$ since $\overline{T}$ does not contain a letter appearing more than once. Therefore,
\begin{eqnarray}\label{Eq. X case 2 in |LLFR|=2g}
    \overline{X}&=&\overline{b_{r+1}\cdots b_{2g}x_{2g-r+1}\cdots x_{n-r}b_1\cdots b_{r}}\notag\\
    &=&\overline{\underbrace{b_{r+1}\cdots b_{2g}(b_2\cdots b_{2g})^{t_0-1}}_{x_1\cdots x_{n_1-1}=:A}\underbrace{x_{n_1}\cdots x_{n-r}b_1\cdots b_r}_{x_{n_1}\cdots x_n=:B}}
\end{eqnarray}
and
\begin{eqnarray}
    \overline{YX^2Y^{-1}}&=&\overline{Yx_1\cdots x_{n-r}Vx_{n_1}\cdots x_nY^{-1}}\label{Eq. YX2Y-1 case 2 in |LLFR|=2g}\\
   % &=&\overline{Yx_1\cdots x_{n-r}b_1(b_2\cdots b_{2g})^{t_0}x_{n_1}\cdots x_nY^{-1}}\\
   &\xrightarrow{S_{(4,t_0)}}&\overline{Yx_1\cdots x_{n-r}(b_{2g}\cdots b_2)^{t_0}b_1x_{n_1}\cdots x_nY^{-1}}\notag\\    &=&\overline{YAx_{n_1}\cdots x_{n-r}(b_{2g}\cdots b_2)^{t_0}b_1BY^{-1}}\notag\\
    &=:&\overline{Z},\label{Eq. H case 2 in |LLFR|=2g}
\end{eqnarray}
where $x_{n-r}\neq b_{2g+1},b_{4g}$ and $x_{n_1}\neq b_{4g},b_{2g+1}$ for
\begin{equation}\label{position argument 2 |LLFR|=2g}
    n_1:=(2g-1)t_0-r+2\leq n-r-1.
\end{equation}
Note that Eq. (\ref{position argument 2 |LLFR|=2g}) holds by $x_{n-r+1}=b_1\notin \overline{b_{r+1}\cdots b_{2g}(b_2\cdots b_{2g})^{t_0-1}}=\overline{x_1\cdots x_{n_1-1}}\subset \overline{V}$, and $\overline{Z}$ is freely reduced. 

If $\overline{Z}$ is irreducible, we have the following two cases: 
\begin{enumerate}
\item[$1^\circ$] When $\overline{x_{n_1}\cdots x_{n-r}}\neq \overline{b_2\cdots b_{2g-1}}$, by Lemma \ref{lem types of W}(3) and Lemma \ref{lem from 2-nd power to n-th power}, $\nf(x^k)(k\geq 1)$ is as follows:
    \begin{equation}\label{|LLFR|=2g word for subcase2.2.2}
        \nf(x^k)=\overline{Yx_{1}\cdots x_{n_1-1}W^{k-1}x_{n_1}\cdots x_nY^{-1}},
    \end{equation}
where $$\overline{W}=\overline{x_{n_1}\cdots x_{n-r}(b_{2g}\cdots b_2)^{t_0}b_1}=\nf(x_{n_1}\cdots x_nx_1\cdots x_{n_1-1}).$$
            
\item[$2^\circ$] When $\overline{x_{n_1}\cdots x_{n-r}}=\overline{b_2\cdots b_{2g-1}}$, then $\overline{X}=\overline{AB}$ is of type $\mathfrak{A}$ (see Definition \ref{defn word of types ABC}), $\nf(x^2)=\overline{YACDBY^{-1}}$ and $\nf(x^3)=\overline{YACWDBY^{-1}}$,
where $\overline{CD}=\nf(BA), \overline{W}=\nf(DC)$ and 
\begin{equation}\label{LLFR=2g special word 1}
		\left\lbrace \begin{array}{lll}
		\overline{A}&=&\overline{b_{r+1}\cdots b_{2g}(b_2\cdots b_{2g})^{t_0-1}}\\
		\overline{B}&=&\overline{b_2\cdots b_{2g-1}b_1\cdots b_r}\\
        \overline{C}&=&\overline{b_2\cdots b_{2g}}\\
        \overline{D}&=&\overline{b_{2g-1}\cdots b_2(b_{2g}\cdots b_2)^{t_0-1}b_1}\\
        \overline{W}&=&\overline{b_{2g-1}\cdots b_2(b_{2g}\cdots b_2)^{t_0} b_1}\\
	\end{array}\right. (t_0\geq 1).
    \end{equation}
Note that $|\overline{W}|=|\overline{CD}|$ and $\overline{W}$ is cyclically irreducible by Lemma \ref{lem types of W}(3). By Lemma \ref{lem from 2-nd power to n-th power}, $\nf(x^k)(k\geq 2)$ is as follows:
\begin{equation}\label{item(a) 1 for |LLFR|=2g}
    \nf(x^k)=\overline{YACW^{k-2}DBY^{-1}}.
\end{equation}
\end{enumerate}

If $\overline{Z}$ is reducible, then it contains a reducible subword $\overline{U}$ of types $S_{(2)},S_{(3,t_1)}(t_1\geq 2)$ or $S_{(4,t_2)}(t_2\geq 1)$. By Lemma \ref{lem frequently used}(3), there are two possible positions for $\overline{U}$:
\begin{equation*}
\overline{U}=\left\{\begin{array}{lll}
\overline{b_1x_{n_1}\cdots}&\subset&\overline{b_1x_{n_1}\cdots x_nY^{-1}}  \\
    \overline{\cdots x_{n-r}b_{2g}}&\subset&\overline{Yx_1\cdots x_{n-r}b_{2g}}     
\end{array}\right..  
\end{equation*}
        
Subcase (2.1).  $\overline{U}=\overline{b_1x_{n_1}\cdots}\subset \overline{Z}$. By Lemma \ref{lem for Xw types}, we have the following subcases. 

(a). If $\overline{U}$ is of type $S_{(2,2g+1)}$, then $\overline{U}=\overline{b_1b_2\cdots b_{2g}b_{2g+1}}$ and Eq. (\ref{Eq. YX2Y-1 case 2 in |LLFR|=2g}) becomes
\begin{eqnarray*}
    \overline{YX^2Y^{-1}}&=&\overline{Yx_1\cdots x_{n-r}b_1(b_2\cdots b_{2g})^{t_0}b_2\cdots b_{2g}b_{2g+1}\cdots }\\
    &\xrightarrow{S_{(3,t_0+1)}}&\overline{Yx_1\cdots x_{n-r}(b_{2g}\cdots b_{2})^{t_0+1}\cdots }.
\end{eqnarray*}
Therefore, $\overline{YX^2Y^{-1}}$ can be reduced by $S_{(3,t_0+1)}$ first, i.e., it should be discussed in Case (1). There are also some similar arguments in the following part of this proof. We will not give detailed analysis again.

 (b). If $\overline{U}$ is of type $S_{(3,t_1)}$, then $\overline{U}=\overline{b_1(b_2\cdots b_{2g})^{t_1}b_{2g+1}}$ and Eq. (\ref{Eq. YX2Y-1 case 2 in |LLFR|=2g}) becomes
$$\overline{YX^2Y^{-1}}=\overline{Yx_1\cdots x_{n-r}b_1(b_2\cdots b_{2g})^{t_0}(b_2\cdots b_{2g})^{t_1}b_{2g+1}\cdots }.$$
The argument is the same as that in the above subcase (a). 
			
(c). If $\overline{U}$ is of type $S_{(4,t_2)}$, then $\overline{U}=\overline{b_1(b_2\cdots b_{2g})^{t_2}}$ and Eq. (\ref{Eq. YX2Y-1 case 2 in |LLFR|=2g}) becomes
$$\overline{YX^2Y^{-1}}=\overline{Yx_1\cdots x_{n-r}b_1(b_2\cdots b_{2g})^{t_0}(b_2\cdots b_{2g})^{t_2}\cdots}.$$
Then, $\overline{V}$ should be of type $S_{(4,t_0+t_2)}$, which contradicts the maximality of $t_0$ (see Remark \ref{rem of maximality}). 
        
       Subcase (2.2).  $\overline{Z}$ does not contain a reducible subword of the form $\overline{b_1x_{n_1}\cdots }$. Then 
       $$\overline{U}=\overline{\cdots  x_{n-r}b_{2g}}\subset \overline{Z}.$$
       By Lemma \ref{lem for Xw types}, we have the following subcases.
        
	(a). If $\overline{U}$ is of type $S_{(2,2g+1)}$, then $\overline{U}=\overline{b_{4g}b_1\cdots b_{2g-1}b_{2g}}$ and Eq. (\ref{Eq. YX2Y-1 case 2 in |LLFR|=2g}) becomes
        \begin{eqnarray*}
                \overline{YX^2Y^{-1}}&=&\overline{\cdots b_{4g}(b_1\cdots b_{2g-1})^2b_{2g}(b_2\cdots b_{2g})^{t_0-1}x_{n_1}\cdots x_{n}Y^{-1}}\\
                &\xrightarrow{S_{(3,2)}}&\overline{\cdots (b_{2g-1}\cdots b_1)(b_{2g-1}\cdots b_2)b_1(b_2\cdots b_{2g})^{t_0-1}x_{n_1}\cdots x_nY^{-1}}\\
				&\xrightarrow{S_{(4,t_0-1)}}&\overline{\cdots(b_{2g-1}\cdots b_1)(b_{2g-1}\cdots b_2)(b_{2g}\cdots b_2)^{t_0-1}b_1x_{n_1}\cdots x_nY^{-1}}.
        \end{eqnarray*}
    Therefore, $\overline{YX^2Y^{-1}}$ can be reduced first by $S_{(3,2)}$  and then by $S_{(4,t_0-1)}$, i.e., it should be discussed in Subcase (1.3). (Actually, this case is symmetric to Subcase (1.3). )
			
(b). If $\overline{U}$ is of type $S_{(3,t_1)}$, then $\overline{U}=\overline{b_{4g}(b_1\cdots b_{2g-1})^{t_1}b_{2g}}$ and Eq. (\ref{Eq. YX2Y-1 case 2 in |LLFR|=2g}) becomes
    $$\overline{YX^2Y^{-1}}=\overline{\cdots b_{4g}(b_1\cdots b_{2g-1})^2b_{2g}(b_2\cdots b_{2g})^{t_0-1}x_{n_1}\cdots x_{n}Y^{-1}}.$$
    The argument is the same as that in the above subcase (a).

(c). If $\overline{U}$ is of type $S_{(4,t_2)}$ (not of type $S_{(3,t_2)}$), then $\overline{U}=\overline{(b_1\cdots b_{2g-1})^{t_2}b_{2g}}$ and Eq. (\ref{Eq. YX2Y-1 case 2 in |LLFR|=2g}) becomes
    $$\overline{YX^2Y^{-1}}=\overline{\cdots(b_1\cdots b_{2g-1})^{t_2}b_1(b_2\cdots b_{2g})^{t_0}x_{n_1}\cdots x_nY^{-1}}.$$
    Since $x_{n_1-1}=b_{2g}\notin\overline{(b_1\cdots b_{2g-1})^{t_2}}$, we get $n_2:=n-r-(2g-1)t_2\geq n_1-1$. Then Eqs. (\ref{Eq. X case 2 in |LLFR|=2g})(\ref{Eq. H case 2 in |LLFR|=2g}) become
        \begin{eqnarray}
                \overline{X}&=&\overline{\underbrace{b_{r+1}\cdots b_{2g}(b_2\cdots b_{2g})^{t_0-1}}_A~\underbrace{x_{n_1}\cdots x_{n_2}(b_1\cdots b_{2g-1})^{t_2}b_1\cdots b_r}_B}\label{X specail form a}\\
                \overline{Z}&=&\overline{Yx_1\cdots x_{n_2}(b_1\cdots b_{2g-1})^{t_2}(b_{2g}\cdots b_2)^{t_0}b_1 x_{n_1}\cdots x_nY^{-1}}\notag\\
                &\xrightarrow{S_{(4,t_2)}}&\overline{Yx_1\cdots x_{n_2}b_{2g}(b_{2g-1}\cdots b_1)^{t_2}b_{2g-1}\cdots b_2(b_{2g}\cdots b_2)^{t_0-1}b_1 x_{n_1}\cdots x_nY^{-1}}\notag\\
                &=:&\overline{H},
        \end{eqnarray}
    where $t_0,t_2\geq 1$ and $x_{n_2}\neq b_{2g+1},b_{4g}$. Then, we have:
   \begin{claim}\label{claim 1}
      $\overline{H}$ is irreducible.
   \end{claim}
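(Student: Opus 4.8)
The plan is to establish the irreducibility of $\overline{H}$ through Lemma \ref{lem four operations}, i.e. to show that $\overline{H}$ contains no subword of types $S_{(1)}, S_{(2,2g+1)}, S_{(3,t)}~(t\geq 2)$ or $S_{(4,1)}$. First I would record that $\overline{H}$ is freely reduced: the $S_{(4,t_2)}$-reduction creates only the two new adjacencies $\overline{x_{n_2}b_{2g}}$ and $\overline{b_1x_{n_1}}$, and the standing hypotheses $x_{n_2}\neq b_{4g}$ and $x_{n_1}\neq b_{2g+1}$ give $x_{n_2}\neq b_{2g}^{-1}$ and $x_{n_1}\neq b_1^{-1}$, so neither adjacency is of type $S_{(1)}$.

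The core of the argument is a localization step based on a factorization into irreducible pieces. Writing $\overline{W}:=\overline{x_{n_1}\cdots x_{n_2}b_{2g}(b_{2g-1}\cdots b_1)^{t_2}b_{2g-1}\cdots b_2(b_{2g}\cdots b_2)^{t_0-1}b_1}$ for the central block, one checks directly that
\[
\overline{H}=\overline{(Yx_1\cdots x_{n_1-1})\,W\,(x_{n_1}\cdots x_nY^{-1})},
\]
and I would argue that each of these three words is irreducible: $\overline{Yx_1\cdots x_{n_1-1}}$ is a prefix of the irreducible word $\overline{YX}$ and $\overline{x_{n_1}\cdots x_nY^{-1}}$ is a suffix of the irreducible word $\overline{XY^{-1}}$ (both read off from Eq. (\ref{X specail form a})), while $\overline{W}$ is a word of the shape treated in Lemma \ref{lem types of W}(6), hence cyclically irreducible once one verifies the side condition $\overline{x_{n_1}\cdots x_{n_2}}\neq\overline{b_2\cdots b_{2g-1}}$ — precisely the condition that separates this subcase from the special type-$\mathfrak{A}$ situation of Definition \ref{defn word of types ABC}. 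Granting the irreducibility of the three pieces, Lemma \ref{lem some observations}(6) confines every hypothetical reducible subword $\overline{V'}$ of $\overline{H}$ to contain one of the two junctions $\overline{x_{n_1-1}x_{n_1}}$ or $\overline{b_1x_{n_1}}$, so only reducible subwords straddling these two junctions remain to be excluded.

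The final step is the junction case analysis. At each junction a putative $\overline{V'}$ extends from an outer irreducible word into $\overline{W}$, and I would classify it using Lemma \ref{lem for Xw types} (together with Lemma \ref{lem frequently used} to pin down its endpoints) into the three possibilities $S_{(2,2g+1)}, S_{(3,t')}$ and $S_{(4,t')}$; in each case the reducible configuration is traced back either to a forbidden subword of $\overline{YX}$ or $\overline{XY^{-1}}$ — contradicting their irreducibility via Corollary \ref{very useful cor} exactly as in Subcases (1.1)--(1.3) — or to an enlargement of the exponent that violates the maximality of $t_0$ recorded in Remark \ref{rem of maximality}. The main obstacle I anticipate is the right junction $\overline{b_1x_{n_1}}$, where the block $\overline{W}$ terminates in $b_1$ and abuts the irreducible tail $\overline{x_{n_1}\cdots x_nY^{-1}}$: here an $S_{(4)}$-type reduction could a priori cascade, and ruling it out is precisely where the maximality of $t_0$ is indispensable; the accompanying delicate point is the verification that $\overline{W}$ meets the hypotheses of Lemma \ref{lem types of W}(6). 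Once both junctions are cleared, Lemma \ref{lem four operations} yields that $\overline{H}$ is irreducible, completing the proof of Claim \ref{claim 1}.
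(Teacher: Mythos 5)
Your proposal contains a genuine gap, and it is a circularity. For your localization via Lemma \ref{lem some observations}(6) to apply to the factorization $\overline{H}=\overline{(Yx_1\cdots x_{n_1-1})\,W\,(x_{n_1}\cdots x_nY^{-1})}$, you must already know that the middle block $\overline{W}$ is irreducible, and you certify this by citing Lemma \ref{lem types of W}(6), checking only the side condition $\overline{x_{n_1}\cdots x_{n_2}}\neq\overline{b_2\cdots b_{2g-1}}$. But Lemma \ref{lem types of W} is stated for words that are \emph{already assumed irreducible}, and row (6) of Table \ref{tab: types of cyclically irreducible words 1} additionally assumes that $\overline{Ax_1\cdots x_n}$ (here $\overline{Wx_{n_1}\cdots x_{n_2}}$) is irreducible; its conclusion concerns only the powers $\overline{A^k}$. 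Both of these hypotheses are subwords of $\overline{H}$, i.e. parts of exactly what Claim \ref{claim 1} asserts, and in the paper that lemma is invoked only \emph{after} Claim \ref{claim 1}, which is what supplies its hypotheses. The circular step is not harmless bookkeeping: the genuinely hard configurations are reducible subwords ending at the new adjacency $\overline{x_{n_2}b_{2g}}$, and these need not contain either of your two junctions $\overline{x_{n_1-1}x_{n_1}}$, $\overline{b_1x_{n_1}}$ (they reach back to $x_{n_1-1}$ only when $\overline{x_{n_1}\cdots x_{n_2}}$ is short), so in your scheme they are excluded solely by the assumed irreducibility of $\overline{W}$. At that junction the correct contradictions are also not quite the ones you name: types $S_{(2,2g+1)}$ and $S_{(3,t')}$ would force $\overline{Z}$ to contain $\overline{b_{4g}(b_1\cdots b_{2g-1})^{t_2+t'}b_{2g}}$, i.e. land us in the excluded subcase (b), while type $S_{(4,t')}$ violates the maximality of $t_2$ (Remark \ref{rem of maximality}), not of $t_0$.

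A second symptom that the route fails: Claim \ref{claim 1} is stated before, and holds independently of, the dichotomy $\overline{x_{n_1}\cdots x_{n_2}}\neq\overline{b_2\cdots b_{2g-1}}$ versus equality. In the equality case your side condition is simply false and $\overline{W}$ is genuinely \emph{not} cyclically irreducible — there $\overline{W^2}$ contains $\overline{b_1b_2\cdots b_{2g}}$ with $b_1\succ b_{2g}$, a word of type $S_{(4,1)}$ — yet $\overline{H}$ is still irreducible; indeed it equals $\nf(x^2)$, and this is precisely the type-$\mathfrak{A}$ case producing Proposition \ref{prop classification of LLFR of X^2}(\ref{XX reducible}a). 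So your argument cannot yield the claim in that case even in principle; only plain irreducibility of $\overline{W}$ is needed, and it cannot be obtained from Lemma \ref{lem types of W}. The paper's proof avoids all of this: the standing hypothesis of Subcase (2.2) (no reducible subword of $\overline{Z}$ of the form $\overline{b_1x_{n_1}\cdots}$) makes $\overline{Z_2}=\overline{(b_{2g}\cdots b_2)^{t_0}b_1x_{n_1}\cdots x_nY^{-1}}$ irreducible; writing $\overline{Z}=\overline{Z_1Z_2}$ and applying Lemma \ref{lem frequently used}(3) to the reduction $\overline{Z}\xrightarrow{S_{(4,t_2)}}\overline{H}$ pins every potential reducible subword of $\overline{H}$ either to end at $\overline{x_{n_2}b_{2g}}$ (killed as above) or to begin with $\overline{b_1b_{2g-1}}$, which is impossible outright since $\overline{b_1b_{2g-1}}$ is not even a fractional relator. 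Note in this connection that the adjacencies created by the $S_{(4,t_2)}$-reduction are $\overline{x_{n_2}b_{2g}}$ and $\overline{b_1b_{2g-1}}$; the adjacency $\overline{b_1x_{n_1}}$ you single out was already present in $\overline{Z}$.
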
  
\begin{proof}[Proof of Claim \ref{claim 1}]       
Since $\overline{Z}$ does not contain a reducible subword of the form $\overline{b_1x_{n_1}\cdots }$, we can easily obtain the irreducibility of $\overline{Z_2}:=\overline{(b_{2g}\cdots b_2)^{t_0}b_1x_{n_1}\cdots x_nY^{-1}}$. Then, $\overline{Z}=\overline{Z_1Z_2}$ with $\overline{Z_1}=\overline{Yx_1\cdots x_{n-r}}$ irreducible. By Lemma \ref{lem frequently used}(3), there are two possible positions of reducible subwords in $\overline{K}$ of types $S_{(2,2g+1)},S_{(3)}$ or $S_{(4)}$: 

(i) ~$\overline{\cdots x_{n_2}b_{2g}}\subset\overline{Yx_1\cdots x_{n_2}b_{2g}}$; 

(ii) $\overline{b_1 b_{2g-1}\cdots }\subset\overline{ b_1 b_{2g-1}\cdots b_2(b_{2g}\cdots b_2)^{t_0-1}b_1x_{n_1}\cdots x_n Y}$. 

For (i), we will see it is impossible after a discussion similar to Subcase (2.1). For (ii), it is also impossible because the latter word is apparently irreducible. Hence, $\overline{H}$ is irreducible.
    \end{proof}
    
    Therefore, we have the following two cases.
        \begin{enumerate}
            \item[$1^\circ$] When $\overline{x_{n_1}\cdots x_{n_2}}\neq \overline{b_2\cdots b_{2g-1}}$, by Lemma \ref{lem types of W}(6) and Lemma \ref{lem from 2-nd power to n-th power},  $\nf(x^k)(k\geq 1)$ is as follows:
			$$\nf(x^k)= \overline{Yx_1\cdots x_{n_1-1}W^{k-1}x_{n_1}\cdots x_nY^{-1}},$$
            where the cyclically irreducible word $$\overline{W}=\overline{x_{n_1}\cdots x_{n_2}b_{2g}(b_{2g-1}\cdots b_1)^{t_2}b_{2g-1}\cdots b_2(b_{2g}\cdots b_2)^{t_0-1}b_1 }=\nf(x_{n_1}\cdots x_nx_1\cdots x_{n_1-1}).$$
            
            \item[$2^\circ$]  When $\overline{x_{n_1}\cdots x_{n_2}}=\overline{b_2\cdots b_{2g-1}}$, then by Eq. (\ref{X specail form a}), $\overline{X}=\overline{AB}$ is of type $\mathfrak{A}$ (see Definition \ref{defn word of types ABC}), $\nf(x^2)=\overline{YACDBY^{-1}}$ and $\nf(x^3)=\overline{YACWDBY^{-1}}$, 
            where 
            $$\overline{CD}=\nf(BA), \quad\overline{W}=\nf(DC)$$ and
            \begin{equation}\label{LLFR=2g special word 2}
				\left\lbrace \begin{array}{lll}
					\overline{A}&=&\overline{b_{r+1}\cdots b_{2g}(b_2\cdots b_{2g})^{t_0-1} }\\
					\overline{B}&=&\overline{b_{2}\cdots b_{2g-1}(b_1\cdots b_{2g-1})^{t_2}b_1\cdots b_r}\\
                    \overline{C}&=&\overline{b_2\cdots b_{2g}}\\
                    \overline{D}&=&\overline{(b_{2g-1}\cdots b_1)^{t_2}b_{2g-1}\cdots b_2 (b_{2g}\cdots b_2)^{t_0-1}b_1}\\
                    \overline{W}&=&\overline{(b_{2g-1}\cdots b_1)^{t_2}b_{2g-1}\cdots b_2(b_{2g}\cdots b_2)^{t_0}b_1}
				\end{array}\right.~(t_0, t_2\geq 1).
			\end{equation}   
            Note that $|\overline{W}|=|\overline{CD}|$ and $\overline{W}$ is cyclically irreducible by Lemma \ref{lem types of W}(7). Then, by Lemma \ref{lem from 2-nd power to n-th power},  for every $k\geq 2$,
            \begin{equation}\label{item(a) 2 for |LLFR|=2g}
                \nf(x^k)=\overline{YACW^{k-2}DBY^{-1}}.
            \end{equation} 
            \end{enumerate}  

Now we obtain Proposition \ref{prop classification of LLFR of X^2}(\ref{XX reducible}a) by concluding Eqs. (\ref{item(a) 1 for |LLFR|=2g})(\ref{item(a) 2 for |LLFR|=2g}) and their symmetric cases. The proof of Proposition \ref{prop classification of LLFR of X^2}(\ref{XX reducible}) ($|\overline{T}|=2g$) is complete by concluding all the above cases.

\subsection{\texorpdfstring{Proof of Proposition \ref{prop classification of LLFR of X^2}(\ref{XX reducible}) ($|\overline{T}|\in\{2g+1,\dots,4g-2\}$)}{Proof of Proposition \ref{prop classification of LLFR of X^2}(\ref{XX reducible}) (|overline{T}| in \{2g+1,...,4g-2\})}}\label{Appendix A2}

Apparently, the $\llfr$ $\overline{T}\subset\overline{YX^2Y^{-1}}$ containing $\overline{x_nx_1}$ is a reducible subword of type $S_{(2)}$ and $\overline{T}$ does not contain a letter appearing more than once. Therefore, we assume 
\begin{eqnarray}\label{Eq. T=bMb}
    \overline{T}&=&\overline{x_{n-r+1}\cdots x_nx_{1}\cdots x_{s}}\notag\\
    &=&\overline{b_1\cdots b_rb_{r+1}\cdots b_{r+s}}\notag\\    &\xrightarrow{S_{(2,r+s)}}&\overline{b_{2g}\underbrace{b_{2g-1}\cdots b_{r+s-2g+2}}_{M}b_{r+s-2g+1}}\notag\\
    &=&\overline{b_{2g}Mb_{k-2g+1}} \qquad(k:=r+s=|\overline{T}|\leq n)
\end{eqnarray}
for some $\overline{b_1\cdots b_{4g}}\in\RR$, where $1\leq r,s\leq 2g;~2g+1\leq k \leq 4g-2$ and $s\leq  n-r$. Note that $\overline{M}=1$ provided $k= 4g-2$. Then we have 
\begin{eqnarray}\label{Eq. Z om 4.22T}
\overline{YX^2Y^{-1}}&=& \overline{\underbrace{Yb_{r+1}\cdots b_{k}x_{s+1}\cdots x_{n-r}}_{Z_{1}}~ T~\underbrace{x_{s+1}\cdots x_{n-r} b_1\cdots b_rY^{-1}}_{Z_2}}\notag\\
&\xrightarrow{S_{(2,k)}}&
\overline{Z_1b_{2g}Mb_{k-2g+1}Z_2}\notag\\
&=:&\overline{Z},
\end{eqnarray}
where 
\begin{equation}\label{condition 1 for |LLFR|=2g+1 to 4g-2}
    x_{n-r}\neq b_{4g},b_{2g+1};\quad x_{s+1}\neq b_{k+1},b_{k-2g}  
\end{equation}
and 
$$\overline{Z_1}=\overline{Yx_1\cdots x_{n-r}}, \quad \overline{Z_2}=\overline{x_{s+1}\cdots x_nY^{-1}}$$
are both irreducible.
 
According to Lemma \ref{lem frequently used}(1), every potential reducible subword of $\overline{Z}$ of type $S_{(i)}$ must be one of the following two forms: 
\begin{eqnarray*}
\overline{\cdots x_{n-r}b_{2g}} &\subset& \overline{Z_1b_{2g}},\\
 \overline{b_{k-2g+1}x_{s+1}\cdots}&\subset& \overline{ b_{k-2g+1}Z_2}.
 \end{eqnarray*}
 Therefore, we only need to consider that $\overline{Z_1b_{2g}}$ and $\overline{b_{k-2g+1}Z_2}$  belong to which items of Lemma \ref{lem for Xw types} respectively. To do this, we use a pair $(i,j) $ to denote each combination:
$$(i,j) ~\mbox{means} ~\overline{Z_1b_{2g}}~ \mbox{and} ~\overline{b_{k-2g+1}Z_2} ~\mbox{satisfying items} ~(i) ~\mbox{and} ~(j)~ \mbox{of Lemma \ref{lem for Xw types}}, \mbox{respectively},$$
where $2\leq i,j\leq 5$. (Notice that $\overline{Z}$ is freely reduced. It does not contain a subword of type $S_{(1)}$, and thus item (1) of Lemma \ref{lem for Xw types} is excluded). Then, we have 16 combinations to examine. Fortunately, because of symmetry, it suffices to assume $i\leq  j$ and hence we only have the following 10 items to consider. 
For the convenience of subsequent discussions, we denote $\overline{YX}$ and $\overline{XY^{-1}}$ as follows. See Figure \ref{fig: D,E,X,Y} for a direct impression.
\begin{align*}
   \overline{D}&:=\overline{d_1\cdots d_{m+n}}=\overline{y_1\cdots y_mx_1\cdots x_n}\quad=\overline{YX},\\
   \overline{E}&:=\overline{e_1\cdots e_{m+n}}=\overline{x_1\cdots x_ny_m^{-1}\cdots y_1^{-1}}=\overline{XY^{-1}}. 
\end{align*}
\begin{figure}[ht]
    \centering
\includegraphics[width=0.85\linewidth]{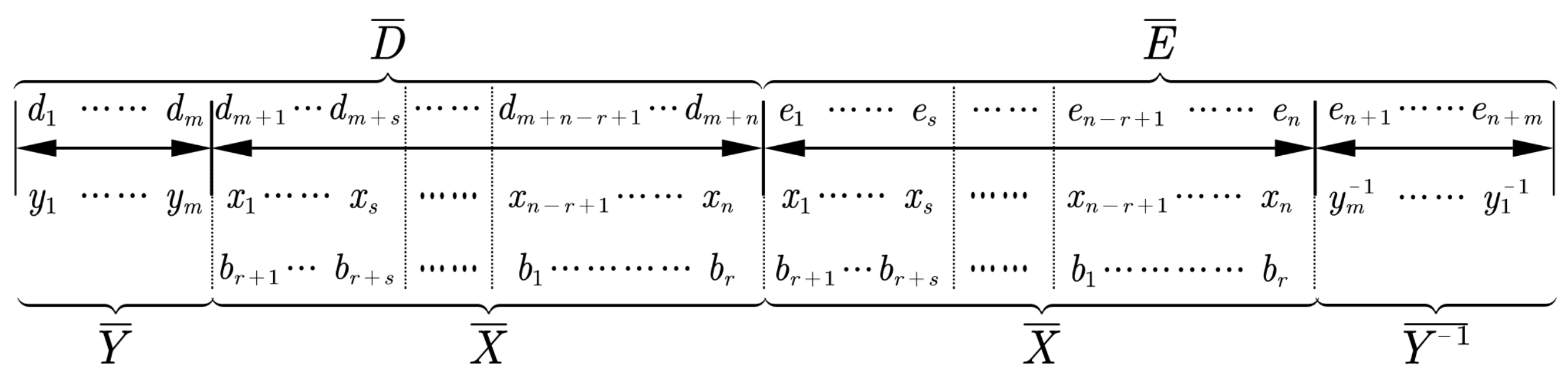}
    \caption{The correspondence among $\overline{D},\overline{E},\overline{X},\overline{Y},\overline{Y^{-1}}$ and their subscripts.}
    \label{fig: D,E,X,Y}
\end{figure}

In the following $10$ reduction cases, the integers $n_1$ and $n_2$ only depend on $j$ and $i$, respectively. See Table \ref{tab: n_1,n_2 in |LLFR| in 2g+1...4g-2}. (For instance, in the case of $(2,3)$, we have $n_1=s+(2g-1)t'+2$ and $n_2=m+n-r-2g$.)

\begin{table}[ht]
    \centering
    \begin{tabular}{|c|c|c|c|c|}
    \hline
       $j$  & $2$ &$3$ & $4$&$5$  \\
    \hline
       $n_1$ &$s+2g+1$ &$s+(2g-1)t'+2$ for $t'\geq 2$ &$s+(2g-1)t'+1$ for $t'\geq 1$ &$s+1$ \\
    \hline \hline
 $i$   & $2$ &$3$ & $4$&$5$  \\
    \hline
    
       $n_2$ &$m+n-r-2g$ &$m+n-r-(2g-1)t-1$ for $t\geq 2$ &$m+n-r-(2g-1)t$ for $t\geq 1$ & $m+n-r$\\
    \hline
    \end{tabular}
    \newline
    \caption{The correspondences between $n_1$ and $j$; $n_2$ and $i$.}
    \label{tab: n_1,n_2 in |LLFR| in 2g+1...4g-2}
\end{table}

Then by Eq. (\ref{Eq. T=bMb}), Eq. (\ref{Eq. Z om 4.22T}) becomes:
%$$\overline{Z}=\overline{Z_1(b_{2g}b_{2g-1}\cdots b_{k-2g+1})Z_2}$$ becomes:

\begin{enumerate}
\item[$(2,2)$.]
\begin{eqnarray*}
\overline{YX^2Y^{-1}}&=&\overline{d_1\cdots d_{n_2}  b_{4g}b_1\cdots b_{2g-1}Tb_{k-2g+2}\cdots b_{k}b_{k+1}e_{n_1}\cdots e_{m+n}},\\
\overline{Z}&=&\overline{d_1\cdots d_{n_2}  b_{4g}b_1\cdots b_{2g-1}b_{2g}M b_{k-2g+1} b_{k-2g+2}\cdots b_{k}b_{k+1}e_{n_1}\cdots e_{m+n}}\\
&\xrightarrow{2\cdot S_{(2)}}&\overline{\underbrace{d_1\cdots d_{n_2}b_{2g-1}\cdots b_{1}}_{L_2}M\underbrace{b_{k}\cdots b_{k-2g+2}e_{n_1}\cdots e_{m+n}}_{R_2}}.
\end{eqnarray*}
	
\item[$( 2,3 )$.] 
\begin{eqnarray*}
\overline{YX^2Y^{-1}}&=&\overline{d_1\cdots d_{n_2}b_{4g}b_1 \cdots b_{2g-1} T (b_{k-2g+2}\cdots b_{k})^{t'}b_{k+1}e_{n_1}\cdots e_{m+n}},\\
\overline{Z}&=&\overline{d_1\cdots d_{n_2}b_{4g}b_1 \cdots b_{2g-1} b_{2g}M b_{k-2g+1} (b_{k-2g+2}\cdots b_{k})^{t'}b_{k+1}e_{n_1}\cdots e_{m+n}}\\
&\xrightarrow{S_{(2)}+S_{(3)}}&\overline{L_2M\underbrace{(b_{k}\cdots b_{k-2g+2})^{t'}e_{n_1}\cdots e_{m+n}}_{R_3}}.
%&=&L_2MR_3\\
\end{eqnarray*}	

\item[$( 2,4 )$.] 
\begin{eqnarray*}
\overline{YX^2Y^{-1}}&=&\overline{d_1\cdots d_{n_2}b_{4g}b_1\cdots b_{2g-1}T(b_{k-2g+2}\cdots b_{k})^{t'}e_{n_1}\cdots e_{m+n}},\\
\overline{Z}&=&\overline{d_1\cdots d_{n_2}b_{4g}b_1\cdots b_{2g-1}b_{2g}M b_{k-2g+1}(b_{k-2g+2}\cdots b_{k})^{t'}e_{n_1}\cdots e_{m+n}}\\
&\xrightarrow{S_{(2)}+S_{(4)}}&\overline{L_2M\underbrace{(b_{k}\cdots b_{k-2g+2})^{t'}b_{k-2g+1}e_{n_1}\cdots e_{m+n} }_{R_4}}.
\end{eqnarray*}

\item[$( 2,5 )$.] 
\begin{eqnarray}\label{Eq. MR5}
\overline{YX^2Y^{-1}}&=&\overline{d_1\cdots d_{n_2}b_{4g} b_1\cdots b_{2g-1}Te_{n_1}\cdots e_{m+n}},\notag\\
\overline{Z}&=&\overline{d_1\cdots d_{n_2}b_{4g} b_1\cdots b_{2g-1}b_{2g}M \underbrace{b_{k-2g+1} e_{n_1}\cdots e_{m+n}}_{R_5}}\notag\\
&\xrightarrow{S_{(2)}}&\overline{L_2MR_5}.
\end{eqnarray}

\item[$( 3,3 ) $.] 
\begin{eqnarray*}
\overline{YX^2Y^{-1}}&=&\overline{d_1\cdots d_{n_2}b_{4g}(b_1\cdots b_{2g-1})^tT (b_{k-2g+2}\cdots b_{k})^{t'}b_{k+1}e_{n_1}\cdots e_{m+n}},\\
\overline{Z}&=&\overline{d_1\cdots d_{n_2}b_{4g}(b_1\cdots b_{2g-1})^tb_{2g}M b_{k-2g+1} (b_{k-2g+2}\cdots b_{k})^{t'}b_{k+1}e_{n_1}\cdots e_{m+n}}\\
&\xrightarrow{2\cdot S_{(3)}}&\overline{\underbrace{d_1\cdots d_{n_2} (b_{2g-1}\cdots b_1)^t}_{L_3}MR_3}.
%&=&L_3MR_3
\end{eqnarray*}	

\item[$( 3,4 ) $.] 
\begin{eqnarray*}
\overline{YX^2Y^{-1}}&=&\overline{d_1\cdots d_{n_2}b_{4g}(b_{1}\cdots b_{2g-1})^tT(b_{k-2g+2} \cdots b_{k})^{t'}e_{n_1}\cdots e_{m+n}},\\
\overline{Z}&=&\overline{d_1\cdots d_{n_2}b_{4g}(b_{1}\cdots b_{2g-1})^tb_{2g}M b_{k-2g+1}(b_{k-2g+2} \cdots b_{k})^{t'}e_{n_1}\cdots e_{m+n}}\\
&\xrightarrow{S_{(3)}+S_{(4)}}&\overline{L_3MR_4}.
\end{eqnarray*}	

\item[$( 3,5 )$.] 
\begin{eqnarray*}
\overline{YX^2Y^{-1}}&=&\overline{ d_1\cdots d_{n_2} b_{4g}(b_{1}\cdots b_{2g-1})^t  Te_{n_1}\cdots e_{m+n}},\\
\overline{Z}&=&\overline{ d_1\cdots d_{n_2} b_{4g}(b_{1}\cdots b_{2g-1})^t  b_{2g}MR_5}\\
&\xrightarrow{S_{(3)}}&\overline{L_3MR_5}.
\end{eqnarray*}

\item[$( 4,4 ) $.] 
\begin{eqnarray*}
\overline{YX^2Y^{-1}}&=&\overline{d_1\cdots d_{n_2}(b_1\cdots b_{2g-1})^t   T (b_{k-2g+2}\cdots b_{k})^{t'}e_{n_1}\cdots e_{m+n}},\\
\overline{Z}&=&\overline{d_1\cdots d_{n_2}(b_1\cdots b_{2g-1})^t  b_{2g}Mb_{k-2g+1}(b_{k-2g+2}\cdots b_{k})^{t'}e_{n_1}\cdots e_{m+n}}\\
&\xrightarrow{2\cdot S_{(4)}}&\overline{\underbrace{d_1\cdots d_{n_2} b_{2g}(b_{2g-1}\cdots b_1)^t}_{L_4}MR_4}.
\end{eqnarray*}

\item[$( 4,5 ) $.] 
\begin{eqnarray*}
\overline{YX^2Y^{-1}}&=&\overline{ d_1\cdots d_{n_2} (b_{1}\cdots b_{2g-1})^t  Te_{n_1}\cdots e_{m+n}},\\
\overline{Z}&=&\overline{d_1\cdots d_{n_2}(b_{1}\cdots b_{2g-1})^t b_{2g}MR_5}\\
&\xrightarrow{S_{(4)}}&\overline{L_4MR_5}.
\end{eqnarray*}

\item[$( 5,5 ) $.] 
\begin{eqnarray*}
\overline{YX^2Y^{-1}}&=&\overline{d_1\cdots d_{n_2} Te_{n_1}\cdots e_{m+n} }~,\\
\overline{Z}&=&\overline{ \underbrace{d_1\cdots d_{n_2}b_{2g}}_{L_5}MR_5}.
\end{eqnarray*}
\end{enumerate}

Note that in the above 10 cases, by  Lemma \ref{lem for Xw types}, $L_i$, $R_i$ and $M$ are irreducible for all $2\leq i\leq 5$. Through some arguments as before, we can verify:
\begin{claim}\label{claim 2}
    $\overline{H_{i,j}}:=\overline{L_iMR_j}$ are irreducible for all $2\leq i\leq j\leq 5$.
\end{claim}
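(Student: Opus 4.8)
The plan is to prove each $\overline{H_{i,j}}=\overline{L_iMR_j}$ is irreducible by showing it contains no subword of types $S_{(2,2g+1)}$, $S_{(3,t)}$ ($t\geq 2$) or $S_{(4,1)}$, so that Lemma \ref{lem four operations} applies. First I would record the structural data produced by the $10$ reduction cases together with Lemma \ref{lem for Xw types}: the three factors $\overline{L_i},\overline{M},\overline{R_j}$ are each irreducible, the outer boundary letters satisfy the non-extension conditions of Eq. (\ref{condition 1 for |LLFR|=2g+1 to 4g-2}) (namely $d_{n_2}=x_{n-r}\neq b_{4g},b_{2g+1}$ and $e_{n_1}=x_{s+1}\neq b_{k+1},b_{k-2g}$), and hence $\overline{H_{i,j}}$ is freely reduced. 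The decisive structural observation is that the central block $\overline{b_{2g}Mb_{k-2g+1}}=\overline{b_{2g}b_{2g-1}\cdots b_{k-2g+1}}$ is a descending fractional relator of length $4g-k\in\{2,\dots,2g-1\}$, and the boundary conditions guarantee it cannot be lengthened on either side, so it is an $\llfr$ of $\overline{H_{i,j}}$.

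Next I would argue by contradiction. Suppose $\overline{H_{i,j}}$ carries a reducible subword $\overline{V}$ of one of the three types. Since $\overline{L_i},\overline{M},\overline{R_j}$ are individually irreducible, Lemma \ref{lem some observations}(6) forces $\overline{V}$ to contain one of the junction pairs $\overline{b_1b_{2g-1}}$ or $\overline{b_{k-2g+2}b_k}$ (and when $\overline{M}=1$, i.e. $k=4g-2$, the single junction lies directly between $\overline{L_i}$ and $\overline{R_j}$). The engine driving every case is the fact, already used repeatedly in Section \ref{sect cyclically irreducible words}, that every $\llfr$ sitting inside a reducible word of type $S_{(2)}$ (resp. $S_{(3)}$, $S_{(4)}$) has length $2g+1$ (resp. $\geq 2g-1$), whereas the central $\llfr$ here has length at most $2g-1$.

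I would then split into the same $(i,j)$ cases as in Lemma \ref{lem types of W for lengh =2g+1 to 4g-2}. When $i,j\in\{2,3,4\}$ the last letter of $\overline{L_i}$ is $b_1$ and the first letter of $\overline{R_j}$ is $b_k$, so any crossing $\overline{V}$ is pinned between two descending blocks and the short middle $\llfr$; a direct check (identical to Subcases (4.1)--(4.3) of Lemma \ref{lem types of W for lengh =2g+1 to 4g-2}) shows that $\overline{V}$ cannot be of type $S_{(2,2g+1)}$ or $S_{(4,1)}$, and that if it were of type $S_{(3)}$ then a repeated letter would appear, whence Lemma \ref{lem some observations}(7) would manufacture a reducible subword of the irreducible word $\overline{YX}$ or $\overline{XY^{-1}}$, a contradiction. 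When $i=5$ or $j=5$ the relevant junction does extend the central $\llfr$ of length $4g-k\leq 2g-1$; but because $d_{n_2}$ (resp. $e_{n_1}$) does not continue the relator, $\overline{V}$ can only acquire an $\llfr$ of length $\geq 2g-1$ when $k=2g+1$, and even in that threshold case the putative $S_{(3)}$-word straddling the junction would force $d_{n_2}$ or $e_{n_1}$ to equal a forbidden letter, again contradicting the irreducibility of $\overline{YX}$ or $\overline{XY^{-1}}$.

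The main obstacle will be precisely these mixed cases with $i=5$ or $j=5$ combined with $k=2g+1$, where the central $\llfr$ attains the threshold length $2g-1$ and one must rule out a genuine $S_{(3)}$-word crossing the junction; since this is exactly the local configuration analysed in Cases (3), (5) and (6) of Lemma \ref{lem types of W for lengh =2g+1 to 4g-2} (with the prefix $\overline{d_1\cdots d_{n_2}}$ and suffix $\overline{e_{n_1}\cdots e_{m+n}}$ playing the role there played by the copies of $\overline{U}$), I would import that case analysis rather than repeat it verbatim. Once no admissible $\overline{V}$ survives any case, Lemma \ref{lem four operations} yields that every $\overline{H_{i,j}}$ with $2\leq i\leq j\leq 5$ is irreducible, establishing the claim.
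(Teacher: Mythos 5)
Your skeleton coincides with the paper's: suppose $\overline{V}\subset\overline{H_{i,j}}$ is reducible of type $S_{(2,2g+1)}$, $S_{(3)}$ or $S_{(4,1)}$, use Lemma \ref{lem some observations}(6) and the irreducibility of the three factors to force $\overline{V}$ across a junction, and play the short central block against the fact that every maximal fractional-relator run inside such a $\overline{V}$ has length $\geq 2g-1$. However, two of your mechanisms fail. For $i,j\in\{2,3,4\}$ you dispose of a crossing $S_{(3)}$-word by ``repeated letter, then Lemma \ref{lem some observations}(7) manufactures a reducible subword of $\overline{YX}$ or $\overline{XY^{-1}}$''. This step is wrong: the letters of $\overline{L_i'}$, $\overline{M}$ and $\overline{R_j'}$ were \emph{created} by the $S$-reductions, so a subword of $\overline{H_{i,j}}$ meeting them is not a subword of $\overline{YX}$ or $\overline{XY^{-1}}$, and the word produced by the deletion in Lemma \ref{lem some observations}(7) lives in no word whose irreducibility you control. (The paper invokes that lemma only inside squares such as $\overline{XYXY}$, where deleting between the two occurrences of the repeated letter recovers an honest subword of $\overline{XY}$.) What actually closes these cases, and is the paper's argument, is a period mismatch: a crossing $S_{(3)}$-word at the junction $\overline{b_1b_{2g-1}}$ is forced to equal $\overline{b_{2g}(b_{2g-1}\cdots b_1)^{t}b_{4g}}$, whose descending period would have to run through $\overline{M}$ into $\overline{R_j}$, requiring $b_k=b_{k-2g+1}$; the mirror junction forces $b_1=b_{2g}$; both are false.

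Second, the cases with $j=5$ cannot be settled by importing Cases (3), (5), (6) of Lemma \ref{lem types of W for lengh =2g+1 to 4g-2}: those cases analyse reducible words that straddle the whole middle word $\overline{U}$ of a squared word (they contain pieces of $\overline{R_j'}$ \emph{and} $\overline{L_i'}$ on both sides of $\overline{U}$), whereas here $\overline{V}$ meets a single internal junction with arbitrary original letters $d_1\cdots d_{n_2}$ or $e_{n_1}\cdots e_{m+n}$ on one side. Moreover your threshold claim (``only $k=2g+1$ is dangerous'') misses admissible configurations such as $d_{n_2}=b_{2g-1}$, which Eq. (\ref{condition 1 for |LLFR|=2g+1 to 4g-2}) does not exclude; there the junction pair $\overline{d_{n_2}b_{2g}}$ is itself a fractional-relator pair whose run extends into the $d_i$'s, so the length engine alone yields nothing and a form analysis is unavoidable. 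The paper resolves all of this uniformly with Lemma \ref{lem frequently used}(1), which was proved exactly for this purpose: in the case $(5,5)$ one has $\overline{H_{5,5}}=\overline{Z}$, so every reducible subword is confined to $\overline{L_5}$ or $\overline{R_5}$, both irreducible by Lemma \ref{lem for Xw types}; in the cases $(i,5)$ with $i\neq 5$ one first deduces that $\overline{MR_5}$ is irreducible, so the only surviving candidate is $\overline{V}=\overline{b_{2g}(b_{2g-1}\cdots b_1)^{t_0}b_{4g}}$, and its period continuation forces $e_{n_1}=b_{k-2g}$, contradicting Eq. (\ref{condition 1 for |LLFR|=2g+1 to 4g-2}). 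Your high-level plan is salvageable, but these two steps must be replaced by the form-and-period analysis (equivalently, by an appeal to Lemma \ref{lem frequently used}) rather than by Lemma \ref{lem some observations}(7) and the imported square cases.
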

\begin{proof}[Proof of Claim \ref{claim 2}]
Suppose $\overline{H_{i,j}}$ contains a reducible subword $\overline{V}$ of types $S_{(2,2g+1)},S_{(3,t_0)}$ or $S_{(4,1)}$.
In the case $(5,5)$, by Lemma \ref{lem frequently used}(1), $\overline{V}$ is a subword of $\overline{L_5}$ or $\overline{R_5}$, which contradicts the irreducibility of $\overline{L_5}$ and $\overline{R_5}$. Hence, $\overline{H_{5,5}}$ is irreducible.

In the cases $i\neq 5$, we have $\overline{H_{i,j}}=\overline{L_iMR_j}$ with the  irreducible words
$$\overline{L_i}= \overline{\cdots b_{2g-1}\cdots b_{1}}, \quad \overline{M}= \overline{ b_{2g-1}\cdots b_{k-2g+2}}.$$
Now we have three cases.
    \begin{enumerate}
    \item  If $j=5$, by Eq. (\ref{Eq. MR5}),
            $$ \overline{MR_5}=\overline{b_{2g-1}\cdots b_{k-2g+1}e_{n_1}\cdots e_{m+n}}\subset\overline{Z}.$$
Therefore, if there  is a reducible subword $\overline{V'}\subset\overline{MR_5}$ of type $S_{(i)}$, then $\overline{V'}$ is also a reducible subword of $\overline{Z}$ of type $S_{(i)}$, and hence has a form $\overline{\cdots d_{m+n-r}b_{2g}}(\nsubseteq \overline{MR_5})$ or $\overline{b_{k-2g+1}e_{s+1}\cdots}(\subset \overline{R_5})$ by Lemma \ref{lem frequently used}(1), which contradicts the irreducibility of $\overline{R_5}$. Hence, $\overline{MR_5}$ is irreducible, and thus we have $\overline{V}=\overline{\cdots b_1b_{2g-1}\cdots}\subset\overline{L_iMR_5}$. It implies $$\overline{V}=\overline{b_{2g}(b_{2g-1}\cdots b_1)^{t_0}b_{4g}}(t_0\geq 2),$$
which leads to $\overline{MR_5}=\overline{(b_{2g-1}\cdots b_{1})^{t_1}b_{4g}\cdots}~(t_1\geq 1)$ and thus $b_{k-2g}=e_{n_1}=e_{s+1}=x_{s+1}$ contradicts Eq. (\ref{condition 1 for |LLFR|=2g+1 to 4g-2}).

    \item If $j\neq 5$ and $k\leq 4g-3$, we have $\overline{H_{i,j}}=\overline{L_iMR_j}$ with $\overline{L_i},\overline{M}$ and $\overline{R_j}$ irreducible, where
   $$\overline{R_j}= \overline{b_{k}\cdots b_{k-2g+2}\cdots}.$$
    Then, there are two possible positions for 
    $$\overline{V}\subset\overline{L_iMR_j}=\overline{\cdots b_{2g-1}\cdots b_1b_{2g-1}\cdots b_{k-2g+2}b_{k}\cdots b_{k-2g+2}\cdots}$$ as follows:
    \begin{equation*}
        \overline{V}=\left\{\begin{array}{lll}
             \overline{\cdots b_1b_{2g-1}\cdots}\\
              \overline{\cdots b_{k-2g+2}b_{k}\cdots}
        \end{array}\right. ~.
    \end{equation*}
    Therefore, we have
    \begin{equation*}
        \overline{V}=\left\{\begin{array}{l}
             \overline{b_{2g}(b_{2g-1}\cdots b_1)^{t_1}b_{4g}} \\
             \overline{b_{k+1}(b_{k}\cdots b_{k-2g+2})^{t_1}b_{k-2g+1}}
        \end{array}\right. ~,
    \end{equation*}
which lead to $b_{k}=b_{k-2g+1}$ or $b_1=b_{2g}$. It's contradictory.

\item If $j\neq 5$ and $k= 4g-2$, then $\overline{M}=1$, and hence we have $\overline{H_{i,j}}=\overline{L_iR_j}$ with $\overline{L_i}$ and $\overline{R_j}$ irreducible, where
  $$\overline{R_j}=\overline{b_{4g-2}\cdots b_{2g}\cdots}.$$
Then, we have $\overline{V}$ as follows:
    $$\overline{V}=\overline{\cdots b_1b_{4g-2}\cdots}\subset\overline{\cdots b_{2g-1}\cdots b_1b_{4g-2}\cdots b_{2g}\cdots}.$$
Note that there is no such a reducible word of types $S_{(2)},S_{(3)}$ or $S_{(4)}$ containing $\overline{b_1b_{4g-2}}$. It is contradictory.
\end{enumerate}

In conclusion, $\overline{H_{i,j}}$ contains no reducible subwords and hence is irreducible for all $2\leq i\leq j\leq 5$.  
\end{proof}

Finally, by seeing Figure \ref{fig: D,E,X,Y}, in all the above 10 cases, we can obtain $n\geq n-r+1>n_1-1$ by verifying
$$e_{n-r+1}=x_{n-r+1}=b_1\notin\overline{e_1\cdots e_{n_1-1}}.$$
Then we have $e_{n_1-1}\notin \overline{Y^{-1}}$ and thus $e_{n_1-1}=x_{n_1-1}=d_{m+n_1-1}\in\overline{X}$. Furthermore, by verifying
$$d_{m+n_1-1}=e_{n_1-1}\notin\overline{d_{n_2+1}\cdots d_{m+n}},$$
we can obtain $1\leq n_1-1\leq n_2-m<n$ and thus $e_{n_1-1}=x_{n_1-1}$ and $d_{n_2}=x_{n_2-m}$. Hence, all of these $\overline{H_{i,j}}=\overline{L_iMR_j}$ have a uniform form:
\begin{eqnarray*}
    \overline{H_{i,j}}&=&\overline{\underbrace{d_1\cdots d_{n_2}L'_i}_{L_i}M\underbrace{R'_je_{n_1}\cdots e_{m+n}}_{R_j}}\\
    &=&\overline{Yx_1\cdots x_{n_2-m}L'_iMR'_j x_{n_1}\cdots  x_nY^{-1}}\\
    &=&\overline{Yx_1\cdots x_{n_1-1}\underbrace{x_{n_1}\cdots x_{n_2-m}L'_iMR'_j}_{W_{i,j}}x_{n_1}\cdots  x_nY^{-1}},
\end{eqnarray*}
where $\overline{M}=\overline{b_{2g-1}b_{2g-2}\cdots b_{k-2g+2}}$ ($\overline{M}=1$ when $k= 4g-2$), and $\overline{L'_i},\overline{R'_j}$ are of the forms corresponding to $(i,j)$ in tables in Definition \ref{defn irreducible word pair of type (i,j)}, and
 $$\overline{W_{i,j}}:=\overline{\underbrace{x_{n_1}\cdots x_{n_2-m}}_UL'_iMR'_j}=\nf(x_{n_1}\cdots x_nx_1\cdots x_{n_1-1}).$$

Note that in the case of $(i,j)$ with $2\leq i\leq j\leq 3$ and $k=4g-2$,  we have 
$$\overline{X}=\overline{b_{r+1}\cdots b_{4g-2}(b_{2g}\cdots b_{4g-2})^{t'}b_{4g-1}\underbrace{x_{n_1}\cdots x_{n_2-m}}_Ub_{4g}(b_1\cdots b_{2g-1})^t b_1\cdots b_r },$$
where $t,t'\geq 1$, and $\overline{U}\neq 1$. (Indeed, if $\overline{U}=1$, then $\overline{X}$ contains a reducible subword $\overline{b_{2g}\cdots b_{4g-2}b_{4g-1}b_{4g}}$, which contradicts the irreducibility of $\overline{X}$.) Therefore, $\overline{U}$  satisfies the special condition ``$\overline{U}\neq 1$  when $k=4g-2$ and $ 2\leq i\leq j\leq 3$'' in Definition \ref{defn irreducible word pair of type (i,j)}.
Furthermore, in each case $(i,j)$($2\leq i\leq j\leq 5$), the irreducible word $\overline{W_{i,j}}$ is of type $\overline{U}(i,j)$ (see Definition \ref{defn irreducible word pair of type (i,j)}), and $$\overline{W_{i,j}\underbrace{x_{n_1}\cdots x_{n_2-m}}_U}\subset\overline{H_{i,j}}$$ is also irreducible. Therefore, by Lemma \ref{lem types of W for lengh =2g+1 to 4g-2}, $\overline{W_{i,j}^2}$ is irreducible for all cases. Now by Lemma \ref{lem from 2-nd power to n-th power}, for any $k\geq 1$, we obtain a uniform expression
 $$\nf(x^k)=\overline{Yx_1\cdots x_{n_1-1}W^{k-1}x_{n_1}\cdots x_nY^{-1}},$$
where $\overline{W}=\overline{W_{i,j}}=\nf(x_{n_1}\cdots x_nx_1\cdots x_{n_1-1})$.
The proof of Proposition \ref{prop classification of LLFR of X^2}(\ref{XX reducible}) ($|\overline{T}|\in\{2g+1,\dots,4g-2\}$) is complete.
% \end{proof}

\subsection{\texorpdfstring{Proof of Proposition \ref{prop classification of LLFR of X^2}(\ref{XX reducible}) ($|\overline{T}|=4g-1$)}{Proof of Proposition \ref{prop classification of LLFR of X^2}(\ref{XX reducible}) (|overline{T}| = 4g-1)}} \label{Appendix A3}
Since this proof is quite lengthy, we first present an outline of the proof for the sake of readability.
Recall that $x$ is a nontrivial element of the surface group $G$ and $$\nf(x)=\overline{YXY^{-1}}$$ with $\overline{X}=\overline{x_1\cdots x_n}(n\geq 1)$ cyclically freely reduced and $\overline{Y}=\overline{y_1\cdots y_m}(m\geq 0)$ possibly empty.

\subsubsection{Outline of the proof.}
Our target is to obtain the normal form $\nf(x^k)$.
\begin{enumerate}
    \item We first obtain the normal form $\nf(X^k)$ by the following steps.
    \begin{enumerate}
        \item Reduce $\overline{X^2}$ by the maximum times (denoted $t$) $S_{(2,4g-1)}$, then after choosing one of two symmetric cases, we obtain Eq. (\ref{Eq X with maximal t for LLFR=4g-1}):
        $$\overline{X}=\overline{b_1(b_2\cdots b_{2g})^tx_{n_5}\cdots x_{n_6}(b_{2g+1}\cdots b_{4g})^t}$$
where $\overline{X_0}:=\overline{b_1x_{n_5}\cdots x_{n_6}}$ is irreducible and cyclically freely reduced. Moreover, the $\llfr$ of $\overline{X_0^2}$ containing $\overline{x_{n_6}b_1}$ has length $\leq 4g-2$ or $\overline{x_{n_6}b_1}$ is not a fractional relator.
    
    \item If $\overline{X_0^2}$ is irreducible, then we obtain $\nf(X^k)~(k\geq 1)$ in Claim \ref{claim 3}.
    
    \item\label{outline of LLFR=4g-1, reducible and of type A} If $\overline{X_0^2}$ is reducible and $\overline{X_0}$ is of type $\mathfrak{A}$ (see Definition \ref{defn word of types ABC}), then we obtain $\nf(X^k)~(k\geq 2)$ in Claim \ref{claim 4}.
    
    \item\label{outline of LLFR=4g-1, reducible and not of type A} If $\overline{X_0^2}$ is reducible but $\overline{X_0}$ is not of type $\mathfrak{A}$, then for every $k\geq 1$ and some $n_5-1\leq i\leq n_6$, we obtain Eq. (\ref{Eq nf of X_0^k for general case before Claim 5}):
    $$\nf(X_0^k)=\overline{b_1x_{n_5}\cdots x_i W^{k-1}x_{i+1}\cdots x_{n_6}}.$$ Furthermore, we obtain $\nf(X^k)~(k\geq 2)$ in Claim \ref{claim 5} through a discussion by classification:
    \begin{enumerate}
        \item[Case](\hyperref[Case 1 for LLFR=4g-1]{1}) If $\overline{x_{n_5}\cdots x_i}=1$, then we obtain $\nf(X^k)$ in Eq. (\ref{NF of X^k LLFR=4g-1 Case 1}).
        \item[Case](\hyperref[Case 2 for LLFR=4g-1]{2}) If $\overline{x_{n_5}\cdots x_i}\neq1$, then we have two subcases:
        \begin{enumerate}
            \item[Subcase](\hyperref[Subcase 2.1 for LLFR=4g-1]{2.1}) If $\overline{x_{n_6}b_1}$ is not a fractional relator, then we obtain $\nf(X^k)$ in Eq. (\ref{NF of X^k LLFR=4g-1 Subcase 2.1}).
            \item[Subcase](\hyperref[Subcase 2.2 for LLFR=4g-1]{2.2}) If the $\llfr$ of $\overline{X_0^2}$ containing $\overline{x_{n_6}b_1}$ exists, then we obtain $\nf(X^k)$ in Eq. (\ref{NF of X^k LLFR=4g-1 Subcase 2.2}).
        \end{enumerate}
    \end{enumerate}
    \item By concluding Claim \ref{claim 3}, \ref{claim 4} and \ref{claim 5}, we obtain $\nf(X^k)~(k\geq 2)$.
    \end{enumerate}
     
    \item Finally, we obtain $\nf(x^k)=\overline{Y\nf(X^k)Y^{-1}}$, and prove Proposition \ref{prop classification of LLFR of X^2}(\ref{XX reducible}b, \ref{XX reducible}c) by concluding the above item (\ref{outline of LLFR=4g-1, reducible and of type A}) and its symmetric case respectively. 
\end{enumerate}

\subsubsection{Proof of Proposition \ref{prop classification of LLFR of X^2}(\ref{XX reducible}) ($|\overline{T}|=4g-1$)} 

Since $\overline{T}$ does not contain a letter appearing more than once, we have 
\begin{eqnarray*}
    \overline{T}&=&\overline{x_{n-r+1}\cdots x_nx_{1}\cdots x_{4g-r-1}}\\
    &=&\overline{b_{4g-r+1}\cdots b_{4g}b_{1}\cdots b_{4g-r-1}}
\end{eqnarray*}
for some $\overline{b_1\cdots b_{4g}}\in\RR$, where $1\leq r,4g-r-1\leq 2g$ and $4g-r-1< n-r+1$. Thus, $r=2g-1$ or $2g$, and we have two possible cases for the irreducible word $\overline{X}$:
\begin{equation*}
   \overline{X}=\left\{\begin{array}{ll}\overline{b_1\cdots b_{2g}x_{2g+1}\cdots x_{n-2g+1}b_{2g+2}\cdots b_{4g}} ~~(b_1\prec b_{2g})& \mbox{ when }r=2g-1 \\ \overline{b_1\cdots b_{2g-1}x_{2g}\cdots x_{n-2g}b_{2g+1}\cdots b_{4g}}~~(b_{2g+1}\prec b_{4g})& \mbox{ when }r=2g\end{array}\right.~. \end{equation*}
Because of symmetry, we assume $r=2g-1$ and hence
\begin{eqnarray}
\overline{X}&=&\overline{b_1\cdots b_{2g}x_{n_1}\cdots x_{n_2}b_{2g+2}\cdots b_{4g}},\label{Eq. A3 X}\\
\overline{T}&=&\overline{b_{2g+2}\cdots b_{4g}b_1\cdots b_{2g}}\xrightarrow{S_{(2,4g-1)}}b_1,\notag\\
  \overline{X^2}&=&\overline{x_1\cdots x_{n_2}Tx_{n_1}\cdots x_n}\notag\\
        &\xrightarrow{S_{(2,4g-1)}}&\overline{x_1\cdots x_{n_2}b_1x_{n_1}\cdots x_n}\notag\\
        &=:&\overline{H},
\end{eqnarray}
where   $n_1=2g+1$,  $n_2=n-2g+1$ (note that $n_1\leq n_2+1$ and $\overline{x_{n_1}\cdots x_{n_2}}=1$ provided $n_1= n_2+1$) and
    \begin{equation}\label{condition 1 |LLFR|=4g-1}
        b_1\prec b_{2g};\quad x_{n_2}\neq b_{2g+1},b_{2};\quad x_{n_1}\neq b_{2g+1},b_{4g}.
    \end{equation}

In the following, for every $k\geq 2$, we will first obtain the normal form $\nf(X^k)$ and then obtain $\nf(x^k)=\nf(YX^kY^{-1})$ by showing 
$$\nf(YX^kY^{-1})=\overline{Y\nf(X^k)Y^{-1}}.$$
	
     Note that $\overline{H}$ may still contain a reducible subword $\overline{T'}$ of type $S_{(2,4g-1)}$. If so, $\overline{T'}$ is of the following form
\begin{eqnarray}\label{Eq. A3 T'}
\overline{T'}&=&\overline{x_{n_2-r'+1}\cdots x_{n_2}b_1x_{n_1}\cdots x_{n_1+4g-r'-3}}\\
&=&\overline{b_{4g-r'+1}\cdots b_{4g}b_1b_2\cdots b_{4g-r'-1}},\notag
\end{eqnarray}
 where $0\leq r',4g-r'-2\leq 2g$. Thus, $2g-2\leq r'\leq 2g$. By combining Eqs. (\ref{Eq. A3 X})(\ref{Eq. A3 T'}), we shall show $$r'=4g-r'-2=2g-1.$$ Indeed, 
 \begin{enumerate}
     \item if $r'=2g$, then
     $$\overline{x_{n-4g+2}\cdots x_n}=\overline{b_{2g+1}(b_{2g+2}\cdots b_{4g})^2}$$ is a reducible subword of $\overline{X}$ because $b_{2g+1}=b_{1}^{-1}\succ b_{2g}^{-1}=b_{4g}$ (see Eq. (\ref{condition 1 |LLFR|=4g-1})), which contradicts the irreducibility of $\overline{X}$;
     \item if $r'=2g-2$, then $$\overline{x_1\cdots x_{4g}}=\overline{b_1(b_2\cdots b_{2g})^2b_{2g+1}}$$ is a reducible subword of $\overline{X}$, which is also contradictory.
 \end{enumerate}
 Therefore, $r'=4g-r'-2=2g-1$ and Eqs. (\ref{Eq. A3 T'})(\ref{Eq. A3 X}) become:
 \begin{eqnarray*}
     \overline{T'}&=&\overline{x_{n_2-2g+2}\cdots x_{n_2}b_1x_{n_1}\cdots x_{n_1+2g-2}}\\&=&\overline{b_{2g+2}\cdots b_{4g}b_1b_2\cdots b_{2g}},\\
     \overline{X}&=&\overline{b_1(b_2\cdots b_{2g})^2x_{n_3}\cdots x_{n_4}(b_{2g+2}\cdots b_{4g})^2},
 \end{eqnarray*}
where $n_3\leq n_4+1$. Similarly, after $t\geq 1$ times this process (with $t$ maximal), we have
\begin{eqnarray}\label{Eq X with maximal t for LLFR=4g-1}
\overline{X}&=&\overline{\underbrace{b_1(b_2\cdots b_{2g})^t}_A\underbrace{x_{n_5}\cdots x_{n_6}}_B\underbrace{(b_{2g+2}\cdots b_{4g})^t}_D},\label{Eq. A3 Xt}
\end{eqnarray}
where $n_5\leq n_6+1; ~t\geq 1$ and
\begin{equation}\label{|LLFR|=4g-1, condition for n_5,n_6}
   b_1\prec b_{2g};\quad x_{n_5}\neq b_{2g+1},b_{4g}; \quad x_{n_6}\neq b_{2g+1},b_{2}.
\end{equation}
Note that 
\begin{eqnarray}\label{DA=b1}
\overline{DA}&=&\overline{x_{n_6+1}\cdots x_nx_1\cdots x_{n_5-1}}\notag\\
&=&\overline{(b_{2g+2}\cdots b_{4g})^tb_1(b_2\cdots b_{2g})^t}\notag\\
&\xrightarrow{t\cdot S_{(2,4g-1)}}&\overline{b_1}, 
\end{eqnarray}
then we have
\begin{eqnarray}
	\overline{X^2}&=&\overline{AB(DA)BD}\notag\\
    &\xrightarrow{t\cdot S_{(2,4g-1)}}&\overline{ABb_1BD} =:\overline{Z}.\label{Eq. A3 Z}
\end{eqnarray}

After analyzing every item of Lemma \ref{lem for Xw types}, we can obtain 
\begin{eqnarray}\label{Eq. A.3 X0}
   \overline{X_0}:=\overline{b_1B}=\overline{b_1x_{n_5}\cdots x_{n_6}}=\nf(x_{n_6+1}\cdots x_nx_1\cdots x_{n_6})
\end{eqnarray}
is irreducible. (For convenience, we only take one item as an example: if $\overline{X_0}$ contains a reducible subword of type $S_{(4,1)}$, then $\overline{X_0}=\overline{b_1(b_2\cdots b_{2g})x_{n_{5}+2g-1}\cdots x_{n_6}}$ where $b_1\succ b_{2g}$. This contradicts Eq. (\ref{|LLFR|=4g-1, condition for n_5,n_6}).) Moreover, $\overline{X_0}$ is cyclically freely reduced because $x_{n_6}\neq b_{2g+1}=b_{1}^{-1}$, and the length of the $\llfr$ $\overline{F}\subset\overline{X_0^2}$ containing $\overline{x_{n_6}b_1}$ (if $\overline{F}$ exists) is not $4g-1$ (otherwise, $\overline{X^2}$ can be reduced by $S_{(2,4g-1)}$ $(t+1)$ times).

If $n_5=n_6+1$, then $\overline{B}=\overline{x_{n_5}\cdots x_{n_6}}=1$, $\overline{X}= \overline{AD}$, and hence for any $k\geq 1$,
$$\nf(X^k)=\overline{Ab_1^{k-1}D}.$$%=\overline{b_{1}(b_2\cdots b_{2g})^tb_1^{k-1}(b_{2g+2}\cdots b_{4g})^t}.$$
Therefore, we suppose $n_5\leq n_6$, i.e., $|\overline{X_0}|\geq 2$ in the following.
\begin{claim}\label{claim 3}
    If $\overline{X_0^2}$ is irreducible, then $\overline{Z}$ in Eq. (\ref{Eq. A3 Z}) is irreducible, and for any $k\geq 1$, we have
\begin{eqnarray*}
\nf(X^k)=\overline{ABX_0^{k-1}D}=\overline{x_1\cdots x_{n_6}X_0^{k-1}x_{n_6+1}\cdots x_n},
\end{eqnarray*}
    where $\overline{X_0}=\nf(x_{n_6+1}\cdots x_n x_1\cdots x_{n_6}).$

\end{claim}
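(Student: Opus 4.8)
```latex
The plan is to prove Claim \ref{claim 3} by establishing that $\overline{Z}=\overline{ABb_1BD}$ is irreducible, and then applying Lemma \ref{lem from 2-nd power to n-th power} to boost the result from $k=2$ to all $k\geq 1$. First I would observe that $\overline{Z}$ is freely reduced: this follows from Eq. (\ref{|LLFR|=4g-1, condition for n_5,n_6}) together with the fact that $\overline{AB}=\overline{x_1\cdots x_{n_6}}\subset\overline{X}$ and $\overline{BD}=\overline{x_{n_5}\cdots x_n}\subset\overline{X}$ are both irreducible subwords of the irreducible word $\overline{X}$. Since $\overline{X_0}=\overline{b_1B}$ is assumed cyclically irreducible (i.e. $\overline{X_0^2}$ is irreducible), the central portion $\overline{b_1B}=\overline{X_0}$ is also irreducible. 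Thus $\overline{Z}$ is a concatenation of irreducible pieces, and by Lemma \ref{lem some observations}(6) any reducible subword of $\overline{Z}$ must straddle one of the junctions $\overline{x_{n_6}b_1}$ (between $\overline{AB}$ and $b_1$) or $\overline{b_1x_{n_5}}$ (between $b_1$ and $\overline{BD}$).

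The heart of the argument is to rule out a reducible subword $\overline{V}$ of type $S_{(2,2g+1)}, S_{(3,t_0)}$ or $S_{(4,1)}$ crossing either junction. I would treat the two junctions by the assumptions carried from the construction of $t$ being maximal. Recall that the $\llfr$ $\overline{F}\subset\overline{X_0^2}$ containing $\overline{x_{n_6}b_1}$ has length $\neq 4g-1$ (by maximality of $t$), and since $\overline{X_0^2}$ is irreducible, no reducible subword crosses the junction $\overline{x_{n_6}b_1}$ inside $\overline{X_0^2}=\overline{b_1Bb_1B}$. The key point is that a reducible subword of $\overline{Z}=\overline{ABb_1BD}$ crossing $\overline{x_{n_6}b_1}$ would, after translating through the irreducibility of $\overline{X_0^2}$, force a reducible subword of $\overline{X}$ or of $\overline{X_0^2}$, contradicting their irreducibility. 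Concretely, any such $\overline{V}$ crossing the right junction $\overline{b_1x_{n_5}}$ would be a subword of $\overline{b_1BD}=\overline{b_1x_{n_5}\cdots x_n}$; but $\overline{b_1BD}$ is conjugate-related to a subword already controlled, and using $x_{n_5}\neq b_{2g+1},b_{4g}$ one checks it is irreducible via Corollary \ref{very useful cor} and the irreducibility of $\overline{X_0}$.

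The main obstacle I expect is the careful case analysis at the left junction $\overline{x_{n_6}b_1}$, where the interaction between the maximality of $t$ and the possible $\llfr$ $\overline{F}$ of length $\leq 4g-2$ must be handled. Here I would invoke Lemma \ref{lem frequently used}(1), (2), (3) to constrain the shape of any putative reducible $\overline{V}$: it must contain $\overline{x_{n_6}b_1}$ and hence reduce to a fractional relator of length $\geq 2g-1$, but such a subword would propagate into $\overline{X_0^2}$ (which is irreducible by hypothesis) or contradict $x_{n_6}\neq b_{2g+1},b_2$ from Eq. (\ref{|LLFR|=4g-1, condition for n_5,n_6}). Each of the three types $S_{(2,2g+1)}, S_{(3,t_0)}, S_{(4,1)}$ is excluded by exhibiting the forbidden subword it would force inside $\overline{X}$, precisely as in the arguments following Eq. (\ref{length=0 word1}). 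Having shown $\overline{Z}$ contains no subword of types $S_{(1)}, S_{(2,2g+1)}, S_{(3,t)}(t\geq 2)$ or $S_{(4,1)}$, Lemma \ref{lem four operations} gives that $\overline{Z}=\nf(X^2)=\overline{ABX_0D}$ is irreducible.

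Finally, to pass to arbitrary $k$, I would apply Lemma \ref{lem from 2-nd power to n-th power} with the roles $\overline{A}\leftrightarrow\overline{AB}$, $\overline{D}\leftrightarrow\overline{X_0}$, $\overline{E}\leftrightarrow\overline{D}$: since both $\overline{ABX_0D}$ (the $k=2$ normal form just established) and $\overline{X_0^2}$ are irreducible, the lemma yields that $\overline{ABX_0^{k-1}D}$ is irreducible for every $k\geq 1$. As $\overline{ABX_0^{k-1}D}$ represents $X^k$ in $G$ (each $t$-fold application of $S_{(2,4g-1)}$ at the $k-1$ internal junctions collapses $\overline{DA}$ to $b_1$, reconstituting $\overline{X_0}=\overline{b_1B}$), we conclude $\nf(X^k)=\overline{ABX_0^{k-1}D}=\overline{x_1\cdots x_{n_6}X_0^{k-1}x_{n_6+1}\cdots x_n}$ with $\overline{X_0}=\nf(x_{n_6+1}\cdots x_nx_1\cdots x_{n_6})$, completing the claim.
```
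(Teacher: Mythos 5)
Your overall strategy coincides with the paper's: show $\overline{Z}=\overline{ABb_1BD}$ is irreducible by ruling out reducible subwords at the two junctions, then apply Lemma \ref{lem from 2-nd power to n-th power} (with $\overline{ABX_0D}$ and $\overline{X_0^2}$ irreducible) to obtain $\nf(X^k)=\overline{ABX_0^{k-1}D}$ for all $k\geq 1$. That boosting step, and your observation that $\overline{ABX_0^{k-1}D}$ represents $X^k$ because $DA=b_1$ in $G$, are correct and match the paper.

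However, there is a genuine gap in your case analysis. You assert that any reducible subword $\overline{V}$ crossing the right junction $\overline{b_1x_{n_5}}$ ``would be a subword of $\overline{b_1BD}$.'' That is false: $\overline{V}$ may start in $\overline{A}$ (or in the first copy of $\overline{B}$) and end in the second copy of $\overline{B}$ or in $\overline{D}$, i.e.\ it may span both junctions, and such a word is contained neither in $\overline{b_1BD}$ nor in $\overline{X_0^2}$, so neither your containment claim nor your ``propagate into $\overline{X_0^2}$'' remark covers it. This spanning configuration is exactly position (2) in the paper's proof and requires its own mechanism: such a $\overline{V}=\overline{\cdots b_{2g}Bb_1x_{n_5}\cdots}$ contains the letter $x_{n_5}$ twice, so it cannot be of type $S_{(2,2g+1)}$ or $S_{(4,1)}$ (these are fractional relators, with no repeated letters), hence it must be of type $S_{(3,t')}$; then Lemma \ref{lem some observations}(7) allows one to delete the segment $\overline{Bb_1}$ between the two occurrences of $x_{n_5}$, producing a word $\overline{V'}=\overline{\cdots b_{2g}x_{n_5}\cdots}$ that is still reducible --- but $\overline{V'}$ is a subword of $\overline{X}=\overline{ABD}$, contradicting the irreducibility of $\overline{X}$. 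Nothing in your proposal supplies this repetition-plus-deletion argument. A smaller point: your appeal to Corollary \ref{very useful cor} for the irreducibility of $\overline{b_1BD}$ does not apply as stated; the reducible subwords of $\overline{b_1BD}$ starting at $b_1$ and ending in $\overline{D}$ are instead excluded by the same direct type analysis you sketch at the left junction (each type forces $x_{n_5}=b_{4g}$, contradicting Eq.~(\ref{|LLFR|=4g-1, condition for n_5,n_6})), which is the symmetric twin of the paper's first position.
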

 \begin{proof}[Proof of Claim \ref{claim 3}]
Suppose there is a reducible subword $$\overline{V}\subset \overline{Z}=\overline{ABb_1BD}$$
of types $S_{(2,2g+1)}$, $S_{(3,t')}~(t'\geq 2)$ or $S{(4,1)}$.  
Note that $\overline{AB},\overline{Bb_1},\overline{b_1B},\overline{Bb_1B}$ and $ \overline{BD}$ are subwords of  the irreducible words $\overline{X}$ or $\overline{X_0^2}=\overline{b_1Bb_1B}$, they are all irreducible. Hence $\overline{V}$ has the following possible positions.
\begin{enumerate}
	\item $\overline{V}$ starts in $\overline{A}$ and ends in $\overline{b_1}$, that is, $$\overline{V}=\overline{\cdots b_{2g}Bb_1}=\overline{\cdots b_{2g}x_{n_5}\cdots x_{n_6}b_1},$$
where  $x_{n_6}\neq b_2$ by Eq. (\ref{|LLFR|=4g-1, condition for n_5,n_6}).
If $\overline{V}$ is of type $S_{(2,2g+1)}$, $S_{(4,1)}$ or $S_{(3,t')}$, then it is of the following forms respectively:
    $$\overline{b_{2g+1}b_{2g}b_{2g-1}\cdots b_2b_1}, \quad\overline{b_{2g}b_{2g-1}\cdots b_{2}b_1}, \quad \overline{b_{2g+1}(b_{2g}\cdots b_2)^{t'}b_1}.$$
It implies $x_{n_6}= b_2$, a contradiction. By symmetry, the case when $\overline{V}$ starts in $\overline{b_1}$ and ends in $\overline{D}$ can be proved impossible in a similar manner. 
	
	\item $\overline{V}$ starts in $\overline{A}$ and ends in $\overline{BD}$, that is, $$\overline{V}=\overline{\cdots b_{2g}Bb_1x_{n_5}\cdots}=\overline{\cdots b_{2g}x_{n_5}\cdots x_{n_6}b_1x_{n_5}\cdots}.$$ 
    Since $x_{n_5}$ appears more than once in $\overline{V}$,  we have $\overline{V}$ is not of type $S_{(2,2g+1)}$ or $S_{(4,1)}$. Thus, $\overline{V}$ is of type $S_{(3,t')}(t'\geq 2)$. Let $\overline{V'}$ be the word obtained by deleting $\overline{Bb_1}$ from $\overline{V}$. Then by Lemma \ref{lem some observations}(7), $\overline{V'}$ is again reducible, which contradicts that $\overline{V'}=\overline{\cdots b_{2g}x_{n_5}\cdots}\subset  \overline{X}$ is irreducible.  By symmetry, the case when $\overline{V}$ starts in $\overline{AB}$ and ends in  $\overline{D}$ can also be proven impossible in a similar manner. 
\end{enumerate}
Therefore, $\overline{Z}=\overline{ABX_0D}$ is irreducible. Furthermore, since $\overline{X_0^2}$ is irreducible, by Lemma \ref{lem from 2-nd power to n-th power}, we obtain
	\begin{eqnarray*}
\nf(X^k)&=&\overline{ABX_0^{k-1}D}\\%&=&\overline{b_1(b_2\cdots b_{2g})^tx_{n_5}\cdots x_{n_6}(b_1x_{n_5}\cdots x_{n_6})^{k-1}(b_{2g+2}\cdots b_{4g})^t}\\
		&=&\overline{x_1\cdots x_{n_6}X_0^{k-1}x_{n_6+1}\cdots x_n}.
	\end{eqnarray*}
  for any $k\geq 1$.  \end{proof}

We now suppose $\overline{X_0^2}$ is reducible. 

\begin{claim}\label{claim 4}
   If $\overline{X_0^2}$ is reducible and $\overline{X_0}$ is of type $\mathfrak{A}$ (see Definition \ref{defn word of types ABC}), 
   then $\overline{X}$ is of type $\mathfrak{B}$, and for any $k\geq 2$,
\begin{eqnarray*}
\nf(X^k)&=&\overline{Ax_{n_5}\cdots x_i (x_1'\cdots x_{j}'W^{k-2} x_{j+1}'\cdots x_{n'}') x_{i+1}\cdots x_{n_6}D}\\
&=&\overline{x_{1}\cdots x_i (x_1'\cdots x_{j}'W^{k-2} x_{j+1}'\cdots x_{n'}') x_{i+1}\cdots x_{n}},
\end{eqnarray*}
where $\overline{x_1'\cdots x_{n'}'}=\nf(x_{i+1}\cdots x_nx_1\cdots x_i)$ and $\overline{W}=\nf(x_{j+1}'\cdots x_{n'}'x_1'\cdots x_{j}')$ with $|\overline{W}|=n'$.
\end{claim}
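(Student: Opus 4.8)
My plan is to dispose of the structural assertion first, and then to obtain the normal form not by reducing $\overline{X^k}$ from scratch but by exploiting that the cyclic reduction $\overline{X_0}$ of $\overline{X}$ is already a type $\mathfrak{A}$ word to which the \emph{already-completed} case $|\overline{T}|=2g$ applies. The conceptual core is the conjugacy relation $X_0=DXD^{-1}$, which lets me transport $\nf(X_0^k)$ back to $\nf(X^k)$.

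\textbf{That $\overline{X}$ is of type $\mathfrak{B}$.} This is immediate. Equation (\ref{Eq X with maximal t for LLFR=4g-1}) already exhibits $\overline{X}=\overline{b_1(b_2\cdots b_{2g})^tx_{n_5}\cdots x_{n_6}(b_{2g+2}\cdots b_{4g})^t}$, Equation (\ref{|LLFR|=4g-1, condition for n_5,n_6}) gives $b_1\prec b_{2g}$, $x_{n_5}\neq b_{4g}$ and $x_{n_6}\neq b_2$, and the hypothesis asserts that $\overline{X_0}=\overline{b_1x_{n_5}\cdots x_{n_6}}$ is of type $\mathfrak{A}$. These are precisely the requirements of Definition \ref{defn word of types ABC}(2), so $\overline{X}$ is of type $\mathfrak{B}$.

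\textbf{Reducing $X^k$ to $X_0^k$.} By Equation (\ref{DA=b1}) the word $\overline{DA}$ reduces to $\overline{b_1}$, so as group elements $DXD^{-1}=D(ABD)D^{-1}=(DA)B=b_1B=X_0$, whence $X^k=D^{-1}X_0^kD$ for every $k$. Now $\overline{X_0}$ is irreducible (Equation (\ref{Eq. A.3 X0})), cyclically freely reduced, of type $\mathfrak{A}$, and has $\overline{X_0^2}$ reducible; hence the already-established case $|\overline{T}|=2g$ of Proposition \ref{prop classification of LLFR of X^2}(\ref{XX reducible}a) (Appendix \ref{Appendix A1}, Equations (\ref{item(a) 1 for |LLFR|=2g})(\ref{item(a) 2 for |LLFR|=2g})) applies to $\overline{X_0}$ with empty $\overline{Y}$. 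Since Appendix \ref{Appendix A1} is logically independent of the present argument, this is not circular. It yields, for $k\geq 2$,
$$\nf(X_0^k)=\overline{b_1\,x_{n_5}\cdots x_i(x_1'\cdots x_j'W^{k-2}x_{j+1}'\cdots x_{n'}')x_{i+1}\cdots x_{n_6}}=:\overline{b_1M_k},$$
with $\overline{W}$ cyclically irreducible and the indices as in the statement.

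\textbf{Lifting the normal form, and the main obstacle.} To transport this to $X^k$, I would use the relator identity of Lemma \ref{lem some observations}(5) with $b_{2g+1}=b_1^{-1}$, namely $b_1(b_2\cdots b_{2g})^tb_1^{-1}=(b_{2g}\cdots b_2)^t=D^{-1}$ as elements, i.e. $A=D^{-1}b_1$. Combining with $X_0^k=b_1M_k$ gives $X^k=D^{-1}X_0^kD=D^{-1}b_1M_kD=AM_kD$, so that $\overline{AM_kD}=\overline{A\,x_{n_5}\cdots x_i(x_1'\cdots x_j'W^{k-2}x_{j+1}'\cdots x_{n'}')x_{i+1}\cdots x_{n_6}D}$ is a word representing $X^k$, of exactly the shape asserted once one recalls $\overline{A}=\overline{x_1\cdots x_{n_5-1}}$ and $\overline{D}=\overline{x_{n_6+1}\cdots x_n}$ and rewrites in the global indices. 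The remaining, and genuinely technical, task is to prove $\overline{AM_kD}$ irreducible; once this is done, it equals $\nf(X^k)$ by uniqueness and matches the general form (\ref{word of prop 2g}). I expect this to be the main obstacle. The plan mirrors Claim \ref{claim 3}: since $\overline{b_1M_k}=\nf(X_0^k)$, $\overline{AB}=\overline{x_1\cdots x_{n_6}}$ and $\overline{BD}=\overline{x_{n_5}\cdots x_n}$ are all irreducible and $\overline{AM_kD}$ is freely reduced, Lemma \ref{lem frequently used} confines any reducible subword to the junction $\overline{b_{2g}}\,x_{n_5}$ ending $\overline{A}$ or the junction $x_{n_6}\,\overline{b_{2g+2}}$ beginning $\overline{D}$, and each case is ruled out by a contradiction using the boundary constraints of Equation (\ref{|LLFR|=4g-1, condition for n_5,n_6}), Corollary \ref{very useful cor}, and the maximality of $t$ (which forbids any fresh $S_{(2,4g-1)}$-subword). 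The delicate point, absent from Claim \ref{claim 3}, is that the interior now carries the block $W^{k-2}$: one must additionally verify that no $\llfr$ spanning the prepended $(b_2\cdots b_{2g})^t$ or the appended $D$ block interacts with $\overline{M_k}$, which I would handle via the irreducibility of $\overline{b_1M_k}$ together with Lemma \ref{lem from 2-nd power to n-th power} applied to the cyclically irreducible $\overline{W}$.
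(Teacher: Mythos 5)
Your overall architecture coincides with the paper's own proof, so there is no structural error and no circularity: the type-$\mathfrak{B}$ assertion is immediate exactly as you say; your identity $X^k=D^{-1}X_0^kD=AM_kD$ is just a repackaging of the group-element identities $DA=b_1$ and $B_0b_1A_0'=C_0D_0$ that the paper uses via Eq. (\ref{DA=b1}); both you and the paper feed the type-$\mathfrak{A}$ word $\overline{X_0}$ of Eq. (\ref{Eq. A.3 X0}) into the already-established $|\overline{T}|=2g$ case (Eqs. (\ref{item(a) 1 for |LLFR|=2g})(\ref{item(a) 2 for |LLFR|=2g})); and both pass from small $k$ to all $k\geq 2$ via Lemma \ref{lem types of W}(7) and Lemma \ref{lem from 2-nd power to n-th power}.

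The genuine gap is the step you yourself flag and then only sketch: the irreducibility of $\overline{AM_kD}$, which is the entire substance of the claim. Your confinement step is essentially right (though the tool is Lemma \ref{lem some observations}(6) rather than Lemma \ref{lem frequently used}, which presupposes a reduction just performed at a junction), but the toolkit you list for excluding junction-spanning forbidden subwords --- the boundary constraints of Eq. (\ref{|LLFR|=4g-1, condition for n_5,n_6}), Corollary \ref{very useful cor}, maximality of $t$, and irreducibility of $\overline{b_1M_k}$ --- is not sufficient. Concretely: those constraints allow $x_{n_5}=b_{2g-1}$ a priori, and if $\overline{M_k}$ began with $\overline{b_{2g-1}b_{2g-2}\cdots b_1}$, then $\overline{AM_kD}$ would contain $\overline{b_{2g}b_{2g-1}\cdots b_1}$, a genuine $S_{(4,1)}$ leading term because $b_{2g}\succ b_1$ by Eq. (\ref{|LLFR|=4g-1, condition for n_5,n_6}). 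Nothing you cite rules this out: $\overline{b_1M_k}$ would then begin with $\overline{b_1b_{2g-1}\cdots b_1}$, which contains no forbidden subword (its longest $\llfr$ has length only $2g-1$), so the irreducibility of $\nf(X_0^k)$ is silent here, and neither Corollary \ref{very useful cor} nor the maximality of $t$ says anything about fractional relators running in the inverse-relator direction. This configuration is in fact impossible, but seeing that requires unwinding Definition \ref{defn word of types ABC}: one must check that the type-$\mathfrak{A}$ structure of $\overline{X_0}$ can only be built on a cyclic permutation of $\overline{b_1\cdots b_{4g}}$ with the same orientation (the reversed orientation would force $x_{n_6}=b_2$, contradicting Eq. (\ref{|LLFR|=4g-1, condition for n_5,n_6})), which pins down $x_{n_5}\in\{b_2,b_{2g+3}\}$ and, more importantly, pins down the entire letter-level prefix $\overline{A_0'C_0}$ and suffix $\overline{D_0B_0}$ of $\overline{M_k}$. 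That unwinding --- writing out $\overline{A_0},\overline{B_0},\overline{C_0},\overline{D_0},\overline{W}$ explicitly as in Eqs. (\ref{LLFR=2g special word 1})(\ref{LLFR=2g special word 2}) and checking the two junctions of $\overline{E}=\overline{AA_0'C_0D_0B_0D}$ and $\overline{E'}=\overline{AA_0'C_0WD_0B_0D}$ against those expressions --- is exactly what the paper's ``direct verification'' consists of, and it is what your proposal is missing; until it is carried out for $k=2$ and $k=3$, the claimed normal form is only a candidate word representing $X^k$, not a proof.
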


\begin{proof}[Proof of Claim \ref{claim 4}]
Since $\overline{X_0}$ is of type $\mathfrak{A}$, by Eq. (\ref{Eq. A.3 X0}) and combining Eqs. (\ref{LLFR=2g special word 1})(\ref{LLFR=2g special word 2}), we have $$\overline{X_0}=\overline{b_1x_{n_5}\cdots x_{n_6}}=\overline{A_0B_0},\quad \nf(X_0^2)=\overline{A_0C_0D_0B_0},\quad \nf(X_0^3)=\overline{A_0C_0WD_0B_0},$$
where $\overline{b_{1}\cdots b_{4g}}\in\RR,~x_{n_5}\neq b_{4g},~x_{n_6}\neq b_2$ (see Eq. (\ref{|LLFR|=4g-1, condition for n_5,n_6})) and 
$$\overline{C_0D_0}=\nf(B_0A_0),\qquad \overline{W}=\nf(D_0C_0),$$
\begin{equation*}
    \left\{\begin{array}{lll}
					\overline{A_0}&=&\overline{b_{1}\underbrace{\cdots b_{2g-r_0}(b_{2-r_0}\cdots b_{2g-r_0})^{t_1}}_{A'_0} }\\
					\overline{B_0}&=&\overline{b_{2-r_0}\cdots b_{2g-1-r_0}(b_{1-r_0}\cdots b_{2g-1-r_0})^{t_2}b_{1-r_0}\cdots b_{4g}}\\
                    \overline{C_0}&=&\overline{b_{2-r_0}\cdots b_{2g-r_0}}\\
                    \overline{D_0}&=&\overline{(b_{2g-1-r_0}\cdots b_{1-r_0})^{t_2}b_{2g-1-r_0}\cdots b_{2-r_0} (b_{2g-r_0}\cdots b_{2-r_0})^{t_1}b_{1-r_0}}\\
                    \overline{W}&=&\overline{(b_{2g-1-r_0}\cdots b_{1-r_0})^{t_2}b_{2g-1-r_0}\cdots b_{2-r_0}(b_{2g-r_0}\cdots b_{2-r_0})^{t_1+1}b_{1-r_0}}
    \end{array}\right.\left(\begin{array}{c}  
        1\leq r_0\leq 2g-1, \\
        b_{1-r_0}\succ b_{2g-r_0}, \\
        t_1,t_2\geq 0
    \end{array}\right).
\end{equation*}
Note that all subscripts here are$\mod{4g}$, i.e. $b_{-i}=b_{4g-i}$, and $\overline{A_0'}=\overline{(b_{2g+3}\cdots b_{4g} b_{1})^{t_1}}$ provided $r_0=2g-1$. Then, Eq. (\ref{Eq. A3 Xt}) becomes
\begin{eqnarray*}
\overline{X}&=&\overline{\underbrace{b_1(b_2\cdots b_{2g})^t}_A\underbrace{A_0'B_0}_B\underbrace{(b_{2g+2}\cdots b_{4g})^t}_D}.
\end{eqnarray*}
Hence, $\overline{X}$ is of type $\mathfrak{B}$. 
Furthermore, let $$\overline{E}:=\overline{AA_0'C_0D_0B_0D}, \quad \overline{E'}:=\overline{AA_0'C_0WD_0B_0D}.$$ 
We can directly verify the irreducibility of $\overline{E}$ and $\overline{E'}$. Note as elements (not words) of the surface group $G$ in Eq. (\ref{symmetric presentation}), $DA=b_1,~B=A_0'B_0$ and $B_0b_1A_0'=C_0D_0$. Then in the surface group $G$, 
\begin{eqnarray*}
X^2&=&(ABD)^2=AA_0'C_0D_0B_0D=E,\\
X^3&=&(ABD)^3=ABb_1Bb_1BD=AA_0'C_0WD_0B_0D=E'.
\end{eqnarray*}
Thus, $\nf(X^2)=\overline{E}$ and $\nf(X^3)=\overline{E'}$. By Lemma \ref{lem types of W}(7), $\overline{W}$ is cyclically irreducible. 

Finally, by Lemma \ref{lem from 2-nd power to n-th power}, we obtain $\nf(X^k)(k\geq 2)$ as follows:
$$\nf(X^k)=\overline{AA_0'C_0W^{k-2}D_0B_0D},$$
where $|\overline{W}|=|\overline{C_0D_0}|$ and
\begin{equation*}
    \left\{\begin{array}{lll}
        \overline{C_0D_0} &=&\nf(B_0b_1A_0')=\nf(B_0DAA_0')  \\
         \overline{W}&=&\nf(D_0C_0)
    \end{array}\right..
\end{equation*}
The proof of Claim \ref{claim 4} is complete.
\end{proof}

We now suppose $\overline{X_0^2}$ is reducible, but the irreducible word $$\overline{X_0}=\overline{b_1B}=\overline{b_1\underbrace{x_{n_5}\cdots x_{n_6}}_B}$$ in Eq. (\ref{Eq. A.3 X0}) is not of type $\mathfrak{A}$, i.e., $\overline{X}$ is not of type $\mathfrak{B}$ (see Definition \ref{defn word of types ABC}). 

Let $\overline{F}$ be the $\llfr$ of $\overline{X_0^2}$ containing $\overline{x_{n_6}b_1}$. Then the length $|\overline{F}|\leq 4g-2$ if $\overline{F}$ exists. Therefore, by Proposition \ref{prop classification of LLFR of X^2}(\ref{XX reducible}) ($\overline{x_nx_1}$ is not a fractional relator, $|\overline{T}|\in\{2g-1,\dots,4g-2\}$), we can obtain 
\begin{equation}\label{Eq nf of X_0^k for general case before Claim 5}
    \nf(X_0^k)=\overline{b_1x_{n_5}\cdots x_{i}W^{k-1}x_{i+1}\cdots x_{n_6}},\quad n_5-1\leq i\leq n_6,
\end{equation}
for every $k\geq 1$, where
\begin{equation}\label{Eq LLFR=4g-1 W is cyclically irreducible}
    \overline{W}=\nf(x_{i+1}\cdots x_{n_6}b_1x_{n_5}\cdots x_i)\xlongequal{\mbox{by Eq. (\ref{DA=b1})}} \nf(x_{i+1}\cdots x_nx_1\cdots x_i):=\overline{w_1\cdots w_{m_0}}
\end{equation}
is cyclically irreducible and ``$i=n_5-1$'' means $\overline{x_{n_5}\cdots x_i}=1$ and $\overline{W}=\nf(x_{n_5}\cdots x_{n_6}b_1)$. Note that $$\overline{x_{n_5}\cdots x_iWx_{i+1}\cdots x_{n_6}}=\nf(x_{n_5}\cdots x_{n_6}b_1x_{n_5}\cdots x_{n_6})=\nf(Bb_1B).$$ Then, by Eq. (\ref{Eq. A3 Z}),
\begin{equation}\label{length=4g-1 word1}
\overline{Z}=\overline{ABb_1BD}\xrightarrow{S_{(i)}\mbox{'s}}\overline{\underbrace{b_1(b_2\cdots b_{2g})^t}_{A}\underbrace{x_{n_5}\cdots x_i}_{B'}W\underbrace{x_{i+1}\cdots x_{n_6}}_{C'}\underbrace{(b_{2g+2}\cdots b_{4g})^t}_{D}}=:\overline{E},
\end{equation}
which is freely reduced.

To prove the irreducibility of $\overline{E}$, we need to analysis the potential reducible subwords of $\overline{E}$.
Note that $i+1\leq n_6$, i.e., $\overline{C'}\neq 1$ (otherwise, $\overline{X_0^2}$ is irreducible), and the subwords $\overline{AB'},\overline{B'WC'},\overline{C'D}$ of $\overline{X}$ or $\nf(X_0^2)$ are irreducible. Then the position of a reducible subword (if it exists) $\overline{V}\subset\overline{E}$ of types $S_{(2,2g+1)},S_{(3)}$ or $S_{(4,1)}$ can only be the following:
	\begin{enumerate}
		\item $\overline{V}$ starts in $\overline{A}$ and ends in $\overline{W}$;
		\item $\overline{V}$ starts in $\overline{A}$ and ends in $\overline{C'D}$;
		\item $\overline{V}$ starts in $\overline{AB'}$ and ends in $\overline{D}$;
		\item $\overline{V}$ starts in $\overline{W}$ and ends in $\overline{D}$.
	\end{enumerate}

%Then, we still have
\begin{claim}\label{claim 5}
If $\overline{X_0^2}$ is reducible but $\overline{X_0}$ is not of type $\mathfrak{A}$, then $\overline{E}$ in Eq. (\ref{length=4g-1 word1}) is irreducible and thus for every $k\geq 1$,
$$\nf(X^k)=\overline{Ax_{n_5}\cdots x_iW^{k-1}x_{i+1}\cdots x_{n_6}D}=\overline{x_1\cdots x_iW^{k-1}x_{i+1}\cdots x_n}~,$$
where $\overline{W}=\nf(x_{i+1}\cdots x_nx_1\cdots x_i)$.
\end{claim}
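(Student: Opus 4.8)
The plan is to prove the irreducibility of $\overline{E}$ in Eq.~(\ref{length=4g-1 word1}) by systematically eliminating each of the four possible positions for a hypothetical reducible subword $\overline{V}$, and then to immediately conclude the normal-form expression via Lemma~\ref{lem from 2-nd power to n-th power}. The guiding principle throughout is the same one used repeatedly in Section~\ref{sect. proof of key Proposition} and the preceding claims: we assume such a $\overline{V}$ of type $S_{(2,2g+1)}$, $S_{(3,t')}$ or $S_{(4,1)}$ exists, exploit the structural constraints coming from the fact that $\overline{AB'}$, $\overline{B'WC'}$, $\overline{C'D}$ and $\overline{X_0^2}=\overline{b_1Bb_1B}$ are all irreducible, and derive a contradiction with one of the forbidden-letter conditions in Eq.~(\ref{|LLFR|=4g-1, condition for n_5,n_6}) (most crucially $x_{n_5}\neq b_{4g}$, $x_{n_6}\neq b_2$, $b_1\prec b_{2g}$) or with the maximality of $t$ established in Eq.~(\ref{Eq X with maximal t for LLFR=4g-1}).

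First I would handle positions (1) and (4), which are symmetric. In position (1), $\overline{V}=\overline{\cdots b_{2g}B'W\cdots}$ begins in $\overline{A}=\overline{b_1(b_2\cdots b_{2g})^t}$ and ends in $\overline{W}$; since $\overline{W}=\nf(x_{i+1}\cdots x_nx_1\cdots x_i)$ is cyclically irreducible (Eq.~(\ref{Eq LLFR=4g-1 W is cyclically irreducible})), any $\llfr$ inside $\overline{V}$ crossing into $\overline{W}$ forces a repeated letter, so $\overline{V}$ must be of type $S_{(3,t')}$; then deleting the middle block and invoking Lemma~\ref{lem some observations}(7) produces a reducible subword of the irreducible word $\overline{AB'}\subset\overline{X}$ (or of $\overline{X_0^2}$), a contradiction. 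Position (4) follows by the left-right symmetry of the relator, using $x_{n_6}\neq b_2$ in place of $x_{n_5}\neq b_{4g}$. For positions (2) and (3), also symmetric, $\overline{V}$ spans from $\overline{A}$ across $\overline{W}$ into $\overline{C'D}$; here the repeated-letter argument via Lemma~\ref{lem some observations}(7) again forces type $S_{(3,t')}$, and removing the $\overline{W}$-block yields a reducible subword of $\overline{B'WC'}$ or of $\overline{AB'\cdots C'D}\subset\nf(X_0^2)$, contradicting irreducibility. The key technical tool in each case is Lemma~\ref{lem frequently used}, which pins down the exact form $\overline{x_{r'}\cdots b_{2g}}$ or $\overline{b_1 x_{i+1}\cdots}$ that a potential $S_{(i)}$-subword must take at each junction.

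Having ruled out all four positions, $\overline{E}$ contains no subword of types $S_{(1)},S_{(2,2g+1)},S_{(3,t')}(t'\geq2)$ or $S_{(4,1)}$, so by Lemma~\ref{lem four operations} it is irreducible, giving $\nf(X^2)=\overline{E}$. Since $\overline{W}$ is cyclically irreducible, $\overline{W^2}$ is irreducible by Proposition~\ref{prop D cyclically irreducible= DD irreducible}, and then Lemma~\ref{lem from 2-nd power to n-th power} applied to $\overline{AB'\,(\cdot)\,C'D}$ upgrades this to $\nf(X^k)=\overline{Ax_{n_5}\cdots x_iW^{k-1}x_{i+1}\cdots x_{n_6}D}$ for all $k\geq1$, which rewrites as $\overline{x_1\cdots x_iW^{k-1}x_{i+1}\cdots x_n}$ using Eq.~(\ref{Eq X with maximal t for LLFR=4g-1}).

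The main obstacle I anticipate is bookkeeping rather than conceptual: because the $\llfr$ $\overline{F}$ carrying $\overline{x_{n_6}b_1}$ may or may not exist, and when it exists its length ranges over $\{2g-1,\dots,4g-2\}$, the precise shape of $\overline{W}$ and of the boundary letters $x_i,x_{i+1}$ varies, so the contradiction in positions (2)/(3) must be extracted without assuming a fixed length for $\overline{F}$. I expect to route this uniformly through Eq.~(\ref{Eq nf of X_0^k for general case before Claim 5}), which already encodes the outcome of Proposition~\ref{prop classification of LLFR of X^2}(\ref{XX reducible}) for $|\overline{T}|\in\{2g-1,\dots,4g-2\}$ applied to $\overline{X_0}$; the remaining case analysis (Cases~(1) and~(2) with Subcases~(2.1),~(2.2) flagged in the outline) then only refines \emph{where} the cut index $i$ sits and whether $\overline{x_{n_6}b_1}$ is a fractional relator, without reopening the irreducibility argument for $\overline{E}$.
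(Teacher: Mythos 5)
Your concluding step is sound and matches the paper: once $\overline{E}$ is irreducible and $\overline{W}$ is cyclically irreducible (Eq. (\ref{Eq LLFR=4g-1 W is cyclically irreducible})), Lemma \ref{lem from 2-nd power to n-th power} yields $\nf(X^k)$ for all $k\geq 1$. The gap is in how you prove $\overline{E}$ irreducible. Your mechanism --- ``any $\llfr$ inside $\overline{V}$ crossing into $\overline{W}$ forces a repeated letter, so $\overline{V}$ must be of type $S_{(3,t')}$, then delete and apply Lemma \ref{lem some observations}(7)'' --- fails on both halves. First, a subword spanning the junction between $\overline{A}$ and $\overline{B'W}$ need not contain any repeated letter: words of types $S_{(2,2g+1)}$ and $S_{(4,1)}$ have pairwise distinct letters and are genuine candidates here. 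In the paper's Case (1) (where $\overline{B'}=1$ and $\overline{W}=\overline{x_{n_5}\cdots x_{n_7}b_1(b_{4g}\cdots b_{2g+2})^{t'}}$), the candidate $\overline{V}=\overline{x_{n_7-2g+1}\cdots x_{n_7}b_1}=\overline{b_{2g+1}\cdots b_{4g}b_1}$ of type $S_{(2,2g+1)}$ is eliminated not by a repeated letter but by showing it would place the reducible word $\overline{b_{2g+1}(b_{2g+2}\cdots b_{4g})^{t+t'+1}}$ inside the irreducible $\overline{X}$, using $b_{2g+1}\succ b_{4g}$ (i.e. $b_1\prec b_{2g}$, Eq. (\ref{|LLFR|=4g-1, condition for n_5,n_6})) and in effect the maximality of $t$ in Eq. (\ref{Eq X with maximal t for LLFR=4g-1}). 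Second, even when $\overline{V}$ is of type $S_{(3)}$, Lemma \ref{lem some observations}(7) only permits deleting a block flanked by two occurrences of the \emph{same} letter, after which the leftover must sit inside a word already known to be irreducible; ``removing the $\overline{W}$-block'' is not such an operation (in general $x_i\neq x_{i+1}$), and the concatenation of a suffix of $\overline{A}$ with a prefix of $\overline{W}$ is not a subword of $\overline{X}$, of $\nf(X_0^2)=\overline{b_1B'WC'}$, or of anything yet known to be irreducible --- the junctions $\overline{A}$-to-$\overline{B'W}$ and $\overline{WC'}$-to-$\overline{D}$ are exactly where irreducibility is in question.

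This is why your final paragraph inverts the actual logic of the proof. The case split --- Case (1) $\overline{x_{n_5}\cdots x_i}=1$; Case (2) with Subcase (2.1) ($\overline{x_{n_6}b_1}$ not a fractional relator) and Subcase (2.2) (the $\llfr$ $\overline{F}$ exists) --- is not bookkeeping that can be sealed off from the irreducibility argument; it \emph{is} the irreducibility argument. Only inside each case does one know the first and last letters of $\overline{W}$ (e.g. $\overline{W}=\overline{(b_{2g-1}\cdots b_1)^{t_1+t_2}x_{n_8}\cdots x_{n_9}}$ in Subcase (2.1)), and those letters are precisely what the junction analysis consumes; moreover, in Subcase (2.1) it is the length bounds $|\overline{B'}|,|\overline{C'}|\geq 2g$ and $|\overline{W}|\geq 4g-1$ that force any candidate $\overline{V}$ to be of type $S_{(3)}$, a step with no analogue in the other cases. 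Subcase (2.2) is more delicate still: one must first prove Assertion $*$ (any $\overline{V}$ starts in $\overline{A}$ and $s_1=0$), then force $x_{n_5}=b_{2g-1}$ and $r_1=2g-1$, and this is also where the hypothesis that $\overline{X_0}$ is not of type $\mathfrak{A}$ (Definition \ref{defn word of types ABC}) is invoked, to guarantee that $\nf(X_0^2)$ arises from a single $S_{(4,t_7)}$ reduction so that the final contradiction $x_i=b_2$ can be extracted. A proof that never opens these cases cannot reach these contradictions, so as written your argument does not establish the irreducibility of $\overline{E}$.
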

\begin{proof}[Proof of Claim \ref{claim 5}]
We shall discuss in two cases.

\textbf{Case (1).}\label{Case 1 for LLFR=4g-1}
$\overline{B'}=\overline{x_{n_5}\cdots x_{i}}=1$, i.e. $i=n_5-1$. Then Eq. (\ref{Eq LLFR=4g-1 W is cyclically irreducible}) becomes $$\overline{W}=\nf(x_{n_5}\cdots x_{n_6}b_{1}).$$
By a specific analysis and noting $x_{n_6}\neq b_2$ (see Eq. (\ref{|LLFR|=4g-1, condition for n_5,n_6})), we know that $\overline{x_{n_5}\cdots x_{n_6}b_1}$ satisfies Lemma \ref{lem for Xw types}(4), i.e., 
	$$\overline{x_{n_5}\cdots x_{n_6}b_1}=\overline{x_{n_5}\cdots x_{n_7}(b_{2g+2}\cdots b_{4g})^{t'}b_1}\xrightarrow{S_{(4,t')}}\overline{x_{n_5}\cdots x_{n_7}b_1(b_{4g}\cdots b_{2g+2})^{t'}}=\overline{W},$$
	where $n_7:=n_6-(2g-1)t'(t'\geq 1)$ and
    \begin{equation}\label{eq. xn7+1... xn6}
        \overline{x_{n_7+1}\cdots x_{n_6}}=\overline{(b_{2g+2}\cdots b_{4g})^{t'}}, \quad x_{n_5}\neq b_{2g+1},b_{4g}, \quad x_{n_7}\neq b_2, b_{2g+1}, \quad b_{2g+2}\succ b_{1}.
    \end{equation}
    Then, Eq. (\ref{length=4g-1 word1}) becomes
    \begin{equation}\label{LLFR=4g-1 Case 1 Eq E}
        \overline{E}=\overline{\underbrace{b_{1}(b_2\cdots b_{2g})^t}_{A}\underbrace{x_{n_5}\cdots x_{n_7}b_1(b_{4g}\cdots b_{2g+2})^{t'}}_{W}\underbrace{x_{n_5}\cdots x_{n_7}(b_{2g+2}\cdots b_{4g})^{t'}}_{C'}\underbrace{(b_{2g+2}\cdots b_{4g})^t}_{D}}.
    \end{equation} 
    Note there is another partition of $\overline{E}$ with irreducible words $\overline{A'}(\subset \overline{X}),~\overline{P}$ and $\overline{D'}(\subset \overline{X})$ :
    $$\overline{E}=\overline{\underbrace{b_{1}(b_2\cdots b_{2g})^tx_{n_5}\cdots x_{n_7}}_{A'}\underbrace{b_1(b_{4g}\cdots b_{2g+2})^{t'}}_{P}\underbrace{x_{n_5}\cdots x_{n_7}(b_{2g+2}\cdots b_{4g})^{t'+t}}_{D'=C'D}}.$$
    Hence, if $\overline{E}$ contains a reducible subword $\overline{V}$ of types $S_{(2,2g+1)},S_{(3,t_0)}(t_0\geq 2)$ or $S_{(4,1)}$, then $\overline{V}$ contains $\overline{x_{n_7}b_1}$ or $\overline{b_{2g+2}x_{n_5}}$. However, 
	\begin{enumerate}
		\item If $\overline{V}$ is of type $S_{(2,2g+1)}$, then we have two possible cases:
		\begin{eqnarray}
			\overline{V}&=&\overline{x_{n_7-2g+1}\cdots x_{n_7}b_1}=\overline{b_{2g+1}\cdots b_{4g} b_1},\qquad\qquad\qquad\mbox{or}\label{length=4g-1 word2}\\
			\overline{V}&=& \overline{b_{2g+2}x_{n_5}\cdots x_{n_5+2g-1} }=\overline{b_{2g+2}b_{2g+3}\cdots b_{4g}b_1b_2}.\label{length=4g-1 word3}
		\end{eqnarray} 
		In the case (\ref{length=4g-1 word2}), since $b_{2g+1}=b_1^{-1}\succ b_{2g}^{-1}=b_{4g}$ (see Eq. (\ref{condition 1 |LLFR|=4g-1})), by Eqs. (\ref{eq. xn7+1... xn6})(\ref{length=4g-1 word2}), the irreducible word $\overline{X}$ has a reducible subword
        $$\overline{x_{n_7-2g+1}\cdots x_{n_7}x_{n_7+1}\cdots x_{n_6} x_{n_6+1}\cdots  x_n}=\overline{b_{2g+1}(b_{2g+2}\cdots b_{4g})^{t+t'+1}}.$$
		In the case (\ref{length=4g-1 word3}), note that $\overline{V}$ starts in $\overline{W}$, thus $\overline{V}$ must end in $\overline{D}$ in Eq. (\ref{LLFR=4g-1 Case 1 Eq E}) but $b_2\notin \overline{D}$. Therefore, both of the above two cases are impossible.
		
		\item If $\overline{V}$ is of type $S_{(3,t_0)}(t_0\geq 2)$, then we have three possible cases:
		\begin{eqnarray}
			\overline{V}&=&\overline{x_{n_7-(2g-1)t_0}\cdots x_{n_7}b_1}=\overline{b_{2g+1}(b_{2g+2}\cdots b_{4g})^{t_0} b_1},\qquad \qquad\qquad\qquad\qquad~\mbox{ or}\label{length=4g-1 word4}\\
            \overline{V}&=&\overline{x_{n_7-(2g-1)(t_0-1)}\cdots x_{n_7}b_1b_{4g}\cdots b_{2g+2}}\ =\overline{b_2(b_1b_{4g}\cdots b_{2g+3})^{t_0} b_{2g+2}},\quad\ \mbox{or}\label{length=4g-1 word4'}\\
			\overline{V}&=& \overline{b_{2g+2}x_{n_5}\cdots x_{n_5+(2g-1)t_0} }=\overline{b_{2g+2}(b_{2g+3}\cdots b_{4g}b_1)^{t_0}b_2}.\label{length=4g-1 word5}
		\end{eqnarray} 
		In the cases (\ref{length=4g-1 word4})(\ref{length=4g-1 word4'}), the irreducible word $\overline{X}$ has the following reducible subwords respectively:
        \begin{eqnarray*}
            \overline{x_{n_7-(2g-1)t_0}\cdots x_n}&=&\overline{b_{2g+1}(b_{2g+2}\cdots b_{4g})^{t+t'+t_0}},\\
            \overline{x_{n_7-(2g-1)t_0}\cdots x_{n_7}x_{n_7+1}}&=&\overline{b_2(b_1b_{4g}\cdots b_{2g+3})^{t_0-1}b_{2g+2}}.
        \end{eqnarray*}
    In the case (\ref{length=4g-1 word5}), note that $\overline{V}$ starts in $\overline{W}$, thus $\overline{V}$ must end in $\overline{D}$ in Eq. (\ref{LLFR=4g-1 Case 1 Eq E}) but $b_2\notin \overline{D}$. Therefore, all of the above three cases are also impossible.
		
		\item If $\overline{V}$ is of type $S_{(4,1)}$, then
		\begin{eqnarray}
			\overline{V}&=&\overline{x_{n_7-2g+2}\cdots x_{n_7}b_1}=\overline{b_{2g+2}\cdots b_{4g} b_1}\qquad\qquad\mbox{ or}\label{length=4g-1 word6}\\
			\overline{V}&=& \overline{b_{2g+2}x_{n_5}\cdots x_{n_5+2g-1} }=\overline{b_{2g+2}b_{2g+3}\cdots b_{4g}b_1}.\label{length=4g-1 word7}
		\end{eqnarray} 
		In the cases (\ref{length=4g-1 word6})(\ref{length=4g-1 word7}), $\overline{V}$ ends (starts) in $\overline{W}$, thus $\overline{V}$ must start in $\overline{A}$ (end in $\overline{D}$ ) in Eq. (\ref{LLFR=4g-1 Case 1 Eq E}) respectively, but $b_{2g+2}\notin \overline{A}$ and $b_1\notin \overline{D}$.
		Therefore, both of them are impossible.
	\end{enumerate}
	 
	In conclusion, for Case (1), we have proven that $\overline{E}$ in Eq. (\ref{length=4g-1 word1}) is irreducible. Since $\overline{W}$ is cyclically irreducible (see Eq. (\ref{Eq LLFR=4g-1 W is cyclically irreducible})), by Lemma \ref{lem from 2-nd power to n-th power}, we can obtain $\nf(X^k)~(k\geq 1)$ as follows:
    \begin{equation}\label{NF of X^k LLFR=4g-1 Case 1}
            \nf(X^k)=\overline{x_1\cdots x_{n_5-1}W^{k-1}x_{n_5}\cdots x_n},
        \end{equation}
		where $\overline{W}=\overline{x_{n_5}\cdots x_{n_7}b_1(b_{4g}\cdots b_{2g+2})^{t'}}=\nf(x_{n_5}\cdots x_nx_1\cdots x_{n_5-1})$.\\
        
\textbf{Case (2).}\label{Case 2 for LLFR=4g-1}
$\overline{B'}=\overline{x_{n_5}\cdots x_i}\neq 1$. Then Eq. (\ref{length=4g-1 word1}) becomes
	\begin{equation}\label{E in case 2}
\overline{E}=\overline{\underbrace{b_1(b_2\cdots b_{2g})^t}_{A}\underbrace{x_{n_5}\cdots x_{i}}_{B'}W \underbrace{x_{i+1}\cdots x_{n_6}}_{C'}\underbrace{(b_{2g+2}\cdots b_{4g})^t}_{D}},
	\end{equation}
	where $n_5\leq i\leq n_6-1$. %Suppose $\overline{E}$ contains a reducible subword $\overline{V}$ of types $S_{(2,2g+1)},S_{(3,t_0)},S_{(4,1)}$ containing $\overline{x_iw_1}$ or $\overline{w_{m_0}x_{i+1}}$. 
    According to the assumption, the $\llfr$ $\overline{F}$ satisfying
    \begin{equation}\label{LLFR F}
    \overline{x_{n_6}b_1}\in\overline{F}
\subset\overline{X_0^2}=\overline{b_1x_{n_5}\cdots x_{n_6}b_1x_{n_5}\cdots x_{n_6}}
    \end{equation}
    has the following two subcases.
	
Subcase (2.1).\label{Subcase 2.1 for LLFR=4g-1}
$\overline{F}$ does not exist, i.e., $\overline{x_{n_6}b_1}$ is not a $2$-fractional relator.

According to the discussion in the proof of Proposition \ref{prop classification of LLFR of X^2}(\ref{XX reducible})($\overline{x_nx_1}$ is not a fractional relator), $\overline{X_0}$ has a of form as Eq. (\ref{length=0 condition 1}). Since $x_{n_5}\neq b_{4g}$ (see Eq. (\ref{|LLFR|=4g-1, condition for n_5,n_6})), we have
\begin{eqnarray*}%\label{Eq xn5...xn6 in Subcase (2.1) LLFR=4g-1}
    \overline{X_0}&=&\overline{b_1x_{n_5}\cdots x_{n_6}}\\
    &=&\overline{b_1\underbrace{
    b_{2}\cdots b_{2g-1}(b_1\cdots b_{2g-1})^{t_1-1}b_{2g}x_{n_8}\cdots x_{n_9}}_{x_{n_5}\cdots x_{n_9}=B'}~\underbrace{b_{4g}(b_1\cdots b_{2g-1})^{t_2}}_{x_{n_9+1}\cdots x_{n_6}=C'}},\notag
\end{eqnarray*}
		where $t_1,t_2\geq 1$ and
        \begin{equation}\label{Eq condition for n_9 subcase 2.1 in LLFR=4g-1}
           x_{n_8}\neq b_{2g+1},b_{4g};\quad x_{n_9}\neq b_{2g+1},b_{2g}; \quad n_8:=n_5+(2g-1)t_1\leq n_9:=n_6-(2g-1)t_2-1.
        \end{equation}
        Therefore, $|\overline{x_{n_8}\cdots x_{n_9}}|\geq1$. 
Moreover, by Proposition \ref{prop classification of LLFR of X^2}(\ref{XX reducible})($\overline{x_nx_1}$ is not a fractional relator), we have 
   \begin{eqnarray*}
       \nf(X_0^2)&=&\overline{b_1B'\underbrace{(b_{2g-1}\cdots b_1)^{t_1+t_2}x_{n_8}\cdots x_{n_9}}_{W}C'},%\\&=&\overline{b_1x_{n_5}\cdots x_{n_9}((b_{2g-1}\cdots b_1)^{t_1+t_2}x_{n_8}\cdots x_{n_9})^{k-1}x_{n_9+1}\cdots x_{n_6}},
   \end{eqnarray*}
   which means we can take $i=n_9$ in Eq. (\ref{Eq nf of X_0^k for general case before Claim 5}). Hence Eq. (\ref{E in case 2}) becomes
        \begin{eqnarray*}
           \overline{E}=\overline{AB'WC'D}
             \end{eqnarray*}
      where $\overline{B'},\overline{W}$ and $\overline{C'}$ as above with length
        \begin{equation*}
            \left\{\begin{array}{llll}
                |\overline{B'}| & =&|\overline{b_2\cdots b_{2g-1}(b_1\cdots b_{2g-1})^{t_1-1}b_{2g}x_{n_8}\cdots x_{n_9}}|&\geq 2g \\
                 |\overline{W}|&=&|\overline{(b_{2g-1}\cdots b_1)^{t_1+t_2}x_{n_8}\cdots x_{n_9}}| &\geq 4g-1\\
                 |\overline{C'}|&=&|\overline{b_{4g}(b_1\cdots b_{2g-1})^{t_2}}|&\geq 2g
            \end{array}\right.~.
        \end{equation*}

We shall show that $\overline{E}$ is irreducible. Otherwise, suppose there is a reducible subword $\overline{V}\subset\overline{E}$ of types $S_{(2,2g+1)},S_{(3)}$ or $S_{(4,1)}$, by combining the paragraph following Eq. (\ref{length=4g-1 word1}), we obtain $|\overline{V}|\geq 2g+2$. Therefore, $\overline{V}$ is of type $S_{(3)}$. However, 
		 \begin{enumerate}
		 	\item If $\overline{V}$ starts in $\overline{A}$ and ends in $\overline{W}$, then
		 	\begin{eqnarray*}
		 		\overline{V}&=&\overline{\cdots b_{2g}\underbrace{b_2\cdots b_{2g-1}(b_1\cdots b_{2g-1})^{t_1-1}b_{2g}x_{n_8}\cdots x_{n_9}}_{B'}b_{2g-1}\cdots }\subset\overline{AB'W}.
%&\subset&\overline{\underbrace{b_1(b_2\cdots b_{2g})^t}_{A} \underbrace{b_2\cdots b_{2g-1}(b_1\cdots b_{2g-1})^{t_1-1}b_{2g}x_{n_8}\cdots x_{n_9}}_{B'}\underbrace{(b_{2g-1}b_{2g-2}\cdots b_1)^{t_1+t_2}x_{n_8}\cdots x_{n_9}}_{W}}\\%&&\qquad~~~~~~~\underbrace{\qquad\qquad\qquad\qquad\qquad\qquad\qquad\qquad\qquad\quad\qquad\qquad}_{V}
		 	\end{eqnarray*}
		 	Apparently, it can not be of type $S_{(3)}$. Thus, it's contradictory.
            
		 	\item If $\overline{V}$ starts in $\overline{A}$ and ends in $\overline{C'D}$, then by an argument similar to that of $(1)$, we can also get a contradiction.
		 	
		 	\item If $\overline{V}$ starts in $\overline{AB'}$ and ends in $\overline{D}$, then
		 	$$\overline{V}=\overline{\cdots x_{n_9}\underbrace{(b_{2g-1}\cdots b_1)^{t_1+t_2}x_{n_8}\cdots x_{n_9}}_W\underbrace{b_{4g}(b_1\cdots b_{2g-1})^{t_2}}_{C'}b_{2g+2}\cdots }.$$
		 	Apparently, it can not be of type $S_{(3)}$. Thus, it's contradictory.
		 	\item If $\overline{V}$ starts in $\overline{W}$ and ends in $\overline{D}$, then the argument is similar to that of $(3)$.
		 \end{enumerate}
		Therefore, for Subcase (2.1), we conclude that $\overline{E}$ is irreducible and obtain $\nf(X^k)~(k\geq 1)$ as follows:
        \begin{equation}\label{NF of X^k LLFR=4g-1 Subcase 2.1}
            \nf(X^k)=\overline{x_1\cdots x_{n_9}W^{k-1}x_{n_9+1}\cdots x_n},
        \end{equation}
		where $\overline{W}=\nf(x_{n_9+1}\cdots x_nx_1\cdots x_{n_9})$.
        
Subcase (2.2).\label{Subcase 2.2 for LLFR=4g-1}
$\overline{F}$ does exist. 
Then
\begin{eqnarray*}
\overline{F}&=&\overline{x_{n_6-r_1+1}\cdots x_{n_6}b_1x_{n_5}\cdots x_{n_5+s_1-1}}\\
&=&\overline{b_{4g-r_1+1}\cdots b_{4g}b_1\cdots b_{s_1+1}}, 
\end{eqnarray*}
	where 
  \begin{equation}\label{eq condition of s1 r1 in subcase 2.2 |LLFR|=4g-1}
      1\leq s_1+1, r_1\leq 2g;\qquad 2g-1\leq |\overline{F}|=r_1+s_1+1\leq 4g-2.
  \end{equation}  
 Then $x_{n_6}=b_{4g}$. Hence, $\overline{X_0}$ and Eq. (\ref{E in case 2})  become
 \begin{eqnarray}\label{Eq xn5...xn6 for subcase 2.2 |LLFR|=4g-1}
 \overline{X_0}&=&\overline{b_1\underbrace{\cdots b_{s_1+1}x_{n_5+s_1}\cdots x_{n_6-r_1}b_{4g-r_1+1}\cdots b_{4g}}_{x_{n_5}\cdots x_{n_6}}}\\
     \overline{E}&=&\overline{\underbrace{b_1(b_2\cdots b_{2g})^t}_A\underbrace{x_{n_5}\cdots x_{i}}_{B'}W\underbrace{x_{i+1}\cdots x_{n_6-1} b_{4g}}_{C'}\underbrace{(b_{2g+2}\cdots b_{4g})^t}_D},\notag
 \end{eqnarray}
where $x_{n_5+s_1}\neq b_{s_1+2},b_{2g+s_1+1}$ and $x_{n_6-r_1}\neq b_{4g-r_1},b_{2g-r_1+1}$.

To prove that $\overline{E}$ is irreducible, we suppose there is a reducible subword $\overline{V}\subset\overline{E}$ of types $S_{(2,2g+1)},S_{(3)}$ or $S_{(4,1)}$. By the paragraph proceeding Claim \ref{claim 5}, we first show

\noindent (Assertion $*$).\textit{ If such $\overline{V}$  exists, it must start in $\overline{A}$ and end in $\overline{W}$ or $\overline{C'}$, and  $s_1=0$, i.e., $x_{n_5}\neq b_{2}$ in Eq. (\ref{Eq xn5...xn6 for subcase 2.2 |LLFR|=4g-1}).}

\begin{proof}[Proof of Assertion $*$]
	Indeed, if $\overline{V}$ ends in $\overline{D}$, then
	$$\overline{V}=\overline{\cdots x_{n_6}b_{2g+2}\cdots }=\overline{\cdots b_{4g}b_{2g+2}\cdots }\subset\overline{\cdots b_{4g}D}.$$ 
    It leads to $\overline{V}=\overline{b_{2g+1}(b_{2g+2}\cdots b_{4g})^{t'}b_1}$ for some $t'\geq 2$, and thus $b_1\in \overline{D}=\overline{(b_{2g+2}\cdots b_{4g})^t}$, which is contradictory. Therefore, $\overline{V}$ starts in $\overline{A}$ and ends in $\overline{W}$ or $\overline{C'}$. Furthermore,  if $s_1\geq 1$ , then $x_{n_5}=b_{2}$ and
    $$\overline{V}=\overline{\cdots b_{2g}x_{n_5}\cdots }=\overline{\cdots b_{2g}b_2\cdots }\subset\overline{Ab_2\cdots }.$$ Thus, $\overline{V}=\overline{b_1(b_2\cdots b_{2g})^{t+t'}b_{2g+1}}$ is of type $S_{(3)}$ where $t'\geq 1$. Since $\overline{V}$ starts in $\overline{A}$ and ends in $\overline{W}$ or $\overline{C'}$, we have $\overline{V}\subset \overline{AB'WC'}$ and thus $\overline{B'WC'}=\overline{(b_2\cdots b_{2g})^{t'}b_{2g+1}\cdots }$. Then, Eq. (\ref{Eq nf of X_0^k for general case before Claim 5}) with $k=2$ becomes $$\nf(X_0^2)=\overline{b_1B'WC'}=\overline{b_1(b_2\cdots b_{2g})^{t'}b_{2g+1}\cdots }$$ is reducible, which is contradictory.  Therefore, we have $s_1=0$, i.e., $x_{n_5}\neq b_{2}$.
\end{proof}

By Assertion $*$, we have
	  \begin{equation}\label{length=4g-1 last eq.}
	  	\overline{\underbrace{b_1(b_2\cdots b_{2g})^t}_Ax_{n_5}\cdots}\supset\overline{V}=\overline{\cdots b_{2g}x_{n_5}\cdots }=\left\lbrace 
	  	\begin{array}{lll}
	  		\overline{b_{2g}b_{2g-1}\cdots b_1b_{4g}}&&\overline{V} \mbox{ is of type }S_{(2,2g+1)}\\
	  		\overline{b_{2g}(b_{2g-1}\cdots b_{1})^{t''}b_{4g}}&&\overline{V} \mbox{ is of type }S_{(3, t''\geq 2)}\\
	  		\overline{b_{2g}b_{2g-1}\cdots b_1}&&\overline{V} \mbox{ is of type }S_{(4,1)}
	  	\end{array}\right.,
	  \end{equation}
where $x_{n_5}\neq b_2,b_{2g+1},b_{4g}$ (see Eq. (\ref{|LLFR|=4g-1, condition for n_5,n_6})). 
It implies $x_{n_5}=b_{2g-1}$ and Eq. (\ref{Eq xn5...xn6 for subcase 2.2 |LLFR|=4g-1}) becomes 
\begin{equation}\label{X0 A54}
\overline{X_0}=\overline{b_1\underbrace{b_{2g-1}x_{n_5+1}\cdots x_{n_{6}-r_1}b_{4g-r_1+1}\cdots b_{4g}}_{x_{n_5}\cdots x_{n_6}}}.    
\end{equation}
By Eq. (\ref{eq condition of s1 r1 in subcase 2.2 |LLFR|=4g-1}), we have $2g-2\leq|\overline{F}|-1=r_1\leq 2g$. If $r_1=2g$, then $\overline{b_{2g+1}\cdots b_{4g}}\subset \overline{X_0}$ is of type $S_{(4,1)}$, which contradicts the irreducibility of $\overline{X_0}$. If $r_1=2g-2$, then $\overline{X_0}$ is of the form as Eq. (\ref{X in |LLFR|=2g-1}),
$$\overline{X_0}=\overline{b_1\underbrace{(b_{2g+3}\cdots b_{4g}b_1)^{t_4}b_{2}x_{n_7}\cdots x_{n_8}b_{2g+2}(b_{2g+3}\cdots b_{4g}b_1)^{t_5}b_{2g+3}\cdots b_{4g}}_{x_{n_5}\cdots x_{n_6}}} ~(t_4,t_5\geq 1),$$
which contradicts that $x_{n_5}=b_{2g-1}$. Hence, 
$$|\overline{F}|-1=r_1=2g-1, \quad  x_{n_6-r_1}=x_{n_6-2g+1}\neq b_{2g+1},$$ and Eq. (\ref{X0 A54}) becomes
\begin{eqnarray*}\label{X0 A55}
    \overline{X_0}&=&
    \overline{b_1b_{2g-1}x_{n_5+1}\cdots x_{n_{6}-2g+1}b_{2g+2}\cdots b_{4g}}\\
    &\neq& \overline{\cdots b_{2g+1} (b_{2g+2}\cdots b_{4g})^{t_6}},
\end{eqnarray*}
where $t_6\geq 1$, $b_{2g+1}=b_1^{-1}\succ b_{2g}^{-1}=b_{4g}$ (see Eq. (\ref{|LLFR|=4g-1, condition for n_5,n_6}), and the ``$\neq$'' holds because $\overline{b_{2g+1}(b_{2g+2}\cdots b_{4g})^{t_6}}$ is reducible but $\overline{X_0}$ is irreducible. Therefore, $\overline{X_0^2}$ can not be reduced by $S_{(3)}$ but can be reduced by $S_{(4)}$ first. It implies
      \begin{equation}\label{eq last X_0 for subcase 2.2 LLFR=4g-1}
          \overline{X_0}=\overline{b_1\underbrace{b_{2g-1}x_{n_5+1}\cdots x_{i}}_{x_{n_5}\cdots x_i=:B'}\underbrace{(b_{2g+2}\cdots b_{4g})^{t_7}}_{x_{i+1}\cdots x_{n_6}=:C'}}
      \end{equation}
where $t_7\geq 1;~x_{i}\neq b_{2g+1},b_2$; $i:=n_6-(2g-1)t_7\geq n_5$, and hence
\begin{eqnarray*}\label{Eq55 X02}
    \overline{X_0^2}
    &=&\overline{b_1B'\underbrace{(b_{2g+2}\cdots b_{4g})^{t_7}}_{C'}b_1\underbrace{b_{2g-1}x_{n_5+1}\cdots x_{i}}_{B'}C'}\\
    &\xrightarrow{S_{(4,t_7)}}& \overline{b_1B'\underbrace{b_{1}(b_{4g}\cdots b_{2g+2})^{t_7}b_{2g-1}x_{n_5+1}\cdots x_i}_{W}C'}\notag\\
    &=&\nf(X_0^2),\notag
\end{eqnarray*}
where the last ``$=$'' holds because $\overline{X_0}$ is not of the symmetric form of Eq. (\ref{X specail form a}) and hence  $\overline{X_0^2}$ can be reduced just once. Then combining Eqs. (\ref{Eq nf of X_0^k for general case before Claim 5})( \ref{Eq LLFR=4g-1 W is cyclically irreducible} )(\ref{length=4g-1 word1}), we have
     \begin{equation*}
\overline{E}=\overline{AB'WC'D}.
     \end{equation*}
 %where $$\overline{W}=\nf(\overline{C'b_1B'})=\nf(\overline{C'DAB'})$$ by combining Eqs.(\ref{DA=b1})(\ref{Eq55 X02}).
Recall that $\overline{V}$ starts in $\overline{A}$ and ends in $\overline{W}$ or $\overline{C'}$ (see Assertion $*$). Then Eq. (\ref{length=4g-1 last eq.}) becomes 
$$\overline{V}=\left\lbrace 
	  	\begin{array}{lll}
        \overline{b_{2g}\underbrace{b_{2g-1}\cdots b_2}_{B'=x_{n_5}\cdots x_i}b_1b_{4g}}&&\overline{V} \mbox{ is of type }S_{(2,2g+1)}\\
        \overline{b_{2g}\underbrace{(b_{2g-1}\cdots b_1)^{t''-1}b_{2g-1}\cdots b_{2}}_{B'=x_{n_5}\cdots x_i}b_1b_{4g}} &&\overline{V} \mbox{ is of type }S_{(3,t''\geq 2)}\\
        \overline{b_{2g}\underbrace{b_{2g-1}\cdots b_2}_{B'=x_{n_5}\cdots x_i}b_1}&&\overline{V} \mbox{ is of type }S_{(4,1)}
        \end{array}\right.~.$$ It leads to $x_{i}=b_2$ contradicting that $\overline{X_0}$ in Eq. (\ref{eq last X_0 for subcase 2.2 LLFR=4g-1}) is irreducible. 
	 
	Therefore, for Subcase (2.2), we have concluded that $\overline{E}$ is irreducible and obtain
\begin{equation}\label{NF of X^k LLFR=4g-1 Subcase 2.2}
    \nf(X^k)=\overline{Ax_{n_5}\cdots x_{i}W^{k-1}x_{i+1}\cdots x_{n_6}D}=\overline{x_1\cdots x_{i}W^{k-1}x_{i+1}\cdots x_n},
\end{equation}
where $\overline{W}=\nf(x_{i+1}\cdots x_nx_1\cdots x_{i})$.

The proof of Claim \ref{claim 5} is complete by concluding Eqs. (\ref{NF of X^k LLFR=4g-1 Case 1})(\ref{NF of X^k LLFR=4g-1 Subcase 2.1})(\ref{NF of X^k LLFR=4g-1 Subcase 2.2}).
\end{proof}	

	 In conclusion, by combining Claims \ref{claim 3}, \ref{claim 4} and \ref{claim 5}, for every $k\geq 2$, we obtain 
	 \begin{eqnarray*}
	    \nf(X^k)&=&\overline{x_1\cdots x_i(x'_1\cdots x'_jW_0^{k-2}x'_{j+1}\cdots x'_{n'})x_{i+1}\cdots x_n}\\
        &=&\overline{\underbrace{b_1(b_2\cdots b_{2g})^t}_A\underbrace{u_1\cdots u_l}_{U_k}\underbrace{(b_{2g+2}\cdots b_{4g})^t}_D},
	 \end{eqnarray*}
	 where $\overline{x'_1\cdots x'_{n'}}=\nf(x_{i+1}\cdots x_nx_1\cdots x_i),~\overline{W_0}=\nf(x_{j+1}'\cdots x_{n'}'x_1'\cdots x_j')$ with length $|\overline{W_0}|=n'$,  $t\geq 1$ is maximal and for any $t'\geq 0$
\begin{equation}\label{eq.1 for proof of 4g-1 conjugate}
    \overline{U_k}~\neq~ \overline{(b_2\cdots b_{2g})^{t'}b_{2g+1}\cdots},\quad \overline{\cdots b_{2g+1}(b_{2g+2}\cdots b_{4g})^{t'}}.
\end{equation}
Otherwise, we get a contradiction that $\nf(X^k)$ contains the following reducible subwords $\overline{U'}$ respectively:
\begin{equation*}
\overline{U'}=\left\{\begin{array}{lll}
     \overline{Au_1...u_{(2g-1)t'+1}}&=&\overline{b_1(b_2\cdots b_{2g})^{t+t'}b_{2g+1}}\\
     \overline{u_{l-(2g-1)t'+1}\cdots u_lD}& =&\overline{b_{2g+1}(b_{2g+2}\cdots b_{4g})^{t+t'}}
\end{array}\right.,
\end{equation*}
where $\overline{U'}$ is of types $S_{(3,t+t')}$ or $S_{(4,t+t')}$ (since $b_{2g+1}=b_1^{-1}\succ b_{2g}^{-1}=b_{4g}$ see Eq. (\ref{|LLFR|=4g-1, condition for n_5,n_6})). Thus Eq. (\ref{eq.1 for proof of 4g-1 conjugate}) holds. Note that $\nf(x)=\overline{YXY^{-1}}=\overline{YA\cdots DY^{-1}}$ is irreducible, then for any $t'\geq 0$,
\begin{equation}\label{eq.2 for proof of 4g-1 conjugate}
    \overline{Y}=\overline{y_1\cdots y_m}\neq \overline{\cdots b_{2g+1}(b_{2g+2}\cdots b_{4g})^{t'}}~(y_m\neq b_{4g}, b_{2g+1}).
\end{equation}

Furthermore, note that $\overline{Y\nf(X^k)Y^{-1}}$ is freely reduced. If $\overline{Y\nf(X^k)Y^{-1}}$ contains a reducible subword $\overline{V}$ of types $S_{(2,2g+1)},S_{(3)}$ or $S_{(4,1)}$, then there are only two possible positions for $\overline{V}$:
$$\overline{V}=\overline{\cdots y_mb_1(b_2\cdots b_{2g})^tu_1\cdots}\quad \mbox{ or }\quad \overline{V}= \overline{\cdots u_l(b_{2g+2}\cdots b_{4g})^ty_m^{-1}\cdots}.$$
However, by Eqs. (\ref{eq.1 for proof of 4g-1 conjugate})(\ref{eq.2 for proof of 4g-1 conjugate}),  
$$y_m\neq b_{4g}, b_{2g+1};\quad y_{m}^{-1}\neq b_1,b_{2g};\quad u_1\neq b_{2g+1},b_{4g};\quad u_l\neq b_{2g+1},b_2.$$
The first position of $\overline{V}$ is impossible, because there is an $\llfr$ $\overline{b_1\cdots b_{2g}}$ not containing the first and the last letters of $\overline{V}$ and thus $\overline{V}$ is not of types $S_{(2,2g+1)},S_{(3)}$ or $S_{(4,1)}$. In the second position, there is an $\llfr$ $\overline{b_{2g+2}\cdots b_{4g}}$. Then, $\overline{V}=\overline{b_{2g+1}(b_{2g+2}\cdots b_{4g})^{t_0}b_1}$($t_0>t$) is of type $S_{(3)}$. Thus $$\overline{U_kD}=\overline{\cdots b_{2g+1}(b_{2g+2}\cdots b_{4g})^{t_0}}$$ is a reducible subword of $\nf(\overline{X^k})$. It is contradictory. 

Therefore, $\overline{Y\nf(X^k)Y^{-1}}$ is irreducible and thus $\nf(x^k)=\overline{Y\nf(X^k)Y^{-1}}$. By concluding Claim \ref{claim 4} and its symmetric case, we can obtain Proposition \ref{prop classification of LLFR of X^2}(\ref{XX reducible}b, \ref{XX reducible}c). The proof of Proposition \ref{prop classification of LLFR of X^2} is complete.

%\newpage
\section{Lists of reductions}\label{sect appendix tables for reductions}
In the following tables, we always assume $\overline{b_1\cdots b_{4g}}, ~\overline{b'_1\cdots b'_{4g}}\in \RR$, and use the top-left label $^*$ to indicate the presence of symmetry. Every word in the reducing-subword pairs is irreducible. Therefore, we will not provide any parameter declarations implied by the irreducibility. For example, in Table \ref{tab: reduction class 1},  the irreducibility of the two words in the reducing-subword pair implies $b_1\prec b_{2g}$ and $b_2\prec b_{2g+1}$, and these  will not be recorded in the parameter declaration. Note that if the length of the $\llfr$ at the junction is in $\{2,3,\dots,2g-2\}$, then $\overline{X^2}$ is irreducible, and hence there is no reducing-subword pair.

\begin{table}[ht]
    \centering
    \begin{tabular}{|M{5.2cm}|M{2.7cm}|M{2.9cm}|M{3.1cm}|}
    \hline
        Reducing-subword pair $(\overline{C_1}, \overline{C_2})$ & $\nf(C_1C_2)$ & Reducing operation & Parameter declaration \\
    \hline
     $(\overline{b_1(b_2\cdots b_{2g})^{t_1}},\overline{(b_2\cdots b_{2g})^{t_2}b_{2g+1} })$    & $\overline{(b_{2g}\cdots b_2)^{t_1+t_2}}$ & $S_{(3,t_1+t_2)}$ & $t_1,t_2\geq 1$\\%b_1\prec b_{2g};b_2\prec b_{2g+1};
    \hline
    \end{tabular}
    \newline
    \caption{The junction has no fractional relator.}
    \label{tab: reduction class 1}
\end{table}

\begin{table}[ht]
    \centering
    \begin{tabular}{|M{4.5cm}|M{2.7cm}|M{1.9cm}|M{5cm}|}
    \hline
        Reducing-subword pair $(\overline{C_1},\overline{C_2})$& $\nf(C_1C_2)$ & Reducing operation & Parameter declaration \\
    \hline
       $(\overline{b_1(b_2\cdots b_{2g})^{t_1}b_2\cdots b_{r+1}}
,$ $ \overline{b_{r+2}\cdots b_{2g}(b_2\cdots b_{2g})^{t_2}b_{2g+1}})$  & $\overline{(b_{2g}\cdots b_2)^{t_1+t_2+1}}$ & $S_{(3,t_1+t_2+1)}$ & $ t_1,t_2\geq 1; 1\leq r\leq 2g-2$\\%b_1\prec b_{2g};b_2\prec b_{2g+1};
    \hline
    \end{tabular}
\newline    \caption{The length of the $\llfr$ at the junction is $2g-1$.}
    \label{tab: reduction class 2g-1}
\end{table}
\begin{table}[ht]
    \centering
    \begin{tabular}{|M{4.6cm}|M{6.7cm}|M{1.5cm}|M{2cm}|}
    \hline
        Reducing-subword pair $(\overline{C_1},\overline{C_2})$& $\nf(C_1C_2)$ & Reducing operation & Parameter declaration\\
    \hline
      $^*(\overline{b_1\cdots b_r},$ $\overline{b_{r+1}\cdots b_{2g}(b_2\cdots b_{2g})^{t-1}b_{2g+1}})$   & $\overline{(b_{2g}\cdots b_2)^t}$ & $S_{(3,t)}$ & $t\geq 2$\\%b_2\prec b_{2g+1};
    \hline
    $^*(\overline{b_{4g}b_1\cdots b_{2g-1}b_1\cdots b_r},$ $\overline{b_{r+1}\cdots b_{2g}(b_2\cdots b_{2g})^tb_{2g+1}})$ & $\overline{b_{2g-1}\cdots b_1b_{2g-1}\cdots b_2(b_{2g}\cdots b_2)^{t-1}}$ &$S_{(3,t)}$ $S_{(2,2g+1)}$ & $t\geq 2$\\%b_{4g}\prec b_{2g-1};b_2\prec b_{2g+1};
    \hline
    $^*(\overline{b_{4g}(b_1\cdots b_{2g-1})^{t_1}b_1\cdots b_r},$ $\overline{b_{r+1}\cdots b_{2g}(b_2\cdots b_{2g})^{t-1}b_{2g+1}})$ & $\overline{(b_{2g-1}\cdots b_1)^{t_1}b_{2g-1}\cdots b_2(b_{2g}\cdots b_2)^{t-1}}$ &$S_{(3,t)}$  $S_{(3,t_1)}$ & $t, t_1\geq 2$\\%b_{4g}\prec b_{2g-1};b_2\prec b_{2g+1};
    \hline
    $^*(\overline{(b_1\cdots b_{2g-1})^{t'}b_1\cdots b_r},$ $\overline{b_{r+1}\cdots b_{2g}(b_2\cdots b_{2g})^{t_0-1}b_{2g+1}}) $&$\overline{b_{2g}(b_{2g-1}\cdots b_1)^{t'}b_{2g-1}\cdots b_2(b_{2g}\cdots b_2)^{t_0-1}} $&$ S_{(3,t_0)}$  $S_{(4,t')}$&$b_1\succ b_{2g}$;  $t_0\geq 2;t'\geq 1$\\%
    \hline
    $^*(\overline{b_1\cdots b_r},$ $\overline{b_{r+1}\cdots b_{2g}(b_2\cdots b_{2g})^{t_0-1}}) $&$\overline{(b_{2g}\cdots b_2)^{t_0}b_1} $&$ S_{(4,t_0)}$&$b_1\succ b_{2g};$ \quad $t_0\geq 1$\\
    \hline
    $^*(\overline{(b_1\cdots b_{2g-1})^{t'}b_1\cdots b_r},$ $\overline{b_{r+1}\cdots b_{2g}(b_2\cdots b_{2g})^{t_0-1}}) $&$\overline{b_{2g}(b_{2g-1}\cdots b_1)^{t'}b_{2g-1}\cdots b_2(b_{2g}\cdots b_2)^{t_0-1}b_1} $&$ S_{(4,t_0)}$  $S_{(4,t')}$&$b_1\succ b_{2g};$ $t_0\geq 2;t'\geq 1$\\
    \hline  
    \end{tabular}
\newline   \caption{The length of the $\llfr$ at the junction is $2g$.  In all the cases, $1\leq r\leq 2g-1 $.  }
    \label{tab: reduction class 2g}
\end{table}

\begin{longtable}{|M{6.0cm}|M{5.3cm}|M{1.4cm}|M{2.7cm}|}  
  \captionsetup{width=0.8\textwidth}
  \hline  
  Reducing-subword pair $(\overline{C_1},\overline{C_2})$& $\nf(C_1C_2)$& Reducing operation & Parameter declaration \\
  \hline  
  \endhead

  \hline  
  \endfoot  
  
  \hline\noalign{\vspace{5pt}}
  \caption{The length of the $\llfr$ at the junction is in $\{2g+1,2g+2,\dots,4g-2\}$. In all the cases,  $r,s\geq 1$ and $2g+1\leq r+s\leq 4g-2$.}
    \label{tab: reduction class 2g+1 to 4g-2}
  \endlastfoot  
  
  $(\overline{b_{4g}b_1\cdots b_{2g-1} b_1\cdots b_r},$ $ \overline{b_{r+1}\cdots b_{r+s}b_{r+s-2g+2}\cdots b_{r+s+1}})$   & $\overline{b_{2g-1}\cdots b_1b_{2g-1}\cdots b_{r+s-2g+2}}$ $\cdot\overline{b_{r+s}\cdots b_{r+s-2g+2}}$ & $S_{(2,r+s)}$ $S_{(2,2g+1)}$ $S_{(2,2g+1)}$ & \\%b_{4g}\prec b_{2g-1}; b_{r+s-2g+2}\prec b_{r+s+1};
    \hline
    $^*(\overline{b_{4g}b_1\cdots b_{2g-1} b_1\cdots b_r}$,\quad $\overline{b_{r+1}\cdots b_{r+s}(b_{r+s-2g+2}\cdots b_{r+s})^{t'}b_{r+s+1}})$ & $\overline{b_{2g-1}\cdots b_1b_{2g-1}\cdots b_{r+s-2g+2}}$ $ \cdot \overline{(b_{r+s}\cdots b_{r+s-2g+2})^{t'}}$ & $S_{(2,r+s)}$ $S_{(2,2g+1)}$ $S_{(3,t')}$ &$t'\geq 2$ \\%b_{4g}\prec b_{2g-1};b_{r+s-2g+2}\prec b_{r+s+1};
    \hline 
    $^*(\overline{b_{4g}b_1\cdots b_{2g-1} b_1\cdots b_r},$ $\overline{b_{r+1}\cdots b_{r+s}(b_{r+s-2g+2}\cdots b_{r+s})^{t'}})$   & $\overline{b_{2g-1}\cdots b_1b_{2g-1}\cdots b_{r+s-2g+2}}$ $ \cdot \overline{(b_{r+s}\cdots b_{r+s-2g+2})^{t'}b_{r+s-2g+1}}$ & $S_{(2,r+s)}$ $S_{(2,2g+1)}$ $S_{(4,t')}$ &$b_{r+s}\prec b_{r+s-2g+1}$ \quad$t'\geq 1$ \\%b_{4g}\prec b_{2g-1};
    \hline
    $^*(\overline{b_{4g}b_1\cdots b_{2g-1} b_1\cdots b_r},$ $\overline{b_{r+1}\cdots b_{r+s}})$   & $\overline{b_{2g-1}\cdots b_1b_{2g-1}\cdots b_{r+s-2g+1}}$ & $S_{(2,r+s)}$ $S_{(2,2g+1)}$ & \\%b_{4g}\prec b_{2g-1};
    \hline
    $(\overline{b_{4g}(b_1\cdots b_{2g-1})^t b_1\cdots b_r},$ $\overline{b_{r+1}\cdots b_{r+s}(b_{r+s-2g+2}\cdots b_{r+s})^{t'}b_{r+s+1}})$ & $\overline{(b_{2g-1}\cdots b_1)^tb_{2g-1}\cdots b_{r+s-2g+2}}$ $ \cdot \overline{(b_{r+s}\cdots b_{r+s-2g+2})^{t'}}$ & $S_{(2,r+s)}$ $S_{(3,t)}$ $S_{(3,t')}$ &$t,t'\geq 2$ \\%b_{4g}\prec b_{2g-1};b_{r+s-2g+2}\prec b_{r+s+1};
    \hline
    $^*(\overline{b_{4g}(b_1\cdots b_{2g-1})^t b_1\cdots b_r},$ $\overline{b_{r+1}\cdots b_{r+s}(b_{r+s-2g+2}\cdots b_{r+s})^{t'}})$   & $\overline{(b_1\cdots b_{2g-1})^tb_{2g-1}\cdots b_{r+s-2g+2}}$ $ \cdot \overline{(b_{r+s}\cdots b_{r+s-2g+2})^{t'}b_{r+s-2g+1}}$ & $S_{(2,r+s)}$ $S_{(3,t)}$ $S_{(4,t')}$ &$b_{r+s}\prec b_{r+s-2g+1}$ \quad $t\geq 2;t'\geq 1$ \\%b_{4g}\prec b_{2g-1};
    \hline
    $^*(\overline{b_{4g}(b_1\cdots b_{2g-1})^t b_1\cdots b_r},$ $\overline{b_{r+1}\cdots b_{r+s}})$   & $\overline{(b_1\cdots b_{2g-1})^tb_{2g-1}\cdots b_{r+s-2g+1}}$ & $S_{(2,r+s)}$ $S_{(3,t)}$ &$t\geq 2$ \\%b_{4g}\prec b_{2g-1};
    \hline
    $(\overline{(b_1\cdots b_{2g-1})^t b_1\cdots b_r},$ $\overline{b_{r+1}\cdots b_{r+s}(b_{r+s-2g+2}\cdots b_{r+s})^{t'}})$   & $\overline{b_{2g}(b_1\cdots b_{2g-1})^tb_{2g-1}\cdots b_{r+s-2g+2}}$ $ \cdot \overline{(b_{r+s}\cdots b_{r+s-2g+2})^{t'}b_{r+s-2g+1}}$ & $S_{(2,r+s)}$ $S_{(4,t)}$ $S_{(4,t')}$ &$b_{r+s}\prec b_{r+s-2g+1}$ \quad $ b_1\succ b_{2g}$; ~$t,t'\geq 1$ \\%
    \hline
    $^*(\overline{(b_1\cdots b_{2g-1})^t b_1\cdots b_r},$ $\overline{b_{r+1}\cdots b_{r+s}})$   & $\overline{b_{2g}(b_1\cdots b_{2g-1})^tb_{2g-1}\cdots b_{r+s-2g+1}}$ & $S_{(2,r+s)}$ $S_{(4,t)}$ &$b_1\succ b_{2g};t\geq 1$ \\
    \hline
    $(\overline{b_1\cdots b_r},$ $\overline{b_{r+1}\cdots b_{r+s}})$   & $\overline{b_{2g}\cdots b_{r+s-2g+1}}$ & $S_{(2,r+s)}$  &$t\geq 1$ \\
\end{longtable}

\begin{table}[ht]
    \begin{center}
    \begin{tabular}{|M{4.6cm}|M{3.8cm}|M{1.9cm}|M{5cm}|}
    \hline
        Reducing-subword pair $(\overline{C_1},\overline{C_2})$ & $\nf(C_1C_2)$ & Reducing operation & Parameter declaration \\

    \hline
        $^*(\overline{x_{p_1}\cdots x_{p_2}(b_{2g+2}\cdots b_{4g})^t},$ $\overline{b_1(b_2\cdots b_{2g})^tx_{q_1}\cdots x_{q_2}})$   & $\nf(x_{p_1}\cdots x_{p_2}b_1x_{q_1}\cdots x_{q_2})$ & $t\cdot S_{(2,4g-1)},$ $\cdots$  & $t\geq 1$; $\overline{x_{p_1}\cdots x_{p_2}}=\overline{x_{q_1}\cdots x_{q_2}}=1$ or $   
        (\overline{x_{p_1}\cdots x_{p_2}}, ~\overline{b_1x_{q_1}\cdots x_{q_2}})$ satisfies one of the previous cases in Tables \ref{tab: reduction class 1}, \ref{tab: reduction class 2g-1}, \ref{tab: reduction class 2g}, \ref{tab: reduction class 2g+1 to 4g-2} with $x_{p_2}\in\{ b_{4g}, b_{2g-1}\}$ and $x_{q_1}\in\{1, b_2,b_{2g+3}\}$\\
    \hline 
    \end{tabular}
   \end{center} 
   \caption{The length of the $\llfr$ at the junction is $4g-1$. The parameter declaration of $x_{p_2},x_{q_1}$ is explained in the proof of Lemma \ref{lem observations of reductions}(1).}
    \label{tab: reduction class 4g-1}
      
\end{table}

%\clearpage

%-----------------references------------------------

\end{document}